\def\rr{{\mathbb R}}
\def\nn{{\mathbb N}}
\def\ch{{\mathcal H}}
\def\az{\alpha}
\def\dist{{\mathop\mathrm{\,dist\,}}}
\def\loc{{\mathop\mathrm{\,loc\,}}}
\def\ez{\epsilon}
\def\boz{{\Omega}}
\def\wz{\widetilde}
\def\ls{\lesssim}
\def\gs{\gtrsim}
\def\bint{{\ifinner\rlap{\bf\kern.35em--}
\int\else\rlap{\bf\kern.45em--}\int\fi}\ignorespaces}
\def\bbint{{\ifinner\rlap{\bf\kern.35em--}
\hspace{0.078cm}\int\else\rlap{\bf\kern.45em--}\int\fi}\ignorespaces}
\def\diam{{\mathop\mathrm{\,diam\,}}}
\newtheorem{thm}{Theorem}[section]
\newtheorem{lem}[thm]{Lemma}%[section]     %@@!!@@!!
\newtheorem{cor}[thm]{Corollary}%[section]    %@@!!@@!!
\newtheorem{defn}[thm]{Definition}%[section]    %@@!!@@!!
\numberwithin{equation}{section}
\theoremstyle{remark}
\newtheorem{rem}[thm]{Remark}%[section]    %@@!!@@!!
\def\bint{{\ifinner\rlap{\bf\kern.35em--}
\int\else\rlap{\bf\kern.45em--}\int\fi}\ignorespaces}
\begin{document}

\title[Planar $W^{1,\,1}$-extension Domains]
{Planar $\boldsymbol{W^{1,\,1}}$-extension domains}

\author{Pekka Koskela}
\author{Tapio Rajala}
\author{Yi Ru-Ya Zhang}

\address{University of Jyvas\-kyla\\
         Department of Mathematics and Statistics \\
         P.O. Box 35 (MaD) \\
         FI-40014 University of Jyvas\-kyla \\
         Finland}
\email{pekka.j.koskela@jyu.fi} % vai \email{pkoskela@maths.jyu.fi}
\email{tapio.m.rajala@jyu.fi}

\address{Academy of Mathematics and Systems Science, the Chinese Academy of Sciences, Beijing 100190, China}
\email{yzhang@amss.ac.cn}

\thanks{The first author has been  supported by the Academy of Finland via the Centre of Excellence in Analysis and Dynamics Research (project No. 307333) and
 Finnish Centre of Excellence in Randomness and Structures (project No. 364210). The second author has been funded by the Academy of Finland grant No.\ 274372. The third author is funded by National Key R\&D Program of China (Grant No. 2025YFA1018400 \&  No. 2021YFA1003100), NSFC grant No. 12288201 \& No. 12571128 and the Chinese Academy of Sciences.}
\subjclass[2000]{Primary 46E35; Secondary 30C20}
\keywords{Sobolev space, extension, quasiconvexity}
\date{\today}

%%%%%%%%%%%%%%%%%%%%%%%%%%%%%%%%%%%%%%%%%%%%%%%%%%%%%%%%%%%%%%%%%%%%%

\begin{abstract}
We show that a bounded planar simply connected domain $\Omega$ is a 
$W^{1,\,1}$-extension domain 
if and only if for every pair $x,y$ of points in  $\Omega^c$ there exists a 
curve $\gamma \subset \Omega^c$ connecting $x$ and $y$ with
$$ \int_\gamma\frac{1}{\chi_{\rr^2\setminus \partial\Omega}(z)}\,ds(z) \le 
C|x-y|.$$
Consequently, a planar Jordan domain $\Omega$ is a $W^{1,\,1}$-extension domain 
if and only if it is a $BV$-extension domain,
and if and only if its complementary domain $\wz \Omega$ is a $W^{1,\,\infty}$-extension domain. 
\end{abstract}

%%%%%%%%%%%%%%%%%%%%%%%%%%%%%%%%%%%%%%%%%%%%%%%%%%%%%%%%%%%%%%%%%%%%%

\maketitle

\tableofcontents

\section{Introduction}

Given a domain $\Omega \subset\mathbb R^2$, define for $1 \le p\le \infty$ 
the Sobolev space $W^{1,p}(\Omega)$ as
\[
 W^{1,p}(\Omega) = \left\{u \in L^p(\Omega)\mid\nabla u \in L^p(\Omega,\,\mathbb R^2)\right\},
\]
where $\nabla u$ denotes the distributional gradient of $u$. 
One usually employs $W^{1,p}(\Omega)$ with the non-homogeneous norm
$\|u\|_{L^p(\Omega)} + \|\nabla u\|_{L^p(\Omega,\,\mathbb R^2)}$ or with the
homogeneous (semi-)norm $\|\nabla u\|_{L^p(\Omega,\,\mathbb R^2)}$.

We call $E \colon W^{1,p}(\Omega) \to W^{1,p}(\mathbb R^2)$ an extension 
operator for $\Omega$
if there exists a constant $C \ge 1$ so that for every 
$u \in W^{1,p}(\Omega)$ we have 
\[
\|Eu\|_{W^{1,p}(\mathbb R^2)} \le C\|u\|_{W^{1,p}(\Omega)} 
\]
and $Eu|_\Omega = u$. A domain $\Omega \subset \mathbb R^2$
is called a $W^{1,p}$-extension domain if there exists an extension operator
$E \colon W^{1,p}(\Omega) \to W^{1,p}(\mathbb R^2)$. By \cite{HK1991}, a 
bounded domain $\Omega$ admits an extension operator with respect to the 
above non-homogeneous norm
%homogeneous norm
if and only if there is a bounded extension operator $F\colon W^{1,p}(\Omega)\to W^{1,p}_{\loc}(\mathbb R^2)$ with respect to
the homogeneous semi-norm
%admits one for the homogeneous norm, 
when $1\le p<\infty$. 
%In this paper we only consider 
%the homogeneous norm and refer to it by $\|u\|_{W^{1,p}(\Omega)}$. 

Geometric properties of simply connected planar $W^{1,\,p}$-extension domains 
are by now rather
well understood, see 
\cite{GLV1979}, \cite{J1981}, \cite{S2010} and \cite{KRZ2015}.
In particular, a full geometric characterization for bounded simply 
connected planar $W^{1,\,p}$-extension domains 
is available for $1<p\le \infty$. 
For the range $1 < p < 2$  bounded simply connected planar 
$W^{1,p}$-extension domains were characterized in \cite{KRZ2015}
as  those bounded domains $\Omega \subset \rr^2$
for which any two points $x,y \in \rr^2 \setminus \Omega$ can be connected 
with a curve  $\gamma\subset \rr^2 \setminus \Omega$  satisfying
\begin{equation}\label{eq:pgeaterthan1}
 \int_\gamma \dist(z,\partial \Omega)^{1-p}\,ds(z)  \le C |x-y|^{2-p}.
\end{equation}

Regarding $p=1,$ in \cite[Corollary 1.2]{kosmirsha10} it was shown that the 
complement of a bounded
simply connected $W^{1,1}$-extension domain is quasiconvex.
This was obtained as a corollary to a characterization of 
bounded simply connected $BV$-extension domains.
Recall that a set $E \subset \mathbb R^2$
is called {\it $C$-quasiconvex} if there exists a constant $C\ge1$ such that 
any pair of 
points $z_1,z_2 \in E$ can be connected by a rectifiable curve 
$\gamma \subset E$
whose length $\ell(\gamma)$ satisfies $\ell(\gamma) \le C|z_1-z_2|$.

Quasiconvexity of the complement does not imply $W^{1,\,1}$-extendability in 
general. Indeed, a slit disk
$$\Omega=\mathbb D\setminus\{(x,\,0)\mid x\ge 0 \}$$
has a quasiconvex complement but $\Omega$ fails to be a $W^{1,\,1}$-extension 
domain. 
In this paper we show that in order to obtain a characterization of 
$W^{1,\,1}$-extendability
in the spirit of the curve estimate \eqref{eq:pgeaterthan1}, in addition to 
the quasiconvexity of the complement,
we have to take into account the size of the set of self-intersections of the 
boundary $\partial \Omega$.
Our main result is the following theorem.

\begin{thm}\label{thm:main}
 Let $\Omega \subset \rr^2$ be a bounded simply connected domain. Then $\Omega$ is a $W^{1,\,1}$-extension domain
 if and only if there exists a constant $C < \infty$, quantitatively, such that
 for each pair $x,y \in \Omega^c$ of points there exists a curve 
$\gamma \subset \Omega^c$ connecting $x$ and $y$ with
 \begin{equation}\label{eq:curvecondition}
  \int_\gamma\frac{1}{\chi_{\rr^2\setminus \partial\Omega}(z)}\,ds(z) \le C|x-y|.
 \end{equation} 
\end{thm}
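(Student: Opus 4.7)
Since $1/\chi_{\rr^2\setminus\partial\Omega}(z)$ equals $1$ off $\partial\Omega$ and $+\infty$ on $\partial\Omega$, condition \eqref{eq:curvecondition} is equivalent to the existence of a rectifiable curve $\gamma\subset\Omega^c$ joining $x,y$ with $\ell(\gamma)\le C|x-y|$ and $\mathcal{H}^1(\gamma\cap\partial\Omega)=0$. I would prove the two implications separately.

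\emph{Necessity.} Assume $E\colon W^{1,1}(\Omega)\to W^{1,1}(\rr^2)$ is an extension operator. Since $W^{1,1}(\rr^2)\subset BV(\rr^2)$, in particular $E$ is also a $BV$-extension operator, so by \cite{kosmirsha10} the complement $\Omega^c$ is $C_0$-quasiconvex. Fix $x,y\in\Omega^c$. If no curve obeyed \eqref{eq:curvecondition}, then every $C_0|x-y|$-short curve in $\Omega^c$ joining $x,y$ would cross $\partial\Omega$ in a set of $\mathcal{H}^1$-measure bounded below, and from this I would extract a compact ``barrier'' $B\subset\partial\Omega$ with $\mathcal{H}^1(B)>0$ approached by $\Omega$ from two locally disjoint components $A_1,A_2\subset\Omega$ each of positive Euclidean density at $\mathcal{H}^1$-a.e.\ $z\in B$; this extraction step is where the simple connectedness of $\Omega$ enters, via prime-end/Riemann-map analysis near $B$. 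A Lipschitz test function $u_\epsilon\in W^{1,1}(\Omega)$ with $u_\epsilon\equiv 0$ on $A_1$, $u_\epsilon\equiv 1$ on $A_2$, and an intrinsic transition strip of width $\epsilon$ has $W^{1,1}(\Omega)$-norm bounded uniformly in $\epsilon$. Yet since $Eu_\epsilon\in W^{1,1}(\rr^2)$ must admit approximate limits $\mathcal{L}^2$-a.e., while every small Euclidean ball about any $z\in B$ meets both $A_1$ and $A_2$ in positive density carrying values $\le\epsilon$ and $\ge 1-\epsilon$, the approximate limit of $Eu_\epsilon$ fails to exist on a set of positive $\mathcal{H}^1$-measure, a contradiction.

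\emph{Sufficiency.} Let $\wz\Omega:=\rr^2\setminus\overline{\Omega}$. Since the integrand in \eqref{eq:curvecondition} is pointwise at least $1$, the hypothesis implies $\Omega^c$ is $C$-quasiconvex. I would take a Whitney decomposition $\{Q_j\}$ of $\wz\Omega$, pair each $Q_j$ with a reflected square $Q_j^*\subset\Omega$ of comparable size and distance, and define
\[ Eu(z) = \sum_j \vz_j(z)\bint_{Q_j^*} u\,d\mathcal{L}^2,\qquad z\in\wz\Omega, \]
via a partition of unity $\{\vz_j\}$ subordinate to $\{Q_j\}$, and set $Eu=u$ on $\Omega$. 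Standard Whitney estimates yield $\|Eu\|_{W^{1,1}(\wz\Omega)}\ls\|u\|_{W^{1,1}(\Omega)}$. The delicate remaining step is to rule out a singular part of $D(Eu)$ on $\partial\Omega$; for this I would exploit the curves furnished by \eqref{eq:curvecondition} to transport Lebesgue values through the exterior between boundary points of $\Omega$, showing that the two one-sided traces of $Eu$ from $\Omega$ and from $\wz\Omega$ coincide at $\mathcal{H}^1$-a.e.\ point of $\partial\Omega$, so $D(Eu)$ is purely absolutely continuous and $Eu\in W^{1,1}(\rr^2)$.

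The main obstacle is the barrier construction in the necessity direction: extracting from the failure of \eqref{eq:curvecondition} a subset of $\partial\Omega$ of positive $\mathcal{H}^1$-measure which $\Omega$ approaches with positive Euclidean density from two separated local components requires a careful structural analysis of the simply connected $\Omega$ along a possibly fractal portion of $\partial\Omega$, combining prime-end theory with quasiconvex/modulus-type curve-family arguments; I expect this to occupy the bulk of the technical work.
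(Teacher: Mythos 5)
Your high-level outline is sensible, and you correctly identify where the difficulties lie, but both delicate steps you flag are left as gaps, and the paper's argument is structured quite differently from what you propose. Let me point out the specific issues.

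\textbf{Necessity.} You observe (correctly) that quasiconvexity of $\Omega^c$ follows from \cite{kosmirsha10}, and you want to extract, from the failure of \eqref{eq:curvecondition}, a ``barrier'' $B\subset\partial\Omega$ of positive $\ch^1$-measure approached with positive density by two locally disjoint components of $\Omega$. You acknowledge that this extraction is the heart of the matter and leave it unresolved. The paper's key device here is the notion of a \emph{piecewise hyperbolic geodesic}: for $x,y\in\partial\Omega$, one takes the hyperbolic geodesic joining approximating points $x_n,y_n\in\partial D_n$ in the exteriors $\wz D_n$ of the Jordan subdomains $D_n=\varphi(B(0,1-2^{-n}))$, and passes to a uniform limit $\Gamma$. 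The crucial properties of $\Gamma$ (Lemmas~\ref{lem_phg}--\ref{curve condition for geodesic}) are that $\ell(\Gamma)\ls|x-y|$ \emph{and} $\Gamma\cap\partial\Omega\subset\gamma\cap\partial\Omega$ for every competing curve $\gamma\subset\Omega^c$ joining $x,y$. So it suffices to show $\ch^1(\Gamma\cap\partial\Omega)=0$ for this single canonical curve, and moreover Lemma~\ref{two side cut point} tells you exactly what $\Gamma\cap\partial\Omega$ looks like: each such point is a cut-point, and $\Omega$ approaches it from both sides of a Jordan crosscut $\varphi([v_1,0]\cup[0,v_2])$. This is precisely the two-sided density structure you wanted but could not produce. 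Once this is in hand, the contradiction argument is simpler than yours: a single Lipschitz test function $\psi=\max\{0,1-\dist(\cdot,B(z_0,r_0))/(4r_0)\}\chi_{\Omega_1}$ together with a Poincar\'e estimate on balls centered on a density point of $\Gamma\cap\partial\Omega$ and the $5r$-covering theorem defeats absolute continuity of $\int|\nabla E\psi|$ directly; no $\epsilon$-family of test functions and no appeal to approximate limits of $W^{1,1}$ functions is needed.

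\textbf{Sufficiency.} Here your proposal diverges from the paper more fundamentally, and the gap is more serious. You propose the standard Jones/Whitney reflection extension $Eu=\sum_j\vz_j\bint_{Q_j^*}u$ built on a Whitney decomposition of $\wz\Omega$ with reflected cubes $Q_j^*\subset\Omega$, and then propose to show a posteriori that the $BV$ derivative $D(Eu)$ has no singular part on $\partial\Omega$ by ``transporting Lebesgue values through the exterior.'' The paper explicitly explains why precisely this line of attack stalls at $p=1$: the approximation scheme that works for $1<p<2$ (extending from Jordan subdomains and passing to a limit) only yields a $BV$-extension, because $W^{1,1}$ lacks weak compactness, and no argument for removing the singular part is apparent. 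Moreover, it is not even clear that the generic Whitney reflection gives the gradient bound $\|\nabla Eu\|_{L^1(\wz\Omega)}\ls\|\nabla u\|_{L^1(\Omega)}$: adjacent Whitney squares of $\wz\Omega$ can reflect to Whitney sets of $\Omega$ that are far apart in the intrinsic metric of $\Omega$, so the usual chaining of the average differences $|a_j-a_k|$ need not close up with bounded overlap. The paper sidesteps both problems by abandoning the Whitney decomposition of $\wz\Omega$ entirely. Instead it uses a \emph{conformal hyperbolic triangulation} of $\wz\Omega$ whose edges are (piecewise) hyperbolic geodesics joining the points $z_k^{(j)}=\varphi(x_k^{(j)})\in\partial\Omega$, where $x_k^{(j)}$ is the dyadic decomposition of $\partial\mathbb D$. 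Each triangle edge $\gamma_k^{(j)}$ then corresponds canonically to a single Whitney-type set $Q_k^{(j)}=\varphi(D_k^{(j)})\subset\Omega$, and the extension is built via explicit auxiliary functions on the triangles (Lemma~\ref{axillary function}) with boundary data $a_k^{(j)}=\bint_{Q_k^{(j)}}u$. Because the decompositions of $\Omega$ and $\wz\Omega$ are defined simultaneously from the same conformal parametrization, the gradient estimate closes with no chaining issues, and the $W^{1,1}$-regularity (not merely $BV$) is verified directly via an ACL argument exploiting that piecewise hyperbolic geodesics meet $\partial\Omega$ only in cut-points, a set of $\ch^1$-measure zero. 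In the non-Jordan case a further layer is needed (degenerate triangles, equivalence classes $[\gamma_{k,i}^{(j)}]$, family chains), which your Whitney proposal does not address.

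In short, your outline names the two obstructions correctly but provides neither: the piecewise-hyperbolic-geodesic machinery for necessity, nor a decomposition of $\wz\Omega$ that actually yields $W^{1,1}$ (rather than $BV$) for sufficiency. Both are substantial, and both are what the paper actually builds.
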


In other words, \eqref{eq:curvecondition} requires that $\ell(\gamma)\le
C|x-y|$ and that $\mathcal H^1(\gamma\cap \partial \Omega)=0,$ where $\mathcal H^1$ denotes the $1$-dimensional Hausdorff measure. 

Both the necessity and sufficiency in Theorem~\ref{thm:main} are new.
Our construction of an extension operator actually gives a linear operator.

\begin{cor}\label{hakotu}
Let $\Omega \subset \rr^2$ be a bounded simply connected domain.
If $\Omega$ is a $W^{1,1}$-extension domain, then $\Omega$ admits a bounded
linear extension operator for $W^{1,1}.$
\end{cor}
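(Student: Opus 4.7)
My plan is to derive Corollary~\ref{hakotu} as a by-product of the proof of Theorem~\ref{thm:main} rather than as a genuinely new argument. Since $\Omega$ is assumed to be a $W^{1,1}$-extension domain, the necessity half of Theorem~\ref{thm:main} gives the curve condition~\eqref{eq:curvecondition} for $\Omega$. Thus it suffices to build a bounded \emph{linear} extension operator starting directly from~\eqref{eq:curvecondition}; the point is to show that the construction used in the sufficiency direction of Theorem~\ref{thm:main} already has this property.

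Explicitly, I would take a Whitney decomposition $\{Q_j\}$ of $\rr^2\setminus\overline\Omega$ with a subordinate partition of unity $\{\varphi_j\}$, and assign, once and for all and independently of the function to be extended, a ``reflected'' set $S_j\subset\Omega$ to each $Q_j$ whose diameter and distance to $Q_j$ are comparable to $\diam(Q_j)$. Such a choice is possible precisely because~\eqref{eq:curvecondition} forces a sufficient amount of $\Omega$ to lie near $\partial\Omega$. I would then set
\[
Eu(x)=u(x)\ \text{ for }\ x\in\Omega,\qquad
Eu(x)=\sum_{j}\varphi_j(x)\,\bbint_{S_j} u(y)\,dy\ \text{ for }\ x\in\rr^2\setminus\overline\Omega,
\]
with any pointwise extension on $\partial\Omega$ (a null set for the purposes of the Sobolev norm). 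Linearity of $E$ is then immediate: each $S_j$, each $\varphi_j$, and the entire Whitney decomposition are fixed before $u$ enters the construction, so $Eu$ is visibly a linear function of $u$.

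The substantive part is the bound $\|Eu\|_{W^{1,1}(\rr^2)}\le C\|u\|_{W^{1,1}(\Omega)}$; this is exactly the estimate produced in the sufficiency half of Theorem~\ref{thm:main}. There, condition~\eqref{eq:curvecondition} is used to compare the averages $\bbint_{S_j}u$ and $\bbint_{S_k}u$ associated with pairs of touching Whitney cubes via chains of curves $\gamma\subset\Omega^c$ satisfying $\ell(\gamma)\le C\diam(Q_j)$ and $\mathcal H^1(\gamma\cap\partial\Omega)=0$. Only under this hypothesis does the $p=1$ bound close, since the absence of $\mathcal H^1$-boundary mass along $\gamma$ is what prevents the logarithmic-type divergences that otherwise appear at the endpoint $p=1$.

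I expect the main obstacle to be bookkeeping rather than estimation: one must verify that \emph{every} step of the sufficiency argument --- the Whitney decomposition, the selection of the reflected sets $S_j$, the auxiliary chains of curves used to compare averages, and the partition of unity --- is performed purely in terms of the geometry of $\Omega$ together with~\eqref{eq:curvecondition}, with no adaptive dependence on $u$. Once this is checked, the only data of $u$ that ever enter the estimates are the linear functionals $u\mapsto\bbint_{S_j}u$, and $E$ is therefore a bounded linear operator from $W^{1,1}(\Omega)$ to $W^{1,1}(\rr^2)$, as required.
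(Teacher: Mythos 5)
Your high-level strategy is exactly the one the paper uses: run the necessity half of Theorem~\ref{thm:main} to obtain~\eqref{eq:curvecondition}, then observe that the extension operator built in the sufficiency half is linear because the only data of $u$ ever fed into the construction are averages of $u$ over fixed Whitney-type sets inside $\Omega$, and those are linear functionals chosen once and for all from the geometry of $\Omega$. That observation is precisely why the paper inserts the one-line remark in Lemma~\ref{axillary function} that the auxiliary function is linear in the boundary parameters $a_1,a_2,a_3$.

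However, the concrete construction you describe is not the one in the paper, and the discrepancy matters. You propose a Whitney decomposition of $\rr^2\setminus\overline\Omega$ with a subordinate smooth partition of unity $\{\varphi_j\}$ and reflected sets $S_j\subset\Omega$, i.e.\ the Jones-type extension. The paper explicitly avoids this: the introduction states that sufficiency is proved ``via a hyperbolic triangulation of the complementary domain instead of a usual Whitney decomposition.'' In the paper one decomposes the complement into hyperbolic triangles bounded by (piecewise) hyperbolic geodesics in $\wz\Omega$, and on each triangle one places an auxiliary function $\phi_k^{(j)}$ from Lemma~\ref{axillary function} whose gradient $L^1$-norm is controlled by $\sum_{i\ne j}|a_i-a_j|\min\{\ell(\gamma_i),\ell(\gamma_j)\}$. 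This one-dimensional quantity --- the length of an edge, not the area of a Whitney cube divided by its sidelength --- is what makes the $p=1$ estimate close after summing over the triangulation. With your Whitney/partition-of-unity setup, the local gradient bound is of the form $|\nabla\varphi_j|\,|a_j-a_k|\sim\diam(Q_j)^{-1}|a_j-a_k|$ on the whole cube $Q_j$, and the chain of Whitney cubes needed to compare $a_j$ and $a_k$ across the pinched complement can have unbounded overlaps; this is exactly the type of logarithmic divergence that kills the endpoint $p=1$ in non-uniform domains, and is the reason the authors replaced the Whitney decomposition by the hyperbolic triangulation. So the claim that ``this is exactly the estimate produced in the sufficiency half of Theorem~\ref{thm:main}'' does not hold for the construction you wrote down. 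The conclusion (linearity) is right and the mechanism you identify (only fixed averages of $u$ enter) is right, but you need to attach that mechanism to the hyperbolic-triangulation operator of Sections~3--5, not to a Jones-type reflection.
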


Corollary~\ref{hakotu} gives the first linearity result for Sobolev extension
operators for $p=1$ beyond concrete cases like Lipschitz domains. 
Up to now it has not been clear if linearity could be expected; recall that
there is no bounded linear extension operator from the trace space  $L^1(\rr)$
of $W^{1,1}(\rr^2)$ to $W^{1,1}(\rr^2).$ 
The existence of a bounded linear extension operator in the case of $p>1$ 
was established in \cite{hakotu2008} for $W^{1,p}$-extension domains via
a particular method that cannot be applied for $p=1.$ 

%In other words, the correct interpretation of \eqref{eq:pgeaterthan1} at the limit $p \downarrow 1$ is
%\[
% \dist(z,\partial \Omega)^{1-p} \to \begin{cases}
%                                     1, & \text{if } z \notin \partial\Omega\\
%                                     \infty, & \text{if }z \in \partial\Omega.
%                                    \end{cases}
%\]

In the case of a Jordan domain we do not have boundary self-intersections
and hence our characterization easily reduces to quasiconvexity of the 
complementary domain. From Theorem~\ref{thm:main} and (the proof of) Lemma~\ref{qconvex} we obtain the following corollary. 

\begin{cor}\label{mainthm}
Let $\Omega$ be a planar Jordan domain. Then $\Omega$ is a $W^{1,\,1}$-extension domain if and only if its complementary domain $\wz\Omega:=\mathbb R^2\setminus \overline{\Omega}$ is quasiconvex. 
\end{cor}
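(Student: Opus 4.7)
The plan is to invoke Theorem~\ref{thm:main} to reduce the statement to the claim that, for a Jordan domain $\Omega$, the curve condition \eqref{eq:curvecondition} is equivalent to the quasiconvexity of the unbounded complementary open set $\widetilde\Omega$. For a Jordan domain $\Omega^c=\widetilde\Omega\cup\partial\Omega$ is a disjoint union with $\partial\Omega$ a Jordan curve, so the two conditions differ only in the freedom to use $\partial\Omega$ inside the curve.

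\emph{Easier direction: quasiconvexity of $\widetilde\Omega$ implies \eqref{eq:curvecondition}.} If both $x,y\in\widetilde\Omega$, a quasiconvex curve in $\widetilde\Omega$ is disjoint from $\partial\Omega$, so $1/\chi_{\rr^2\setminus\partial\Omega}\equiv1$ on it and the integral in \eqref{eq:curvecondition} is just the length, bounded by $C|x-y|$. When one or both endpoints lie on $\partial\Omega$, I would pick dyadic approximating sequences $x_n,y_n\in\widetilde\Omega$ with $|x_n-x|,|y_n-y|\le 2^{-n}|x-y|$, connect $x_n$ to $x_{n+1}$ and $y_n$ to $y_{n+1}$ by quasiconvex curves in $\widetilde\Omega$, and concatenate; geometric summation of the lengths, followed by a final quasiconvex curve between $x_1$ and $y_1$, produces a curve that lies in $\widetilde\Omega\cup\{x,y\}$ and has length $\lesssim|x-y|$. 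Its intersection with $\partial\Omega$ is at most the two endpoints, hence of zero $\mathcal H^1$-measure, and \eqref{eq:curvecondition} holds.

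\emph{Harder direction: \eqref{eq:curvecondition} implies quasiconvexity of $\widetilde\Omega$.} Given $x,y\in\widetilde\Omega$, Theorem~\ref{thm:main} supplies $\gamma\subset\Omega^c$ with $\ell(\gamma)\le C|x-y|$ and $\mathcal H^1(\gamma\cap\partial\Omega)=0$. This is the KMS quasiconvexity of $\Omega^c$; what I still need is to move $\gamma$ into the open set $\widetilde\Omega$ without inflating its length. Parametrizing by arclength and passing to a simple sub-arc, the closed set $K=\gamma^{-1}(\partial\Omega)$ has Lebesgue measure zero, and on each component of its complement $\gamma$ lies in the open set $\widetilde\Omega$. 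I would then follow the strategy of (the proof of) Lemma~\ref{qconvex}: cover $\gamma(K)\subset\partial\Omega$ by countably many balls $B(p_i,r_i)$ with $\sum r_i<\varepsilon$, and on each ball replace the sub-arc of $\gamma$ meeting $B(p_i,r_i)$ by a detour in $\widetilde\Omega$ connecting the first and last entry points. The existence of such detours with length $\lesssim r_i$ comes from applying \eqref{eq:curvecondition} itself to the entry/exit points (both in $\widetilde\Omega$, so the produced curve is again mostly in $\widetilde\Omega$) combined with the Carath\'eodory local connectedness of $\widetilde\Omega$ at Jordan boundary points, which allows a final push of any remaining boundary contact into the open set. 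Summing, the total added length is $\lesssim\varepsilon$, giving a curve in $\widetilde\Omega$ of length $\le C|x-y|+\varepsilon$; taking $\varepsilon\to 0$ and extracting a limit (or simply choosing $\varepsilon=|x-y|$) yields the required quasiconvex curve.

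The main obstacle is precisely this bypass construction: a priori, local connectedness of $\widetilde\Omega$ at $\partial\Omega$ gives only an unspecified modulus, so the detour lengths are not automatically comparable to the radii $r_i$. The linear, scale-invariant bound in \eqref{eq:curvecondition} is exactly what supplies the missing Lipschitz control, and the zero-length hypothesis $\mathcal H^1(\gamma\cap\partial\Omega)=0$ is what keeps the aggregate detour length summable. Once this is handled along the lines of Lemma~\ref{qconvex}, the chain \textit{$W^{1,1}$-extension} $\Leftrightarrow$ \textit{curve condition} $\Leftrightarrow$ \textit{quasiconvexity of $\widetilde\Omega$} is complete.
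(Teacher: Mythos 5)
Your reduction to \eqref{eq:curvecondition} via Theorem~\ref{thm:main} is the right frame, and the dyadic-approximation argument for the direction ``$\widetilde\Omega$ quasiconvex $\Rightarrow$ \eqref{eq:curvecondition}'' is correct: it produces a curve in $\widetilde\Omega\cup\{x,y\}$ of controlled length, and $\mathcal H^1$ of the boundary intersection is automatically zero. This is a legitimate (and slightly more self-contained) alternative to the paper's route through Theorem~\ref{Jordan suff}.

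The other direction has a genuine gap. You correctly note that \eqref{eq:curvecondition} gives $\ell(\gamma)\le C|x-y|$ and $\mathcal H^1(\gamma\cap\partial\Omega)=0$ with $\gamma\subset\Omega^c$, and then you try to push $\gamma$ off $\partial\Omega$ into $\widetilde\Omega$ by covering $\gamma(K)$ with small balls and rerouting. The problem is the rerouting step: your appeal to \eqref{eq:curvecondition} for the detours is circular, since \eqref{eq:curvecondition} again only produces curves in $\Omega^c$ which may still meet $\partial\Omega$; and the ``final push'' via Carath\'eodory local connectedness is exactly the step with no quantitative modulus. What genuinely supplies the missing Lipschitz control in the paper is the exterior Gehring--Hayman theorem (Theorem~\ref{GM unbounded}, via Corollary~\ref{GH thm} and Lemma~\ref{ulkodist}), which bounds the length of a hyperbolic geodesic in $\widetilde\Omega$ by a multiple of the distance between its endpoints. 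Your sketch neither invokes nor substitutes for this tool, so the aggregate detour length cannot be controlled. Note also that what you describe is not actually ``the strategy of the proof of Lemma~\ref{qconvex}'': that proof does not cover $\gamma\cap\partial\Omega$ by balls, but rather discretizes the \emph{entire} curve into short segments, replaces each vertex by a nearby interior point, and reconnects short pairs near the boundary by hyperbolic geodesics estimated via Corollary~\ref{GH thm}.

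The fix is to not redo Lemma~\ref{qconvex}. For a Jordan domain, $\Omega^c=\overline{\widetilde\Omega}$; the curve condition immediately makes $\overline{\widetilde\Omega}$ quasiconvex, and the first (``sufficiency'') part of Lemma~\ref{qconvex} — already proved in the paper, with Gehring--Hayman as the key input — then yields quasiconvexity of the open set $\widetilde\Omega$. Citing that lemma closes the argument cleanly.
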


Corollary~\ref{mainthm} together with earlier results (see e.g.\ \cite{BM1967}, \cite{Z1999}, \cite{kosmirsha10}) yields the following 
somewhat surprising corollary.

\begin{cor}
Let $\Omega$ be a planar Jordan domain and $\wz \Omega$ be its 
complementary domain. Then the following are equivalent: 
\begin{enumerate}[(1) ]
\item $\Omega$ is a $W^{1,\,1}$-extension domain. 
\item $\Omega$ is a $BV$-extension domain. 
\item $\wz \Omega$ is quasiconvex. 
\item  $\wz \Omega$ is a $W^{1,\,\infty}$-extension domain. 
\end{enumerate}

\end{cor}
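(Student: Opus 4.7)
The plan is to prove a cycle of implications, leaning on three already-established pieces: Corollary~\ref{mainthm} above, the $BV$-extension characterization of \cite{kosmirsha10}, and the classical equivalence between $W^{1,\infty}$-extendability and quasiconvexity.

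First, the equivalence (1)\,$\Leftrightarrow$\,(3) is immediate from Corollary~\ref{mainthm}, which we have just established: for a Jordan domain the set $\partial\Omega$ has empty interior in $\Omega^c$ and does not disconnect $\Omega^c$, so the curve condition \eqref{eq:curvecondition} collapses to plain quasiconvexity of $\wz\Omega$ (the "$\chi_{\rr^2\setminus\pa\Omega}$" factor can be ignored after a small perturbation of the curve off the Jordan arc, a step I would verify via a density/pushing argument in Lemma~\ref{qconvex} or its proof).

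Next, for (2)\,$\Leftrightarrow$\,(3) I would invoke the main theorem of \cite{kosmirsha10}, which characterises bounded simply connected planar $BV$-extension domains exactly as those whose complement is quasiconvex. Since a Jordan domain is bounded and simply connected, specialising that theorem to our setting gives the equivalence without further work.

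Finally, for (3)\,$\Leftrightarrow$\,(4), the idea is that $W^{1,\infty}(\wz\Omega)$ coincides isometrically with $\mathrm{Lip}(\wz\Omega, d_{\wz\Omega})$, where $d_{\wz\Omega}$ is the intrinsic length metric. If $\wz\Omega$ is $C$-quasiconvex then $d_{\wz\Omega}\le C|\cdot-\cdot|$, so every $W^{1,\infty}$-function on $\wz\Omega$ is Lipschitz in the Euclidean metric, and the McShane/Whitney extension theorem produces a Lipschitz extension to $\rr^2$, yielding (4). Conversely, if (4) holds, testing the extension operator on the intrinsic distance function $f_x(z)=\min\{d_{\wz\Omega}(z,x),R\}$ to a fixed point $x\in\wz\Omega$ forces $d_{\wz\Omega}$ to be controlled by the Euclidean distance, i.e.\ quasiconvexity of $\wz\Omega$. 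I expect the main subtlety here to lie in making the converse direction clean when $\wz\Omega$ is unbounded (one truncates $f_x$ and takes a limit), but this is the content of the classical results cited as \cite{BM1967} and \cite{Z1999}, which I would simply quote.

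Chaining (3)\,$\Leftrightarrow$\,(1), (3)\,$\Leftrightarrow$\,(2), and (3)\,$\Leftrightarrow$\,(4) completes the proof. The only conceptually new input is Corollary~\ref{mainthm}; everything else is bookkeeping on known theorems.
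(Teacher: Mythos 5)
Your proposal is correct and matches the paper's intended route: the paper does not write out a proof of this corollary but explicitly states that it follows from Corollary~\ref{mainthm} (giving (1)\,$\Leftrightarrow$\,(3)) together with \cite{kosmirsha10} (giving (2)\,$\Leftrightarrow$\,(3)) and \cite{BM1967}, \cite{Z1999} (giving (3)\,$\Leftrightarrow$\,(4)), exactly the three pieces you chain together. Your brief sketches of the underlying mechanisms (the $\chi_{\rr^2\setminus\partial\Omega}$ factor collapsing via Lemma~\ref{qconvex}, the identification of $W^{1,\infty}$ with intrinsic Lipschitz functions plus McShane for one direction and the distance-function test for the other) are accurate descriptions of what those cited results deliver.
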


This corollary together with 
Lemma~\ref{lma:unboundedbounded} below  also extends the duality result from 
\cite[Corollary 1.3]{KRZ2015} to the case of all $1\le p\le \infty.$

\begin{cor}\label{cor:dual}
 Let $1 \le p,q \le \infty$ be H\"older dual exponents and let 
$\Omega \subset \rr^2$ be a Jordan
 domain. Then $\Omega$ is a $W^{1,p}$-extension domain if and only if 
 $\rr^2 \setminus \bar\Omega$ is a $W^{1,q}$-extension domain.
\end{cor}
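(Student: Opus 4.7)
The plan is to reduce Corollary~\ref{cor:dual} to the preceding (unnumbered) four-way corollary, together with Lemma~\ref{lma:unboundedbounded} and the classical characterization of $W^{1,\infty}$-extension domains. The case $1<p<\infty$ (with $q$ the H\"older dual) was already established in \cite[Corollary 1.3]{KRZ2015}, so only the two endpoint cases $p=1,\,q=\infty$ and $p=\infty,\,q=1$ need a new argument.

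For the case $p=1$, $q=\infty$ the claim is immediate from the preceding corollary. Indeed, since $\Omega$ is a bounded Jordan domain, the complementary domain $\wz\Omega$ coincides with $\rr^2\setminus\bar\Omega$, and the equivalence (1)$\Leftrightarrow$(4) of that corollary states precisely that $\Omega$ is a $W^{1,1}$-extension domain if and only if $\wz\Omega$ is a $W^{1,\infty}$-extension domain.

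For the case $p=\infty$, $q=1$ I would first invoke the classical McShane/Whitney-type result that a bounded domain is a $W^{1,\infty}$-extension domain if and only if it is quasiconvex. Thus it suffices to prove that $\Omega$ is quasiconvex if and only if $\rr^2\setminus\bar\Omega$ is a $W^{1,1}$-extension domain. The difficulty is that $\rr^2\setminus\bar\Omega$ is unbounded, so Theorem~\ref{thm:main} cannot be applied directly. Here I would use Lemma~\ref{lma:unboundedbounded} to transfer the $W^{1,1}$-extendability of the unbounded domain $\rr^2\setminus\bar\Omega$ to that of a bounded Jordan domain obtained by the corresponding reduction (typically a bounded truncation or inversion-type procedure), whose complementary domain is $\Omega$ itself. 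Applying the preceding corollary to this bounded Jordan domain converts the $W^{1,1}$-extension property into quasiconvexity of its complementary domain, which is exactly the quasiconvexity of $\Omega$, closing the loop.

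The main obstacle, as so often in duality reductions on the plane, lies in the second case: one must check that the reduction supplied by Lemma~\ref{lma:unboundedbounded} preserves both the Jordan-domain hypothesis required by the preceding corollary and the identification of quasiconvexity on the two sides. Once this compatibility is verified the proof is essentially a chain of substitutions into already established equivalences.
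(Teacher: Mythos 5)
Your proposal is correct and follows the same recipe the paper sketches in the sentence preceding the statement: \cite[Corollary~1.3]{KRZ2015} handles $1<p<\infty$, equivalence (1)$\Leftrightarrow$(4) of the preceding corollary gives $p=1,\,q=\infty$ directly, and the case $p=\infty,\,q=1$ is reduced to the bounded Jordan setting via inversion using Lemma~\ref{lma:unboundedbounded}. One point to tighten: for $x\in\Omega$ the bounded Jordan domain $i_x(\rr^2\setminus\bar\Omega)$ has complementary domain $i_x(\Omega)$, not $\Omega$ itself, so the preceding corollary yields that $i_x(\rr^2\setminus\bar\Omega)$ is a $W^{1,1}$-extension domain iff $i_x(\Omega)$ is quasiconvex; you then need part (2) of Lemma~\ref{lma:unboundedbounded} once more to transfer quasiconvexity from $i_x(\Omega)$ back to $\Omega$. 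This is precisely the ``identification of quasiconvexity on the two sides'' you flag at the end as the compatibility check, so the chain closes once that intermediate substitution is written out explicitly rather than asserting the complementary domain is $\Omega$ itself.
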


One further obtains a monotonicity property via  Theorem~\ref{thm:main} and 
\cite{KRZ2015}: if $\Omega$ is a bounded 
simply connected $W^{1,\,p}$-extension domain with $1<p<2$, 
then it is also a $W^{1,\,q}$-extension domain for all $1\le q\le p.$ 
However, a 
$W^{1,\,1}$-extension domain need not  be a $W^{1,\,p}$-extension domain for 
any $1<p<2$. To see this consider the Jordan domain
$$\Omega=\mathbb D\setminus\{(x,\,y)\mid y\le x^2\}.$$ 
The complement of $\Omega$ is clearly quasiconvex, but for any curve 
$\gamma$ connecting the origin and the point $(t,\,0)$ with $0<t<1$, we have 
$$ \int_\gamma \dist(z,\partial \Omega)^{1-p}\,ds(z)\gtrsim \int_{0}^t  x^{2-2p}\, dx\gtrsim t^{3-2p}. $$
Since there is no constant $C>0$ such that  $t^{3-2p}<C t^{2-p}$ for all 
$0<t<1$ when $1<p<2$,  \eqref{eq:pgeaterthan1} fails, and thus $\Omega$ is not 
an extension domain for $W^{1,\,p}(\Omega)$ when  $1<p<2$.

Resulting from \cite[Corollary 1.5]{KRZ2015} and the argument above, we 
conclude that the set of all $1\le s \le\infty$ for which a bounded simply 
connected planar domain $\Omega$ is a $W^{1,\,s}$-extension domain, can only 
be one of:
$$\text{$\emptyset$, $\{1\}$, $[1,\,q)$ with $q<2$, $[1,\,\infty]$,  $(q,\,\infty]$ with $q> 2$ or $\{\infty\}$.}$$

Let us close this introduction by briefly discussing the ideas of the proofs of Theorem~\ref{thm:main} and Corollary~\ref{mainthm}. First of all, we cannot use the approach in \cite{KRZ2015} which gave a characterization for $1<p<2$. This is because there both   the sufficiency and necessity of \eqref{eq:pgeaterthan1} were first proven for approximating Jordan domains and the general case was concluded via a limiting argument. Our condition \eqref{eq:curvecondition} is not stable under (uniform) convergence of the complement and $W^{1,\,1}$ is not weakly compact.  For sufficiency, an approximation procedure would only give $BV$-extension. Nevertheless, we first prove sufficiency in the Jordan case via a hyperbolic triangulation of the complementary domain  instead of a usual Whitney decomposition. For the general case we construct a generalized hyperbolic triangulation for the complement of $\Omega$ and apply the Jordan result for suitable subdomains of $\Omega$ to show that the extension obtained via this triangulation is a Sobolev extension. Namely, when $\Omega$ is simply connected, it becomes necessary to manually construct the extension, drawing upon the approximation techniques employed in the Jordan case in Section~\ref{sec:sufficiency_Jordan}. This process inevitably gives rise to a range of non-trivial technical challenges, and has been done in Sections~\ref{sec:phg}, \ref{sec:sufficiency}, and \ref{sec:necessity}. The necessity of \eqref{eq:curvecondition} is proven directly in the simply connected case and the necessity in Corollary~\ref{mainthm} is deduced from the general case.  The proof of Corollary~\ref{mainthm} is given at the end of Section~\ref{sec:necessity}.

\section{Preliminaries}\label{sec:preliminaries}
In this section we recall some definitions and properties. 
The notation in our paper is quite standard. When we make estimates, we sometimes write the constants as 
positive real numbers $C(\cdot)$ with
the parenthesis including all the parameters on which the constant depends. The constant $C(\cdot)$ may
vary between appearances, even within a chain of inequalities. 
Especially, $C$ refers to  denote an absolute constant. 
By $a\lesssim b$ we mean $a\le C b$, and $a \sim b$ means that $b/C \le a \le Cb$ for some constant $C \ge 2$. 
The Euclidean distance between two sets $A,\,B \subset \mathbb R^2$ is denoted by $\dist(A,\,B)$. 
By $\mathbb D$ we always mean the open unit disk in $\mathbb R^2$ and by $\partial \mathbb D$ its boundary.
We denote by $\ell(\gamma)$ the length of a curve $\gamma$. For a set $A\subset \mathbb R^2$, we denote by $A^o$ its interior, $\partial A$ its boundary, $A^c=\mathbb R^2\setminus A$ its complement, and $\overline A$ its closure respectively with respect to the Euclidean topology, unless another specific explanation is given. 
For an injective curve $\gamma\colon [0,\,1] \to \mathbb R^2$ we use $\gamma^o$ to denote $\gamma((0,\,1))$.
The line segment connecting $x$ and $y$ is referred to by $[x,\,y]$, and for an injective curve $\gamma$ with $x,\,y\in \gamma$, the subarc of $\gamma$ joining $x$ and $y$ is denoted by $\gamma[x,\,y]$. 
We denote the $1$-dimensional Hausdorff measure by $\mathcal H^1$. 
For a disk $B\subset \mathbb R^2$ and a constant $c>0$, we denote by $cB$ the disk with the same center as $B$ but a radius that is $c$ times the radius of $B$.

  Let us recapitulate the main definitions and properties that are used throughout the paper frequently.

\begin{itemize}
    \item A Jordan domain $\Omega \subset \mathbb{R}^2$ is defined as the image of the unit disk $\mathbb{D}$ under a global homeomorphism of $\mathbb{R}^2$ onto itself. In particular, $\Omega$ is bounded and its boundary $\partial \Omega$ is a compact, simple closed curve. 

    \item Given two open subsets $A,B \subset \mathbb R^2$, by a conformal mapping $\varphi \colon A \to B$  we mean a holomorphic homeomorphism between the sets $A$ and $B$.

    \item Let $\Omega \subset \mathbb R^2$ be a simply connected domain. Then the Riemann mapping theorem guarantees the existence of a conformal mapping $\varphi \colon \mathbb{D} \to \Omega$.

    \item For any Jordan domain $\Omega \subset \mathbb{R}^2$ and its complementary domain $\wz{\Omega}:=\mathbb R^2\setminus \overline{\Omega}$, in addition to the conformal mapping $\varphi \colon \mathbb{D} \to \Omega$, the Riemann mapping theorem also guarantees the existence of conformal mapping $\wz{\varphi} \colon \mathbb{R}^2 \setminus \overline{\mathbb{D}} \to \wz{\Omega}$. Furthermore, by the Carath\'eodory-Osgood theorem \cite[Section IX.4]{P1991}, both $\varphi$ and $\wz{\varphi}$ extend continuously to the boundary and induce homeomorphisms there.

    \item Let $\Omega \subset \mathbb{R}^2$ be a bounded, simply connected domain whose complement is quasiconvex. Then $\Omega$ is a John domain (see Definition~\ref{John}, and every conformal mapping $\varphi \colon \mathbb{D} \to \Omega$ admits a continuous extension to the boundary. However, this extension need not be a homeomorphism of the closed unit disk onto $\overline{\Omega}$. See Lemma~\ref{property of John domain} below. 

    \item Given a sequence of uniformly bounded curves whose lengths are also uniformly bounded, the Arzel\`a-Ascoli lemma \cite[Theorem 2.5.14]{BBI2001} implies, possibly after passing to a subsequence, the existence of a limit curve whose length has the same uniform bound.
\end{itemize}

\medskip
 
We have the following swapping lemma. 
\begin{lem}\label{lma:unboundedbounded}
 Let $\Omega \subset \mathbb R^2$ be a bounded domain. 
Fix $x \in \Omega$ and define
 an unbounded domain $\hat \Omega = i_x(\Omega)$ using the inversion 
 \[
  i_x \colon \mathbb R^2\setminus\{x\} \to \mathbb R^2\setminus\{x\} \colon y \mapsto x + \frac{y-x}{|y-x|^2}.
 \]
 Then
 \begin{enumerate}
  \item The domain $\Omega$ is a $W^{1,1}$-extension domain 
        if and only if $\hat \Omega$ is such a domain.
  \item The domain $\Omega$ satisfies \eqref{eq:curvecondition} with $\gamma\subset \Omega^c$ if and only if 
$\hat \Omega$ has this property 
 \end{enumerate}
\end{lem}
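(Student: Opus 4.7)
The plan is to exploit the inversion $i_x$ as a conformal involution on $\mathbb{R}^2\setminus\{x\}$: it satisfies the classical identity $|i_x(z_1)-i_x(z_2)|=|z_1-z_2|/(|z_1-x|\,|z_2-x|)$ and has conformal factor $|y-x|^{-2}$. Since $x\in\Omega$ is interior and $\Omega$ is bounded, I pick $0<r_0<R_0$ with $B(x,r_0)\subset\Omega\subset B(x,R_0)$, so $\{|y-x|>1/r_0\}\subset\widetilde\Omega\subset\{|y-x|>1/R_0\}$; in particular $\widetilde\Omega$ contains a neighborhood of $\infty$ and $\widetilde\Omega^c\subset\overline{B(x,1/r_0)}$ is bounded, while on each annulus bounded away from both $x$ and $\infty$, $i_x$ is bi-Lipschitz with explicit constants. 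By the involutivity $i_x\circ i_x=\mathrm{id}$, it suffices to prove one direction in each part. For (2), assuming $\Omega$ is $C$-quasiconvex, I first establish the auxiliary claim that $\Omega\setminus\overline{B(x,r_0/4)}$ is quasiconvex: take a $C$-quasiconvex curve in $\Omega$ joining two outside points and, if it enters $\overline{B(x,r_0/4)}$, replace the incursion by an arc on $\partial B(x,r_0/4)\subset\Omega$ of length at most $\pi r_0/2$, handling pairs at distance $\ll r_0$ by a short-distance local argument. Via the bi-Lipschitz transfer under $i_x$, the bounded part $\widetilde\Omega\cap\{|y-x|<4/r_0\}$ becomes quasiconvex. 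The unbounded part $\{|y-x|>1/r_0\}\subset\widetilde\Omega$ is the complement of a closed disk, hence quasiconvex with an absolute constant, and arbitrary pairs in $\widetilde\Omega$ are joined through a point of the overlap annulus $\{1/r_0<|y-x|<4/r_0\}\cap\widetilde\Omega$.

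For (1), I would use a cutoff-and-transfer argument. Given $\widetilde u\in W^{1,1}(\widetilde\Omega)$, pick $\chi\in C_c^\infty(\mathbb R^2)$ with $\chi\equiv 1$ on $\overline{B(x,2/r_0)}$ and $\chi\equiv 0$ outside $B(x,4/r_0)$, and split $\widetilde u=\chi\widetilde u+(1-\chi)\widetilde u$. The piece $(1-\chi)\widetilde u$ vanishes on a neighborhood of $\widetilde\Omega^c\subset B(x,1/r_0)$ and extends by zero to $W^{1,1}(\mathbb R^2)$. For the compactly supported piece, set $v(z):=(\chi\widetilde u)(i_x(z))$ on $\Omega\setminus\{x\}$, extended by zero at $z=x$ (consistent since $\chi(i_x(z))=0$ for $|z-x|<r_0/4$). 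The change of variables $y=i_x(z)$ with Jacobian $|z-x|^{-4}$, combined with the uniform lower bound $|y-x|\ge 1/R_0$ on $\widetilde\Omega$, yields $\|v\|_{W^{1,1}(\Omega)}\lesssim\|\widetilde u\|_{W^{1,1}(\widetilde\Omega)}$. Apply the extension operator $E$ for $\Omega$, then multiply $Ev$ by a smooth compactly supported $\phi\equiv 1$ on $\overline\Omega$ so that $\phi Ev$ still extends $v$, and pull back: $w(y):=(\phi Ev)(i_x(y))$ on $\mathbb R^2\setminus\{x\}$. The vanishing of $\phi Ev$ on $B(x,r_0/4)$ (inherited from $v$) forces $w\equiv 0$ near $\infty$, while the compact support of $\phi Ev$ forces $w\equiv 0$ on a neighborhood of $x$; hence $w\in W^{1,1}(\mathbb R^2)$ with controlled norm and $w|_{\widetilde\Omega}=\chi\widetilde u$, so $w+((1-\chi)\widetilde u)^{\mathrm{ext}}$ is the desired extension. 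The reverse implication is symmetric: decompose $u\in W^{1,1}(\Omega)$ with a cutoff equal to $1$ near $x$, extend the inner piece (supported in $\Omega$) by zero, and transfer the outer piece to $\widetilde\Omega$ via $i_x$ to apply the assumed extension operator there, then pull back with a further cutoff enforcing vanishing near $\infty$.

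The main obstacle will be confirming that each pulled-back function genuinely defines a $W^{1,1}(\mathbb R^2)$-function across the singular point $x$ (or across $\infty$ in the reverse direction), rather than merely on $\mathbb R^2\setminus\{x\}$. This is secured by arranging, through the cutoff $\phi$, that the extension-operator image vanishes in a neighborhood of the singular point, killing any potential distributional contribution supported at $\{x\}$. A secondary technical difficulty in (2) is the quasiconvexity sub-lemma for the punctured domain $\Omega\setminus\overline{B(x,r_0/4)}$, which requires a careful local argument at short Euclidean distances to avoid a length blowup from the rerouting constant $\pi r_0/2$.
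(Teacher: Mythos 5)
Your part~(2) argument follows the paper's route: $i_x$ is bi-Lipschitz on a fixed annulus around $x$ containing $\partial\Omega$, quasiconvex curves transfer through $i_x$ once excursions into a small disk about $x$ are rerouted along its bounding circle, and a neighborhood of $\infty$ in $\widetilde\Omega$ is the complement of a closed disk and hence quasiconvex with an absolute constant; the paper reroutes along $S^1(x,\dist(x,\partial\Omega)/2)$ and glues the regimes just as you do. The ``short-distance'' issue you leave open is not actually a separate case and you should close it: for a quasiconvex curve $\gamma$ with $\ell(\gamma)\le C|z_1-z_2|$, each maximal incursion into $B(x,r_0/4)$ meets $\partial B(x,r_0/4)$ at entry and exit points whose chord distance $d$ is at most the length of that incursion, and the shorter boundary arc joining them has length $\frac{r_0}{2}\arcsin(2d/r_0)\le\frac{\pi}{2}d$; summing over incursions, the rerouted curve has length at most $\frac{\pi}{2}\ell(\gamma)$, so the constant degrades only by a universal factor (the same chord--arc bound gives $\frac{\pi}{2}$-quasiconvexity of the exterior of a closed disk).

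Part~(1) is where you genuinely diverge: the paper supplies no argument, deferring to \cite[Lemma~2.1(1)]{KRZ2015}, while your cutoff-and-transfer proof is self-contained. It is correct as sketched: $v=(\chi\widetilde u)\circ i_x$ vanishes on $B(x,r_0/4)$; the change of variables (Jacobian $|y-x|^{-4}$, gradient factor $|y-x|^{2}$, together with $1/R_0\le|y-x|\le 4/r_0$ on the support of $\chi\widetilde u$) gives $\|v\|_{W^{1,1}(\Omega)}\lesssim\|\widetilde u\|_{W^{1,1}(\widetilde\Omega)}$; after applying $E$ and multiplying by a compactly supported $\phi\equiv1$ on $\overline\Omega$, the pullback $w=(\phi Ev)\circ i_x$ vanishes near $x$ (compact support of $\phi Ev$) and near $\infty$ ($Ev=v=0$ on $B(x,r_0/4)$), so $w\in W^{1,1}(\mathbb R^2)$ and $w|_{\widetilde\Omega}=\chi\widetilde u$. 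Adding the zero-extension of $(1-\chi)\widetilde u$ finishes; the reverse implication is symmetric and the construction is manifestly linear. What this buys over the paper's citation is a proof that can be read in place; what it costs is the annulus bookkeeping.
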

\begin{proof}
The first part of the lemma follows from the part (1) of \cite[Lemma 5.1]{KRZ2015}. Thus we only need to verify (2).

Let  $R=2\diam(\Omega)$ and $2r= \dist(x,\,\partial \Omega) $. Then 
$$\partial \Omega \subset A(x,\,r,\,R) := \overline B(x,\,R) \setminus {B(x,\,r)}.$$ 

Suppose that $\Omega$ satisfies \eqref{eq:curvecondition} with $\gamma\subset \Omega^c.$ We prove that $\hat \Omega$ also satisfies \eqref{eq:curvecondition}. Given $z_1,\,z_2\in \Omega^c$,  let $x_1=i_x^{-1}(z_1)$ and $x_2=i_x^{-1}(z_2).$ Then $x_1,\,x_2\in \overline {\Omega}=\hat \Omega^c$. Even though the point $x$ is not obtained in this manner, it is easy to see that it suffices to establish the claim for all such $x_1,x_2.$

Notice that the inversion $i_x$ is a biLipschitz map when restricted to $A(x,\,r,\,R)$, with the biLipschitz constant only depending on $r$ and $R$. 
If a curve $\gamma\subset \Omega^c$ connecting $z_1,z_2$  from   \eqref{eq:curvecondition} lies in $A(x,\,r,\,R)$, then the biLipschitz property of $i_x$ directly gives the desired inequality for the curve $i_x \circ\gamma$  connecting $x_1,x_2$ up to a multiplicative constant depending only on $p$, $r$ and $R$.

Next if $z_1,\,z_2\in A(x,\,r,\,R)$ but the corresponding curve is not contained in  $A(x,\,r,\,R)$, since $R= 2\diam( \Omega)$, we may replace a part of the curve by a subarc of the circle $S^1(x,\,R)$ so that we stay inside $A(x,\,r,\,R)$. The new curve, which is denoted by $\gamma$, satisfies inequality 
\eqref{eq:curvecondition} with a constant that only depends on the original
constant.  Then the desired inequality for the curve $i_x \circ\gamma$ follows by the argument in the previous case. 

The case where $z_1,\,z_2\in B(x,\,R)^c$ is trivial, since then $x_1,\, x_2$ are contained in a disk in $\Omega.$  The case $z_1\in B(x,\,R)^c$ while $z_2\in A(x,\,r,\,R)$ follows easily from the combination of  the previous cases, and by symmetry we finish the proof of this direction. 

The case where \eqref{eq:curvecondition}  holds for $\hat \Omega$ is analogous to the above one.
\end{proof}
 
\subsection{Hyperbolic geodesics}

Recall that the hyperbolic geodesics in $\mathbb D$ and in
$\mathbb R^2\setminus \overline{\mathbb D}$ are defined as the arcs of 
(generalized) circles that intersect the 
unit circle orthogonally. 
  To be more specific, the hyperbolic distance in the unit disk
is defined to be
$$\dist_h(z_1,\,z_2)=\inf_{\gamma}\int_{\gamma} \frac {2}{1-|z|^2}\, d s(z),$$
where the infimum is taken over all rectifiable curves $\gamma$ joining $z_1$ to $z_2$ in $\mathbb D$. The density of the hyperbolic metric is comparable to $\dist(z,\,\partial \mathbb D)^{-1}$, and the infimum is attained uniquely by a hyperbolic geodesic: an arc of a circle orthogonal to $\partial \mathbb D$. If the geodesic contains the origin, it is a Euclidean segment. Hyperbolic distance is invariant under conformal self-maps of $\mathbb D$, i.e., M\"obius transformations preserving $\mathbb D$. 

For a simply connected domain $\Omega$, we take a conformal map $\varphi\colon \mathbb D\to \Omega$ and for $x, y \in \Omega$ define 
$$\dist_{h}(x,\,y)=\dist_h(\varphi^{-1}(x),\,\varphi^{-1}(y)) .$$
This definition is independent of $\varphi$. %(up to post-composition with a disk automorphism). 
Equivalently, 
$$\dist_h(x,\,y)=\inf_{\gamma}\int_{\gamma} \frac {2|g'(z)|}{1-|g(z)|^2}\, d s(z),$$
where $g=\varphi^{-1}$ and the infimum is taken over all rectifiable curves that join $x$ to $y$ in $\Omega.$ 
By the Koebe distortion  theorem \cite[Theorem 2.10.6]{AIM2009}, the density is comparable to $\dist(z,\,\partial \Omega)^{-1}$ with an absolute constant. 

For the complement of the closed unit disk, one can define the hyperbolic distance via the inversion $z\mapsto \frac{1}{z}$.
(Notice that for points in the complement of the closed unit disk for which the origin lies on the line-segment between them, the hyperbolic geodesic does not exist in $\mathbb R^2 \setminus \overline{\mathbb D}$ as it would go via the point at infinity.)
The density is still controlled from above by an absolute constant multiple of $\frac 1 {|z|-1}=\dist(z,\,\partial \mathbb D)^{-1}$ (and also from below when $z\in B(0,\,10)$). 
More generally, for a Jordan domain $\Omega$ and its complementary domain $\wz \Omega=\mathbb R^2\setminus \overline{\Omega}$, one can define the hyperbolic distance via conformal equivalence with $\mathbb D$ or  $\mathbb R^2\setminus \overline{\mathbb D}$. Uniqueness holds because any two such conformal maps differ by a rotation. Furthermore, as $\Omega$ is Jordan, any hyperbolic geodesic in $\wz \Omega$ can be extended to the boundary points in $\partial \Omega$ via the extended conformal map between $\mathbb R^2\setminus {\mathbb D}  \to \overline{\wz \Omega}$, which is a homeomorphism  according to Carath\'eodory-Osgood  theorem  \cite[Section IX.4]{P1991}.
We refer to \cite[Chapter 2]{AIM2009} for more details.

The following theorem was established by  Gehring and Hayman \cite{GH1962}; see also  e.g.\ {\cite[Theorem 4.20, Page 88]{P1992}}. 
\begin{thm}\label{GM bdd}
Let $\Omega$ be a bounded simply connected domain, and $\varphi\colon \mathbb D \to \Omega$ be a conformal map. Then there exists an absolute constant $C>0$ so that,  given a pair of points
$x,\,y\in \mathbb D$, 
denoting the corresponding hyperbolic geodesic in 
$\mathbb D$ by $\Gamma_{x,\,y}$, 
and by $\gamma_{x,\,y}$ any 
arc connecting $x$ and $y$ in $ \mathbb D$, we have
$$\ell(\varphi(\Gamma_{x,\,y}))\le C \ell(\varphi(\gamma_{x,\,y})). $$
\end{thm}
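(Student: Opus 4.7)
The plan is to first normalize: composing $\varphi$ with a suitable M\"obius self-map of $\mathbb{D}$, I may assume that $x=-r$ and $y=r$ for some $r\in(0,1)$, so that $\Gamma_{x,y}$ is the subsegment $[-r,r]$ of the real axis. Such a precomposition changes neither the image curves in $\boz$ nor their lengths, only parametrizations. The key analytic tool throughout will be the Koebe distortion theorem, together with the Koebe $1/4$-theorem, which together give the standard comparison
\[
 |\varphi'(z)|(1-|z|^2)\sim \dist(\varphi(z),\pa\boz)
\]
with absolute constants, and a bi-Lipschitz comparison for $\varphi$ on any hyperbolic ball of bounded hyperbolic radius.

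I would then decompose $\Gamma_{x,y}$ into pieces $I_k$ on each of which $1-|z|$ is comparable to a fixed scale $r_k$; there are at most two such pieces per hyperbolic scale, and Koebe distortion gives $\ell(\varphi(I_k))\sim r_k|\varphi'(z_k)|$ for any representative $z_k\in I_k$. For each $I_k$ I would construct a hyperbolic crosscut $\sigma_k\subset\mathbb{D}$ through $z_k$, transverse to $\Gamma_{x,y}$, that separates $x$ from $y$ in $\mathbb{D}$ and sits inside a hyperbolic ball $B_k$ of bounded hyperbolic radius around $z_k$. Any competing curve $\gamma_{x,y}$ from $x$ to $y$ in $\mathbb{D}$ must meet $\sigma_k$, and hence must traverse $B_k$; by the bi-Lipschitz comparison above, the $\varphi$-image of the corresponding sub-arc has length at least comparable to $r_k|\varphi'(z_k)|$, and therefore at least comparable to $\ell(\varphi(I_k))$.

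The final step is a bounded-overlap estimate: the balls $\{B_k\}$ sit at distinct hyperbolic scales with hyperbolically well-separated centers, so each point of $\mathbb{D}$ lies in a uniformly bounded number of them. Summing the piecewise bounds and invoking this multiplicity yields
\[
\ell(\varphi(\Gamma_{x,y}))\sim\sum_k \ell(\varphi(I_k))\ls \sum_k \ell(\varphi(\gamma_{x,y}\cap B_k)) \ls \ell(\varphi(\gamma_{x,y})),
\]
which is the desired inequality. The main obstacle I anticipate is coordinating two competing demands on the scale-by-scale construction: each $\sigma_k$ must genuinely separate $x$ from $y$ in $\mathbb{D}$, while the enlarged balls $B_k$ must still have uniformly bounded overlap so that no logarithmic loss appears in the summation. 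This is resolved by truncating the dyadic decomposition at the endpoint scales $1-|x|$ and $1-|y|$ and handling the $O(1)$ boundary pieces directly, their contribution being absorbed into the final constant.
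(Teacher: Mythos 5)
The paper does not actually prove Theorem~\ref{GM bdd}; it invokes it as a known result of Gehring--Hayman, citing \cite{GH1962} and \cite[Theorem 4.20]{P1992}. (It does prove the exterior analogue, Theorem~\ref{GM unbounded}, in the Appendix, via conformal capacity estimates.) So your proposal must be judged against the standard proofs, and there it has a genuine gap.

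The gap is in the crosscut step. You want $\sigma_k$ to simultaneously (i) separate $x$ from $y$ in $\mathbb{D}$ and (ii) lie inside a hyperbolic ball $B_k$ of bounded hyperbolic radius about $z_k$. These are incompatible: any arc that separates $x$ from $y$ in $\mathbb{D}$ must be a crosscut with both endpoints on $\partial\mathbb{D}$, and such an arc has infinite hyperbolic length, hence cannot lie in any hyperbolic ball. So the inference ``$\gamma$ must meet $\sigma_k$, and hence must traverse $B_k$'' fails: the competing curve may cross $\sigma_k$ arbitrarily far (hyperbolically) from $z_k$, near $\partial\mathbb{D}$, where the Koebe bi-Lipschitz comparison you are relying on does not hold. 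In fact, what you are implicitly using is exactly the \emph{ball separation property} for hyperbolic geodesics --- that any competing curve from $x$ to $y$ must enter a bounded dilate of the Whitney ball at $\varphi(z_k)$. That is \cite[Corollary 4.18]{P1992}, quoted in the paper as Lemma~\ref{ball separation}, and it is a substantive fact requiring its own proof (typically via extremal length/modulus or harmonic measure), not a consequence of the separation argument as you present it. Your final paragraph identifies a real tension between ``separates'' and ``bounded overlap,'' but the truncation-at-endpoint-scales fix addresses a different issue and does not supply the missing separation lemma.

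Once the ball separation property is granted as a lemma, your decomposition-plus-bounded-overlap scheme does give the theorem, and that is one of the standard routes. The paper's own (exterior) proof in the Appendix uses a related decomposition into conformal annuli, but replaces the crosscut step with a conformal capacity estimate (Lemma~\ref{inner capacity}): a lower bound on $\mathrm{Cap}(\Gamma_k,\gamma,\cdot)$ forces the two curves to be close in the inner metric and thus forces length into $\gamma$, regardless of where $\gamma$ crosses the annulus. That is precisely the device that closes the hole your argument leaves open.
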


One also has the following complementary version of the Gehring-Hayman theorem for Jordan domains. We record a proof in the Appendix. 
 
\begin{thm}\label{GM unbounded}
Let $\wz \Omega$ be the complementary domain of a Jordan domain and $x,\,y\in \overline{\wz \Omega}$. 
If $\wz \varphi\colon  \overline{\wz \Omega}  \to \mathbb R^2\setminus  {\mathbb  D}$ is a homeomorphism  (given by Carath\'eodory-Osgood  theorem) which is conformal in $\wz\Omega$ and $\Gamma$ is the hyperbolic
geodesic in $\wz \Omega $ connecting $x$ and $y$ satisfying 
$ \wz \varphi (\Gamma) \subset B(0,\,100),$ and $\gamma\subset \overline{ \wz \Omega}$ is any curve joining $x$ and $y$, then we have
$$\ell( \Gamma  )\le C \ell( \gamma ) $$
for some absolute constant $C$. 
\end{thm}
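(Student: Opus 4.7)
The plan is to reduce this exterior version of Gehring--Hayman to its bounded counterpart, Theorem~\ref{GM bdd}, by inverting simultaneously on the source and target sides so that $\wz\varphi$ becomes a conformal map between two bounded Jordan subdomains of $\rr^2$. Setting $\Omega_0:=\rr^2\setminus\overline{\wz\Omega}$, I first postcompose $\wz\varphi$ with a Möbius self-map of $\rr^2\setminus\overline{\mathbb D}$ so that $\wz\varphi(z)\to\infty$ as $|z|\to\infty$. Fixing a point $x_0\in\Omega_0$ (to be chosen conformally), the inversion $i_{x_0}$ of Lemma~\ref{lma:unboundedbounded} then carries $\wz\Omega$ onto a bounded Jordan domain $\Omega^*$, with the point at infinity mapped to $x_0\in\Omega^*$, while $J(w):=1/\overline w$ is an analogous anti-conformal bijection of $(\rr^2\setminus\overline{\mathbb D})\cup\{\infty\}$ onto $\mathbb D$ sending $\infty\mapsto 0$. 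The composition
\[
 \phi\;:=\;J\circ\wz\varphi\circ i_{x_0}^{-1}\colon\;\Omega^*\;\to\;\mathbb D
\]
is conformal, since two anti-conformal factors combine with one conformal factor to give a conformal map.

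Because $i_{x_0}$, $\wz\varphi$ and $J$ all preserve the relevant hyperbolic metrics, $i_{x_0}(\Gamma)$ is a hyperbolic geodesic in $\Omega^*$ joining $i_{x_0}(x)$ and $i_{x_0}(y)$. Applying Theorem~\ref{GM bdd} to $\phi^{-1}\colon\mathbb D\to\Omega^*$ with the geodesic $i_{x_0}(\Gamma)$ and the competing arc $i_{x_0}(\gamma)\subset\Omega^*$ yields $\ell(i_{x_0}(\Gamma))\le C_0\,\ell(i_{x_0}(\gamma))$ for an absolute $C_0$. Using $|i_{x_0}'(z)|=|z-x_0|^{-2}$, this rewrites as
\[
 \int_\Gamma \frac{|dz|}{|z-x_0|^2}\;\le\;C_0\int_\gamma \frac{|dz|}{|z-x_0|^2}.
\]

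To promote this weighted inequality to the Euclidean one $\ell(\Gamma)\le C\ell(\gamma)$, I bound the right-hand side above by $\dist(x_0,\partial\Omega_0)^{-2}\,\ell(\gamma)$ using $|z-x_0|\ge\dist(x_0,\partial\Omega_0)$ on $\wz\Omega$. For the left-hand side I need an absolute upper bound on $\sup_{z\in\Gamma}|z-x_0|$ in terms of $\dist(x_0,\partial\Omega_0)$, and this is exactly where the hypothesis $\wz\varphi(\Gamma)\subset B(0,100)$ enters: under $J$ the image $\phi(i_{x_0}(\Gamma))\subset\mathbb D$ lies in the annulus $\{1/100<|z|<1\}$ and hence is bounded away from $0$. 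Choosing $x_0$ conformally so that the Koebe $1/4$-theorem identifies $\dist(x_0,\partial\Omega^*)$ with $|(\phi^{-1})'(0)|$ up to an absolute factor, and then applying Koebe's distortion theorem to $\phi^{-1}$ on the set carrying $\phi(i_{x_0}(\Gamma))$, will produce an absolute comparison between $\sup_{z\in\Gamma}|z-x_0|$ and $\dist(x_0,\partial\Omega_0)$.

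The main obstacle is precisely this last conversion step: extracting a truly universal constant from the weighted inequality. The Koebe estimate must be applied with $x_0$ chosen in a conformally intrinsic way, and one must carefully track how the hypothesis $\wz\varphi(\Gamma)\subset B(0,100)$ confines $\phi(i_{x_0}(\Gamma))$ inside $\mathbb D$ to make the resulting constant independent of $\wz\Omega$. This hypothesis is essential here; without it, $\Gamma$ could escape to $\infty$ in $\wz\Omega$ along the preimage of a line through the origin in $\{|w|>1\}$, giving $\ell(\Gamma)=\infty$ and no comparison possible.
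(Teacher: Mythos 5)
Your reduction to the bounded Gehring--Hayman theorem via a double inversion is set up carefully, but the step you flag as ``the main obstacle'' is in fact an unbridgeable gap, not a technicality. The weighted inequality
$\int_\Gamma |z-x_0|^{-2}\,ds \le C_0\int_\gamma|z-x_0|^{-2}\,ds$
does \emph{not} upgrade to $\ell(\Gamma)\le C\ell(\gamma)$ with an absolute $C$, because there is no choice of $x_0\in\Omega_0:=\rr^2\setminus\overline{\wz\Omega}$ for which the weight $|z-x_0|^{-2}$ has bounded oscillation on $\Gamma\cup\gamma$. Concretely, what you need is $\sup_{z\in\Gamma}|z-x_0|\lesssim\dist(x_0,\partial\Omega_0)$; but the hypothesis $\wz\varphi(\Gamma)\subset B(0,100)$ only confines $\Gamma$ to a region of size comparable to $\diam(\Omega_0)$ around $\Omega_0$ (a capacity argument of the type used in Lemma~\ref{ulkodist} shows exactly this), while $\dist(x_0,\partial\Omega_0)$ can be arbitrarily small compared with $\diam(\Omega_0)$. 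Take $\Omega_0$ a long thin rectangle $(-L,L)\times(-1,1)$ with $L\gg 1$ and $x,y$ on opposite sides of the thin part: then $\ell(\Gamma)\sim L$, the best available $d_0:=\dist(x_0,\partial\Omega_0)$ is $\sim 1$, and the geodesic reaches distance $\sim L$ from $x_0$, so your LHS lower bound $\ge M^{-2}\ell(\Gamma)$ requires $M\sim L$ and yields only $\ell(\Gamma)\lesssim L^2\ell(\gamma)$. In fact in that example both weighted integrals are $O(1)$ while the Euclidean lengths are $\sim L$, so the weighted inequality simply does not see the quantity you need. No Koebe distortion estimate can repair this: Koebe relates $\dist(x_0,\partial\Omega^*)$ to $|(\phi^{-1})'(0)|$ automatically (that is not a choice), but you would additionally need $\dist(x_0,\partial\Omega^*)\gtrsim\diam(\Omega^*)$, which fails for Jordan domains in general.

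For comparison, the paper proves the theorem directly, without reducing to the bounded case. It decomposes $\Gamma$ into pieces $\Gamma_k$ sitting in preimages of dyadic annuli $A(z_1,k)$ in the $\wz\varphi$-picture, shows via the Koebe/Whitney machinery (Lemma~\ref{whitney preserving}) that each $\Gamma_k$ is of Whitney type so $\ell(\Gamma_k)\sim\diam(\Gamma_k)$, and then uses Loewner/ring-domain capacity estimates (Lemma~\ref{lengthtransfer}, Lemma~\ref{inner capacity}) to show that any competitor $\gamma$ must spend comparable length crossing each such annulus; summing over $k$ and disposing of the finitely many annuli that $\gamma$ may miss (using $\wz\varphi(\Gamma)\subset B(0,100)$ to bound that range) finishes the proof. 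This localizes the comparison to Whitney-type scales and avoids the global weight-conversion problem that sinks the inversion approach. If you want to pursue a reduction argument, you would need a construction that keeps the distortion of length uniformly bounded across all scales that $\Gamma$ can occupy, and a single inversion at an interior point cannot achieve that.
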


According to Theorem~\ref{GM bdd}, hyperbolic geodesics in a (bounded) simply
connected domain are essentially the shortest possible curves joining given
endpoints. For the complementary domain case, one needs to rule out the case of
geodesics with large diameters; see Theorem~\ref{GM unbounded}.
%Moreover, we need the following two lemmas related to the planar geometry for simply connected domains. 
%\begin{lem}\label{separation}
%If $\Omega$ is a simply connected planar domain, or the complementary domain of a Jordan domain, then it satisfies the following separation property: given any point $y\in \Omega$, and a hyperbolic ray $\gamma$ connect it, then for any point $x\in \gamma$, there exists a constant $C>1$ such that, either $y\in B(x,\,C\dist(x,\,\partial \Omega))$  or $\Omega\setminus B(x,\,C\dist(x,\,\partial \Omega)$ has at least two different connected components contain $y$ and $x_0$ respectively. Here $x_0$ is the starting point of the hyperbolic ray. 
%\end{lem}

\subsection{Whitney-type sets} 
Let us recall the following properties of Whitney-type sets. 

\begin{defn}\label{whitney-type set}
 A connected set $A \subset \boz\subset \mathbb R^2$ is called a $\lambda$-Whitney-type set in $\Omega$ 
with  $\lambda\ge 1$ if the following holds. 
 \begin{enumerate}
\item There exists a disk with radius $\frac {1}{\lambda}\diam(A)$ contained in $A$;

\item $ \frac {1}{\lambda} \diam(A)\le \dist(A,\,\partial\Omega)\le {\lambda } \diam(A)$. 
 \end{enumerate}
%Here $C(n)$ is a constant depending only on the dimension $n$
\end{defn}

We need the following lemma that  follows
from \cite[Corollary 4.18]{P1992}. Also see \cite{BHK2001} and 
\cite[Theorem 0.1]{BB2003}, where it is called the \lq\lq{ball separation property}\rq\rq{}.  

\begin{lem}\label{ball separation}
Let $\varphi \colon \mathbb R^2\setminus \overline {\mathbb D}\to G$ be a conformal 
map, and $x,\,y\in G$ be a pair of points and $\Gamma$ a hyperbolic geodesic 
joining them. If $B$ is a disk with 
$B\cap\Gamma\neq \emptyset$ and of $\lambda$-Whitney type, then any curve 
$\gamma$ connecting $x,\,y$ in 
$G$ has non-empty intersection with $c B,$ where $c=c(\lambda)$.
\end{lem}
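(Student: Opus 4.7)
My plan is to reduce the statement to the well-known ball-separation property for conformal maps defined on the unit disk $\mathbb{D}$, which is recorded in \cite[Corollary 4.18]{P1992}. The reduction proceeds via the inversion $\iota(\zeta)=1/\zeta$, a conformal equivalence of $\mathbb{D}\setminus\{0\}$ onto $\mathbb{R}^2\setminus\overline{\mathbb{D}}$ that pulls back the hyperbolic metric $2|dw|/(|w|^2-1)$ of the exterior disk to the hyperbolic metric $2|d\zeta|/(1-|\zeta|^2)$ of $\mathbb{D}$. Hence $\iota$ is a hyperbolic isometry, preserving hyperbolic geodesics and hyperbolic balls.

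Setting $\psi:=\varphi\circ\iota$, we obtain a conformal map $\mathbb{D}\setminus\{0\}\to G$ which extends (holomorphically if $G$ is bounded, meromorphically with a simple pole at $0$ otherwise) to a conformal map from $\mathbb{D}$ onto $G\cup\{\psi(0)\}$ on the Riemann sphere. Under this identification, the hyperbolic geodesic $\varphi^{-1}(\Gamma)\subset\mathbb{R}^2\setminus\overline{\mathbb{D}}$ corresponds to a hyperbolic geodesic $\Gamma'\subset\mathbb{D}$, the $\lambda$-Whitney disk $B\subset G$ still meets $\Gamma=\psi(\Gamma')$, and any curve $\gamma\subset G$ joining $x$ and $y$ pulls back via $\psi$ to a curve in $\mathbb{D}\setminus\{0\}$ joining $\psi^{-1}(x)$ and $\psi^{-1}(y)$ (the lift avoids $0$ because $\gamma$ does not pass through $\infty\in\partial G$ in the unbounded case). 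Applying \cite[Corollary 4.18]{P1992} to $\psi,\Gamma',B$ then yields $\gamma\cap cB\neq\emptyset$ with $c=c(\lambda)$.

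The mechanism behind Pommerenke's corollary is the Koebe distortion theorem combined with an elementary separation estimate in $\mathbb{D}$. Koebe distortion shows that $\psi^{-1}(B)$ sits inside a hyperbolic ball in $\mathbb{D}$ of hyperbolic radius depending only on $\lambda$; on the other hand, any curve in $\mathbb{D}$ joining $\psi^{-1}(x)$ and $\psi^{-1}(y)$ must cross the perpendicular hyperbolic geodesic to $\Gamma'$ at a chosen point $w_0$ of $\Gamma'\cap\psi^{-1}(B)$, and this perpendicular is a Euclidean circular arc orthogonal to $\partial\mathbb{D}$ whose Euclidean diameter is controlled by $1-|w_0|^2$. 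Converting back to $G$ via Koebe distortion once more converts this into the desired Euclidean ball-separation in $G$ with a constant depending only on $\lambda$.

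The main obstacle I expect is verifying that Pommerenke's local argument applies at the pole of $\psi$ in the unbounded case; this is handled either by noting that the curve $\gamma$ is bounded in $\mathbb{R}^2$ and thus stays a definite hyperbolic distance from the pole (allowing the estimate to be made in a neighborhood of $w_0$ away from $0$), or by first composing with a preliminary M\"obius transformation so that the target becomes bounded.
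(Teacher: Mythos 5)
Since the paper gives no proof of this lemma -- it simply asserts that the statement ``follows from \cite[Corollary 4.18]{P1992}'' -- there is no proof to compare against in the strict sense. Your proposal is therefore supplying the omitted reduction, and its core idea (conjugate the exterior-disk parametrization by the inversion $\iota(\zeta)=1/\zeta$ to obtain a disk parametrization $\psi=\varphi\circ\iota$, then invoke Pommerenke's disk version of the ball-separation property) is exactly the standard way to make the citation precise, so the approach matches what the authors evidently have in mind.

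One point deserves more care than you give it. In the unbounded case $\psi$ has a simple pole at $0$, and your second proposed fix -- precompose with a M\"obius map of the target to make it bounded -- does not straightforwardly repair this: a M\"obius transformation of $\hat{\mathbb C}$ need not preserve $\lambda$-Whitney type, and the conclusion $\gamma\cap cB\neq\emptyset$ is a Euclidean statement about the original $G$, so it does not transfer back automatically with a constant depending only on $\lambda$. Your first fix is the right one, but the phrase ``stays a definite hyperbolic distance from the pole'' is not literally true with a universal constant. The clean way to close this gap is to first observe that the statement is trivial when $\diam(B)\gtrsim \diam(\Gamma)+\diam(\gamma)$, since then $cB$ already contains $x$ and $y$. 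In the remaining regime $B$ is small relative to the configuration, hence $\psi^{-1}(B)$ lies in a hyperbolic neighbourhood of $\partial\mathbb D$ bounded away from $0$ (by Lemma~\ref{whitney preserving} applied to $\varphi$, $\varphi^{-1}(B)$ is Whitney-type in $\mathbb R^2\setminus\overline{\mathbb D}$, and $\iota$ is a hyperbolic isometry). In that regime the Koebe distortion estimates and the crosscut argument you describe take place entirely away from the pole, so Pommerenke's Corollary~4.18 applies verbatim. With this adjustment your argument is complete and gives the constant $c=c(\lambda)$ as required.
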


Conformal mappings preserve Whitney-type sets in the following sense. 

%The following lemma states a distortion property of conformal maps. 
%
%\begin{lem}[\cite{AIM2009}, Theorem 2.10.8]\label{linear map}
%Suppose $\varphi$ is conformal in $U,$ where $U$ is the unit disk $\mathbb D$ or $U=\mathbb R^2\setminus
%\overline {\mathbb D}$ and $z,\,w\in U.$ Then
%$$\exp{(-3\dist_h(z,\,w))}|\varphi'(w)|\le |\varphi'(z)| \le\exp{(3\dist_h(z,\,w))}|\varphi'(w)|. $$
%\end{lem}

\begin{lem}\label{whitney preserving}
Suppose $\varphi\colon \Omega \to \Omega'$ is conformal  (and injective, particularly), 
where $\Omega,\Omega'\subset \mathbb R^2$  
are domains conformally equivalent to $\mathbb D$ or $ \mathbb R^2\setminus \overline{\mathbb D}$,
and $A\subset \Omega$ is a $\lambda_1$-Whitney-type set. Then 
$\varphi(A)\subset \Omega'$ is a $\lambda_2$-Whitney-type set with $ \lambda_2=\lambda_2(\lambda_1)$. Moreover, the restriction of the conformal map $\varphi|_{A}$ is a $L(\lambda_1)$-biLipschitz map up to a dilation factor. 
\end{lem}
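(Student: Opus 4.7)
The approach rests on Koebe's $1/4$-theorem and Koebe's distortion theorem for conformal maps on simply connected planar domains. Applied to $\varphi$ and to $\varphi^{-1}$ at the largest disk inscribed in $\Omega$ around each point, these yield the fundamental comparability
\[
|\varphi'(z)|\sim \frac{\dist(\varphi(z),\partial\Omega')}{\dist(z,\partial\Omega)},\qquad z\in \Omega,
\]
together with the fact that $|\varphi'|$ varies by only an absolute factor on each ball $B(\zeta,\dist(\zeta,\partial\Omega)/2)\subset\Omega$.

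The first step is to show that $|\varphi'|\sim \mu:=|\varphi'(z_0)|$ on $A$ with constants depending only on $\lambda_1$, where $z_0$ is the center of the inscribed disk of radius $r_0=\diam(A)/\lambda_1$ provided by (1). Choose a maximal $r_0/2$-separated set $\{z_1,\dots,z_N\}\subset A$; since $\diam(A)=\lambda_1 r_0$, a packing estimate gives $N\le N(\lambda_1)$. The balls $B_i:=B(z_i,r_0/2)$ cover $A$ and lie in $\Omega$ (as $\dist(z_i,\partial\Omega)\ge r_0$), so Koebe distortion on $B(z_i,r_0)\subset\Omega$ yields $|\varphi'|\sim |\varphi'(z_i)|$ on $B_i$ up to an absolute constant. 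The crucial observation is that connectedness of $A$ forces the intersection graph of the open cover $\{B_i\}$ to be connected: otherwise $A$ would split as the union of two non-empty disjoint open sets. Chaining along this graph, of diameter at most $N(\lambda_1)$, propagates the comparability $|\varphi'(z_i)|\sim\mu$ to all $i$ and hence to all of $A$.

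Given this, the biLipschitz-up-to-dilation property of $\varphi|_A$ splits into two cases. For $z,w\in A$ with $|z-w|\le r_0/2$, Koebe distortion on $B(z,r_0)\subset\Omega$ yields $|\varphi(z)-\varphi(w)|\sim \mu|z-w|$. For $|z-w|>r_0/2$, summing Koebe distortion along the chain gives $\diam(\varphi(A))\lesssim \mu\diam(A)\lesssim\mu|z-w|$, producing the upper bound, while the lower bound comes from applying Koebe's $1/4$-theorem to the two disjoint balls $B(z,\min(|z-w|/4,r_0))$ and $B(w,\min(|z-w|/4,r_0))$ in $\Omega$: their $\varphi$-images are disjoint by univalence and contain disks of radius $\gtrsim \mu|z-w|$ centered at $\varphi(z)$ and $\varphi(w)$. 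The two Whitney-type conditions for $\varphi(A)$ follow at once: Koebe's $1/4$-theorem on $B(z_0,r_0)\subset A$ yields an inscribed disk $B(\varphi(z_0),\mu r_0/4)\subset\varphi(A)$ of radius $\gtrsim \diam(\varphi(A))/\lambda_2(\lambda_1)$, and the fundamental comparability at any $z\in A$ together with $\dist(z,\partial\Omega)\sim \diam(A)$ gives $\dist(\varphi(z),\partial\Omega')\sim \mu\diam(A)\sim \diam(\varphi(A))$.

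The main obstacle is precisely that $A$ is only assumed connected, not path-connected, which prevents one from simply joining points of $A$ by curves inside $A$ and integrating the Koebe distortion bound along them. The intersection-graph argument is the device that bypasses this difficulty using only topological connectedness.
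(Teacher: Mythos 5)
Your proof is correct, and it follows a genuinely more self-contained route than the paper's. The paper delegates the first claim (that $\varphi(A)$ is of $\lambda_2$-Whitney-type) to a citation of \cite[Lemma 2.12]{KRZ2015}, and proves the biLipschitz-up-to-dilation claim by estimating the hyperbolic distance between $w_1,w_2\in A$ (via \cite[Lemma 2.5]{KRZ2015}), invoking the Harnack-type estimate $|\varphi'(z)|\sim|\varphi'(w_1)|$ along the hyperbolic geodesic (via \cite[Theorem 2.10.8]{AIM2009}), and then integrating $|\varphi'|$ along that geodesic, controlling the integral because the geodesic is covered by boundedly many Whitney squares. You instead prove everything from scratch: the Koebe $1/4$-theorem and the distortion theorem, applied locally to disks inscribed in $\Omega$, give the comparability $|\varphi'(z)|\sim\dist(\varphi(z),\partial\Omega')/\dist(z,\partial\Omega)$ and the boundedness of the variation of $|\varphi'|$ on half-radius disks; a chaining argument through a bounded cover of $A$ by Euclidean balls (with connectedness of $A$ ensuring the nerve is connected) propagates $|\varphi'|\sim\mu$ over $A$; the lower Lipschitz bound comes from disjointness of the two Koebe image-disks rather than from integrating $|\varphi'|$ backward. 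Both arguments rest on the same analytic engine (Koebe distortion plus a bounded chain of Whitney-scale balls), but yours replaces the hyperbolic-geometric packaging by elementary packing and covering, and also supplies a direct proof of the Whitney-type preservation rather than a citation. One minor imprecision in your opening sentence: you describe Koebe's theorems as being "for conformal maps on simply connected planar domains," but $\mathbb{R}^2\setminus\overline{\mathbb{D}}$ is not simply connected in $\mathbb{C}$; this is harmless, since you apply the $1/4$-theorem and the distortion theorem only locally to $\varphi$ restricted to disks contained in $\Omega$, which requires nothing but univalence.
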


\begin{proof}
The first claim follows, e.g.\,, from  \cite[Lemma 2.12]{KRZ2015}. Thus we only need to verify the second claim. 

To begin, for any two points $w_1,\,w_2\in A$, by \cite[Lemma 2.11]{KRZ2015} the hyperbolic distance between $w_1$ and $w_2$ is smaller than $C(\lambda_1)$. 
Denote by $\gamma[w_1,\,w_2]$ the  hyperbolic geodesic joining $w_1$ and $w_2$. Since the hyperbolic distance is realized by hyperbolic geodesics, we conclude that for any $z\in \gamma[w_1,\,w_2]$ the hyperbolic distance between $z$ and $w_1$ is smaller than $C(\lambda_1)$. Fix $w_0\in A$.  Then \cite[Theorem 2.10.8]{AIM2009}  implies that
$$|\varphi'(z)|\sim |\varphi'(w_1)|\sim |\varphi'(w_0)|,$$
where the constants depend only on $\lambda_1$. 
Thus we have
$$|\varphi(w_1)-\varphi(w_2)|\le \int_{\gamma[w_1,\,w_2]} |\varphi'(z)| \,dz \le C(\lambda_1) |\varphi'(w_0)| \int_{\gamma[w_1,\,w_2]}  1 \, dz \le C(\lambda_1) |\varphi'(w_0)| |w_1-w_2|, $$
where the last inequality is a consequence of the observation that 
 $\gamma[w_1,\,w_2]$ can be encompassed by no more than    $C(\lambda_1)$-many Whitney squares, and within each of these squares, the length of the hyperbolic geodesic is comparable to the length of the straight-line segment connecting its endpoints.
Similarly, since  $\varphi^{-1}$ is also conformal and $\varphi(A)$ is of $\lambda_2$-Whitney-type with $\lambda_2=\lambda_2(\lambda_1)$, 
by the above estimate we get
%$|\varphi'(w_0)|\neq 0$ and hence
$$|w_1-w_2|\le \int_{\gamma[\varphi(w_1),\,\varphi(w_2)]} |\varphi'(z)|^{-1} \,dz \le  C(\lambda_1) |(\varphi^{-1})'(\varphi(w_0))| |\varphi(w_1)-\varphi(w_2)|. $$
Since $\varphi$ is conformal, we have $|\varphi'(w_0)|\neq 0$ and 
$$|(\varphi^{-1})'(\varphi(w_0))|=|\varphi'(w_0)|^{-1}.$$
To conclude, we have that
$$C(\lambda_1)^{-1} |\varphi'(w_0)| |w_1-w_2|\le   |\varphi(w_1)-\varphi(w_2)|\le C(\lambda_1) |\varphi'(w_0)| |w_1-w_2|,$$
as desired. 
\end{proof}

Sometimes we omit the constant $\lambda$ when we are dealing with a
$\lambda$-Whitney-type set whose constant is clear from the context.

\subsection{John domains}

Let us recall some known results about John domains. The relation to the curve condition \eqref{eq:curvecondition} is given by part (1) of Lemma~\ref{property of John domain}. 

\begin{defn}[John domain]\label{John}
An open subset $\Omega\subset \mathbb R^2$ is called a John domain provided it satisfies the following condition:
There exist a distinguished point $x_0 \in \Omega$ and a constant $J>0$ such that, for every $x\in\Omega$, 
there is a curve $\gamma\colon [0,\,\ell(\gamma)] \to \Omega$ parameterized by the arc length, such that $\gamma(0)= x$, $\gamma(\ell(\gamma))= x_0$
and
\[
\dist(\gamma(t),\, \mathbb R^2\setminus\Omega)\ge Jt. 
\]
The curve $\gamma$ is called a John curve, and $J$ is called a John constant. 

 In particular, via an approximation argument, the Arzel\`a-Ascoli lemma  yields the existence of John curve also for boundary points. 
\end{defn}

%Define {\it{inner distance of $\Omega$} between $x,\,y\in\Omega$} as
%$$\dist_{\Omega}(x,\,y)=\inf_{\gamma\subset \Omega} \ell(\gamma),$$
%where $\gamma$ joins $x$ and $y$ in $\Omega$, and $\ell(\gamma)$ is its Euclidean length.
%
%\begin{defn}[Inner uniform domain]\label{inneruniform}
%A domain $\boz$ is called   {\it inner uniform} if there exists a
%positive constant $\ez_0$ such that for any two different points $x,\,y\in\boz$, there exists a rectifiable curve $\gz\subset\boz$ joining $x$, $y$ and satisfying
%\begin{equation}\label{e1.1}
%\ell(\gz)\le \frac1{\ez_0}\dist_{\Omega}(x,\,y) \ \ \mbox{and} \  \ dist(z,\, \partial \boz)\ge\ez_0 \min\{\ell(\gamma_{xz}),\,\ell(\gamma_{zy})\}\ \mbox{for all}\ z\in\gz,
%%\bigcup_{t\in[0,\,T]}B\lf(\gz(t),\,\frac1{  \ez_0}\min\{t,\,T-t\} \r)\subset\boz.
%\end{equation}
%where $\ell(\gamma_{xz})$ is the length of the part of $\gamma$ joining from $x$ to $z$, and $\gamma_{zy}$ corresponds to $z$ and $y$. 
%\end{defn}

We collect a number of results related to John domains in the following lemma. 
\begin{lem}\label{property of John domain}
The following statements hold: 

\noindent{\rm (1)}
Let $\Omega\subset \mathbb R^2$ be a bounded simply connected domain. 
If $\Omega^c$ is $C$-quasiconvex, then $\Omega$ is a $J$-John domain
with $J=J(C).$ 

\noindent{\rm (2)}
Given a simply connected (bounded) $J$-John domain 
$\Omega\subset \mathbb R^2$ together with a conformal map 
$\varphi \colon \mathbb D \to \Omega$ with $\varphi(0)=x_0,$  where $x_0$ is 
the distinguish point of $\Omega,$ we can 
extend $\varphi$ 
continuously up to the boundary. 

\noindent{\rm (3)}
 When $\Omega\subset \mathbb R^2$ is a simply connected $J$-John domain with a distinguished point $x_0$,  the hyperbolic geodesic connecting $x\in \Omega$ and $x_0$ in $\Omega$
is a $J'$-John curve, where $J'=J'(J).$

\noindent{\rm (4)}
The Lebesgue measure of the boundary of a simply connected (bounded) $J$-John domain 
$\Omega\subset \mathbb R^2$ is zero. 
\end{lem}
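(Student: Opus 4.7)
Given $x \in \Omega$, I would show that the hyperbolic geodesic $\Gamma_{x,x_0}$ is itself a John curve, with the John constant quantitatively controlled by the quasiconvexity constant $C$. The mechanism is to rule out ``inward cusps'' of $\Omega$ directly from the quasiconvexity of $\Omega^c$: if $\Gamma_{x,x_0}$ contained a point $z$ where $\dist(z,\partial\Omega)$ was much smaller than the arc-length parameter $t$ from $x$, then one could find two boundary points close to $z$ whose only connecting curves in $\Omega^c$ must loop around a long piece of $\Gamma_{x,x_0}$, producing a detour much longer than $C$ times their Euclidean separation. Alternatively, one can build the John curve by an iterative construction that at each step moves into a region where $\dist(\cdot,\partial\Omega)$ is at least doubled, until the deepest level of $\Omega$ is reached and one simply travels along the core.

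\textbf{Plan for (2) and (3).} For (2), I would invoke the standard fact that a bounded simply connected John domain has locally connected boundary, so Carath\'eodory's boundary-extension theorem applies to yield a continuous extension of $\varphi$ to $\overline{\mathbb D}$. For (3), fix $z$ at arc length $t$ along the hyperbolic geodesic $\Gamma_{x,x_0}$ and let $d = \dist(z,\partial\Omega)$. By (1) there is a John curve $\alpha$ from $x$ to $x_0$; the ball separation property (Lemma~\ref{ball separation}, applied to a Whitney-type disk around $z$ of radius proportional to $d$) forces $\alpha$ to enter a fixed dilation of this disk at some parameter $s$, so the John condition on $\alpha$ gives $\dist(\alpha(s),\partial\Omega) \ge J s$ while the triangle inequality gives $\dist(\alpha(s),\partial\Omega) \lesssim d$; hence $s \lesssim d$. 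Concatenating the initial piece of $\alpha$ up to $\alpha(s)$ with the straight segment from $\alpha(s)$ to $z$ produces a curve in $\Omega$ from $x$ to $z$ of length $\lesssim d$, and the Gehring--Hayman theorem (Theorem~\ref{GM bdd}) applied to the sub-arc of $\Gamma_{x,x_0}$ between $x$ and $z$ transfers this bound to $t \lesssim d$, which is precisely the John estimate for $\Gamma_{x,x_0}$.

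\textbf{Plan for (4).} The John property directly yields the corkscrew condition: for every $y \in \partial\Omega$ and every small $r>0$, moving along the John curve from an interior approximant of $y$ to $x_0$ produces $z \in \Omega \cap B(y,r)$ with $\dist(z,\partial\Omega) \gtrsim r$, and hence $|\Omega \cap B(y,r)| \gtrsim r^2$ with implicit constants depending only on $J$. If $|\partial\Omega|>0$, the Lebesgue density theorem would furnish some $y \in \partial\Omega$ at which the upper density of $\partial\Omega$ equals $1$, which contradicts the above lower bound since $\Omega$ and $\partial\Omega$ are disjoint.

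\textbf{Main obstacle.} The central difficulty is (1): converting the coarse \emph{exterior} property (quasiconvexity of $\Omega^c$) into the refined \emph{interior} John condition on $\Omega$. The ``no inward cusps'' picture is intuitive, but making it rigorous requires delicate topological bookkeeping to produce the two boundary points whose forced detour in $\Omega^c$ violates quasiconvexity, together with a quantitative tracking of how the John constant depends on $C$. Once (1) is in hand, parts (2)--(4) follow from standard Carath\'eodory, hyperbolic-geometry, and corkscrew arguments.
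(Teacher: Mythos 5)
Your plans for (2) and (4) match the paper: for (2) the paper cites N\"akki--V\"ais\"al\"a and Palka, which amounts to exactly the locally connected boundary plus Carath\'eodory argument you describe, and your proof of (4) via the corkscrew condition and the Lebesgue density theorem is essentially verbatim the one the paper gives. Your plan for (3) is a genuinely different route: the paper simply cites Gehring--Hag--Martio, whereas you rederive the John property of hyperbolic geodesics from (1), the ball separation property, and the Gehring--Hayman theorem. This is a legitimate and arguably more self-contained argument, at the cost of needing (1) and ball separation as input; the citation route is shorter but opaque.

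The genuine gap is in (1), and you have correctly identified it as the main obstacle. The paper does not prove (1) either -- it cites N\"akki--V\"ais\"al\"a, Theorem~4.5, after observing that $C$-quasiconvexity of $\Omega^c$ is stronger than the $C$-bounded turning property of $\Omega^c$ which that theorem actually assumes. What you offer instead is a heuristic that never becomes an argument. Two specific problems with the ``inward cusp'' mechanism as sketched: (i) producing the two boundary points whose $\Omega^c$-connections must detour around a cusp requires a topological separation argument (some Jordan curve that witnesses the obstruction), and you have only gestured at this; (ii) even granting that, the detour length you obtain is governed by the geometry of the cusp (its depth), not directly by the arc-length parameter $t$ along $\Gamma_{x,x_0}$ measured from $x$ -- the geodesic can meander in the wide part of $\Omega$ before ever entering the cusp, making $t$ much larger than the cusp depth, so the contradiction with quasiconvexity does not follow without further work relating $t$ to the cusp geometry. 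The alternative ``iterative doubling'' construction is likewise only a slogan. Since (1) is also the input to your (3), the whole chain rests on an unproved step; the efficient fix here is to do what the paper does and invoke the N\"akki--V\"ais\"al\"a theorem, which is stated precisely for planar simply connected domains with bounded-turning complement.
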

\begin{proof}
(1) is from \cite[Theorem 4.5]{NV1991}; observe that the $C$-quasiconvexity property is  stronger  than the corresponding $C$-bounded turning property employed in \cite[Theorem 4.5]{NV1991}, i.e.\ every pair of points $x,\,y\in \mathbb R^2\setminus \Omega$ can be joined by a curve $\gamma$ with $\diam(\gamma)\le C |x-y|$. (2) follows from \cite[Theorem 2.18]{NV1991} and \cite[Theorem 4.7,\,Page 441]{P1991}, and (3) comes from \cite[Theorem 4.1]{GHM1989}. 

To show (4), let $x$ be an arbitrary point in $\partial\Omega$. Define $B_r=B(x,\,r)$ for some $0<r<\frac 1 2 |x-x_0|$, where $x_0$ is the John center of $\Omega$. Choose a point $z\in B$ such that $|z-x|<r/ 4 $, and denote by $\gamma$ the hyperbolic geodesic from $x_0$  to $z$. By (4), $\gamma$ is a $C(J)$-John curve, and hence by choosing a point $y\in \gamma$ such that $|y-z|=r/2$, the definition of John domain, shows that there exists a constant $c=c(J)$ such that the disk $B(y,\,cr)$ is compactly contained in $B_r\cap \Omega$ for every $0<r<\frac 1 2 |x-x_0|$. Hence $x$ cannot be a point of density of $\partial \Omega$. By applying the Lebesgue differentiation theorem to the characteristic function of $\partial \Omega$ we then conclude (5) by the arbitrariness of $x$. 
\end{proof}

In fact we have a stronger version of (2) of Lemma~\ref{property of John domain}. 
Define the {\it{inner distance of $\Omega$}} between 
$x,\,y\in\Omega$ by
$$\dist_{\Omega}(x,\,y)=\inf_{\gamma\subset \Omega} \ell(\gamma),$$
where the infimum runs over all curves joining $x$ and $y$ in $\Omega.$ If $x\in \partial\Omega$ and $y\in \overline{\Omega}$ and $\gamma\colon [0,\,1]\to \overline{\Omega}$ is continuous with $\gamma(0)=x$,  $\gamma(1)=y$ and $\gamma((0,\,1))\subset\Omega$, we say that $\gamma$ joins $x$ and $y$ in $\Omega$. Furthermore, if $\ell(\gamma)$ is finite, then we say that $x$ and $y$ are rectifiably joinable in $\Omega$ and define $\dist_{\Omega}(x,\,y)$ via $\inf_{\gamma} \ell(\gamma)$ over all such $\gamma$.  
The inner diameter $\diam_{\Omega}(E)$ of a set $E\subset \overline{\Omega}$ is then defined in
the usual way. 

\begin{lem}\label{inner continuous}
Let $\Omega\subset \mathbb R^2$  be a simply connected (bounded) $J$-John domain 
 and let $\varphi \colon \mathbb D \to \Omega$ be conformal. Then   $\varphi\colon (\mathbb D,\,|\cdot|)\to (\Omega,\,\dist_{\Omega})$  is uniformly continuous, where $|\cdot|$ denotes the Euclidean metric. 
\end{lem}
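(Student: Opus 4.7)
We argue by contradiction: suppose there exist $\epsilon_0 > 0$ and sequences $x_n, y_n \in \mathbb D$ with $|x_n - y_n| \to 0$ but $\dist_\Omega(\varphi(x_n), \varphi(y_n)) \ge \epsilon_0$. Passing to a subsequence we may assume $x_n, y_n \to z$ for some $z \in \overline{\mathbb D}$. If $z \in \mathbb D$, continuity of $\varphi$ at $z$ eventually puts $\varphi(x_n)$ and $\varphi(y_n)$ inside a single Euclidean disk contained in $\Omega$, so $\dist_\Omega(\varphi(x_n),\varphi(y_n)) \le |\varphi(x_n)-\varphi(y_n)| \to 0$, a contradiction. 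Thus the interesting case is $z \in \partial\mathbb D$. By Lemma~\ref{property of John domain}(2), $\varphi$ extends continuously to $\bar\varphi\colon \overline{\mathbb D} \to \overline\Omega$; let $p = \bar\varphi(z)$ and assume $p \in \partial\Omega$ (otherwise we again reduce to the easy case).

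The plan is to construct a short detour from $\varphi(x_n)$ to $\varphi(y_n)$ in three pieces, by first pushing each endpoint slightly inward along the radial hyperbolic geodesic to the John center $x_0 = \varphi(0)$ and then connecting the two pushed-in points by a Euclidean segment. Given $\eta \in (0,\epsilon_0)$, choose $t_0 \in (0,1)$ so close to $1$ that $d := \dist(\varphi(t_0 z), \partial\Omega)$ is as small as we please; this is possible because $\varphi(t_0 z) \to p \in \partial\Omega$ as $t_0 \to 1^-$. For $n$ large, continuity of $\varphi$ at the interior point $t_0 z$ makes both $\varphi(t_0 x_n)$ and $\varphi(t_0 y_n)$ lie within $d/10$ of $\varphi(t_0 z)$, so the Euclidean segment joining them is contained in $B(\varphi(t_0 z), d/2) \subset \Omega$ and has length bounded by a constant multiple of $d$.

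The key estimate controls the two outer legs. The radial sub-segment $[t_0 x_n, x_n]$ lies on the hyperbolic geodesic $[0, x_n]$ of $\mathbb D$, so its image $\varphi([t_0 x_n, x_n])$ is an initial portion (parameterized from $\varphi(x_n)$) of the $J'$-John curve from $\varphi(x_n)$ to $x_0$ provided by Lemma~\ref{property of John domain}(3). Applying the John inequality $\dist(\gamma(s), \partial\Omega) \ge J' s$ at the endpoint $\varphi(t_0 x_n)$ of this initial portion yields
\[
\ell(\varphi([t_0 x_n, x_n])) \le \frac{\dist(\varphi(t_0 x_n), \partial\Omega)}{J'} \lesssim \frac{d}{J'},
\]
and symmetrically on the $y_n$-side. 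Summing the three contributions gives $\dist_\Omega(\varphi(x_n), \varphi(y_n)) \lesssim d/J'$, which is less than $\eta$ once $t_0$ is chosen appropriately, contradicting $\dist_\Omega \ge \epsilon_0 > \eta$.

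The main subtlety, and the step that makes the argument go through, is the use of the John inequality in this one-sided direction: it converts our control over $\dist(\varphi(t_0 x_n), \partial\Omega)$ (which comes just from the Euclidean continuity of $\bar\varphi$) into a quantitative upper bound on the length of the image of a radial segment terminating near $\partial \mathbb D$. This sidesteps any attempt to estimate $\ell(\varphi([t_0 x_n, x_n]))$ by directly integrating $|\varphi'|$ along the radius, which would require uniform integrability of $|\varphi'|$ up to $\partial \mathbb D$ that is not available from the hypotheses.
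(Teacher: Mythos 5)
Your proof is correct, and it takes a genuinely different and more self-contained route than the paper. The paper obtains the result by citing two external theorems: Herron's result (\cite{H2012}) that $\varphi$ is uniformly continuous into the ``diameter distance'' $d_\Omega$, followed by the comparison $d_\Omega \sim \dist_\Omega$ on John domains from \cite[Theorem 5.14]{GNV94}. You instead give a hands-on argument that uses only material already present in Lemma~\ref{property of John domain}: the boundary extension from part (2), and the fact that radial hyperbolic geodesics are John curves from part (3). The key observation, which you flag correctly, is that the John inequality $\dist(\gamma(s),\partial\Omega)\ge J' s$ yields, by reading it backwards at $s=\ell(\varphi([t_0 x_n,x_n]))$, the estimate $\ell(\varphi([t_0 x_n,x_n]))\le J'^{-1}\dist(\varphi(t_0 x_n),\partial\Omega)$; this converts plain Euclidean boundary continuity of $\bar\varphi$ into an inner-length bound without any direct integration of $|\varphi'|$ along radii. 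Your three-piece detour (two short John legs plus one Euclidean segment inside a disk of radius $d$ around $\varphi(t_0 z)\subset\Omega$) then gives $\dist_\Omega(\varphi(x_n),\varphi(y_n))\lesssim d/J'$, which contradicts the assumed lower bound once $t_0$ is close to $1$. The trade-off is that the paper's approach is shorter and gives a cleaner conceptual picture via the diameter distance, whereas yours avoids importing two auxiliary theorems at the cost of a more ad hoc argument.

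One small normalization worth stating explicitly: Lemma~\ref{property of John domain}(3) applies to hyperbolic geodesics ending at the John center $x_0$, and for the radial segments $[0,x_n]$ to map to such geodesics you need $\varphi(0)=x_0$. The lemma as stated allows an arbitrary conformal $\varphi$. This is easily fixed either by precomposing $\varphi$ with a M\"obius automorphism of $\mathbb D$ (which is uniformly continuous on $\overline{\mathbb D}$, so does not affect the conclusion) or by noting that any interior point can serve as the John center with a possibly different constant; neither is a genuine gap, but the argument should make the choice explicit. Your parenthetical ``otherwise we again reduce to the easy case'' is also superfluous: for a Riemann map of a bounded domain with a continuous boundary extension, one always has $\bar\varphi(\partial\mathbb D)\subset\partial\Omega$, so $p\in\partial\Omega$ automatically.
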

\begin{proof}
By \cite{H2012} together with  (2) of Lemma~\ref{property of John domain}, 
$\varphi\colon (\mathbb D,\,|\cdot|)\to (\Omega,\,d_{\Omega})$ is uniformly continuous, where for each pair of points $x,\,y\in \Omega$,
$$d_{\Omega}(x,\,y):=\inf_{\gamma\subset \Omega} \diam(\gamma),$$
and the infimum runs over all curves joining $x$ and $y$ in $\Omega.$

By \cite[Theorem 5.14]{GNV94}, for a simply connected (bounded) $J$-John domain $\Omega$, we have 
$$d_{\Omega}(x,\,y)\sim \dist_{\Omega}(x,\,y)$$
for  any pair of two points $x,\,y\in \Omega$, where the constant   depends only on $J$.
Thus  the claim follows form the previous uniform continuity result. 
\end{proof}

Recall that the extended complex plane consists of the complex plane together with a point at infinity. As a complex manifold it can be described by two charts via the stereographic projections with $\frac 1 z$ as the transition map.
Combining Lemma~\ref{property of John domain} above with results in \cite{DH}, we have the following conclusion. 

%Douady-Hubbard (corollary to the caratheodory thm, , shows that
%$\Omega^c$ is locally connected and its complement $\Omega$ is connected.
%may assume that 0 is in $\Omega$ apply $g(z)=1/z$ to $\Omega.$ Then
%$g(\Omega^c)$ is compact, locally connected and its complement is connected.
%Proposition 2.3, page 17, now shows that each connected component of the
%interior of $g(\Omega^c)$ is a Jordan domain. If follows that...
%NEED TO SAY THAT ALSO THE EXTERIOR OF A JORDAN CURVE IS CALLED A JORDAN DOMAIN.

\begin{lem}\label{complement of John}
Let $\Omega\subset \mathbb R^2$ be a bounded simply connected domain whose complement is quasiconvex. Then $\Omega^c$ is locally connected and each bounded component of its interior is a Jordan domain. (In fact, every component of the interior of $\Omega^c$ is Jordan on the Riemann sphere. )
\end{lem}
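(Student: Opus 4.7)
\emph{Proof proposal.}
The plan splits into two parts. Local connectedness is extracted directly from the quasiconvexity of $\Omega^c$. For the Jordan property of the components of $(\Omega^c)^o$, my idea is to combine the fact that $\Omega$ is a John domain (Lemma~\ref{property of John domain}(1)) with the boundary extension of the Riemann map (Lemma~\ref{property of John domain}(2) and Lemma~\ref{inner continuous}), and then invoke classical complementary-domain theory, which is presumably what is encoded in the results cited from \cite{DH}, to conclude that each component has Jordan boundary on the sphere.

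For local connectedness, fix $p\in\Omega^c$ and $\epsilon>0$, and let $C$ be the quasiconvexity constant. Set $\delta=\epsilon/(2C+2)$. Any two points $x,y\in B(p,\delta)\cap\Omega^c$ are joined in $\Omega^c$ by a curve $\gamma$ of length at most $C|x-y|\le 2C\delta$; every point of $\gamma$ then stays within Euclidean distance $(2C+1)\delta<\epsilon$ of $p$, so $\gamma\subset B(p,\epsilon)\cap\Omega^c$. Thus $B(p,\delta)\cap\Omega^c$ is contained in a single component of $B(p,\epsilon)\cap\Omega^c$, giving local connectedness at $p$.

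For the Jordan claim, Lemma~\ref{property of John domain}(1) makes $\Omega$ a John domain, so by Lemma~\ref{property of John domain}(2) the Riemann map $\varphi\colon\mathbb D\to\Omega$ extends continuously to $\overline{\mathbb D}$, and hence $\partial\Omega=\varphi(\partial\mathbb D)$ is a Peano continuum in the Riemann sphere $S^2$. Let $D$ be any component of $(\Omega^c)^o$. Every other component $W$ of $(\Omega^c)^o$ satisfies $\partial W\subset\partial\Omega$, so $\overline W$ meets $\overline\Omega$, and the set $\overline\Omega\cup\bigcup_{W\neq D}\overline W$ is connected; adding the point at infinity (attached to the unbounded component) then shows that $S^2\setminus D$ is connected, so $D$ is simply connected on $S^2$. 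Torhorst's theorem applied to the Peano continuum $\partial\Omega$ yields that $\partial D$ is itself a Peano continuum, and by a classical result of Whyburn the Jordan claim for $\partial D$ reduces to showing that $\partial D$ has no cut point.

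The anticipated main difficulty is exactly this cut-point step. John-ness of $\Omega$ alone is not enough, since $\partial\Omega$ itself may carry cut points (e.g.\ slit disks). Quasiconvexity of $\Omega^c$ must be used: a hypothetical cut point $p\in\partial D$ would, via the prime-end structure and the continuous extension of $\varphi$, correspond to two distinct points of $\partial\mathbb D$ identified under $\varphi$ that separate the prime ends making up $\partial D$; but a quasiconvex curve in $\Omega^c$ joining points of $\partial D$ on either side would be forced to traverse a neighborhood of $p$ in a controlled way, and the inner-distance uniform continuity from Lemma~\ref{inner continuous} pins down the local geometry near $p$ sharply enough to derive a contradiction. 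Encoding this rigorously is precisely where the combination of Lemma~\ref{inner continuous} with the results of \cite{DH} is needed.
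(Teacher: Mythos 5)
Your local connectedness argument is correct and in fact more direct than the paper's: you derive it straight from quasiconvexity, whereas the paper deduces local connectedness of $\Omega^c$ from the continuous boundary extension of the Riemann map of $\Omega$ via a Carath\'eodory-type corollary cited from \cite{DH}.

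The Jordan part, however, has a genuine gap. After reducing (via Torhorst and an unstated Whyburn criterion) to the claim that $\partial D$ has no cut point, you never actually rule out cut points: the closing paragraph only describes what a proof should accomplish (``pins down the local geometry near $p$ sharply enough to derive a contradiction'') and explicitly defers the rigorous step to \cite{DH}. That deferred step is the entire content of the assertion, so the proof is incomplete. The intuition behind your sketch is also off: quasiconvexity and the inner-distance continuity of Lemma~\ref{inner continuous} (which concerns the Riemann map of $\Omega$, not of $D$) are not what make this step work; once local connectedness of $\Omega^c$ is established, the conclusion is purely topological. This is exactly what the paper does: it inverts so that $\Omega^c\cup\{\infty\}$ becomes a compact, connected, locally connected subset of the sphere with connected complement, and invokes Proposition~2.3 of \cite{DH}, which states that every component of the interior of such a set is a Jordan domain. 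If you wanted to avoid the black box, the cleaner route than cut points is this: $D$ is simply connected on $S^2$ (your argument for that is fine), so by Carath\'eodory the Riemann map $\psi\colon\mathbb D\to D$ extends continuously to $\overline{\mathbb D}$; if $\psi(a)=\psi(b)=p$ for distinct $a,b\in\partial\mathbb D$, then $\gamma:=\psi([0,a]\cup[0,b])$ is a Jordan curve contained in $\Omega^c$, and since $\Omega$ is connected and disjoint from $\gamma$ one of the two complementary disks of $\gamma$ lies in $\Omega^c$, hence in $\operatorname{int}(\Omega^c)$, hence in $D$; identifying that disk with the lune $\psi(W)$ of $\mathbb D$ cut off by $[0,a]\cup[0,b]$ forces $\psi$ to collapse the nondegenerate boundary arc of $W$ to the single point $p$, which is impossible for a conformal map. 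No quantitative input beyond local connectedness enters.
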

\begin{proof}
By Lemma~\ref{property of John domain} (2), we know that a Riemann mapping $\varphi\colon \mathbb D \to \Omega$ can be extended continuously to the boundary. Hence  \cite[Page 14, Corollary]{DH} shows that $\Omega^c$ is locally connected. 

To show the second part of the lemma, we may assume that $0\in \Omega$. Then by letting $g(z)=\frac z {|z|^2}$ we have that $g(\Omega^c)$ is compact, locally connected and its complement is connected (on the Riemann sphere). Therefore by applying \cite[Page 17, Proposition 2.3]{DH} to $g(\Omega^c)$ we conclude the second part of the lemma; note that $g$ is a homeomorphism on the Riemann sphere. 
\end{proof}

%Define the {\it{inner distance of $\Omega$} between $x,\,y\in\Omega$} as
%$$\dist_{\Omega}(x,\,y)=\inf_{\gamma\subset \Omega} \ell(\gamma),$$
%where the infimum $\gamma$ is taken over all the curves which join $x$ and 
%$y$ in $\Omega.$ 
%%and $\ell(\gamma)$ is its Euclidean length. 
%We say that a homeomorphism $\varphi\colon \mathbb D\to \Omega $ is 
%{\it{quasisymmetric with respect to the
%inner distance}} if there is a homeomorphism $\eta:[0,\infty)\to [0,\infty)$ so that
%$$|z-x|\le t|y-x|\mbox{ implies }\dist_{\Omega}(\varphi(z),\varphi(x))\le 
%\eta(t)\dist_{\Omega}(\varphi(y),\varphi(x))$$
%for each triple $z,x,y$ of points in $\mathbb D.$
%
%\begin{lem}[\cite{H1989}, Theorem 3.1]\label{John subdomain}
%Let $\Omega\subset \mathbb R^2$ be a simply-connected domain, and 
%$\varphi\colon \mathbb D\to \Omega $ be a 
%conformal map. Then $\Omega$ is John  if and only if $\varphi$ 
%is quasisymmetric with 
%respect to the inner distance. Here the John constant and the function 
%$\eta$ in quasisymmetry depend
%only on each other and $\diam(\Omega)/\dist(\varphi(0),\partial\Omega).$
%\end{lem}

\subsection{Quasiconvex sets}

In this subsection we list some important properties of quasiconvex sets in the plane.

We begin with the following corollary of Theorem~\ref{GM unbounded}. 
 
\begin{cor}\label{GH thm}
Let $\wz \Omega$ be the complementary domain of some Jordan domain.  Assume that $\overline{\wz \Omega}$ is $C_1$-quasiconvex and $x,\,y\in \mathbb R^2\setminus  {\mathbb  D}$. 
If $\wz \phi\colon \mathbb R^2\setminus  \mathbb D   \to\overline{\wz \Omega}$ is a homeomorphism (given by Carath\'eodory-Osgood  theorem) which is conformal in $\mathbb R^2\setminus \overline{\mathbb D}$ and 
$\Gamma$ is a hyperbolic
geodesic in $\mathbb R^2\setminus \overline{\mathbb  D}$ connecting $x$ and $y$ with
$ \Gamma \subset B(0,\,100),$ then
$$\ell(\wz \phi (\Gamma))\le C(C_1) |\wz \phi (x)- \wz \phi (y)|.$$
\end{cor} 
\begin{proof}
According to the definition of quasiconvexity, there exists a curve $\gamma\subset \overline{\wz \Omega}$ joining $x$ and $y$ with $\ell(\gamma)\le C_1 |x-y|$. Then by Theorem~\ref{GM unbounded}, we have 
$$\ell(\wz \phi (\Gamma) )\le C \ell(\gamma)\le C(C_1) |\wz \phi (x)- \wz \phi (y)|$$
with the constant depending only on $C_1$.
\end{proof}

The proof of  the following technical lemma is given in Section~\ref{appendix quasiconvex} of the Appendix.

\begin{lem} \label{ulkodist}
Given $C_1>0$  and an unbounded $C_1$-quasiconvex domain $\wz \Omega$ whose boundary is Jordan, there is a constant
$\delta_1=\delta_1(C_1)>0$ so that the following holds: If
$\wz \varphi\colon \wz \Omega \to \mathbb R^2\setminus \overline{\mathbb  D}$ is  a conformal mapping extending to a homeomorphism on the boundary, and
$z_1,z_2\in \overline{\wz \Omega}$ satisfy 
$$|z_1-z_2|<\delta_1 \diam(\partial \wz  \Omega)\quad  \text{ and }\quad \dist(z_i,\,\partial \wz  \Omega)\le \delta_1 \diam(\partial \wz  \Omega) \quad \text{ for }\ i=1,\,2,$$ then 
$|\wz \varphi (z_1)-\wz \varphi (z_2)| <\frac 1 4$ and $\dist(\varphi(z_i),\,\partial \mathbb D)<\frac 1 4$ for $i=1,\,2$. 
\end{lem}

% {\color{red} 
% Let us also record the following axillary lemma, and postpone its proof in the Appendix. 

% \begin{lem}\label{approx curve}
% Let $\Omega\subset\mathbb R^2$ be a Jordan domain,  $x,y\in\Omega$, and 
% \[
% \gamma:[0,1]\to\overline{\Omega}
% \]
% be a rectifiable curve joining $x$ to $y$ with
% \[
% \ell(\gamma)=a>0.
% \]
% Then for every $\ez>0$ there exists a rectifiable curve
% \[
% \hat\gamma:[0,1]\to\Omega
% \]
% joining $x$ to $y$ such that
% \[
% \ell(\hat\gamma)\le (1+\ez)a.
% \]
% \end{lem}
% }

\begin{lem}\label{qconvex}
Let $\wz \Omega$ be the complementary domain of a Jordan domain. Then if $\wz \Omega$ is $c_1$-quasiconvex, 
its closure is $c_2$-quasiconvex with $c_2$ depending only on $c_1$. Conversely, if the closure of $\wz \Omega$ is $c_2$-quasiconvex, then $\wz \Omega$ is $c_1$-quasiconvex with $c_1$ depending only on $c_2$.
%it is $c_1$-quasiconvex if and only if its closure is $c_2$-quasiconvex, where $c_1$ and $c_2$ depend only on each other.
\end{lem}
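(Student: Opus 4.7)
The plan is to prove the two implications separately. The forward direction is a routine compactness argument, while the reverse uses the conformal structure of the Jordan complement together with the exterior Gehring--Hayman theorem (Theorem~\ref{GM unbounded}).

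For the forward direction, suppose $\wz\Omega$ is $c_1$-quasiconvex and let $x,y\in\overline{\wz\Omega}$. I would pick sequences $x_n,y_n\in\wz\Omega$ with $x_n\to x$ and $y_n\to y$, and by $c_1$-quasiconvexity take curves $\gamma_n\subset\wz\Omega$ joining them with $\ell(\gamma_n)\le c_1|x_n-y_n|$. Parameterising each $\gamma_n$ on $[0,1]$ with constant speed yields a uniformly Lipschitz, uniformly bounded family, so by Arzel\`a--Ascoli some subsequence converges uniformly to a curve $\gamma\colon[0,1]\to\overline{\wz\Omega}$ joining $x,y$. Lower semicontinuity of Euclidean length under uniform convergence gives $\ell(\gamma)\le\liminf_n\ell(\gamma_n)\le c_1|x-y|$, so one may take $c_2=c_1$.

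For the reverse direction, assume $\overline{\wz\Omega}$ is $c_2$-quasiconvex and fix $x,y\in\wz\Omega$. Since $\partial\Omega$ is a Jordan curve, Carath\'eodory's theorem ensures the conformal map $\wz\varphi\colon\mathbb R^2\setminus\overline{\mathbb D}\to\wz\Omega$ extends to a homeomorphism on $\mathbb R^2\setminus\mathbb D$. Let $\Gamma\subset\wz\Omega$ be the hyperbolic geodesic from $x$ to $y$; after a preliminary rescaling of $\Omega$ we may assume $\wz\varphi(\Gamma)\subset B(0,100)$, so Theorem~\ref{GM unbounded} is applicable. My plan is to show $\ell(\Gamma)\le C(c_2)|x-y|$, which immediately gives $c_1=C(c_2)$-quasiconvexity of $\wz\Omega$ via the curve $\Gamma$. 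To that end I would take the curve $\gamma\subset\overline{\wz\Omega}$ of length $\le c_2|x-y|$ from hypothesis, and construct for each sufficiently small $\delta>0$ an interior approximation $\gamma_\delta\subset\wz\Omega$: pull $\gamma$ back via the extended homeomorphism, radially push the pre-image outward from $\partial\mathbb D$ on the set $\{1\le|z|\le 1+\delta\}$ (which leaves the endpoints fixed once $\delta$ is smaller than the distance of $\wz\varphi(x),\wz\varphi(y)$ from $\partial\mathbb D$), and map back via $\wz\varphi$. Applying Theorem~\ref{GM unbounded} to $\gamma_\delta$ then gives $\ell(\Gamma)\le C\ell(\gamma_\delta)$.

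The main obstacle is controlling $\ell(\gamma_\delta)$ as $\delta\to 0$, since $|\wz\varphi'|$ can blow up near $\partial\mathbb D$. The remedy is Lemma~\ref{whitney preserving}: the radial push of length at most $\delta$ at a point $z$ with $|z|\in[1,1+\delta]$ keeps $z$ and its image within a common Whitney-type subset of $\mathbb R^2\setminus\overline{\mathbb D}$ of diameter $\sim\delta$, whose $\wz\varphi$-image is a Whitney-type subset of $\wz\Omega$ on which $\wz\varphi$ is bi-Lipschitz up to a scale factor. Hence the Euclidean length of each pushed segment of $\gamma_\delta$ is controlled by a uniform constant times the diameter of the corresponding Whitney image in $\wz\Omega$, which in turn is comparable to the Euclidean length contributed by the nearby portion of $\gamma$ in that Whitney set. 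Careful bookkeeping of these local estimates, using $\ell(\gamma)<\infty$ to ensure the cumulative contribution of excess length vanishes, yields $\ell(\gamma_\delta)\le C\ell(\gamma)$ uniformly in $\delta$, which combined with Theorem~\ref{GM unbounded} gives $\ell(\Gamma)\le Cc_2|x-y|$ and completes the proof.
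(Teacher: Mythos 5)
Your forward direction (Arzel\`a--Ascoli applied to a sequence of approximating curves) is exactly the paper's argument and is fine. The reverse direction, however, has two genuine gaps. First, the claim that ``after a preliminary rescaling of $\Omega$ we may assume $\wz\varphi^{-1}(\Gamma)\subset B(0,100)$'' is false: replacing $\Omega$ by $\lambda\Omega$ replaces $\wz\varphi$ by $\lambda\wz\varphi$, so the pre-image of the (correspondingly rescaled) hyperbolic geodesic is \emph{unchanged}, and the only conformal self-maps of $\mathbb R^2\setminus\overline{\mathbb D}$ fixing $\infty$ are rotations, which preserve distance from $\partial\mathbb D$. You therefore cannot force the hypothesis of Theorem~\ref{GM unbounded} this way. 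The condition only becomes verifiable after restricting to pairs $x,y$ that lie close to $\partial\wz\Omega$ and close to each other (this is what Lemma~\ref{ulkodist} gives), and the remaining pairs must then be treated separately --- the paper does this by subdividing the given curve in $\overline{\wz\Omega}$, perturbing the subdivision points into $\wz\Omega$, and connecting consecutive points either by the near-boundary special case or by straight segments.

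Second, the ``push the pre-image radially outward'' step is not controlled by Lemma~\ref{whitney preserving}. Whitney-type sets in $\mathbb R^2\setminus\overline{\mathbb D}$ centred at a point $z$ with $|z|=1+\epsilon$ necessarily have diameter $\sim\epsilon$ by Definition~\ref{whitney-type set}, so when $\epsilon\ll\delta$ (and in particular when $\gamma$ touches $\partial\Omega$, i.e.\ $\epsilon=0$) the point $z$ and its image $P_\delta(z)$ at distance $\delta$ from $\partial\mathbb D$ cannot lie in a single Whitney-type set, and the bi-Lipschitz estimate is unavailable. The distortion of $\wz\varphi$ between those scales is exactly what is \emph{not} uniformly controlled for a general Jordan exterior, so $\ell(\gamma_\delta)$ need not be bounded by a constant multiple of $\ell(\gamma)$. (In fact the push is unnecessary: the proof of Theorem~\ref{GM unbounded} runs through Lemma~\ref{lengthtransfer}, which already allows the comparison curve to lie in $\mathbb R^2\setminus\Omega=\overline{\wz\Omega}$, but that is not visible from the statement you cite.) In short, you have the right tool (the exterior Gehring--Hayman estimate) but are missing the localisation/chaining argument needed to make its hypotheses hold, and the interior-perturbation step you propose as a substitute does not go through.
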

%{\color{red}Bad formulation. Correct the statement and refer to elsewhere in the proof.}
\begin{proof}
We first show the sufficiency of $c_2$-quasiconvexity of the closure. 
%If $\overline{\wz \Omega}$ is quasiconvex, then for any two points $x,\,y \in \wz \Omega$  there exists a curve $\gamma\subset \overline{\wz \Omega}$ joining them and satisfying
%$$\ell(\gamma)\ls |x-y|. $$
%If $\gamma\subset \wz \Omega$, then we are done. 
%Otherwise we have that
%\begin{equation}\label{equ 7}
%\max\{\dist(x,\,\partial \wz \Omega),\,\dist(y,\,\partial \wz \Omega)\}\ls |x-y|. 
%\end{equation}
%We may also assume that
%\begin{equation}\label{equ 8}
%|x-y|\le 3\diam(\partial \wz \Omega); 
%\end{equation}
%Indeed if this does not hold, then $x$ and $y$ can be joined by the concatenation of three curves, two hyperbolic geodesics towards $\infty$ crossing $x,\,y$, respectively, and a suitable curve outside $B(0,\,2\diam(\partial \wz \Omega)$ joining points on these two hyperbolic geodesics; the quasiconvexity holds since the hyperbolic rays in $\wz \Omega$ are quasiconvex by Lemma~\ref{lengthtransfer}; see Remark~\ref{hyperbolic ray}. 
Let $x,\,y\in \wz \Omega.$
To begin, let us consider the special case where 
$$|x-y|\le \delta \diam( \partial {\wz \Omega}), \ \dist(x,\,\partial {\wz \Omega})\le \delta \diam( \partial {\wz \Omega}) \  \text{ and } \  \dist(y,\,\partial {\wz \Omega})\le \delta \diam( \partial {\wz \Omega})$$
  with the constant $\delta$ in Lemma~\ref{ulkodist}. Then by applying Corollary~\ref{GH thm} and Lemma~\ref{ulkodist} to the hyperbolic geodesic connecting $x$ and $y$ we get the desired control on the length of the geodesic.

Let us now prove the general case. Let $x, y \in \widetilde\Omega$ be two distinct points. Since $\overline{\widetilde\Omega}$ is $c_2$-quasiconvex,
there exists a curve $\gamma \subset \overline{\widetilde\Omega}$ connecting $x$ to $y$ with $\ell(\gamma) \le c_2 |x-y|$.
Let $\{x_i\}_{i=0}^k \subset \gamma$ be such that $x_0 = x$, $x_k = y$,
\begin{equation}\label{eq:1}
 0 < |x_i-x_{i-1}| \le \frac{1}{6}\delta\diam(\partial \widetilde\Omega) \qquad \text{for all }i \in \{1,\dots, k\}. 
\end{equation}
The existence of these points follows from the finiteness of the length of $\gamma$.
By the definition of the length of a curve, we further have
\begin{equation}\label{eq:2}
 \sum_{i=1}^k|x_i-x_{i-1}| \le \ell(\gamma) \le c_2 |x-y|.
\end{equation}

For each $i \in \{1,\dots, k-1\}$ take a point
\[
x_i' \in \widetilde \Omega \cap B(x_i,\min\{|x_i-x_{i-1}|, |x_i-x_{i+1}|\}) 
\]
and define $x_0' = x$ and $x_k'=y$.
By the choice of $x_i'$  we have for every $i \in \{1,\dots, k\}$ the estimate
\begin{equation}\label{eq:3}
 |x_i'-x_{i-1}'| \le |x_i'-x_i| + |x_i - x_{i-1}| + |x_{i-1}-x_{i-1}'| \le 3|x_i-x_{i-1}|.
\end{equation}
Combining this with \eqref{eq:1} and \eqref{eq:2}  gives
\begin{equation}\label{eq:4}
  |x_i'-x_{i-1}'| \le \frac{1}{2}\delta\diam(\partial \widetilde\Omega) \ \ \text{ and }  \ \sum_{i=1}^k|x_i'-x_{i-1}'|\le \sum_{i=1}^k 3|x_i-x_{i-1}| \le 3c_2 |x-y|.
\end{equation}

Now for each $i \in \{1,\dots, k\}$ we define a curve $\gamma_i\subset \wz \Omega$ connecting $x_{i-1}'$ and $x_i'$. There are two cases. If
\begin{equation}\label{eq:5}
|x_i'-x_{i-1}'| \le \delta\diam(\partial\widetilde\Omega), \quad \dist(x_i',\partial\widetilde\Omega)\le \delta\diam(\partial\widetilde\Omega)
\quad \text{and} \quad \dist({x_{i-1}',\partial\widetilde\Omega})\le \delta\diam(\partial\widetilde\Omega),
\end{equation}
then by the special case we find $\gamma_i \subset \widetilde\Omega$ connecting $x_{i-1}'$ and $x_i'$ with $\ell(\gamma_i) \le C(c_2) |x_i'-x_{i-1}'|$.
If \eqref{eq:5} fails, then by \eqref{eq:4}
\[
 \quad \dist(x_i',\partial\widetilde\Omega)> \delta\diam(\partial\widetilde\Omega)
\quad \text{or} \quad \dist({x_{i-1}',\partial\widetilde\Omega})> \delta\diam(\partial\widetilde\Omega).
\]
In this case we define $\gamma_i := [x_i',x_{i-1}'] \subset \widetilde\Omega$. Then $\ell(\gamma_i) = |x_i'-x_{i-1}'|$. By concatenating all the $\gamma_i$, via \eqref{eq:4} we finally get a curve $\gamma'$ in $\widetilde\Omega$ connecting $x$ to $y$ with
\[
 \ell(\gamma') \le \sum_{i=1}^k\ell(\gamma_i) \ls \sum_{i=1}^k |x_i'-x_{i-1}'| \ls |x-y|,
\]
with the constant depending only on $c_2$ as required.

For the necessity, we choose sequences of points with $x_n\to x$ and $y_n\to y$ with  $x_n,\,y_n\in \wz \Omega$. Then for every large $n\in \mathbb N$, there exists a curve $\gamma_n$ of length  uniformly bounded  from above by a multiple of $|x-y|$ which joins $x_n$ and $y_n$, according to the quasiconvexity of $\wz \Omega$. Therefore up to reparametrizing $\gamma_n$'s, Arzel\`a-Ascoli lemma tells us that (a subsequence of) $\{\gamma_n\}$ converges to a curve $\gamma\subset \overline{\wz \Omega}$ joining $x$ and $y$, whose length is bounded from above by a multiple of $|x-y|$. 
\end{proof}

\begin{lem} \label{inherited} Let $\Omega$ be a bounded simply connected 
domain so that $\Omega^c$ is $C_1$-quasiconvex and let $\varphi\colon \mathbb D\to \Omega$
be conformal. Set $B_n=B(0,1-2^{-n})$ and $\Omega_n=\varphi(B_n)$ for $n\ge 1.$
Then $\Omega_n^c$ is $C(C_1)$-quasiconvex for each $n.$
\end{lem}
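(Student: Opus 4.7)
The plan is to construct, for any two points $x,y\in\Omega_n^c$, a rectifiable curve inside $\Omega_n^c$ of length at most $C(C_1)|x-y|$. I first record the structural setup that I will exploit. Since $\Omega^c$ is $C_1$-quasiconvex, Lemma~\ref{property of John domain} will make $\Omega$ a $J$-John domain with $J=J(C_1)$ and provide a continuous extension of $\varphi$ to $\overline{\mathbb D}$. As $\overline{B_n}\Subset\mathbb D$, the restriction $\varphi|_{\overline{B_n}}$ is a homeomorphism, so $\Omega_n$ is a Jordan domain and $\partial\Omega_n=\varphi(\partial B_n)$ is a Jordan curve contained in $\Omega$.

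For $x\in\Omega\setminus\Omega_n$ my proof will rely on two projections. Writing $w_x=\varphi^{-1}(x)$, the \emph{radial projection} $\tilde x:=\varphi(w_x/|w_x|)\in\partial\Omega\subset\Omega^c$ is reached along the image $\alpha_x:=\varphi([w_x,w_x/|w_x|])$ of the radial segment, which sits inside $\overline{\Omega\setminus\Omega_n}\subset\Omega_n^c$; the \emph{nearest-point projection} $\hat x\in\partial\Omega_n$ is reached via the segment $[x,\hat x]\subset\overline{B(x,\dist(x,\partial\Omega_n))}\subset\Omega_n^c$. I will use the estimate $\ell(\alpha_x)\le C(J)\dist(x,\partial\Omega)$, which should follow from Theorem~\ref{GM bdd}: the radial preimage is the hyperbolic geodesic to $w_x/|w_x|$, so by Gehring-Hayman $\ell(\alpha_x)$ is bounded by a multiple of the length of any curve from $x$ to $\tilde x$ in $\Omega$, and in a John domain such a curve of length $\ls\dist(x,\partial\Omega)$ is available.

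The core argument will be a case analysis. In Case 1, if $|x-y|\le\tfrac12\dist(x,\partial\Omega_n)$ at $x$ (or at $y$), I use the segment $[x,y]\subset\overline{B(x,\dist(x,\partial\Omega_n))}\subset\Omega_n^c$. In Case 2, if $x,y\in\Omega^c$, the $C_1$-quasiconvexity of $\Omega^c$ already gives a suitable curve in $\Omega^c\subset\Omega_n^c$. In Case 3, if $x\in\Omega\setminus\Omega_n$ with $\dist(x,\partial\Omega)\le 100|x-y|$ (and similarly at $y$ if $y\in\Omega\setminus\Omega_n$), I replace $x$ by $\tilde x$ along $\alpha_x$, paying $\ell(\alpha_x)\ls|x-y|$, and likewise for $y$; by the triangle inequality $|\tilde x-\tilde y|\ls|x-y|$, and the $C_1$-quasiconvex curve in $\Omega^c$ between $\tilde x$ and $\tilde y$ closes the construction. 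Case 4 is the residual situation: $x,y\in\Omega\setminus\Omega_n$ with $\dist(\cdot,\partial\Omega_n)\ls|x-y|$ at both points (by the triangle inequality, if the condition holds at $x$ it propagates to $y$) and $\dist(\cdot,\partial\Omega)\gg|x-y|$ at both. Here I project $x,y$ to $\hat x,\hat y\in\partial\Omega_n$ by the short straight segments and then connect $\hat x$ to $\hat y$ along a short arc of $\partial\Omega_n\subset\Omega_n^c$.

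Case 4 is the main technical obstacle. Writing $\hat x=\varphi(\zeta_x)$, $\hat y=\varphi(\zeta_y)$ with $\zeta_x,\zeta_y\in\partial B_n$, the hypothesis $\dist(\hat x,\partial\Omega)\gs\dist(x,\partial\Omega)\gg|\hat x-\hat y|$ together with Koebe distortion should force $|\zeta_x-\zeta_y|\ll 2^{-n}=1-|\zeta_x|$, so that the shorter arc of $\partial B_n$ between $\zeta_x$ and $\zeta_y$ has bounded hyperbolic diameter in $\mathbb D$. The Koebe estimates inside the proof of Lemma~\ref{whitney preserving} then give that $|\varphi'|$ is essentially constant on this arc, so its image in $\partial\Omega_n\subset\Omega_n^c$ has length $\sim|\zeta_x-\zeta_y|\cdot|\varphi'(\zeta_x)|\sim|\hat x-\hat y|\ls|x-y|$. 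The delicate point is the translation of the Euclidean hypothesis (closeness of $\hat x$ and $\hat y$ relative to $\dist(\hat x,\partial\Omega)$) into a useful hyperbolic statement in $\mathbb D$, so that the localized bilipschitz behaviour of $\varphi$ near $\partial B_n$ becomes available.
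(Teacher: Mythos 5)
Your proof takes essentially the same route as the paper: split according to whether both points lie in $\Omega^c$, whether $\dist(\cdot,\partial\Omega)$ is small compared to $|x-y|$ (your Case 3, handled via the radial projection to $\partial\Omega$ plus the quasiconvexity of $\Omega^c$), or whether both points lie deep inside $\Omega$ relative to $|x-y|$ (your Case 4, handled by the local bilipschitz behaviour of $\varphi$ on a Whitney ball). Your extra Case 1 (straight segment when $|x-y|$ is dominated by $\dist(\cdot,\partial\Omega_n)$) and your choice of joining along an arc of $\partial\Omega_n$, instead of the paper's curve in the crescent $B\setminus\overline{B_n}$, are cosmetic variants of the same idea. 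The one place the case split needs a word is that Case 3 is stated with a conjunction at $x$ and at $y$; this is exhaustive only because, by the triangle inequality, $\dist(x,\partial\Omega)\le 100|x-y|$ forces $\dist(y,\partial\Omega)\le 101|x-y|$ (and conversely), which you use implicitly.

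There is one genuine slip. The derivation of $\ell(\alpha_x)\le C(J)\dist(x,\partial\Omega)$ from Theorem~\ref{GM bdd} is circular: Gehring--Hayman only compares $\alpha_x$ to some \emph{other} curve from $x$ to $\tilde x$ in $\Omega$, and the comparison curve of length $\ls\dist(x,\partial\Omega)$ you would need is precisely $\alpha_x$ (a priori the straight segment from $x$ to its nearest boundary point does not end at $\tilde x$). The correct reference is Lemma~\ref{property of John domain}(3): $\alpha_x$ is a subcurve of a $J'$-John curve issuing from $\tilde x\in\partial\Omega$, and evaluating the John condition at the terminal point $x$ gives $\ell(\alpha_x)\le\dist(x,\partial\Omega)/J'$ directly, which is what the paper invokes. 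By contrast, the step you flag as ``delicate'' in Case 4 works exactly as you guess and is not delicate: by Koebe $\dist(\hat x,\partial\Omega)\sim 2^{-n}|\varphi'(\zeta_x)|$, and by Lemma~\ref{whitney preserving} the restriction of $\varphi$ to $B(\zeta_x,2^{-n-1})$ is bilipschitz up to the dilation $|\varphi'(\zeta_x)|$ with an absolute constant, so its image contains a disk about $\hat x$ of radius $c\dist(\hat x,\partial\Omega)$ for an absolute $c>0$; taking the threshold $100$ large enough places $\hat y$ in that disk, forcing $\zeta_y\in B(\zeta_x,2^{-n-1})$ and $|\zeta_x-\zeta_y|\ls|\hat x-\hat y|/|\varphi'(\zeta_x)|\ll 2^{-n}$, after which the short arc of $\partial B_n$ between $\zeta_x$ and $\zeta_y$ stays inside the Whitney ball and its image has length $\sim|\hat x-\hat y|\ls|x-y|$.
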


\begin{proof}
Notice that, by  part (1) of Lemma~\ref{property of John domain} $\Omega$ is a $J$-John domain with $J=J(C_1)$.  
Fix a pair of points $x,\,y\in \Omega_n^c$. If $x,\,y\in \Omega^c$, then we are done by the quasiconvexity of $\Omega^c$. 
Hence, by symmetry we may assume that $x\in \Omega\setminus \Omega_n$.

Let $x_0\in \partial \Omega$ be a point such that the line segment $[\varphi^{-1}(x),\,\varphi^{-1}(x_0)]$ joining $\varphi^{-1}(x)$ and $\varphi^{-1}(x_0)$ lies in the line segment $[0,\,\varphi^{-1}(x_0)]$. 
Notice that $\gamma_1=\varphi([\varphi^{-1}(x),\,\varphi^{-1}(x_0)])$ is a hyperbolic geodesic. 

Suppose first that  $y\in \Omega^c$. Then we can join $x_0$ and $y$ by a curve $\gamma_2$ in $\Omega^c$ of length at most $C_1|x_0-y|$ because $\Omega^c$ is quasiconvex. 
Note that part (3) of Lemma~\ref{property of John domain}    implies 
\begin{equation}\label{equat 1}
|x-x_0|\le \ell(\gamma_1)\le C(J) \dist(x,\,\partial \Omega).
\end{equation}
 Then  by the fact that $\dist(x,\,\partial \Omega)\le |x-y|$ as $ x\in \Omega$ and $y\in \Omega^c$, via \eqref{equat 1}
 we have
$$\ell(\gamma_1)+\ell(\gamma_2)\le C(J) |x-y| + C(C_1)|x_0-y|\le C(C_1)| (|x-y| + |x-x_0|)\le C(C_1)|x-y|, $$
where in the second inequality we applied the triangle inequality to get
$$|x_0-y|\le |x-y|+|x-x_0|.$$
Hence $\gamma_1\cup\gamma_2$ is a desired curve.

Next consider the case where $y\in \Omega\setminus \Omega_n$. Here we have two subcases depending whether \eqref{equ 9} below holds. 

Firstly suppose $\varphi^{-1}(y)\in B(\varphi^{-1}(x),\,(1-|\varphi^{-1}(x)|)/2)=:B$. Then the existence of the desired curve follows directly from Lemma~\ref{whitney preserving}; note that we can join 
$\varphi^{-1}(x)$ and $\varphi^{-1}(y)$ inside $B\setminus \overline{B_n}$ by a curve of length uniformly bounded by a multiple of $|\varphi^{-1}(x)- \varphi^{-1}(y)|$ for some absolute constant, and then by applying a change of variable we obtain the desired estimate from Lemma~\ref{whitney preserving}. 
A similar conclusion also holds if the roles of $x,\,y$ are reversed.   Note that each $B(z,\,(1-|z|)/2)$ is a Whitney-type set. Then by Lemma~\ref{whitney preserving} we know that  $\varphi(B(z,\,(1-|z|)/2))$ is also of Whitney-type for any $z\in \mathbb D$, and hence there exists an absolute constant $C$ such that if 
$$C|x-y|\le \max \{\dist(x,\,\partial \Omega),\, \dist(y,\,\partial \Omega) \},$$
then these two points fall into  one of the two cases above. 

Hence we only need to consider the case where $x,\,y\in \Omega\setminus \Omega_n$ and
\begin{equation}\label{equ 9}
C|x-y|\ge \max \{\dist(x,\,\partial \Omega),\, \dist(y,\,\partial \Omega) \}. 
\end{equation}
Recall the definition of $x_0$ above, and similarly define $y_0\in \partial \Omega$. We can connect $x_0$ to $y_0$ by a curve $\gamma_2$ in $\Omega^c$ with length controlled from above by $C_1|x_0-y_0|$. Also join $y_0$ to $y$ by the hyperbolic geodesic $\gamma_3$, analogous to $\gamma_1$ that joins $x_0$ to $x$. The desired curve is obtained by concatenating $\gamma_1$ with $\gamma_2$ and $\gamma_3$, respectively. Indeed by \eqref{equ 9} and \eqref{equat 1}
\begin{align*}
\ell(\gamma_1)+\ell(\gamma_2)+\ell(\gamma_3)\le C(J) \dist(x,\,\partial \Omega)+ C(J) \dist(y,\,\partial \Omega)+C(C_1)|x_0-y_0|\\
\le C(C_1) (|x-y|+ |x_0-y_0|)\le C(C_1)| |x-y|, 
\end{align*}
where again in the last inequality we apply the  triangle inequality with \eqref{equ 9} to have
$$|x_0-y_0|\le |x-y|+\dist(x,\,x_0)+ \dist(y,\,x_0)\le |x-y|+C(J) \dist(x,\,\partial \Omega)+ C(J) \dist(y,\,\partial \Omega).$$
\end{proof}

\section{Sufficiency for Jordan domains}\label{sec:sufficiency_Jordan}

Let $\Omega$ be a Jordan domain satisfying \eqref{eq:curvecondition}. By
Lemma~\ref{qconvex}, the complementary domain
$\wz \Omega$ of $\Omega$ is $C_1$-quasiconvex, where $C_1$ depends only on $C$ in \eqref{eq:curvecondition}. We prove the following theorem in this section. 

\begin{thm}\label{Jordan suff}
Let $\Omega\subset \mathbb R^2$ be a Jordan domain whose complementary domain is $C_1$-quasi-convex. Then there exists an extension operator $E\colon W^{1,\,1}(\Omega)\to W^{1,\,1}(\mathbb R^2)$ with the norm depending only on $C_1$. 
\end{thm}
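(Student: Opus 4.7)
The plan is to construct a bounded linear extension operator by partitioning $\wz \Omega$ into Whitney-type cells coming from a hyperbolic triangulation, and defining $Eu$ on $\wz \Omega$ as a piecewise affine interpolation whose vertex values are averages of $u$ over corresponding Whitney-type ``shadow'' disks in $\Omega$. The main use of the conformal/hyperbolic structure is to produce a consistent reflection from $\wz \Omega$ to $\Omega$ that respects the boundary at every scale.

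Concretely, since both $\Omega$ and $\wz \Omega$ are Jordan, Carath\'eodory's theorem yields conformal maps $\vz\colon \mathbb D \to \Omega$ and $\wz \vz \colon \rr^2 \setminus \overline{\mathbb D}\to \wz \Omega$ that extend to homeomorphisms of the closures, producing a boundary homeomorphism $h:=\vz^{-1}\circ \wz \vz\colon \partial \mathbb D \to \partial \mathbb D$. I would fix a standard hyperbolic triangulation $\{\wz T_i\}_{i\in I}$ of $\rr^2\setminus \overline{\mathbb D}$, where each annulus $\{1+2^{-n}\le |z|\le 1+2^{-n+1}\}$ is partitioned into $\sim 2^n$ angular sectors further split into triangles, and push it forward by $\wz \vz$ to a partition $\{T_i\}_{i\in I}$ of $\wz \Omega$ into Whitney-type cells (Lemma~\ref{whitney preserving}). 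To each triangulation vertex $v=\wz\vz(z)$ with $1<|z|\le 2$ I would associate a shadow disk $B_v\subset \Omega$ via the reflection $z^\ast := (2-|z|)\,h(z/|z|)\in \mathbb D$ and $B_v := \vz(B(z^\ast, c_0(1-|z^\ast|)))$ for a small absolute $c_0$, forcing $\diam B_v \sim \dist(v,\partial \Omega)\sim \diam T_i$ for every $T_i$ containing $v$; vertices with $|z|>2$ will be handled trivially through a smooth cutoff at infinity. The matching of scales is the key geometric input and is where the quasiconvexity of $\wz \Omega$ (Lemma~\ref{qconvex}) and the John property of $\Omega$ (Lemma~\ref{property of John domain}(1)) enter, the former forcing $\wz \vz$ to distort Euclidean radii in a controlled way and the latter providing enough room in $\Omega$ near any boundary segment at any scale.

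With the shadows in hand, set $Eu=u$ on $\Omega$ and
\[
Eu(x) = \rho(x)\sum_{v}\Big(\bint_{B_v} u\Big)\, \eta_v(x)\qquad \text{for } x\in \wz \Omega,
\]
where $\eta_v$ is the continuous piecewise affine hat function subordinate to $\{T_i\}$ and $\rho$ is a smooth cutoff equal to $1$ on a neighborhood of $\overline{\Omega}$ and supported in a large ball. Linearity of $E$ is automatic. The $L^1$ bound follows from $|Eu|\le \max_{v\in T_i}|\bint_{B_v}u|$ on $T_i$, $|T_i|\sim |B_v|$, and bounded overlap of $\{B_v\}$, which is a consequence of the bounded combinatorics of the triangulation together with the John property of $\Omega$. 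For the gradient, on each $T_i$ with vertices $v_1,v_2,v_3$,
\[
|\nabla Eu|\ls \frac{1}{\diam T_i}\max_{j,k}\Big|\bint_{B_{v_j}}u - \bint_{B_{v_k}}u\Big| + \|\nabla \rho\|_\infty|Eu|.
\]
Adjacent shadows $B_{v_j},B_{v_k}$ are connected in $\Omega$ by a hyperbolic geodesic of length $\ls \diam T_i$ which is a John curve by Lemma~\ref{property of John domain}(3), hence coverable by a uniformly finite chain of Whitney-type sets; a telescoping Poincar\'e-on-a-chain estimate bounds the oscillation by $\int_{P_{jk}}|\nabla u|$ for a chain $P_{jk}\subset \Omega$. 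Summing over $i$ with bounded overlap of these chains yields $\int_{\wz \Omega}|\nabla Eu|\ls \|u\|_{W^{1,1}(\Omega)}$.

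The main obstacle will be the final step: verifying that $Eu \in W^{1,1}(\rr^2)$, i.e.\ that no singular part of the distributional gradient is produced along $\partial \Omega$. Since $|\partial \Omega|=0$ by Lemma~\ref{property of John domain}(4), $Eu$ is well defined a.e.\ on $\rr^2$, and what must be shown is that the boundary trace from $\Omega$ coincides $\ch^1$-a.e.\ with the trace from $\wz \Omega$ of the triangulation-based formula. The design of the shadow correspondence is intended to ensure that, for $\ch^1$-a.e.\ $\xi\in\partial \Omega$, any sequence of triangulation vertices $v_n\to\xi$ produces shadows $B_{v_n}\subset\Omega$ whose centers approach $\xi$ along a radial arc of $\vz$ with vanishing diameter, so that the Lebesgue differentiation theorem forces $\bint_{B_{v_n}}u\to u(\xi)$ at a.e.\ $\xi$. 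The delicate point is that $\vz$ and $\wz \vz\circ h^{-1}$ must send essentially the same $\partial \mathbb D$-arcs to nearby Euclidean approaches of $\xi$; this is exactly where the Gehring--Hayman theorems (Theorems~\ref{GM bdd} and~\ref{GM unbounded}) come in, comparing hyperbolic geodesics in $\Omega$ and $\wz \Omega$ with Euclidean lengths, and where the quasiconvexity of $\overline{\wz \Omega}$ (Lemma~\ref{qconvex}) supplies matching rectifiable approach curves on the exterior side. Combined with the interior gradient bound above, this yields $Eu\in W^{1,1}(\rr^2)$ with norm depending only on $C_1$.
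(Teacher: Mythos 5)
Your overall architecture (a hyperbolic triangulation of the complementary domain, boundary values given by averages of $u$ over Whitney-type sets in $\Omega$, Poincar\'e-on-chains, Gehring--Hayman, a separate verification of absolute continuity across $\partial\Omega$) mirrors the paper's proof, but the specific ``shadow'' assignment you propose introduces a gap that the paper's construction is designed to avoid. Your reflection $z^\ast=(2-|z|)\,h(z/|z|)$ couples the exterior point $v=\wz\varphi\bigl((1+t)w\bigr)$ to the interior point $\varphi\bigl((1-t)h(w)\bigr)$ at the \emph{same depth} $t$ on both sides of $\partial\mathbb D$. The scale-matching inequality $\diam T_i\lesssim\diam B_v$ you need for the gradient estimate then amounts to
\[
\dist\bigl(\wz\varphi((1+t)w),\partial\Omega\bigr)\ \lesssim\ \dist\bigl(\varphi((1-t)h(w)),\partial\Omega\bigr),
\]
a comparison of the two conformal Whitney scales at equal radial distance from $\partial\mathbb D$. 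This does \emph{not} follow from quasiconvexity of $\wz\Omega$ plus John-ness of $\Omega$; it is essentially a quasisymmetry property of the welding $h=\varphi^{-1}\circ\wz\varphi$, which these hypotheses do not give. A concrete failure: take $\Omega=\mathbb D\setminus\{(x,y): x\ge 0,\ |y|\le x^2\}$. Its complement is quasiconvex, but $\wz\Omega$ has a parabolic cusp at the origin (opening angle $0$) while $\Omega$ has a slit-type corner there (interior angle $2\pi$). Consequently $\dist\bigl(\wz\varphi((1+t)w_0),\partial\Omega\bigr)$ decays only like a power of $1/\log(1/t)$, whereas $\dist\bigl(\varphi((1-t)\zeta_0),\partial\Omega\bigr)\sim t^2$; hence $\diam T_i/\diam B_v\to\infty$ near the cusp, and for $u$ with gradient concentrated there your piecewise-affine estimate $\int_{T_i}|\nabla Eu|\lesssim(\diam T_i/\diam B_v)\int_P|\nabla u|$ is unbounded.

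The paper sidesteps this by never comparing the two parametrizations at equal depth. It starts from the interior Whitney-type sets $Q_k^{(j)}=\varphi(D_k^{(j)})$, assigns the averages $a_k^{(j)}$, and connects the \emph{shared boundary endpoints} $z_k^{(j-1)},z_k^{(j)}$ by exterior hyperbolic geodesics $\gamma_k^{(j)}$. The crucial scale inequality $\ell(\gamma_k^{(j)})\lesssim\diam Q_k^{(j)}$ (estimates \eqref{equ 10} and \eqref{distance estimate}) is then obtained through the shared endpoints, combining interior John (to get $\dist_{\Omega}(z_k^{(j-1)},z_k^{(j)})\lesssim\diam Q_k^{(j)}$) with the exterior Gehring--Hayman/quasiconvexity bound $\ell(\gamma_k^{(j)})\lesssim|z_k^{(j-1)}-z_k^{(j)}|$, never invoking a depth-$t$ reflection. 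Repairing your construction means switching to that shared-endpoint correspondence, at which point you are essentially reproducing the paper's decomposition; and since the paper's hyperbolic triangles $R_k^{(j)}$ have all three vertices on $\partial\Omega$ (so they are not Whitney-type), affine interpolation no longer makes sense and one needs a bespoke auxiliary function as in Lemma~\ref{axillary function} to control the degeneracies at the vertices.
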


We begin by decomposing a part of $\wz \Omega$ into 
hyperbolic triangles (defined below), and then define auxiliary 
functions on them. 
After this we construct an extension operator based on these functions. 

Recall first that $\Omega$ is a $J$-John domain with $J=J(C_1)$ by part (1) of
Lemma~\ref{property of John domain}. 
Let $\varphi\colon \overline{\mathbb D} \to \overline{\Omega} $ be a 
\emph{homeomorphism} which is conformal 
in $\mathbb D$ and satisfies $\varphi(0)=x_0$, where $x_0$ is the 
distinguished point of $\Omega$. 
Such a homeomorphism  exists by the Riemann mapping theorem and  
Carath\'eodory-Osgood 
theorem \cite[Section IX.4]{P1991} since $\Omega$ is \emph{Jordan}.

Moreover we denote by 
$\wz \varphi\colon \overline{\wz \Omega} \to \mathbb R^2\setminus \mathbb D$ a 
homeomorphism which 
is conformal in $\wz \Omega$. The map $\wz \varphi$ is obtained also from an 
extension of a Riemann mapping via  
Carath\'eodory-Osgood 
theorem.   Even though 
$$\varphi(\partial\mathbb D)=\wz\varphi(\partial\mathbb D)=\partial\Omega,$$ 
it could happen that $\varphi(z)$ might not coincide with $\wz\varphi(z)$ for each $z\in\partial \mathbb D$.

\begin{figure} 
 \centering
 \includegraphics[width=0.75\textwidth]{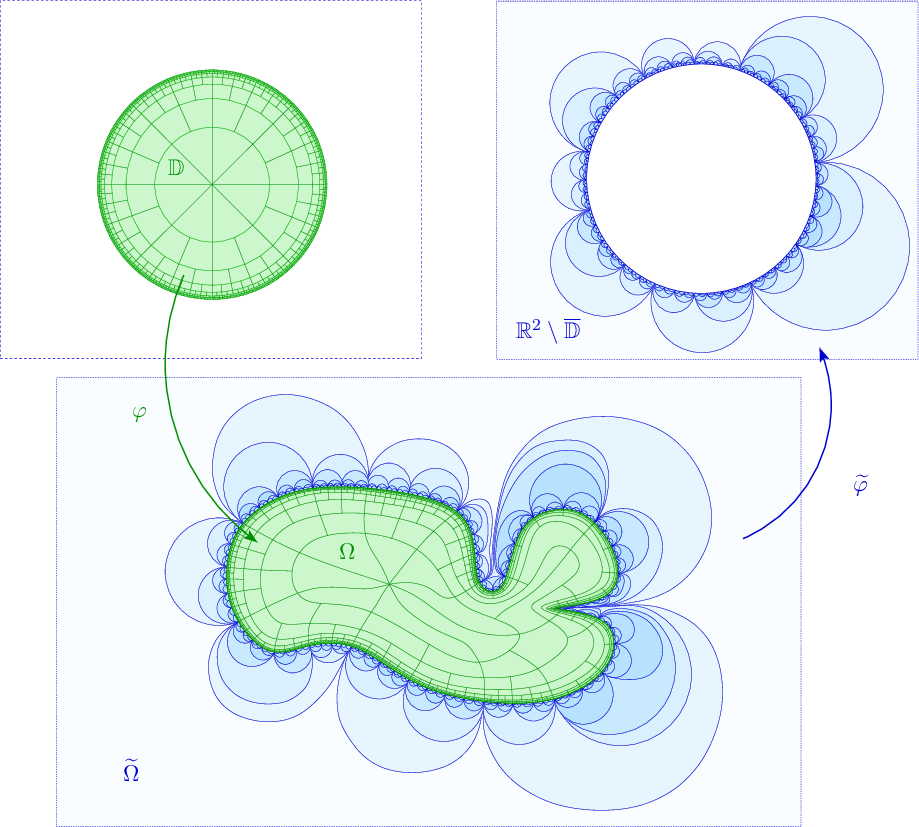}
 \caption{The hyperbolic triangulation of the complementary domain $\wz \Omega$ of $\Omega$ via conformal maps $\varphi \colon \mathbb D  \to  \Omega $ and $\wz\varphi\colon \wz \Omega \to \mathbb R^2\setminus \overline{\mathbb D}$.}
 \label{fig:hyperbolic triangles}
\end{figure}

\subsection{Decomposition of the domain and the complementary domain}\label{sec:jordandeco}

Fix $k_0\in \mathbb N$ to be determined later. 
Let $A_1=B(0,\,\frac 1 2)$ and 
$$A_{k}=B(0,\,1-2^{-k})\setminus B(0,\,1-2^{-k+1})$$ for $k\ge 2$. 
For each $k\ge 1$, the radial rays passing through the points $x^{(j)}_k=e^{{j}{{2^{-k-k_0}}}\pi i} \in \partial \mathbb D$ with $1\le j\le 2^{k+k_0+1}$ 
divide $A_k$ into $2^{k+k_0+1}$ subsets. For $1 \le j\le 2^{k+k_0+1}$, we denote  by $D^{(j)}_k$ each of the subsets. 
Here the upper indices are labeled anticlockwise from the positive real axis. Define $Q^{(j)}_k=\varphi(D^{(j)}_k),$ and $z^{(j)}_k=\varphi(x^{(j)}_k) \in \partial \Omega$ with  $x^{(0)}_k=x^{(2^{k+k_0+1})}_k$. Since $D^{(j)}_k$ is a  Whitney-type set with the constant depending only on $k_0$, we obtain that $Q^{(j)}_k$ is also of $\lambda(k_0)$-Whitney-type by Lemma~\ref{whitney preserving}. 

Moreover, we show that
\begin{equation}\label{equ 10}
 \dist_{\Omega}(z^{(j-1)}_k,\,z^{(j)}_k)\le C(J,\,k_0) \diam(Q^{(j)}_k) 
\end{equation}
Towards this, let $x_1$ be a point in $\Gamma_1\cap Q^{(j)}_k$, where $\Gamma_1$ is the hyperbolic geodesic joining $z^{(j-1)}_k$ and the origin. Recall that $\Gamma_1$ is a $J$-John curve by part (3) of Lemma~\ref{property of John domain}. Thus we have
$$\dist(x_1,\,\partial \Omega)\ge C(J) \ell(\Gamma_1[x_1,\,z^{(j-1)}_k])\ge C(J) \dist_{\Omega}(x_1,\,z^{(j)}_k)$$
where $\Gamma_1[x_1,\,z^{(j-1)}_k]$ denotes the subarc of $\Gamma_1$ joining $x_1$ and $z^{(j-1)}_k$. Likewise we define $\Gamma_2$ as the hyperbolic geodesic joining $z^{(j)}_k$ and the origin, $x_2$ to be a point in $\Gamma_2\cap Q^{(j)}_k$, and have the estimate
$$\dist(x_2,\,\partial \Omega)\ge C(J) \ell(\Gamma_2[x_2,\,z^{(j)}_k])\ge C(J) \dist_{\Omega}(x_2,\,z^{(j)}_k). $$
Since $Q^{(j)}_k$ is a connected set of $\lambda(k_0)$-Whitney-type, we have
$$\dist_{\Omega}(x_1,\,x_2)\le C(k_0) \dist(x_1,\,\partial \Omega)\sim_{k_0} \dist(x_2,\,\partial \Omega). $$
Therefore by the triangle inequality we conclude that
\begin{align*}
\dist_{\Omega}(z^{(j-1)}_k,\,z^{(j)}_k) & \le \dist_{\Omega}(z^{(j-1)}_k,\,x_1)+ \dist_{\Omega}(z^{(j)}_k,\,x_2)+ \dist_{\Omega}(x_1,\,x_2)\\
&\le \dist(x_1,\,\partial \Omega)\sim \diam(Q^{(j)}_k)
\end{align*}
with the constant depending only on $k_0$ and $J$, which concludes \eqref{equ 10}.

Denote by $\gamma_k^{(j)}$ the hyperbolic geodesic of $\wz \Omega$ connecting $z^{(j)}_k$ and $z^{(j-1)}_k$. 
Since $\Omega$ is $J$-John,   by (2) of Lemma~\ref{property of John domain}, it follows that $\varphi$ is uniformly continuous; also see \cite[Page 100,\,Corollary 5.3]{P1992}.
Therefore we can choose $k_0=k_0(C_1)\in \mathbb N$ large enough such that
$$|z^{(j-1)}_k-z^{(j)}_k|\le \delta \diam(\Omega)$$
for any $k,\,j\in \mathbb N$, where $\delta$ is the constant in Lemma~\ref{ulkodist}. This allows us to apply Corollary~\ref{GH thm}   to $x^{(j)}_k=\varphi^{-1}(z^{(j)}_k)$ and $x^{(j+1)}_k=\varphi^{-1}(z^{(j+1)}_k)$   with \eqref{equ 10} and conclude that
\begin{equation}\label{distance estimate}
\ell(\gamma_k^{(j)})\le C(C_1) |z_{k}^{(j-1)} - z_{k}^{(j)}|\le C(C_1) \diam(Q_{k}^{(j)}). 
\end{equation}

A set is called a {\it hyperbolic triangle} if  it is enclosed by three 
hyperbolic geodesics, meeting at the three vertices.  
For $k\in \mathbb N$ and $1 \le j\le 2^{k_0+k+1}$, we denote by $ R_{k}^{(j)}$  
the relatively closed set (with respect to the topology of $\wz \Omega$) 
enclosed by $\gamma_k^{(j)}$, $\gamma_{k+1}^{(2j-1)}$ and $\gamma_{k+1}^{(2j)}$. 
It is a closed hyperbolic triangle 
(with respect to the topology of $\wz \Omega$); see Figure~\ref{fig:hyperbolic triangles}.

 Note that the union of hyperbolic triangles
$$\bigcup_{k\ge 1} \bigcup_{1\le j\le 2^{k_0+k+1}} R_{k}^{(j)} $$
fails to cover any Euclidean neighborhood of $\partial \Omega$ in $\wz\Omega$. To address this, we define additional hyperbolic triangles $R_{0}^{(j)}$
on top of the existing $R_{1}^{(j)}$'s.

Notice that for $1\le j\le 2^{k_0+2}$ the image of each $\gamma^{(j)}_1$ 
under $\wz \varphi$ is a circular arc; 
recall that 
$\wz \varphi\colon \overline{\wz \Omega} 
\to \mathbb R^2\setminus \mathbb D$ is a homeomorphism which is conformal 
in $\wz \Omega$. Denote the preimage under $\wz \varphi$ of the midpoint of 
each arc $\wz \varphi(\gamma_1^{(j)})$ by $z^{(j)}_0$, and the 
corresponding hyperbolic geodesic connecting $z_0^{(j)}$ and $z_0^{(j+1)}$ 
by $\gamma^{(j)}_0$; here $z_0^{(2^{k_0+2}+1)}=z_0^{(1)}$. 
Then with the help of $\gamma^{(j)}_1$ we obtain    $2^{k_0+2}$ closed hyperbolic 
triangles $R_{0}^{(j)}$ such that every $R_{0}^{(j)}$ is enclosed by 
$\gamma_0^{(j)},\,\gamma_{1}^{(j)}$ and $\gamma_{1}^{(j+1)}$; 
here $\gamma_{1}^{(2^{k_0+2}+1)}=\gamma_{1}^{(1)}$. 
Moreover $\wz\varphi(\gamma_0^{(j)})\subset B(0,\,10)$ for any 
$1\le j\le 2^{k_0+2}$ since we defined the constant $\delta$ as in Lemma~\ref{ulkodist}. 
Therefore it is legitimate to apply Corollary~\ref{GH thm}   to $x^{(j)}_0=\varphi^{-1}(z^{(j)}_0)$ and $x^{(j+1)}_0=\varphi^{-1}(z^{(j+1)}_0)$ together with \eqref{distance estimate} to conclude
\begin{align}\label{distance estimate2}
\ell(\gamma^{(j)}_0)&\le C_1 |z^{(j+1)}_0-z^{(j)}_0|\le C_1 \diam(\gamma_0^{(j+1)}\cup \gamma_0^{(j)})\le C_1 \ell(\gamma_0^{(j+1)})+ C_1 \ell( \gamma_0^{(j)})\nonumber\\
&\le C(C_1) |z^{(j-1)}_1-z^{(j)}_1| + |z^{(j+1)}_1-z^{(j)}_1|\le C(C_1)   \diam(Q^{(j)}_1),
\end{align}
where in the last inequality we used the fact that 
\begin{equation}\label{comparable diam}
    \diam(Q^{(j+1)}_1) \sim_{C_1} \diam(Q^{(j)}_1).
\end{equation} 
Indeed, by triangle inequality, there exist points $y_1,\,y_2\in Q^{(j)}_1$ with
$$|y_1-y_2|\le \diam(Q^{(j)}_1)\le 2|y_1-y_2|,$$
which satisfies
\begin{equation}\label{change variable y}
\int_{\varphi^{-1}([y_1,\,y_2])} |\varphi'|\,ds(z)=|y_1-y_2|.
\end{equation}
Similarly, we can find points $w_1,\,w_2\in Q^{(j+1)}_1$ with the same property. Since $\overline{D^{(j)}_1\cup D^{(j+1)}_1}$ is of $\lambda$-Whitney-type for $\lambda=\lambda(C_1)>0$, then $\varphi|_{\overline{D^{(j)}_1\cup D^{(j+1)}_1}}$ is $L$-biLipschitz modulo a dilation factor by Lemma~\ref{whitney preserving} with $L=L(C_1)$. As $D^{(j)}_1$ coincides with $D^{(j+1)}_1$ up to a rotation, we conclude from \eqref{change variable y} (and the one for $w_1,\,w_2$) that
$$|y_1-y_2|\sim_{C_1}|w_1-w_2|,$$
 hence \eqref{comparable diam} follows. 

%\begin{figure}
% \centering
% \def\svgwidth{300pt}
% \input{drawing-1.pdf_tex}
% \caption{The hyperbolic triangles $R_0^{(j)}$ are defined slightly differently from $R_k^{(j)}$ for $k \ge 1$.
% In the figure the hyperbolic triangle $R_{0}^{(4)}$ enclosed by $\gamma^{(1)}_1$, $\gamma^{(4)}_1$ and $\gamma^{(4)}_0$ is illustrated.}
% \label{fig:triangle}
%\end{figure}

\subsection{Definition of the extension}

We define an auxiliary function as follows. Our construction is linear in the
parameters $a_1,a_2,a_3.$

\begin{lem}\label{axillary function}
Let $R$ be a closed hyperbolic triangle with vertices $z_1,\,z_2,\,z_3$ and enclosed by three Jordan arcs $\gamma_1,\,\gamma_2$, $\gamma_3$.  
Then for $a_1,\,a_2,\,a_3\in \mathbb R$, 
there exists a function $\phi$, locally Lipschitz in $R \setminus\{z_1,\,z_2,\,z_3\}$, with the following properties: 
\begin{equation}\label{bound of function}
\min\{a_1,\,a_2,\,a_3\} \le  \phi  \le \max\{a_1,\,a_2,\,a_3\},
\end{equation}
and for $i=1,\,2,\,3$, we have $\phi(x)=a_i$ when $x\in \gamma_i.$
Moreover, 
\begin{equation}\label{norm of function}
||\nabla \phi||_{L^{1}(R)}\lesssim \sum_{i,\,j\in\{1,\,2,\,3\}} |a_i-a_j|\min\{\ell(\gamma_{i}),\,\ell(\gamma_{j})\} 
\end{equation}
with the constant only depending on $C_1.$
\end{lem}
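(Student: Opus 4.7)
The plan is to partition $R$ into three sub-regions meeting at a carefully chosen interior point and to construct $\phi$ as an explicit piecewise interpolation that is continuous on $R\setminus\{z_1,z_2,z_3\}$, linear in $(a_1,a_2,a_3)$, and manifestly bounded between $\min_i a_i$ and $\max_i a_i$.

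First I would pick a Whitney-type point $p\in R$, i.e.\ a point with $\dist(p,\gamma_i)\sim\diam(R)$ for each $i$, where the constants depend only on $C_1$; the existence of such a $p$ follows from the fact that via $\wz\varphi$ the triangle $R$ corresponds to a hyperbolic triangle with three ideal vertices in $\rr^2\setminus\overline{\mathbb D}$, whose hyperbolic incenter has universal geometric properties, together with Lemma~\ref{whitney preserving}. For each side $\gamma_i$ let $m_i$ denote its arc-length midpoint and let $\mu_i$ be the hyperbolic geodesic of $\wz\Omega$ joining $p$ to $m_i$. The three arcs $\mu_1,\mu_2,\mu_3$ divide $R$ into three closed quadrilaterals $Q_1,Q_2,Q_3$, where $Q_i$ contains the vertex $z_i$ and has boundary consisting of the half of $\gamma_{i-1}$ from $z_i$ to $m_{i-1}$, the geodesic $\mu_{i-1}$, the geodesic $\mu_{i+1}$, and the half of $\gamma_{i+1}$ from $m_{i+1}$ to $z_i$, with the convention that $z_i$ is the common vertex of $\gamma_{i-1}$ and $\gamma_{i+1}$.

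Let $c=(a_1+a_2+a_3)/3$. I would set $\phi(p)=c$, $\phi|_{\gamma_i}\equiv a_i$, and let $\phi$ interpolate linearly in arc length from $c$ at $p$ to $a_j$ at $m_j$ along $\mu_j$. Inside each $Q_i$ I would introduce polar-type coordinates $(r,\theta)$ centered at $z_i$, with $\theta\in[0,1]$ sweeping from the $\gamma_{i-1}$-side to the $\gamma_{i+1}$-side and the level curves of $r$ starting from $z_i$ and ending at the inner arc $\mu_{i-1}\cup\{p\}\cup\mu_{i+1}$, and define $\phi$ as the unique continuous extension that, along every level curve of $r$, is the affine combination in $\theta$ of the two boundary values. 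The bilinear nature of this interpolation gives linearity in $(a_1,a_2,a_3)$ and the pointwise bound \eqref{bound of function}.

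The gradient estimate in each $Q_i$ splits into a radial contribution, bounded by $(|a_{i-1}-c|+|a_{i+1}-c|)$ times the length of the inner arc $\mu_{i-1}\cup\mu_{i+1}$ (which is comparable to $\diam(R)$, hence to $\min\{\ell(\gamma_{i-1}),\ell(\gamma_{i+1})\}$ by \eqref{distance estimate} and the Whitney-type property), and a tangential contribution of the form $|a_{i-1}-a_{i+1}|\min\{\ell(\gamma_{i-1}),\ell(\gamma_{i+1})\}$ coming from the level-curve interpolation. Using $|a_{i\pm1}-c|\le\frac{2}{3}\max_{j,k}|a_j-a_k|$ and summing over $i$, one obtains \eqref{norm of function}. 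The main obstacle is the asymmetric $\min\{\ell(\gamma_i),\ell(\gamma_j)\}$ appearing in \eqref{norm of function}: it requires that the transition of $\phi$ between two adjacent sides be localized on the scale of the shorter side, rather than on any symmetric mean. This forces the careful Whitney-type choice of $p$ and of the midpoints $m_i$, together with the use of Lemma~\ref{ball separation} and Theorem~\ref{GM unbounded} to show that the geodesics $\mu_i$ remain of length comparable to $\diam(R)$ with constants depending only on $C_1$. Once these geometric comparabilities are in place, the gradient bound becomes a direct change-of-variables computation in the chosen polar coordinates.
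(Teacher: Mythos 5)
Your approach is genuinely different from the paper's: the paper builds $\phi$ as a ``boundary-layer'' function, explicitly $\phi=a_\ell$ (where $\gamma_\ell$ is the longest side) away from the two shorter sides and transitioning to $a_i$ in a thin collar of width $\sim\dist(\cdot,\partial R\setminus\gamma_i)$ near each shorter $\gamma_i$; the covering/summation argument then localizes all of the $L^1$ mass of $|\nabla\phi|$ on these collars, yielding $|a_\ell-a_i|\ell(\gamma_i)$ with no appeal to the geometry of the interior of $R$ at all. Your proposal instead does a barycentric subdivision through an interior point $p$ and a bilinear interpolation.

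The proposal has a genuine gap, and it is precisely in the step you yourself flag as ``the main obstacle.'' Setting $\phi(p)=c=(a_1+a_2+a_3)/3$ is incompatible with the $\min$ in \eqref{norm of function} when the side lengths are very different. Concretely, take $a_2=a_3=0$, $a_1=1$, and a triangle with $\ell(\gamma_1)=\epsilon\ll\ell(\gamma_2)\sim\ell(\gamma_3)\sim\diam(R)$. Then the target bound is
\[
|a_1-a_2|\ell(\gamma_1)+|a_1-a_3|\ell(\gamma_1)+|a_2-a_3|\ell(\gamma_2)\ =\ 2\epsilon,
\]
so $\|\nabla\phi\|_{L^1(R)}$ must be made $O(\epsilon)$. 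But with your construction $\phi(p)=c=1/3$ while $\phi\equiv 0$ on $\gamma_2\cup\gamma_3$, and in the sub-region $Q_1$ the radial interpolation along $\mu_2,\mu_3$ forces a gradient of order $|c-0|/\ell(\mu_i)\sim(\diam R)^{-1}$ over a region of area $\sim(\diam R)^2$, giving an $L^1$ contribution $\gtrsim\diam R$, independent of $\epsilon$. Your own accounting produces the same thing: you bound the radial term by $(|a_2-c|+|a_3-c|)\cdot\ell(\mu_2\cup\mu_3)$, then replace $|a_{i\pm1}-c|$ by $\frac23\max_{j,k}|a_j-a_k|$; but $\max_{j,k}|a_j-a_k|=|a_1-a_2|$, which is paired with the \emph{wrong} length $\min\{\ell(\gamma_2),\ell(\gamma_3)\}\sim\diam R$ rather than $\ell(\gamma_1)$. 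The quantity $|a_1-a_2|\diam R$ is not dominated by $\sum_{i,j}|a_i-a_j|\min\{\ell(\gamma_i),\ell(\gamma_j)\}$.

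The claim ``$\ell(\mu_{i-1}\cup\mu_{i+1})\sim\diam(R)$, hence $\sim\min\{\ell(\gamma_{i-1}),\ell(\gamma_{i+1})\}$ by \eqref{distance estimate} and the Whitney-type property'' is the source of the error: \eqref{distance estimate} gives only an upper bound $\ell(\gamma_i)\lesssim\diam(Q)$, not a lower bound, so the three sides need not have comparable lengths; and $\diam(R)$ is comparable to $\max_i\ell(\gamma_i)$, not $\min_i\ell(\gamma_i)$. To salvage an interpolation of your type one would have to set $\phi(p)$ equal to $a_\ell$ for the longest side $\gamma_\ell$ (so that no transition at all is generated in the bulk of $R$ for the large pair) and tilt the decomposition so that transitions involving $a_i$ are supported in a region of measure $\lesssim\ell(\gamma_i)^2$---which is essentially what the paper's asymmetric distance-function construction does automatically.
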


\begin{proof}
Without loss of generality we may assume that $\gamma_3$ has the longest length among these three curves. 
We first consider the case where $\max\{a_1,\,a_2\}\le a_3$. Define 
\begin{align*}
\phi(x)=\min &\left\{a_1+\inf_{ \theta_1}\int_{ \theta_1}  \frac {20(a_3-a_1)}{\dist(z,\,\partial R\setminus \gamma_1)} \, ds(z),\, a_2+\inf_{ \theta_2}\int_{ \theta_2}  \frac {20(a_3-a_2)}{\dist(z,\,\partial R \setminus \gamma_2)} \, ds(z),\, a_3  \right\},
\end{align*}
where the infima are taken over all curves $\theta_i\subset R$ connecting $x$ to $ \gamma_{i}$. Observe 
that $\phi$ is bounded by its definition and locally Lipschitz in the (Euclidean) interior of $R$ via the triangle inequality; indeed for any $x\in R$,
$$|\phi(x) - \phi(y)|\le C |x-y|\max_{i=1,\,2} (a_3-a_i)\dist(z,\,\partial R\setminus \gamma_i)^{-1}$$ 
for $y\in B(x,\,1/3 \dist(x,\,\partial R))$ with an absolute constant $C$. Moreover $\phi$ takes the correct boundary value by its definition; note that both of the integrals in the definition of $\phi$ have logarithmic growth towards the boundary.

Define
$$A=\bigcup_{i=1}^{2} \bigcup_{y\in \gamma_i} B\left(y,\, \frac 1 {10} \dist(y,\,\partial R\setminus \gamma_i)\right).$$
Note that for $x\in  R \setminus A $ we have 
\begin{equation}\label{equ 700}
\phi(x)=a_3.
\end{equation}
By the $5r$-covering theorem, we know that there exists a 
countable collection of pairwise disjoint disks 
$$\left\{B\left(y_{ij},\, \frac 1 {10} \dist(y_{ij},\,\partial R\setminus \gamma_i)\right)\right\}_{y_{ij}\in \gamma_i}$$ such that
$$A\subset \bigcup_{i=1}^{2} \bigcup_{y_{ij}\in \gamma_i}  B\left(y_{ij},\, \frac 1 {2} \dist(y_{ij},\,\partial R \setminus \gamma_i)\right)=:\bigcup_{i=1}^{2} \bigcup_{y_{ij}\in \gamma_i} B_{ij}. $$

Let us  estimate the gradient inside $B_{ij}$. 
Define
$$A_i=\bigcup_{y_{ij}\in \gamma_i} B_{ij}$$
for $i=1,\,2.$
Notice that $A_1\cap A_2=\emptyset$ by the triangle inequality; $B_{ij}$ are open disks.
Then for every $x\in A_i,\, i=1,\,2$, we have
$$\phi(x)= \min \left\{a_i+\inf_{ \theta_i}\int_{ \theta_i}  \frac {20(a_3-a_i)}{\dist(z,\,\partial R\setminus \gamma_i)} \, dz,\,a_3\right\},$$
and hence for $x\in A_i,\, i=1,\,2$, 
\begin{equation}\label{local Lip}
|\nabla \phi(x)|\ls  \frac { a_3-a_i }{\dist(z,\,\partial R\setminus \gamma_i)}.
\end{equation}
By the definition of $A_i$, \eqref{local Lip} together with \eqref{equ 700} gives us the local Lipschitz property of $\phi$ in $R \setminus\{z_1,\,z_2,\,z_3\}$.

Notice that, for any $z\in B_{ij},\, i=1,\,2$ by the triangle inequality we have
$$2\dist(z,\,\partial \Omega\setminus \gamma_i)  \ge  \dist(y_{ij},\,  \partial \Omega\setminus \gamma_i). $$
Then by \eqref{equ 700} and \eqref{local Lip}, via the triangle inequality we further have
\begin{align*}
\|\nabla \phi \|_{L^{1}(R)}&\lesssim \sum_{i=1,\,2} \sum_{j}\int_{B_{ij}} \frac {|a_3-a_i|}{\dist(z,\,\partial R \setminus \gamma_i)} \, ds(z)\\
&\lesssim \sum_{i=1,\,2} \sum_{j}\int_{B_{ij}} \frac {|a_3-a_i|}{\dist(y_{ij},\,\partial R\setminus \gamma_i)} \, ds(z)\\
&\lesssim \sum_{i=1,\,2} \sum_{j} {|a_3-a_i|}{\dist(y_{ij},\,\partial R\setminus \gamma_i)} \,
\lesssim |a_3-a_1|\ell(\gamma_{1})+|a_3-a_2|\ell(\gamma_2),
\end{align*}
where the last inequality follows from the fact that $\frac{1}{5}B_{ij}$ are centered on $\gamma_i$ and pairwise disjoint. 

For the other cases, we apply suitable modifications to $\phi$; the idea is to increase or decrease the value of $\phi(x)$ towards a longest curve according to the values on the other two curves. Namely if  $a_3\le \min\{a_1,\,a_2\}$, we   define
$$\phi(x)=\max \left\{a_2-\inf_{ \theta_2}\int_{ \theta_2}  \frac {20(a_2-a_3)}{\dist(z,\,\partial R\setminus \gamma_1)} \, ds(z),\,  a_1- \inf_{ \theta_1}\int_{ \theta_1}  \frac {20(a_1-a_3)}{\dist(z,\,\partial R \setminus \gamma_2)} \, ds(z),\, a_3  \right\}.$$
For the remaining case, by symmetry we only consider the case where $a_1\le a_3\le a_2$, and  define
$$\phi(x)=\max \left\{a_2-\inf_{ \theta_2}\int_{ \theta_2}  \frac {20(a_2-a_3)}{\dist(z,\,\partial R\setminus \gamma_1)} \, ds(z),\, \min \left\{ a_1+\inf_{ \theta_1}\int_{ \theta_1}  \frac {20(a_3-a_1)}{\dist(z,\,\partial R \setminus \gamma_2)} \, ds(z),\, a_3\right\}  \right\}.$$
An argument  similar to the above one gives the estimate for the $L^1$-norm of the gradient of $\phi$. 
\end{proof}

\begin{figure} 
 \centering
 \includegraphics[width=0.9 \textwidth]{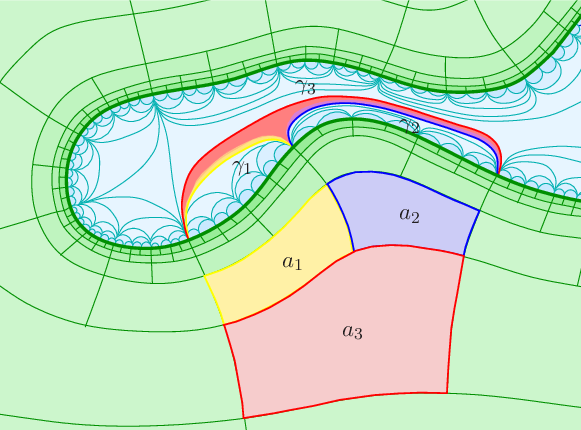}
 \caption{A hyperbolic triangle is shown with the three Whitney-type sets corresponding to its three edges, respectively. The colors of the edges and the Whitney-type sets show how  the boundary values are taken by the function in the hyperbolic triangle. }
 \label{fig:value association}
\end{figure}

With the help of Lemma~\ref{axillary function}, we are ready to construct our 
extension operator.

%Notice that since of a John domain is finitely connected along its boundary, then for a simply connected John domain $\Omega\subset \mathbb R^2$ with conformal map $g \colon \mathbb D \to \Omega$, we can extend $g$ continuously to the boundary with $g(0)=x_0$, where $x_0$ is the center of a biggest Whitney cube in $\Omega$. Then for every quasihyperbolic in $\Omega$ connecting $x\in \Omega$ and $x_0$, it is a John curve. Moreover, we call a curve $\gamma$ a {\it hyperbolic ray} in $\Omega$, if $g^{-1}(\gamma)$ is a radial ray from the origin to the boundary $\partial \mathbb D$. For any ${Q}_i\in W$, we consider all the hyperbolic rays in $\widetilde{\Omega}$ starting from $\infty$ and 
%passing through $\widetilde{Q}_i$, and define the {\it shadow} $S(\widetilde{Q}_i)$ as the set of all points where
%these rays hit the boundary $\partial\Omega$. 

Fix a function 
\begin{equation}\label{assumption u}
u\in W^{1,\,1}(\Omega)\cap C^{\infty}(\mathbb R^2).
\end{equation}
Recall that $\Omega$ is $J$-John by Lemma~\ref{property of John domain}. For $k\ge 1$ and $1\le j\le 2^{k_0+k+1}$ we associate to $\gamma_k^{(j)}$  a constant
$$a_k^{(j)}=\bint_{Q_k^{(j)}} u(x) \,dx, $$
where $Q_k^{(j)}\in W$ is the Whitney-type set defined in Section~\ref{sec:jordandeco} above.

Let 
$$R=\bigcup R_k^{(j)},$$
where the indices run over all the $k,\,j\in \mathbb N$ such that $ R_k^{(j)} $ is defined. 
First, by Lemma~\ref{axillary function} we associate to each $R_{k}^{(j)}$ 
a function $\phi_{k}^{(j)}\in W^{1,\,1}(R_{k}^{(j)})$ for every $k\ge 1$ and $1\le j\le 2^{k_0+k+1}$ so that 
\begin{align*}
\phi_{k}^{(j)}(x) = \left\{ \begin{array}{ll}
a_k^{(j)} \ & \ x\in \gamma_k^{(j)}\cap R_{k}^{(j)}\\
a_{k+1}^{(2j-1)} \ & \ x\in \gamma_{k+1}^{(2j-1)}\cap R_{k}^{(j)}\\
a_{k+1}^{(2j)} \ & \ x\in \gamma_{k+1}^{(2j)}\cap R_{k}^{(j)}
\end{array} \right. ;
\end{align*}
see Figure~\ref{fig:value association}. 
When $k=0$, again by Lemma~\ref{axillary function}, for each $1\le j\le    2^{k_0+2}$ we get a function $\phi_{0}^{(j)}\in W^{1,\,1}(R_{0}^{(j)})$ satisfying

\begin{align*}
\phi_{0}^{(j)}(x) = \left\{ \begin{array}{ll}
a \ & \ x\in \gamma_0^{(j)}\setminus(\gamma_{1}^{(j)}\cup \gamma_{1}^{(j+1)})\\
a_{1}^{(j)} \ & \ x\in \gamma_{1}^{(j)}\cap R_{0}^{(j)}\\
a_{1}^{(j+1)} \ & \ x\in \gamma_{1}^{(j+1)}\cap R_{0}^{(j)}
\end{array} \right. ,
\end{align*}
where $ a_1^{( 2^{k_0+2}+1)}=a_1^{(1)}$ and $$a=\bint_{\Omega} u\, dx=\frac 1 {|\Omega|}\int_{\Omega} u\, dx.$$

Finally set 
$$Eu(x)=
 \begin{cases}
 u(x), & \text{if }x \in \overline{\Omega}\\
 \phi_{k}^{(j)}(x), & \text{if $x\in   R_{k}^{(j)}\cap \wz \Omega$ }\\
a, & \text{otherwise} 
 \end{cases}.
$$

Now let us estimate the homogeneous Sobolev norm of $Eu$.
It is clear from the construction that $Eu\in W^{1,\,1}_{\loc}(\wz\Omega)$ since $Eu$ is locally Lipschitz in $\wz \Omega$ by Lemma~\ref{axillary function}. 
Since by Lemma~\ref{whitney preserving} the conformal map $\varphi$ is uniformly $C(\lambda)$-bi-Lipschitz on $\lambda$-Whitney-type sets up to a dilation factor, we have that $Q_k^{(j)}$'s are $C(C_1)$-John domains, and so are, for example, also $Q_k^{(j)}\cup Q_k^{(j+1)}$ and $Q_k^{(j)}\cup Q_{k+1}^{(2j+1)}$. Recall the following $(1,\,1)$-Poincar\'e inequality for John domains: For any $J$-John domain $G \subset \mathbb R^2$ and  $v\in W^{1,\,1}(G)$ we have
$$\bint_{G} |v-v_G|\, dx \le C(J) \diam(G) \bint_{G}  |\nabla v|\, dx, $$
where $v_G$ is the integral average of $v$ in $G$; 
see e.g.\ \cite{BK1995} and the references therein. Also note that by the definition of a John domain, the facts that every $Q_1^{(j)}$ is of $\lambda(C_1)$-Whitney-type  and that the John center of $\Omega$ is contained in every $Q_1^{(j)}$, we have 
\begin{equation}\label{equ 30}
\diam (Q_1^{(j)})\sim \diam(\Omega) \qquad \text{ and then } \qquad
|Q_1^{(j)}|\sim |\Omega|\sim \diam(\Omega)^2
\end{equation}
for each $1\le j\le   2^{k_0+2}$, where the constants depend only on $C_1$. 

Now we can estimate the homogeneous Sobolev norm of $Eu$ over $\wz \Omega$.
By recalling \eqref{distance estimate} and \eqref{distance estimate2}, we apply Lemma~\ref{axillary function} and the above Poincar\'e inequality  to get
\begin{align*}
& \|\nabla Eu\|_{L^{1}(\wz \Omega)} \\
\lesssim& \sum_{\substack{k\ge 1\\ 1\le j\le 2^{k_0+k+1}}} \int_{R_{k}^{(j)}} |\nabla \phi_{k}^{(j)}(x)| \, dx + \sum_{1\le j\le 2^{k_0+2}} \int_{R_{0}^{(j)}} |\nabla \phi_{k}^{(j)}(x)| \, dx \\
\lesssim& \sum_{\substack{k\ge 1 \\ 1\le j\le 2^{k_0+k+1}}} \left(|a_k^{(j)}-a_{k+1}^{(2j-1)}|+|a_k^{(j)}-a_{k+1}^{(2j)}| +|a_{k+1}^{(2j-1)}-a_{k+1}^{(2j)}|\right) \diam (Q_k^{(j)})\\
&  +\sum_{j=1}^{   2^{k_0+2}} \left(|a_1^{(j)}-a_{1}^{(j+1)}|+|a_1^{(j)}-a| +|a_{1}^{(j+1)}-a|\right)\diam (Q_1^{(j)}) \\ 
\lesssim& \sum_{\substack{k\in \mathbb N\\ 1\le j\le 2^{k_0+k+1}}} \left(  \int_{  Q_k^{(j)} \cup Q_{k+1}^{(2j-1)}} |\nabla u|\, dx +\int_{ Q_k^{(j)} \cup Q_{k+1}^{(2j)}} |\nabla u|\, dx + \int_{ Q_{k+1}^{(2j-1)} \cup Q_{k+1}^{(2j)}} |\nabla u|\, dx  \right) \\
& \qquad +\int_{\Omega}|\nabla u|\, dx,
\end{align*}
where the constants depend only on $C_1$. 

Additionally, by our construction, we only have uniformly finite overlaps for the Whitney-type sets in the summation. Therefore we have 
\begin{equation}\label{eqn201}
\|\nabla Eu\|_{L^{1}(\widetilde\Omega)} \le C(C_1) \int_{\Omega} |\nabla u|\, dx. 
\end{equation}

Let $B$ be a ball of radius $4\diam(\Omega)$ such that  $\Omega$ is  contained in $\frac 1 2 B$. Then 
\begin{align}\label{L1 control}
\| Eu\|_{L^{1}(B)}\ls & \sum_{\substack{k\ge 1\\ 1\le j\le 2^{k_0+k+1}}}\int_{R_k^{(j)}} |Eu(x)| \, dx + \int_{\Omega} |u(x)|\, dx \nonumber \\
\ls & \sum_{\substack{k\ge 1\\ 1\le j\le 2^{k_0+k+1}}}\int_{Q_k^{(j)}} |u(x)| \, dx  +\int_{\Omega} |u(x)|\, dx\ls \int_{\Omega} |u(x)|\, dx
\end{align}
where the constant depending only on $C_1$. Here we used \eqref{bound of function} with the fact that
$$|R_k^{(j)}|\le C(C_1) |Q_k^{(j)}|\sim_{C_1} |Q_{k+1}^{(2j-1)}| \sim_{C_1} |Q_{k+1}^{(2j)}| $$
coming from \eqref{distance estimate} and the fact that $Q_k^{(j)}$ is of $C(C_1)$-Whitney-type.

\subsection{Absolute continuity}\label{sec:jordanabscont}% of the extension along almost every line}

We first check that $Eu$ is absolutely continuous along almost every line segment parallel to the 
coordinate axes for $u\in C^{\infty}(\mathbb R^2)\cap W^{1,\,1}(\Omega)$. 
Notice that $u$ is already smooth in $\overline{\Omega}$. %{\color{blue} according to our assumption \eqref{assumption u}}. 
Denoting the collection of the countable number of points $z_k^{(j)}$ by $Z$,   where  $k\ge 1$ and $1\le j\le 2^{k_0+k+1}$, or $k=0$ and $1\le j\le 2^{k_0+2}$,
we first claim that the restriction of $Eu$ to $\mathbb R^2 \setminus Z$ is continuous.

Indeed since $u$ is Lipschitz in $\overline{\Omega}$ and $Eu$ is continuous
in $\wz \Omega$ by our construction, we just need to verify continuity
of $Eu$ at points of $\partial\Omega\setminus Z$. Given a sequence of 
points $\{y_i\}_{i=1}^{\infty}$ in $\wz \Omega$ converging to 
$y\in \partial\Omega\setminus Z$, we claim that there is
a sequence of hyperbolic triangles $\{R_i\}_{i=1}^{\infty}$ among  $R_k^{(j)}$ 
such that, 
\begin{equation}\label{equat 3}
 y_i\in R_i \ \, \text{ and } \ \ R_i \to  y  \text{ when $i\to \infty$. }
\end{equation}
If so, then by \eqref{bound of function} we know that $Eu(y_i)$ is 
bounded from above and below respectively by one of the integral averages of $u$ over 
the corresponding Whitney-type sets $Q_i$ or over two neighbors of $Q_i$. 
Let us show that now also $Q_i$ converge to $y$ (inside $\Omega$). 
Indeed since $R_i$ converge to $y$, especially the three vertices of $R_i$ also converge to $y$ as $i\to \infty$, by the continuity of $\varphi$ and $\wz \varphi$ on the boundary, we know that $\diam(Q_i)\to 0$. Therefore $Q_i$  also converge to $y$. 
Since $u$ is Lipschitz in $\overline{\Omega}$, we 
get the continuity of $Eu(x)$ in $\mathbb R^2\setminus Z$.

Now let us show the existence of the asserted sequence $\{R_i\}_{i=1}^{\infty}$ as in \eqref{equat 3}. Fix $y\in \partial\Omega\setminus Z$, and for
any $s\in \mathbb N$ define
\begin{equation}\label{curve family}
\Gamma_s=\overline{\bigcup_{0\le k\le s} \bigcup_ j\gamma_k^{(j)}},
\end{equation}
where the second union in $j$ is over all the indices $j$ such that 
$\gamma_k^{(j)}$ is defined. Notice that $\Gamma_s$ is compact, 
and $y\not\in\Gamma_s$.
Therefore there exists $\delta_s>0$ such that $\dist(y,\,\Gamma_s)\ge\delta_s$. 
Thus if $\{y_i\}_{i=1}^{\infty}$ converges to $y$, then the lower indices 
of our hyperbolic triangles $R_{k(i)}^{(j)}=R_{k(i)}$ containing $y_i$ tend to 
$\infty$ when $i\to \infty.$  
This together with the fact that $\wz \varphi\colon \overline{\wz \Omega} 
\to \mathbb R^2\setminus \mathbb D$ is homeomorphic implies that $\wz \varphi(R_{k(i)})$ converges to a point on the unit circle  as $i\to \infty$, and thus $R_{k(i)} \to y$ as desired since $y_i\to y$ and $y_i\in R_{k(i)}$.
Thus  \eqref{equat 3} follows.

Since $Z$ is countable, almost every line parallel to the $x_1$-axis contains 
no point of $Z$. Additionally, for almost every such line $S$,
\begin{eqnarray}\label{integrable}
\int_{S\cap \wz \Omega} |\nabla Eu|\, dx <\infty 
\end{eqnarray}
by Fubini's theorem. Fix $S$ with these properties. Since $Eu$ is  locally Lipschitz in $\wz \Omega$,   $Eu|_{S}$ is absolutely continuous for  every closed segment $I\subset S$   with $I\subset \wz \Omega$.

Now for any line segment $I\subset S$, we first observe that $Eu|_{I}$ is 
certainly continuous as $Eu|_{\mathbb R^2\setminus Z}$ is continuous. 
Next we show that $Eu|_{I}$ is absolutely continuous. 
Let $z,w\in I.$ 
%$z_1,\,\dots,\,z_{2n}\in I$ be points whose 
%first components are strictly increasing. 

If $z,\,w\in \overline{\Omega}$, then the $L$-Lipschitz continuity  
of $u$ in $\overline{\Omega}$ gives
\begin{equation}\label{eka}
|Eu(z)-Eu(w)|\le L|z-w|.
\end{equation}
%Denote by $[z_1,\,z_2]$ the line segment contained between $z_1$ and $z_2$.
If $[z,\,w] \cap \overline{\Omega} = \emptyset$, then since $Eu$ is 
absolutely continuous on line segments in $S$, we obtain 
\begin{equation}\label{toka}
|Eu(z)-Eu(w)|\le \int_{[z,\,w]} |\nabla Eu|\,dx.
\end{equation}

For what is left, by symmetry we may assume that $z\in \overline{\Omega}$ 
while $w\in \wz \Omega$. Then let $z_0$ be the closest point of 
$\overline{\Omega}\cap I$ to $w$ between $z$ and $w.$ Then 
\begin{equation}\label{absolute continuity}
|Eu(z)-Eu(w)|\le |Eu(z)-Eu(z_0)|+|Eu(z_0)-Eu(w)| \le  
L|z-z_0|+\int_{[z_0,\,w]} |\nabla Eu|\, dx.
\end{equation}
For the last inequality, we used the facts that $Eu$ is continuous on $S$ 
and absolutely continuous on each closed subinterval $I\subset S$ when 
$I\subset \wz \Omega$, together with \eqref{integrable}.

Suppose that $\ez>0$ and we are given $z_1,\,\dots,\,z_{2n}\in I$ so that 
the one-dimensional
open intervals $(z_{2i-1},z_{2i}),$ $1\le i\le n,$ are pairwise disjoint. By
applying the relevant one of \eqref{eka},\eqref{toka} or
\eqref{absolute continuity} to each pair $z_{2i-1},z_{2i},$ our assumption 
\eqref{integrable} together with the absolute continuity of integrals
gives the existence of  $\delta>0$ so that 
 $\sum_{i=1}^{n} |Eu(z_{2i-1})-Eu(z_{2i})|\le \ez$ provided
 $\sum_{i=1}^{n} |z_{2i-1}-z_{2i}|<\delta.$ This implies the desired absolute
continuity property. The case of lines parallel to the $x_2$-axis is handled analogously.

Thus $Eu$ is absolutely continuous along almost every line segment parallel to the 
coordinate axes for $u\in C^{\infty}(\mathbb R^2)\cap W^{1,\,1}(\Omega)$, and with the norm estimates \eqref{eqn201} and \eqref{L1 control} we conclude that $E\colon C^{\infty}(\mathbb R^2)\cap W^{1,\,1}(\Omega) \to   W^{1,\,1}(B)$ is a (linear) extension operator. 

\subsection{Proof of Theorem~\ref{Jordan suff}}
By subsection~\ref{sec:jordanabscont}, the norm inequalities \eqref{eqn201} and \eqref{L1 control}, and the fact that the Lebesgue measure of $\partial \Omega$ is zero (see part (4) of Lemma~\ref{property of John domain}),
$$E\colon C^{\infty}(\mathbb R^2)\cap W^{1,\,1}(\Omega) \to   W^{1,\,1}(B), \quad  \text{where $\Omega\subset \frac{1}{2}B\subset B$}.$$
 is both linear and bounded with respect to the non-homogeneous $W^{1,\,1}$-norm. Since    $C^{\infty}(\mathbb R^2)\cap W^{1,\,1}(\Omega)$ is dense 
in $W^{1,\,1}(\Omega)$ 
provided that $\Omega$ is a Jordan domain  \cite{KZ2015}, 
linearity and boundedness of $E$ allow us to extend the domain of $E$ 
to the entire $W^{1,\,1}(\Omega)$.  This concludes Theorem~\ref{Jordan suff} because $B$  is a $W^{1,\,1}$-extension domain.

\section{Piecewise hyperbolic geodesics and cut-points}\label{sec:phg}

Recall that we relied on a hyperbolic triangulation of the complementary domain
in the construction of our extension operator in the Jordan case. In order
to obtain a suitable variant of this for the (strongly quasiconvex)
complement of our simply connected domain, we need a counterpart of hyperbolic geodesics used in the triangulation. 

%We construct the desired extension via a modification to our procedure
%for the Jordan case. The first obstacle is that we cannot anymore use
%hyperbolic triangles in the complementary domain since there need not be
%a complementary domain to work with. To overcome this, we begin
%by constructing ``piecewise hyperbolic'' geodesics which will give us 
%the desired decomposition in each of the components of the interior of the
%complement of $\Omega.$ This allows us to mimic the Jordan case, but we have
%to work harder to to verify the regularity of our extension.
%In this section, we first construct the so-called piecewise 
%hyperbolic 
%geodesics, which will be used to replace the hyperbolic geodesics in the 
%proof of Jordan case since the exterior is no more a Jordan domain. 
%Then based on these new curves and a similar idea to the Jordan case, 
%an extension operator will be constructed, and then a 
%discussion on absolute continuity will follow. 

Let $\varphi \colon \mathbb D \to \Omega$ be a conformal map. Our assumption 
that $\Omega^c$ is quasiconvex, say with constant $C_1,$ together with
Lemma~\ref{property of John domain} 
allows us to extend $\varphi$ \emph{continuously}  to entire $\overline {\mathbb D}$  (the extension may fail to  be a homeomorphism).
As usual, the extension is also denoted by $\varphi.$ 
%It follows that $\partial \Omega=\varphi (\partial \mathbb D).$
%continuos map (still denoted by $\varphi$) to the entire $\mathbb{D} $.
% Since the complementary domain $\wz\Omega := \rr^2 \setminus \overline{\Omega}$ now potentially consists of many connected components,
% we shall use piecewise hyperbolic geodesics (recall the definition from Section \ref{sec:phg}) instead of the hyperbolic geodesics that were used in the Jordan case.

We set $\wz\Omega := \rr^2 \setminus \overline{\Omega}$ and 
let $\{\widetilde\Omega_i\}_{i=1}^N$ be an enumeration of the connected 
components of $\widetilde\Omega$ with $N\in \mathbb N \cup\{+\infty\}$, so 
that $\wz \Omega_1$ is the (unique) unbounded component. Then each 
$\widetilde\Omega_i$ is Jordan for $i\ge 2$ by Lemma~\ref{complement of John}.

%\subsection{Piecewise hyperbolic geodesics and cut-points}\label{sec:phg}

Because the boundary of $\Omega$ may have self-intersections, we eventually base our
construction on labeling via $\partial \mathbb{D}.$ Towards this end,
let $x',\, y'\in \partial \mathbb{D}$ be distinct points such that  $x:=\varphi(x')$, $y:=\varphi(y')$ satisfy $|x-y|<\delta$, where
$\delta>0$ will be fixed later. When $\delta<\pi/4,$
this guarantees that the hyperbolic geodesic between $x',\,y'$ in the complement
of the disk is contained in $B(0,\,3).$ 
%We will later need this also for 
%certain additional points associated to $x,y$ and hence will eventually take a smaller
%$\delta.$
A {\it piecewise hyperbolic geodesic} (in $\Omega^c$) associated to
$x,y$ via $\varphi$ (between 
$x,\,y \in \partial \Omega$) in the complement of  
$\Omega$ is any curve obtained via the following construction. 
%We start with the length-minimizing curve $\gamma_0$ in $\Omega^c$ from $x$ to $y$. By assumption that
%$\Omega^c$ is quasiconvex and closed, such a curve exists and satisfies
%\[
% \ell(\gamma_0) \lesssim |x-y|. 
%\]
%Since $\Omega$ is connected, $\gamma_0$ is also unique. 
%
%
%Next we modify $\gamma_0$ in each connected component of $\wz\Omega$
%to be a hyperbolic geodesic.
%We give $\gamma_0$ a constant speed parametrization by $[0,1]$ and with the usual abuse of notation
%write $\gamma_0$ for both the map $\gamma_0 \colon [0,1] \to \rr^2$ and its image.
%Since each connected component of $\widetilde\Omega$ is quasiconvex and simply connected, each component is also Jordan by \cite[Corollary F]{hohe2008}. 
%We redefine $\gamma_0$ on each of the components inductively as follows.
%
%
%Suppose $1 \le n \le N$ and $\gamma_{n-1}$ has been defined.
%If $\gamma_n \cap \widetilde\Omega_n = \emptyset$, $\gamma_n := \gamma_{n-1}$.
%Otherwise, let $t \in [0,1]$ be the first time such that
%$\gamma_{n-1}(t) \in \partial \widetilde\Omega_n$ and $s \in [0,1]$ the last such time.
%Now define $\gamma_{n}$ on $[t,s]$ as the hyperbolic geodesic inside $\widetilde\Omega_n$
%connecting $\gamma_{n-1}(t)$ to $\gamma_{n-1}(s)$. Outside the interval $[t,s]$ define $\gamma_n$ as $\gamma_{n-1}$.
%The (pointwise) limiting curve $\gamma = \lim_{n\to N}\gamma_n$ is then the {\it piecewise hyperbolic geodesic} joining $x$ and $y$.

%Let $\varphi\colon \mathbb D\to\Omega$ be conformal. Then we obtain a 
%sequence of Jordan domains

Define
$x'_n=(1-2^{-n})x',$  
$y'_n=(1-2^{-n})y'.$ Then $x_n:=\varphi(x'_n),y_n:=\varphi(y'_n)\in \partial D_n,$
where  
$$D_n=\varphi(B(0,\,1-2^{-n})),$$ for $n\ge 2,$ is a Jordan domain.
Denote by $\wz D_n$ the complementary domain of $D_n$.
Since $\varphi$
is continuous up to the boundary, the points
$x_n,\,y_n\in \partial \wz D_n$ converge to $x,\,y,$
respectively.
Also observe that $ \Omega^c\subset \wz D_{n+1}\subset\wz D_n$, 
and $\Omega^c=\bigcap_n 
\wz D_n.$  
Pick a hyperbolic geodesic $\gamma_n\subset \wz D_n$ of $\wz D_n$ connecting 
$ x_n $ and $ y_n.$  
Since $\Omega^c$ is $C_1$-quasiconvex, Lemma~\ref{qconvex} and Lemma~\ref{inherited} show that
$\wz D_n$ is $c_1$-quasiconvex with $c_1=c_1(C_1).$ We now fix
$\delta_1=\delta_1(c_1)<\frac 1 {4c_1}$ as in Lemma~\ref{ulkodist}. Then Lemma~\ref{ulkodist}
guarantees that $|\wz \varphi_n (z)-\wz \varphi_n (w)|\le  \frac {1}{4}$ for any 
conformal map $\wz \varphi_n\colon\wz D_n\to \mathbb R^2\setminus \overline{\mathbb D}$
and $z,w\in \partial D_n$ with $|z-w|<\delta_1\diam(\Omega)$;  
recall here that such a $\wz \varphi_n$ extends continuously to the boundary
and hence we may apply Lemma~\ref{ulkodist} even for points on the boundary.
By the uniform continuity of $\varphi$ on $\mathbb D$  \cite[Page 100, Corollary 5.3]{P1992},
we may choose $\delta=\delta(\delta_1,\,C_1)$ so that
$$|x_n-y_n|<\delta_1 \diam(\Omega)$$
whenever $|x'-y'|<\delta;$ notice that $|x'_n-y'_n|<|x'-y'|.$ This determines the 
value of $\delta.$ Under this choice, 
$|\wz \varphi_n (x_n)-\wz \varphi_n (y_n)|<\frac 1 {4}$
and hence the $c_1$-quasiconvexity of $\wz D_n$ together with
Corollary~\ref{GH thm}
guarantees that
\begin{equation}\label{equ1000}
\ell(\gamma_n)\le c_2 |x_n-y_n|,
\end{equation}
where $c_2=c_2(c_1).$ 
%Our assumption that the distance between
%$x$ and $y$ is no more than $\delta$ and quasiconvexity of $\Omega^c$ 
%allow us to use 
%Lemmas \ref{inherited}, \ref{ulkodist}, \ref{GH thm} to conclude that
%$$\ell(\gamma_n)\le C |\varphi(x_n)-\varphi(y_n)|,$$ provided $\delta<1/200.$

Recalling that $x_n\to x $ and $y_n\to  y $
when $n$ tends to infinity, we conclude from \eqref{equ1000} that $\gamma_n\subset B(\varphi(x),M)$
and $\ell(\gamma_n)\le M$ for some $M$ independently of $n.$
Parametrize each $\gamma_n$ by arc length. Then, by the  
Arzel\`a-Ascoli lemma, we obtain via uniform convergence a curve 
$\gamma$ joining 
$ x $ and $ y$ in $\Omega^c.$ We call it a 
{\it piecewise hyperbolic geodesic joining $ x $ and $ y $.} 
Also notice that by \eqref{equ1000}, if $\varphi(x')=\varphi(y')$, then  the piecewise hyperbolic geodesic joining $ x $ and $ y $ is  a constant curve.

\begin{rem}
A natural question arises: If the approximating sequences  of $\varphi(x)$ and $\varphi(y)$ were chosen differently in the previous steps, especially if the preimages  of $x,\,y$ were chosen differently (but still distinct),  could this result in a different curve $\gamma$? This question is answered affirmatively by Lemma~\ref{unique} below, modulo reparametrization.

Moreover we could also define piecewise hyperbolic geodesics joining sufficiently close points near $\partial\Omega$, for example, the points on the piecewise hyperbolic geodesics above; this is the reason why we chose $\delta_1< \frac {1}{4c_1}$ when we applied Lemma~\ref{ulkodist} above. 
More precisely, the argument above applies whenever $x,\,y\in \Omega^c$ satisfy
$$|x-y|\le \delta \diam(\Omega),\, \dist(x,\,\partial \Omega)\le \delta \diam(\Omega),\, \text{ and } \dist(y,\,\partial \Omega)\le \delta \diam(\Omega)$$
with our choice of $\delta$. Indeed, regarding Lemma~\ref{ulkodist} with our approach above, it suffices to show that when $n$ is large enough  we have
$$|x-y|\le \delta \diam(D_n),\, \dist(x,\,\partial D_n)\le \delta \diam( D_n),\, \text{ and } \dist(y,\,\partial D_n)\le \delta \diam(D_n);$$
notice that $x,\,y\in \wz D_n$ for any $n\in \mathbb N$. This follows from the uniform continuity of $\varphi$.

Some of the lemmas below also hold for these  piecewise hyperbolic geodesics. However, in order to simplify the statements of lemmas, and since those additional curves are not needed in what follows, we only prove the lemmas in the case where the end points of piecewise hyperbolic geodesics are on the boundary of $\Omega$. 
\end{rem}

%$$D_n=\varphi(B(0,\,1-2^{-n})),$$ for $n\ge 2,$ is a Jordain domain,
%converge to $x,y,$ respectively, when $n$ tends to infinity. 
%Denote by $\wz D_n$ the complementary domain of $D_n$. Observe that $ \Omega^c\subset \wz D_{n+1}\subset\wz D_n$, and $\Omega^c=\bigcap_n 
%\wz D_n.$ 
%Fix a disk $B$ containing $\Omega$ with $\diam(B)=2\diam(\Omega).$
%Pick a hyperbolic geodesic $\gamma_n\subset \wz D_n$ of $\wz D_n$ connecting 
%$x_n$ and $y_n.$ Assume now that $\gamma_n\subset 10B.$ 
%Since $\Omega^c$ is $C$-quasiconvex, 
%then if $\gamma_n\subset 10B$, 
%Lemma \ref{GH thm} shows that
%$$\ell(\gamma_n)\lesssim |x-y|. $$
%Parametrize each such $\gamma_n$ by arc length. By Arzel\'a-Ascoli theorem 
%there is a subsequence of these curves, still denoted by $\gamma_n$, that
%converges uniformly to a curve $\gamma\subset \Omega^c$. 
%We call it a 
%{\it piecewise hyperbolic geodesic joining $x$ and $y$.} 

%In the following paragraphs, we may need to construct countably many piecewise hyperbolic geodesics. Then one runs similar arguments countably many times to make sure that they come from the same limiting construction. 

The following lemma justifies our terminology. 

\begin{lem}\label{lem_phg}
 Let $\Omega$ be a bounded simply connected domain whose complement is 
$C_1$-quasiconvex. Then there exist constants $C(C_1)$
and $\delta=\delta(C_1)>0$ 
such that, 
 for every pair of (distinct) $x,\,y\in \partial\Omega$
with
\begin{equation}\label{equ 300}
|x-y|\le \delta \diam(\Omega), 
\end{equation}
%\begin{equation}\label{equ 300}
%|x-y|\le \delta \diam(\Omega),  \quad \dist(x,\,\partial \Omega)\le \delta \diam(\Omega) \quad \text{  and } \quad\dist(y,\,\partial \Omega)\le \delta \diam(\Omega), 
%\end{equation}
any piecewise hyperbolic geodesic
 $\gamma$ joining $x$ to $y$ in $\Omega^c$ satisfies
 \[
  \ell(\gamma[z,\,w])\leq C(C_1) |z-w|,
%\ell(\gamma) \le C_1(C) |x-y|.
 \]
whenever $z,w\in \gamma$ and $\gamma[z,\,w]$ is any subcurve of $\gamma$ 
joining $z$ and $w.$
In particular, $\gamma$ is an injective curve (when parametrized by arc length).

%Moreover, we even have a stronger version. That is, for any $z,\,w\in \gamma$,
%$$\ell(\gamma[z,\,w])\lesssim C_2(C) |z-w|,$$
%where $\gamma[z,\,w]$ is a subcurve of $\gamma$ joining $z$ and $w$. 
%Therefore $\gamma$ is an injective curve. 
Additionally, for any $1\le i\le N$ with $\gamma\cap \wz\Omega_i\neq\emptyset,$
we have that  $\gamma\cap \wz\Omega_i$ is a hyperbolic 
geodesic in $\wz\Omega_i.$ 
%and each $\wz \Omega_i$ is a Jordan domain (on the Riemann sphere). 
\end{lem}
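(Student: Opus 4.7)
The plan is to transfer the relevant properties of the approximating hyperbolic geodesics $\gamma_n\subset\widetilde D_n$ to their uniform limit $\gamma$, using the Arzel\`a--Ascoli parametrization already set up in the construction. I first establish the length inequality (which gives injectivity as a corollary), and then identify $\gamma\cap\widetilde\Omega_i$ as a single hyperbolic geodesic via Carath\'eodory kernel convergence.

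\emph{Length bound and injectivity.} By Lemmas~\ref{qconvex} and~\ref{inherited} each $\widetilde D_n$ is $c_1(C_1)$-quasiconvex, and by the choice of $\delta$ via Lemma~\ref{ulkodist} the image $\widetilde\varphi_n(\gamma_n)$ lies uniformly in $B(0,3)$. For any pair $z_n,w_n\in\gamma_n$ the subarc $\gamma_n[z_n,w_n]$ is itself the hyperbolic geodesic between its endpoints in $\widetilde D_n$, and its $\widetilde\varphi_n$-image is contained in $B(0,100)$, so Corollary~\ref{GH thm} yields
\[
 \ell(\gamma_n[z_n,w_n])\le C(C_1)\,|z_n-w_n|.
\]
Given $z,w\in\gamma$, choose approximating points $z_n,w_n\in\gamma_n$ with $z_n\to z$ and $w_n\to w$ from the uniform convergence of the arc-length parametrizations. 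Lower semicontinuity of length under uniform convergence of $1$-Lipschitz curves then gives $\ell(\gamma[z,w])\le C(C_1)|z-w|$. If we now parametrize $\gamma$ by arc length and assume $\gamma(s)=\gamma(t)$ for some $s<t$, then $t-s=\ell(\gamma|_{[s,t]})\le C(C_1)\cdot 0=0$, which is absurd; hence $\gamma$ is injective.

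\emph{Hyperbolic geodesic in each component.} Fix $i$ with $\gamma\cap\widetilde\Omega_i\ne\emptyset$. Since $\{\widetilde D_n\}$ is a decreasing sequence of (spherically) simply connected domains with $\bigcap_n\widetilde D_n=\widetilde\Omega$, the Carath\'eodory kernel of this sequence at any base point $p\in\widetilde\Omega_i$ is exactly the component $\widetilde\Omega_i$. The Carath\'eodory kernel theorem then provides locally uniform convergence of the hyperbolic metric $\rho_{\widetilde D_n}$ to $\rho_{\widetilde\Omega_i}$ on $\widetilde\Omega_i$, and consequently the hyperbolic geodesic in $\widetilde D_n$ joining any two points in a fixed compact subset of $\widetilde\Omega_i$ converges uniformly to the hyperbolic geodesic of $\widetilde\Omega_i$ between the same endpoints. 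Any maximal subarc of $\gamma$ inside $\widetilde\Omega_i$ is the uniform limit of a corresponding subarc of $\gamma_n$, which is itself a hyperbolic geodesic of $\widetilde D_n$; hence the limiting subarc is a hyperbolic geodesic of $\widetilde\Omega_i$. Uniqueness of the hyperbolic geodesic between two prescribed endpoints, combined with the injectivity of $\gamma$ just established, then forces $\gamma\cap\widetilde\Omega_i$ to be a single such geodesic.

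\emph{Main obstacle.} The hardest step is the last one: making the kernel-convergence argument clean and uniform across the two qualitatively different cases $i=1$ (the unbounded component, where one has to work on the Riemann sphere and normalize so that $\infty\mapsto\infty$) and $i\ge 2$ (the bounded Jordan components, where a different normalization of $\widetilde\varphi_n$ is required), and ruling out the a priori possibility that $\gamma$ visits the same $\widetilde\Omega_i$ through several disjoint subarcs. The single-visit assertion should follow from uniqueness of hyperbolic geodesics in each $\widetilde D_n$ together with the quasi-geodesic bound above, but verifying it carefully is the technical heart of the argument.
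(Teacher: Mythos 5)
Your first part (the length bound and the injectivity corollary) is essentially identical to the paper's argument: pass to a subarc $\gamma_n[z_n,w_n]$, apply Corollary~\ref{GH thm} in the $c_1(C_1)$-quasiconvex domain $\wz D_n$, and use lower semicontinuity of length under the arc-length parametrization. The injectivity deduction $t-s=\ell(\gamma|_{[s,t]})\le C\cdot 0$ is a clean way of stating what the paper asserts implicitly.

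For the second part your route is a close cousin of, but not identical to, the paper's. You invoke Carath\'eodory kernel convergence of $\wz D_n\downarrow\Omega^c$ to get convergence of the hyperbolic metrics on $\wz\Omega_i$; the paper instead fixes conformal maps $\wz\varphi_n\colon\rr^2\setminus\overline{\mathbb D}\to\wz D_n$ with $\wz\varphi_n(\infty)=\infty$, precomposes with a single M\"obius self-map $\phi$ sending $\infty$ to $\wz\varphi_n^{-1}(x_0)$ for a base point $x_0\in\wz\Omega_i$, and extracts a locally uniformly convergent subsequence $\wz\varphi_n\circ\phi\to\wz\varphi\colon\rr^2\setminus\overline{\mathbb D}\to\wz\Omega_i$. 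The gain of the paper's packaging is concrete: it reads the geodesic structure off the \emph{preimage} circular arcs $(\wz\varphi_n\circ\phi)^{-1}(\gamma_n)$, which form a compact, limit-closed class independent of whether the endpoints of the relevant subarc sit in a fixed compact subset of $\wz\Omega_i$. Your statement that ``the hyperbolic geodesic in $\wz D_n$ joining any two points in a fixed compact subset of $\wz\Omega_i$ converges uniformly to the hyperbolic geodesic of $\wz\Omega_i$'' does not directly cover the situation at hand, since the subarcs of $\gamma_n$ relevant to $\wz\Omega_i$ have endpoints tending to $\partial\wz\Omega_i$, not staying in a fixed compact. The connectedness/single-visit issue you flag as the ``technical heart'' is real; the paper also treats it briefly (``it easily follows''), but its mechanism is clearer on the preimage side: a circular arc meeting $\partial\mathbb D$ orthogonally is connected by fiat, and its image under $\wz\varphi\circ\phi$ is exactly $\gamma\cap\wz\Omega_i$ by uniqueness of locally uniform limits. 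So: same idea, different normalization; the paper's circular-arc viewpoint sidesteps the endpoint-escape problem that your compact-subset phrasing would need to address, and gives the connectedness more directly.
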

\begin{proof}
%The first part follows clearly from the construction of $\gamma$. 
%Then we only confirm the second and the third part. 
Let $\gamma[z,w]$ be a subcurve of $\gamma$ joining $z$ and $w.$
Then by our construction of the piecewise hyperbolic geodesics, there are $t_n<s_n$ such that the restrictions of $\gamma_n$
to $[t_n,s_n]$ converge to $\gamma[z,w].$ We may assume that $z_n=\gamma_n(t_n)$
converges to $z$ and that $w_n=\gamma_n(s_n)$
converges to $w.$ 
Hence by the lower semi-continuity of length 
(as a functional on curves), Corollary~\ref{GH thm} and Lemma~\ref{inherited} 
one has
$$\ell(\gamma[z,\,w])\lesssim \liminf_{n\to \infty} 
\ell(\gamma_n[z_n,\,w_n])\lesssim \liminf_{n\to \infty} |z_n-w_n|\lesssim |z-w|.$$
Hence our first claim follows.

For the final claim, let 
$\wz \varphi_n\colon \mathbb R^2\setminus \overline{\mathbb D}\to \wz D_n$ to be conformal for each $n\in \mathbb N$, 
and extend it to $\overline{\mathbb R^2}\setminus \overline{\mathbb D}$ by setting 
$\wz \varphi_n(\infty)=\infty$. Also when $i\ge 2$ and 
$\gamma\cap \wz\Omega_i\neq\emptyset$, let $x_0\in\wz\Omega_i$ 
be a point of maximal distance to $\partial\wz\Omega_i$; 
for $\wz \Omega_1$ we set $x_0=\infty$.

Denote by $\phi\colon \overline{\mathbb R^2} \setminus \overline{\mathbb D} 
\to \overline{\mathbb R^2} \setminus \overline{\mathbb D}$ a M\"obius 
transformation such that $\phi(\infty)=\wz\varphi_n^{-1}(x_0)$. Then the 
compositions $\wz \varphi_n\circ \phi\colon \overline{\mathbb R^2} 
\setminus \overline{\mathbb D} \to \wz D_n\cup\{\infty\}$ form  a normal 
family by \cite[Theorem 19.2]{V1971}. Hence there is a subsequence  that
converges locally uniformly to a conformal map 
$\wz \varphi\colon \overline{\mathbb R^2} \setminus \overline{\mathbb D} \to 
\wz \Omega_i$; see \cite[Theorem 21.1]{V1971}. 
Notice that hyperbolic geodesics are invariant under conformal maps. 
Therefore $\gamma_n$ can be regarded as a hyperbolic geodesic induced by 
$\wz \varphi_n\circ \phi$ on the Riemann sphere by the uniqueness of the limit, and hence also $\gamma$ as 
induced by $\wz\varphi\circ \phi$. Thus the part of $\gamma$ in 
$\wz \Omega_i$ is a hyperbolic geodesic; it easily follows from the above
argument that  $\gamma \cap \wz \Omega_i$ is connected. 
% 
%By a standard radial reflection 
%$\mathbb D\to  \overline{\mathbb R^2} \setminus \overline{\mathbb D}$ 
%if necessary, one can see that the first claim of the third part holds. 
%Moreover $\wz \Omega_i$ is simply connected as the conformal map 
%$\wz \varphi\circ \phi$ is a homeomorphism, and quasiconvex because of the 
%second part of this lemma. Then $\wz \Omega_i$ is Jordan  
%(on the Riemann sphere) by \cite[Corollary F]{hohe2008}.
\end{proof}

%The above arguments easily show that each $\wz \Omega_i$ is a Jordan domain 
%(on the Riemann sphere): given $x,y\in \wz \Omega_i$ with $i\ge 2,$ pick
%a corresponding sequence of geodesics  in $\wz D_n$ and pass to the limit;
%this argument can be used to show that $\wz \Omega_i$ is 
%quasiconvex and hence Jordan. The unbounded
%component requires a bit more work.
%MORE DETAILS.

We need the following further properties of piecewise hyperbolic geodesics.
%following stronger assertion if 
%$\Omega^c$ satisfies \eqref{eq:curvecondition}. 

\begin{lem}\label{curve condition for geodesic}
Let $\Omega$
 and $\delta$ 
 be as in Lemma~\ref{lem_phg}, and $x,\,y\in \partial \Omega$.
%  satisfy \eqref{equ 300}.
Given any piecewise hyperbolic geodesic 
$\Gamma\subset \Omega^c$ associated to the pair $x,y$ one has
$$\Gamma\cap \partial\Omega\subset \gamma\cap \partial\Omega, $$
for any curve $\gamma\subset \Omega^c$ connecting $x$ 
and $y$. 
Moreover, if the complement of $\Omega$ satisfies the curve condition 
\eqref{eq:curvecondition}, then
$$  \int_\Gamma\frac{1}{\chi_{\rr^2\setminus \partial\Omega}(z)}\,ds(z) \le 
C|x-y|,$$
where the constant $C$ depends only on the constant in \eqref{eq:curvecondition}.
\end{lem}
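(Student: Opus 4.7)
My approach is to first prove the set inclusion $\Gamma\cap\partial\Omega\subset\gamma\cap\partial\Omega$, from which the integral estimate will follow almost mechanically by invoking Lemma~\ref{lem_phg} and the hypothesized curve condition. The entire weight of the argument sits on the inclusion, so I focus the effort there.

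For the inclusion I argue by contradiction using the ball separation property. Fix $z_0\in\Gamma\cap\partial\Omega$; the case $z_0\in\{x,y\}$ is immediate, so assume $z_0\neq x,y$ and $z_0\notin\gamma$ for some $\gamma\subset\Omega^c$ joining $x$ and $y$. Then $\dist(\gamma,z_0)>0$ by compactness. Recalling $\Gamma=\lim_n\gamma_n$, where $\gamma_n$ is the hyperbolic geodesic in $\wz D_n$ between $x_n$ and $y_n$, pick $z_n\in\gamma_n$ with $z_n\to z_0$. Because $D_n\subset\Omega$ we have $\Omega^c\subset\wz D_n$, and hence $\gamma\subset\wz D_n$. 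Using the radial connectors $\eta_n^x:=\varphi([x_n',x'])$ and $\eta_n^y:=\varphi([y_n',y'])$, whose interiors lie in $\Omega\setminus\overline{D_n}\subset\wz D_n$, I assemble a curve $\gamma_n'=\eta_n^x*\gamma*(\eta_n^y)^{-1}$ in $\overline{\wz D_n}$ from $x_n$ to $y_n$ whose only boundary points are $x_n,y_n$. With $B_n:=B\bigl(z_n,\tfrac12\dist(z_n,\partial D_n)\bigr)\subset\wz D_n$, a Whitney-type disk meeting $\gamma_n$ at $z_n$, Lemma~\ref{ball separation} (applied after a harmless inward perturbation of the endpoints if strictly necessary) yields points $w_n\in\gamma_n'\cap cB_n$ with $c$ absolute.

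The crucial quantitative step is $\dist(z_n,\partial D_n)\to 0$, which forces the radius of $cB_n$ to shrink to zero and hence $w_n\to z_0$. To verify this, write $z_0=\varphi(z_0')$ for some $z_0'\in\partial\mathbb D$ (possible since $\varphi(\overline{\mathbb D})=\overline\Omega$) and set $\hat z_n:=\varphi((1-2^{-n})z_0')\in\partial D_n$. Uniform continuity of $\varphi$ on $\overline{\mathbb D}$, provided by Lemma~\ref{property of John domain}(2), gives $\hat z_n\to z_0$, whence $\dist(z_n,\partial D_n)\le|z_n-\hat z_n|\to 0$. Since $\eta_n^x$ and $\eta_n^y$ shrink to $\{x\}$ and $\{y\}$ respectively by the same uniform continuity, and $z_0\neq x,y$, for large $n$ the point $w_n$ lies neither on $\eta_n^x$ nor on $\eta_n^y$, so $w_n\in\gamma$. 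Compactness of $\gamma$ then forces $z_0\in\gamma$, contradicting $z_0\notin\gamma$. The inclusion is thereby proved.

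Granting the inclusion, the integral bound is immediate. The curve condition \eqref{eq:curvecondition} furnishes a competitor $\gamma\subset\Omega^c$ from $x$ to $y$ with $\ell(\gamma)\le C|x-y|$ and $\mathcal H^1(\gamma\cap\partial\Omega)=0$. By the inclusion, $\mathcal H^1(\Gamma\cap\partial\Omega)\le\mathcal H^1(\gamma\cap\partial\Omega)=0$, so the integrand on $\Gamma$ is finite and equal to one $\mathcal H^1$-almost everywhere. Applying Lemma~\ref{lem_phg} with $z=x$, $w=y$ gives $\ell(\Gamma)\le C(C_1)|x-y|$, and combining these facts yields
\[
\int_\Gamma\frac{1}{\chi_{\rr^2\setminus\partial\Omega}(z)}\,ds(z)=\ell(\Gamma\setminus\partial\Omega)\le\ell(\Gamma)\le C(C_1)|x-y|,
\]
as required. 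The main technical obstacle that I anticipate is applying Lemma~\ref{ball separation} cleanly despite $x_n,y_n$ lying on $\partial\wz D_n$ rather than in the open domain; I expect to circumvent this by replacing the endpoints with slightly interior points $x_n^\circ,y_n^\circ$ lying on $\gamma_n$ near $x_n,y_n$, applying the lemma to the subarc of $\gamma_n$ between them together with the correspondingly truncated $\gamma_n'$, and then sending $x_n^\circ\to x_n$, $y_n^\circ\to y_n$.
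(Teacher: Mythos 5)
Your proof is correct and follows essentially the same strategy as the paper's: you approximate $z_0$ by points $z_n$ on the hyperbolic geodesics $\gamma_n$ in $\wz D_n$, construct Whitney-type disks $B_n$ at $z_n$, apply the ball separation property (Lemma~\ref{ball separation}) to force every competitor curve in $\wz D_n$ between $x_n$ and $y_n$ to hit $cB_n$, and then let $cB_n\to\{z_0\}$. The only genuine variation is in how you upgrade $\gamma$ (joining $x,y$) to a curve in $\wz D_n$ joining $x_n,y_n$: you use the explicit radial connectors $\eta_n^x=\varphi([x_n',x'])$, $\eta_n^y=\varphi([y_n',y'])$, whereas the paper invokes the $C(C_1)$-quasiconvexity of $\wz D_n$ (from Lemma~\ref{inherited}) to produce connector curves of length $\lesssim|x-x_n|$, chosen small enough to avoid $cB_n$. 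Your radial connectors are arguably cleaner: their convergence to $\{x\}$ and $\{y\}$ in Hausdorff distance is immediate from uniform continuity of $\varphi$, so the disjointness from $cB_n$ for large $n$ follows at once from $z_0\neq x,y$, without the auxiliary threshold $|x_n-z_n|\ge C(C_1)|x-x_n|$ the paper imposes. You also supply a more careful justification (via $\hat z_n=\varphi((1-2^{-n})z_0')$) of the step $\dist(z_n,\partial D_n)\to 0$, which the paper merely asserts. The boundary-endpoint issue in Lemma~\ref{ball separation} that you flag at the end is real and your proposed perturbation handles it, provided you also bridge $x_n^\circ$ to $x_n$ (and $y_n^\circ$ to $y_n$) with tiny arcs in $\wz D_n$ so the comparison curve has the correct endpoints; those bridges shrink along with $\eta_n^x,\eta_n^y$ and so miss $cB_n$ for large $n$.
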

\begin{proof}
Let $\gamma\subset \Omega^c$ be an arbitrary curve joining $x,\,
y\in \partial\Omega$. 
Denote the corresponding   hyperbolic geodesics approximating 
$\Gamma$ by $\Gamma_n\subset \wz D_n$ with endpoints $x_n,\,y_n$, according to the definition of 
$\Gamma$.

Suppose $z\in \Gamma\cap \partial \Omega$. 
We claim that $z\in \gamma\cap \partial \Omega$. We may assume that $z\neq x,\, z\neq y$. 
Let $\{z_n\},\,z_n\in \Gamma_n$ be a sequence of points converging to $z$, 
and define for every $n\ge 2$, $$B_n=B\left(z_n,\,\frac 1 2 
\dist(z_n,\,\partial D_n)\right). $$

Since each $B_n$ is a $2$-Whitney-type set in $\wz D_n$, then Lemma~\ref{ball separation} gives a constant $c$ independent of $n$ such that 
$\gamma'\cap cB_n\neq\emptyset$ for any $\gamma'\subset \wz D_n$ joining $x_n$ and $y_n$.  
Since $x_n\to x$ and $y_n \to y$, then when $n$ is large enough so that
$$ |x_n-z_n|\ge C(C_1)|x-x_n|,\quad|y_n-z_n|\ge C(C_1)|y-y_n|,$$
the $C(C_1)$-quasiconvexity of $\wz D_n$ given by Lemma~\ref{inherited}
implies that there exist curves in $\wz D_n$ joining $x$ to $x_n$ and $y$ to $y_n$, respectively, such that these two curves do not intersect $c B_n$. 
By concatenating  them with $\gamma$, we apply Lemma~\ref{ball separation}  to conclude that 
\begin{equation}\label{equ 500}
\gamma\cap c B_n\neq \emptyset
\end{equation}
 when $n$ is large enough; recall that $\gamma\subset  \Omega^c\subset \wz D_n$. 

Observe that by $z\in \Gamma\cap \partial \Omega$, 
we have that $$\lim_{n\to \infty}\dist(z_n,\,\partial D_n)= 0. $$
Then by the assumption $z_n\to z$ when $n\to \infty$, 
we conclude that $cB_n\to z$, and hence $z\in \gamma$ by \eqref{equ 500}. 
Thus $z\in \gamma\cap \partial \Omega$. This proves the claim, and the 
first part of the lemma follows. The second part follows from 
Lemma~\ref{lem_phg} together with the first part applied to a curve $\gamma$
satisfying \eqref{eq:curvecondition}; note that $\Gamma$ is an injective curve. 
\end{proof}

For further reference we record the following consequences of the above lemmas.

\begin{lem} \label{parempi}
Given $\Omega$ as in Lemma~\ref{lem_phg}, and a piecewise hyperbolic geodesic $\Gamma,$ a subcurve
$\Gamma[x,y]$ of $\Gamma$ and a curve $\gamma$ joining $x$ to $y$ in $\Omega^c,$ we
have
$$\Gamma[x,y]\cap \partial \Omega\subset \gamma\cap \partial \Omega.$$

Moreover  $\Gamma\cap \partial \wz\Omega_i$ consists of  at most two points for each $1\le i\le N$, and it is a doubleton if and only if $\Gamma \cap \wz\Omega_i$ is a hyperbolic geodesic joining boundary points. 
\end{lem}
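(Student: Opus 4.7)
For the first assertion I would adapt the ball-separation argument from Lemma~\ref{curve condition for geodesic} directly to the subcurve setting. Fix $z \in \Gamma[x,y]\cap \partial\Omega$ (I may assume $z\notin\{x,y\}$, else $z\in\gamma$ trivially) and let $\gamma \subset \Omega^c$ be any curve joining $x$ and $y$. By the construction of $\Gamma$ as a uniform limit of hyperbolic geodesics $\Gamma_n \subset \wz D_n$, I choose points $x_n,y_n,z_n \in \Gamma_n$ converging to $x,y,z$ with $z_n$ lying on the subarc $\Gamma_n[x_n,y_n]$. The disks $B_n = B\bigl(z_n,\tfrac{1}{2}\dist(z_n,\partial D_n)\bigr)$ are $2$-Whitney-type sets in $\wz D_n$, so Lemma~\ref{ball separation} yields a constant $c=c(C_1)$ such that every curve in $\wz D_n$ joining $x_n$ and $y_n$ meets $cB_n$. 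Using the quasiconvexity of $\wz D_n$ (Lemmas~\ref{qconvex} and~\ref{inherited}), for large $n$ I join $x$ to $x_n$ and $y$ to $y_n$ inside $\wz D_n$ by short curves of length $\lesssim |x-x_n|$ and $|y-y_n|$ respectively, which are localized near $x$ and $y$ and hence miss $cB_n$ (clustering near $z\neq x,y$). Concatenation with $\gamma$ produces a curve in $\wz D_n$ from $x_n$ to $y_n$, forcing $\gamma\cap cB_n\neq\emptyset$. Since $z\in\partial\Omega$ gives $\dist(z_n,\partial D_n)\to 0$, the balls $cB_n$ shrink to $\{z\}$, so $z\in\overline\gamma=\gamma$.

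For the bound $|\Gamma\cap\partial\wz\Omega_i|\le 2$ I would argue by contradiction: suppose three distinct points $p_1,p_2,p_3\in\Gamma\cap\partial\wz\Omega_i$ ordered along $\Gamma$. By Lemma~\ref{complement of John}, $\wz\Omega_i$ is Jordan on the Riemann sphere, so $\overline{\wz\Omega_i}$ is topologically a closed disk and $\overline{\wz\Omega_i}\setminus\{p_2\}$ remains path-connected. I choose a curve $\gamma'\subset\overline{\wz\Omega_i}\setminus\{p_2\}\subset\Omega^c$ joining $p_1$ and $p_3$ with interior in $\wz\Omega_i$; applying the first assertion to the subcurve $\Gamma[p_1,p_3]$ forces $p_2\in\Gamma[p_1,p_3]\cap\partial\Omega\subset\gamma'\cap\partial\Omega$, contradicting $p_2\notin\gamma'$.

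For the equivalence, the backward direction is immediate from the $\le 2$ bound: if $\Gamma\cap\wz\Omega_i$ is a hyperbolic geodesic joining two boundary points, those endpoints lie in $\Gamma\cap\partial\wz\Omega_i$, which is therefore exactly that doubleton. For the forward direction I assume $\Gamma\cap\partial\wz\Omega_i=\{p_1,p_2\}$ and follow the normal-family scheme from the last paragraph of the proof of Lemma~\ref{lem_phg}. Under a suitable normalization the conformal maps $\wz\varphi_n\circ\phi$ admit a subsequential locally uniform limit $\wz\varphi\colon\mathbb R^2\setminus\overline{\mathbb D}\to\wz\Omega_i$, extending (by Jordanness on the sphere) to a homeomorphism of closures. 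The geodesics $\Gamma_n$ pull back to circular arcs perpendicular to $\partial\mathbb D$; picking $p_j^n\in\Gamma_n$ with $p_j^n\to p_j$, the preimages converge to distinct points $q_1,q_2\in\partial\mathbb D$ with $\wz\varphi(q_j)=p_j$, and the subarcs between these preimages converge to the unique non-degenerate circular arc from $q_1$ to $q_2$ perpendicular to $\partial\mathbb D$. Its $\wz\varphi$-image is a non-trivial hyperbolic geodesic in $\wz\Omega_i$ from $p_1$ to $p_2$, and by the connectedness part of Lemma~\ref{lem_phg} it must coincide with $\Gamma\cap\wz\Omega_i$.

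The most delicate step will be this forward direction of the equivalence: ruling out the possibility that $\Gamma$ touches $\partial\wz\Omega_i$ at two distinct points while avoiding $\wz\Omega_i$ altogether (for instance by slipping through an adjacent component that meets $\partial\wz\Omega_i$ at both $p_1,p_2$). The argument rests on verifying that, under the chosen normalization, both $p_1$ and $p_2$ appear as genuine boundary preimages (rather than collapsed or identified ones), ensuring that the limiting circular arc is truly non-degenerate and hence has non-empty interior in $\mathbb R^2\setminus\overline{\mathbb D}$.
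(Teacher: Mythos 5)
Your arguments for the first two assertions are correct, though you take a somewhat different route than the paper for the first one. For $\Gamma[x,y]\cap\partial\Omega\subset\gamma\cap\partial\Omega$ you re-run the ball-separation argument at the level of the approximating geodesics in $\wz D_n$ (using that a subarc of a hyperbolic geodesic is again a hyperbolic geodesic between its endpoints, so Lemma~\ref{ball separation} applies to $\Gamma_n[x_n,y_n]$); the paper instead reduces everything to the already-proved Lemma~\ref{curve condition for geodesic} by concatenating $\Gamma[x,x_0]$, $\gamma$ and $\Gamma[y,y_0]$ into a curve $\beta$ joining the endpoints of $\Gamma$ and invoking injectivity of $\Gamma$ to deduce that a point of $\Gamma[x,y]\cap\partial\Omega\cap\beta$ must actually lie on $\gamma$. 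Both are valid; the paper's is shorter but needs the injectivity observation, yours redoes the hard part. For the bound $|\Gamma\cap\partial\wz\Omega_i|\le 2$ your argument (remove the middle point from the closed disk $\overline{\wz\Omega_i}$ and reroute) is essentially the paper's.

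The forward direction of the equivalence has a genuine gap, and it is exactly at the point you flag. Your normal-family argument produces a locally uniform limit $\wz\varphi\colon\mathbb R^2\setminus\overline{\mathbb D}\to\wz\Omega_i$, but you then assert that the $(\wz\varphi_n\circ\phi)^{-1}$-preimages of $p_j^n$ converge to two \emph{distinct} points $q_1,q_2\in\partial\mathbb D$. Locally uniform convergence on the open set $\mathbb R^2\setminus\overline{\mathbb D}$ gives no control over boundary preimages; in the situation you yourself describe ($\Gamma$ touching $\partial\wz\Omega_i$ at two points but avoiding $\wz\Omega_i$ by slipping through an adjacent component), the approximating circular arcs in the uniformized picture could very well degenerate or escape to infinity, and the preimage sequences of $p_1^n,p_2^n$ could collapse to a single point of $\partial\mathbb D$. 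Non-degeneracy of the limiting arc is in fact equivalent to $\Gamma\cap\wz\Omega_i\neq\emptyset$, which is precisely what has to be shown — so as written the argument is circular. The paper sidesteps all of this: by Lemma~\ref{lem_phg} it suffices to show $\Gamma\cap\wz\Omega_i\neq\emptyset$, and assuming emptiness the paper concatenates $\Gamma[p_1,p_2]$ with a hyperbolic geodesic $\alpha\subset\wz\Omega_i$ joining $p_1,p_2$ to get a Jordan curve $\gamma'\subset\Omega^c$, then observes that $\partial\Omega$ (hence $\Omega$) lies on both sides of $\gamma'$, contradicting connectedness of $\Omega$ by the Jordan curve theorem. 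You would need a comparable topological input to close the gap; the conformal-map convergence alone does not supply it.
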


\begin{proof}
Let $x_0,\,y_0$ be the end points of $\Gamma$ on $\partial \Omega$. 
Then the concatenation of three curves, the subcurve $\Gamma[x,\,x_0]$ of $\Gamma$, $\gamma$, and the subcurve $\Gamma[y,\,y_0]$ of $\Gamma$, is a curve $\beta$ joining $x_0$ to $y_0$. Then the first conclusion follows from Lemma~\ref{curve condition for geodesic} applied to $\Gamma$ and $\beta$. 

If $\Gamma\cap \partial \wz\Omega_i$ has more than two points, then consider the first  and the last points of $\Gamma$ intersecting $\partial \wz\Omega_i$ according to its parametrization, and join them by a hyperbolic geodesic $\az$ inside $\wz\Omega_i$; recall that $\wz \Omega_i$ is Jordan by Lemma~\ref{complement of John}. Then we obtain a new curve $\gamma$ by rerouting the subarc of $\Gamma$ via $\az$. This contradicts Lemma~\ref{curve condition for geodesic} for $\gamma$ since we  have assumed that $\Gamma\cap \partial \wz\Omega_i$ has more than two points. Thus $\Gamma\cap \partial \wz\Omega_i$ has at most two points. 

Let us show the last part of the lemma. We may assume that $\wz\Omega_i$ is bounded; otherwise we just apply suitable M\"obius transformations on the Riemann sphere. 
If $\Gamma \cap \wz\Omega_i$ is a hyperbolic geodesic  joining boundary points, then $\Gamma\cap \partial \wz\Omega_i$ consists of two points since $\wz \Omega_i$ is Jordan. For the other direction, by Lemma~\ref{lem_phg} it suffices to show that $\Gamma\cap  \wz\Omega_i$ is non-empty. On the contrary let us assume that $\Gamma\cap \wz\Omega_i=\emptyset$. Then by joining the given two points  $x,\,y\in \Gamma\cap \partial \wz\Omega_i$  with the hyperbolic geodesic $\az \subset \wz\Omega_i$, we obtain a Jordan curve $\gamma'$ by concatenating $\Gamma[x,\,y]$ with $\az$; see Lemma~\ref{lem_phg} and note that the open subarc $\Gamma(x,\,y)$ is contained in the exterior of $\wz \Omega$ by our assumption and the conclusion of the previous paragraph. However by our construction, $\partial \Omega$ intersects both the interior and the exterior of the Jordan domain  given by $\gamma'$. This implies that there are points belonging to $\Omega$ on both sides of $\gamma'$. This contradicts the Jordan curve theorem since $\Omega$ is connected while $\gamma'\subset \Omega^c$. Therefore we conclude that $\Gamma\cap \wz\Omega_i\neq \emptyset$.  
\end{proof}

The following corollary is a by-product of the lemmas above. 
\begin{lem}\label{unique}
Given a domain $\Omega$ together with a pair of points $x,\,y\in \partial \Omega$ as in Lemma~\ref{lem_phg}, there exists a unique piecewise hyperbolic geodesic joining them, up to a reparametrization. 
\end{lem}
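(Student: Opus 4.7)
The plan is to show that any two piecewise hyperbolic geodesics $\Gamma_1,\Gamma_2\subset\Omega^c$ joining $x$ and $y$ carry the same image; uniqueness up to reparametrization will then follow at once, since by Lemma~\ref{lem_phg} both curves are injective and share the same endpoints.

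First I would identify the boundary traces of the two competitors. Applying the first part of Lemma~\ref{curve condition for geodesic} with $\Gamma_1$ playing the role of the piecewise hyperbolic geodesic and $\Gamma_2$ playing the role of the competing curve yields $\Gamma_1\cap\partial\Omega\subset\Gamma_2\cap\partial\Omega$; reversing the roles gives the reverse inclusion, so
\[
\Gamma_1\cap\partial\Omega \;=\; \Gamma_2\cap\partial\Omega \;=:\; S.
\]
Because $\Omega^c=\partial\Omega\cup\bigcup_i\wz\Omega_i$ is a disjoint union, it remains to match the portions of $\Gamma_1$ and $\Gamma_2$ inside each component $\wz\Omega_i$ of $\wz\Omega$.

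Fix such an $i$. Since $\partial\wz\Omega_i\subset\partial\Omega$, the previous step immediately gives
\[
\Gamma_1\cap\partial\wz\Omega_i \;=\; S\cap\partial\wz\Omega_i \;=\; \Gamma_2\cap\partial\wz\Omega_i.
\]
The second half of Lemma~\ref{parempi} asserts that this common set has at most two points and is a doubleton if and only if $\Gamma_j\cap\wz\Omega_i$ is a hyperbolic geodesic of $\wz\Omega_i$ joining boundary points. Combined with Lemma~\ref{lem_phg}, which tells us that any nonempty $\Gamma_j\cap\wz\Omega_i$ is itself a hyperbolic geodesic and whose closure therefore hits $\partial\wz\Omega_i$ in two points, we obtain a clean dichotomy: either $|S\cap\partial\wz\Omega_i|=2$ with both $\Gamma_j\cap\wz\Omega_i$ being hyperbolic geodesics between the same pair $\{p,q\}\subset\partial\wz\Omega_i$, or $\Gamma_1\cap\wz\Omega_i=\Gamma_2\cap\wz\Omega_i=\emptyset$. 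In the nontrivial case, Lemma~\ref{complement of John} ensures that $\wz\Omega_i$ is a Jordan domain on the Riemann sphere; pushing forward by a Riemann map onto $\mathbb D$ (or onto $\rr^2\setminus\overline{\mathbb D}$ for the unbounded component $\wz\Omega_1$) reduces the matching to the standard fact that a hyperbolic geodesic in the disk model is uniquely determined by its two boundary endpoints. Thus $\Gamma_1\cap\wz\Omega_i=\Gamma_2\cap\wz\Omega_i$ in either case, and taking the union over $i$ together with the first step identifies the images of $\Gamma_1$ and $\Gamma_2$.

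The technical subtlety one might worry about is that $\Gamma_j$ could, a priori, visit $\wz\Omega_i$ more than once, which would make the two-point bookkeeping break down. This is precisely what Lemma~\ref{parempi} rules out via its reroute-through-a-hyperbolic-geodesic argument, so once that result is in hand the uniqueness above is essentially bookkeeping rather than a fresh difficulty.
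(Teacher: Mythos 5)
Your argument is correct and follows the same route as the paper: first identify the common boundary trace via Lemma~\ref{curve condition for geodesic}, then use Lemma~\ref{lem_phg} and the doubleton criterion in Lemma~\ref{parempi} to reduce the comparison in each component $\wz\Omega_i$ to uniqueness of a hyperbolic geodesic with fixed boundary endpoints in a Jordan domain. The paper compresses this to a one-line citation of those lemmas; your write-up simply spells out the bookkeeping that the paper leaves implicit.
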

\begin{proof}
The existence follows directly from Lemma~\ref{lem_phg}, and we only need to show the uniqueness.
% By Lemma~\ref{lem_phg} and Lemma~\ref{complement of John} we may assume that $z,\,w\in \partial \Omega.$ 

Suppose that there are two piecewise hyperbolic geodesics $\gamma_1,\,\gamma_2$ connecting $x,\,y$. Then by Lemma~\ref{curve condition for geodesic} we have
\begin{equation}\label{equ 60}
\gamma_1\cap \partial \Omega=\gamma_2\cap \partial \Omega. 
\end{equation}

Since $\Omega^c=\partial(\Omega^c)\cup \wz \Omega=\partial \Omega \cup \wz \Omega$, we then only need to show that $\gamma_1$ coincides with $\gamma_2$ componentwise. This follows from \eqref{equ 60}, Lemma~\ref{lem_phg} and Lemma~\ref{parempi} according to the uniqueness of hyperbolic geodesics in   Jordan domains. 
\end{proof}

In order to deal with self-intersections of the boundary, it is convenient
to classify points on the boundary in terms of their preimages.

\begin{defn}
 Let $\varphi  \colon \mathbb{D} \to \Omega$ be conformal and assume that it
extends 
as a continuous map (still denoted by $\varphi$)
 to entire $\overline{\mathbb{D}}$. 
 A point $x \in \partial \Omega$ is called 
%\emph{two-sided for $\varphi$}, if the preimage
a \emph{cut-point} if $\varphi^{-1}(x)$ is not a singleton. 
A point $x$ that is not a cut-point is called \emph{one-sided}.
\end{defn}

We warn the reader that our terminology above is not standard. See e.g.\  \cite[Chapter 14]{R1974}
for a discussion on the relation with the usual topological definition of 
cut-points. As stated, the definition appears to depend on the choice of the
continuous conformal map. This is not the case since any other (continuous)
conformal map differs from the chosen one only by a precomposition via a 
M\"obius 
self-map of the (closed) disk.
%The domains $\Omega$ that we are interested in are John hence our
%conformal map extends continuously up to the boundary and each point  
%$x \in \partial \Omega$ has at least one preimage. Hence our cut-points are
%those that have at least two preimages.

%homeomorphisms
%conformal. In this case the our conformal map 
%$\varphi \colon \mathbb{D} \to \Omega$ extends continuously to entire 
%$\overline{\mathbb D}$ and
%two-sided points 
%for $\varphi$ can be equivalently 
%characterized as those points
%$x \in \varphi(\partial \mathbb D)$ for which the preimage 
%$\varphi^{-1}(\{x\})$ contains more than a single point.

%Indeed, a disconnected set contains more than one point. Then we only need
%to observe that an arc of positive length in $\partial \mathbb D$ cannot be 
%mapped to a point by a conformal map $\varphi.$
%This immediately follows from capacity estimates in Section 2.

\begin{lem}\label{lma:twosidedhyperbolic}
Let $\Omega \subset \rr^2$ be a bounded simply connected John domain.
 Then the set $T$ of cut-points can be characterized as
\begin{equation}\label{two sided}
  T = \bigcup_{\gamma} \gamma^o \cap \partial\Omega,
\end{equation}
 where the union is over all piecewise hyperbolic geodesics $\gamma$ 
associated to pairs of (suitable) points in $\partial \mathbb D$
 and $\gamma^o$ denotes the curve $\gamma$ without its endpoints.
 Moreover, the union can equivalently be taken over a countable set of 
curves. 
%and the collection $T$ is independent of the choice of $\varphi$. 
%  Let $\Omega \subset \rr^2$ be a simply-connected bounded John domain whose complement 
%  satisfies the curve condition \eqref{eq:curvecondition} of Theorem \ref{thm:main}.
%  Then the set $T$ of two-sided points for a conformal map $\mathbb{D} \to \Omega$ can be characterized as
%  \[
%   T = \bigcup_{\gamma} \gamma \cap \partial\Omega,
%  \]
%  where the union is over all piecewise hyperbolic geodesics joining any two points in $\rr^2 \setminus \overline{\Omega}$.
%  Moreover, the union can equivalently be taken over a countable set of curves and consequently
%  \[
%   \ch^1(T) = 0.
%  \]
\end{lem}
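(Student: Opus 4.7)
The plan is to establish the two inclusions in \eqref{two sided} and then the countability claim, leveraging the structural information from Lemma~\ref{lem_phg}, Lemma~\ref{parempi} and Lemma~\ref{curve condition for geodesic} together with the boundary continuity of $\varphi$ from Lemma~\ref{property of John domain}.

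For the inclusion $T\subseteq\bigcup_\gamma\gamma^o\cap\partial\Omega$, I take a cut-point $x$ with two distinct preimages $x'_1,x'_2\in\varphi^{-1}(x)\subset\partial\mathbb{D}$ and approximate them by sequences $a'_k\to x'_1$ and $b'_k\to x'_2$ on $\partial\mathbb{D}$, chosen so that $\varphi(a'_k)\ne\varphi(b'_k)$ and so that $\{a'_k,b'_k\}$ separates $x'_1,x'_2$ on $\partial\mathbb{D}$. By continuity of $\varphi$ on $\overline{\mathbb{D}}$, one has $\varphi(a'_k),\varphi(b'_k)\to x$, and so for large $k$ the piecewise hyperbolic geodesic $\gamma_k$ joining $\varphi(a'_k)$ and $\varphi(b'_k)$ is defined. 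To show $x\in\gamma_k^o$, I consider a chord $A_k\subset\overline{\mathbb{D}}$ from $a'_k$ to $b'_k$: the image $\varphi(A_k)$ is a Jordan arc in $\overline{\Omega}$ from $\varphi(a'_k)$ to $\varphi(b'_k)$ with interior in $\Omega$, and its concatenation with any curve $\beta\subset\Omega^c$ joining the same endpoints forms a closed curve whose winding around any small neighborhood of $x$ forces $\beta$ itself to meet $x$ (because $A_k$ separates $x'_1$ from $x'_2$ in $\overline{\mathbb{D}}$ and $\varphi$ preserves this separation near $x$). Applying this to $\beta=\gamma_k$, together with $\varphi(a'_k),\varphi(b'_k)\ne x$, then yields $x\in\gamma_k^o$.

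For the reverse inclusion $\bigcup_\gamma\gamma^o\cap\partial\Omega\subseteq T$, let $x=\gamma(t_0)\in\gamma^o\cap\partial\Omega$. Since $\partial\Omega\subseteq\bigcup_i\partial\wz\Omega_i$ and by Lemma~\ref{parempi} each $\gamma\cap\partial\wz\Omega_i$ has at most two points, $\gamma^{-1}(\partial\Omega)$ is at most countable. Combined with Lemma~\ref{lem_phg} (each $\gamma\cap\wz\Omega_i$ is a single connected hyperbolic geodesic, so $\gamma^{-1}(\wz\Omega_i)$ is a single open interval), there exist indices $i_-,i_+$ such that $\gamma((t_0-\epsilon,t_0))\subset\wz\Omega_{i_-}$ and $\gamma((t_0,t_0+\epsilon))\subset\wz\Omega_{i_+}$ for some small $\epsilon>0$ (the subcase of infinitely many components accumulating from one side is handled by passing to subsequences). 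If $i_-=i_+$, then the single interval $\gamma^{-1}(\wz\Omega_{i_-})$ would be disconnected at $t_0$, a contradiction. Hence $i_-\ne i_+$ and $x$ lies on the boundary of two distinct components of $\wz\Omega$. On the other hand, a one-sided boundary point has a unique preimage $x'\in\partial\mathbb{D}$ and $\varphi$ is locally injective there, so $\partial\Omega$ is locally a Jordan arc and lies on the boundary of only one component of $\wz\Omega$; contrapositively, $x$ must be a cut-point.

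For the countability, I fix a countable dense subset $Q\subset\partial\mathbb{D}$ (e.g.\ rational angles) and consider the countable family $\mathcal{C}$ of piecewise hyperbolic geodesics associated to pairs in $Q\times Q$ for which the construction is defined. By the forward inclusion, every cut-point $x$ with preimages $x'_1,x'_2$ lies in the interior of some $\gamma_k\in\mathcal{C}$ obtained from any sequences $a'_k,b'_k\in Q$ approaching $x'_1,x'_2$ with $\varphi(a'_k)\ne\varphi(b'_k)$. The main obstacle is the topological separation argument in the forward direction: rigorously showing that every curve in $\Omega^c$ joining $\varphi(a'_k)$ to $\varphi(b'_k)$ must pass through the cut-point $x$. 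The difficulty is that $\Omega^c$ may have multiple components of $\wz\Omega$ accumulating near cut-points, and piecewise hyperbolic geodesics may themselves traverse several such components by Lemma~\ref{parempi}, so the separation must be formulated carefully as a local topological statement compatible with the image of the conformal map $\varphi$.
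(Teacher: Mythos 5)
Your forward inclusion and countability arguments are close in spirit to the paper's: both approximate the two preimages of a cut-point by points from a countable dense set on $\partial\mathbb{D}$ and then use a Jordan-curve separation argument. There is a meaningful structural difference, though: the paper uses the closed Jordan curve $\varphi([z_1,0]\cup[0,z_2])$, which lies entirely in $\overline{\Omega}$ and meets $\Omega^c$ only at the cut-point $z$, so separation of $\varphi(x_i)$ and $\varphi(x_j)$ immediately forces any $\Omega^c$-curve joining them to pass through $z$. Your chord $A_k$ only yields an arc; you must close it up by concatenating with $\gamma_k$ itself and then argue that $x$, as a common accumulation point of the two components of $\Omega\setminus\varphi(A_k)$, must lie on the resulting Jordan curve and hence on $\gamma_k$. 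That argument can be made to work, but the ``winding'' step is not actually written out, and as you yourself flag, the burden is real.

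The reverse inclusion, however, has a genuine gap and does not match the paper's route. You assert $\partial\Omega \subseteq \bigcup_i \partial\wz\Omega_i$; this is false. For the slit disk $\Omega=\mathbb{D}\setminus([0,1)\times\{0\})$, which is a simply connected John domain with quasiconvex complement, $\wz\Omega=\mathbb{R}^2\setminus\overline{\mathbb{D}}$ has a single component whose boundary is $\partial\mathbb{D}$, yet the open slit $(0,1)\times\{0\}$ lies in $\partial\Omega$ but on no $\partial\wz\Omega_i$. As a consequence, $\gamma^{-1}(\partial\Omega)$ need not be countable — a piecewise hyperbolic geodesic can travel along a slit over a set of positive $\ch^1$-measure — and even when it is countable, it can be dense in an interval, so the conclusion that $\gamma$ sits in a single component $\wz\Omega_{i_\pm}$ on a one-sided neighborhood of $t_0$ does not follow from what you have. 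The final step, that one-sided points have $\partial\Omega$ ``locally a Jordan arc,'' is unproven and not true in general: local connectivity of $\partial\Omega$ does not give local injectivity of $\varphi|_{\partial\mathbb{D}}$, since cut-point pairs can accumulate at a one-sided point. The paper sidesteps all of this. It assumes $z\notin T$, chooses $r>0$ so that $B(z,r)$ meets only one of the two components of $\Omega\setminus\varphi([y_1,0]\cup[0,y_2])$, and uses the Jordan--Schoenflies theorem to reroute inside $\Omega^c\setminus\{z\}$, contradicting the obstruction from Lemma~\ref{curve condition for geodesic} that every $\Omega^c$-curve with the same endpoints must pass through $z$. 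You would need an argument of comparable strength; the ``two components of $\wz\Omega$'' characterization of cut-points that your proof leans on is neither established nor, as the slit example shows, correct as stated.
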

\begin{proof}
 First of all, by part (2) of Lemma~\ref{property of John domain} we have a continuous 
map $\varphi\colon \overline{\mathbb D} \to \overline{\Omega}$ which is 
conformal in $\mathbb D$.

Let $\{x_i\}_{i=1}^\infty \subset \partial \mathbb D$ be dense. 
We first claim that 
\begin{equation}\label{inclusion 1}
  T \subset \bigcup  \gamma_{i,j}^o \cap \partial\Omega. 
\end{equation}
 where the union is taken over all the piecewise hyperbolic geodesics $\gamma_{i,j}$  joining those 
$\varphi(x_i)$ to $\varphi(x_j)$ with $$| \varphi(x_i) - \varphi(x_i)|\le \delta \diam(\Omega).$$ Recall that $\gamma_{i,j}$ is obtained via
a subsequence of the domains $\wz D_n.$
Since we only have countably many curves, by a usual diagonal argument,
we may assume that each of them is obtained via the same subsequence.
Observe that by Lemma~\ref{lem_phg}, these curves are all injective.

Fix $z \in T$. By the definition of cut-points there 
exist $z_1,z_2 \in \partial \mathbb D$, $z_1 \ne z_2$,
 such that $\varphi(z_1) = \varphi(z_2) = z$.  Recall here that the image
of any nontrivial arc of $\partial \mathbb D$ under our conformal map
that is continuous up to the boundary is connected and not a singleton. By the density of 
$\{x_i\}_{i=1}^\infty$
 there exist $i,j \in \nn$ such that 
 $$\varphi(x_i) \ne z \ne \varphi(x_j),\ \ | \varphi(x_i) - \varphi(x_j)|\le \delta \diam(\Omega)$$
with $\varphi(x_i)\neq \varphi(x_j)$ and $x_i,\,x_j$ divide $\partial \mathbb D$ into two connected components; 
one contains $z_1$ and the other contains $z_2$.
The curve 
$\varphi([0,\,z_1]\cup[0,\,z_2])$ is Jordan since $z_1\neq z_2$ while $\varphi(z_1)=\varphi(z_2)$, and $\varphi$ is injective in $\mathbb D$, where $[0,\,z_i]$ is the 
(radial) line segment connecting the origin and $z_i$ for $i=1,\,2$. 

We show that the points $\varphi(x_i)$ and $\varphi(x_j)$
belong to different connected components of 
$\rr^2 \setminus \varphi([z_1,0]\cup[0,z_2])$. To begin with, since $\varphi$ is continuous up to the boundary, then $$\varphi([z_1,0]\cup[0,z_2])$$
divides $\Omega$ into two components, coming from the two components of $\mathbb D\setminus ([z_1,0]\cup[0,z_2])$. 
Recall that $x_i$ and $x_j$ are in different connected components of $\partial \mathbb D\setminus\{z_1,\,z_2\}$. 
Then by taking two sequences of points inside the components of $\mathbb D\setminus ([z_1,0]\cup[0,z_2])$ approaching $x_i$ and $x_j $, respectively, we conclude that $\varphi(x_i) $ and $\varphi(x_j) $ are not in the same component of $\rr^2 \setminus \varphi([z_1,0]\cup[0,z_2])$ since $\varphi$ is continuous up to the boundary and neither $\varphi(x_i)$ nor $\varphi(x_j)$ belong to $\varphi([z_1,0]\cup[0,z_2])$. 
Consequently,
$\gamma_{i,j}$ has to intersect the Jordan curve $\varphi([z_1,0]\cup[0,z_2])$. 
Since $\varphi([z_1,0]\cup[0,z_2]) \cap \Omega^c = \{z\}$ and 
$\gamma_{i,\,j}\subset \Omega^c$, we know that 
$$\gamma_{i,j}\cap \varphi([z_1,0]\cup[0,z_2]) =\{z\}$$
and thus the claim is 
proved since $\varphi(x_i),\, z,$  and $\varphi(x_j)$ are distinct. Therefore we obtain 
\eqref{inclusion 1}. 
 
%  Suppose $z \notin \gamma_{i,j}$.
%  Now the curve $\gamma_{i,j}$ together with the curve $\varphi([x_i,0]\cup[0,x_j])$
%  form a Jordan curve in the plane. By our assumption $z$ belongs to either the bounded or the unbouded component of 
% %  $\rr^2 \setminus (\gamma_{i,j} \cup \varphi([x_i,0]\cup[0,x_j]))$. {\color{red}(...)} Thus the claim is proved.

 Let us then show the other inclusion. Let $\gamma$ be a piecewise hyperbolic 
geodesic joining two different points in $\partial \Omega^c$
 and let $z \in \gamma \cap \partial\Omega$ while $\varphi(y_1) \ne z \ne \varphi(y_2)$. 
%We may assume that the endpoints 
%of $\gamma$ are in $\partial\Omega$ by extending the corresponding hyperbolic 
%subarc containing the end points to the boundary, provided the endpoints are 
%interior points of $\Omega^c$. By  Lemma~\ref{lem_phg} the end points of the 
%extended curve are different. 
Let $y_1,y_2 \in \partial \mathbb D$ be such that 
$\varphi(y_1)$ and $\varphi(y_2)$ are the endpoints of $\gamma$.
 Now the curve 
 $$\gamma' := \varphi([y_1,\,0]\cup[0,\,y_2]) \cup \gamma$$ is 
Jordan by Lemma~\ref{lem_phg} and it divides $\Omega$ into two
 connected components $\Omega_1$ (the unbounded component) and $\Omega_2$ 
(the bounded component). 
Let $\Omega'$ be the Jordan domain given by $\gamma'$.

Let us assume contrary to the claim that $z \notin T$. 
Then there exists $r>0$ such that $B(z,\,r)$ intersects
 only one of the components, either $\Omega_1$ or $\Omega_2$; otherwise we have two 
sequences of points converging to $z$ in different components, i.e.\ there exist two sequences in different component of $\mathbb D\setminus ([y_1,
,0]\cup [0,\,y_2])$ approximating $\varphi^{-1}(z)$, which by 
the continuity of $\varphi$
contradicts the assumption that $z \notin T$ but $z\notin \varphi([y_1,\,0]\cup[0,\,y_2])$ . 

Assume that $B(z,\,r)\cap \Omega_1= \emptyset$. 
Since $z\in \gamma\cap \partial \Omega$, by the first part of 
Lemma~\ref{curve condition for geodesic} we know that there is no path 
connecting 
$\varphi(y_1)$ and $\varphi(y_2)$ in
$\Omega^c\setminus \{z\}.$ 
Towards a contradiction, we apply the fact that 
$\gamma\subset \partial \Omega'$ and the 
Jordan-Schoenflies theorem to $\Omega'$.

Indeed, by the Jordan-Schoenflies theorem (see e.g.\ \cite{T1992})  there is a homeomorphism 
$\phi\colon\mathbb R^2\to\mathbb R^2$ such that $\phi(\Omega')=\mathbb D$, 
and then $\phi(\gamma)\subset \partial \mathbb D$. 
Recall that $\varphi(y_1)\neq z \neq \varphi(y_2)$. 
Choose $0<\ez<1$ such that 
$$\min\{|\varphi(y_1)-z|,\,|\varphi(y_2)-z|\}>\ez r.$$ 
Then $\varphi(y_1),\,\varphi(y_2)\notin B(z,\,\ez r)$, and 
$\phi(B(z,\,\ez r))$ is also a Jordan domain. Certainly $\phi(z)$ is an 
interior point of $\phi(B(z,\,\ez r))$, and hence there exists $\delta>0$ 
such that $B(\phi(z),\,\delta)\subset \phi(B(z,\,\ez r))$. 
Denote by $\sigma$ the subarc of 
$\partial\left(\mathbb D\cup B(\phi(z),\,\delta)\right)$ which 
joins $\phi(\varphi(y_1))$ and $\phi(\varphi(y_2))$ and reroutes 
$\phi(\gamma)$, and let $\Gamma=\phi^{-1}(\sigma)$.

Observe first that $\Gamma\subset\Omega^c$ by our construction and the 
assumptions that $\Omega_2\subset \Omega'$ and 
$B(z,\,\ez r)\cap \Omega_1=\emptyset$. 
Also $\Gamma$ joins $\varphi(y_1)$ and $\varphi(y_2)$ by not passing through 
$z$. Therefore we obtain the desired curve, which leads to a contradiction.
A similar argument can be applied to the case where  $B(z,\,r)\cap \Omega_2= \emptyset$.  

The existence of countably many curves follows directly from the proof above. 
\end{proof}

An immediate consequence of Lemma~\ref{curve condition for geodesic} and 
Lemma \ref{lma:twosidedhyperbolic} is the following corollary.

\begin{cor}\label{cor:twosidedzeromeasure}
 Let $\Omega \subset \rr^2$ be a simply connected bounded John domain whose 
complement 
 satisfies the curve condition \eqref{eq:curvecondition} in Theorem 
\ref{thm:main}.
 Then the set of cut-points of $\partial \Omega$ 
%for any conformal map $\mathbb{D} \to \Omega$ 
has $\ch^1$-measure zero.
\end{cor}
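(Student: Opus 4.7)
The plan is to combine the two preceding lemmas in a direct way, since together they essentially force the conclusion.

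First, I would invoke Lemma~\ref{lma:twosidedhyperbolic}: under the John hypothesis on $\Omega$ (which holds because quasiconvexity of the complement implies John via part (1) of Lemma~\ref{property of John domain}, and here we have the stronger curve condition \eqref{eq:curvecondition} which in particular gives quasiconvexity of $\Omega^c$), the set $T$ of cut-points can be written as a \emph{countable} union
\[
T = \bigcup_{k=1}^{\infty} \gamma_k^o \cap \partial\Omega,
\]
where each $\gamma_k$ is a piecewise hyperbolic geodesic associated to a pair of points on $\partial \mathbb{D}$ in the countable dense family used in the proof of that lemma.

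Next, I would fix one such $\gamma_k$ and apply the second part of Lemma~\ref{curve condition for geodesic}, which requires precisely the hypothesis \eqref{eq:curvecondition}. It yields
\[
\int_{\gamma_k} \frac{1}{\chi_{\rr^2 \setminus \partial\Omega}(z)}\,ds(z) \le C|x_k - y_k| < \infty,
\]
where $x_k,y_k$ are the endpoints of $\gamma_k$. The integrand equals $+\infty$ exactly on $\gamma_k \cap \partial\Omega$, so the finiteness of the integral forces $\mathcal{H}^1(\gamma_k \cap \partial\Omega)=0$; removing the two endpoints (which contribute zero length anyway) gives $\mathcal{H}^1(\gamma_k^o \cap \partial\Omega)=0$.

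Finally, by countable subadditivity of $\mathcal{H}^1$,
\[
\mathcal{H}^1(T) \le \sum_{k=1}^\infty \mathcal{H}^1(\gamma_k^o \cap \partial\Omega) = 0.
\]
There is no real obstacle here: the content is entirely contained in the two cited lemmas. The only mild point worth checking is that \eqref{eq:curvecondition} indeed implies the John hypothesis needed to apply Lemma~\ref{lma:twosidedhyperbolic} and the constant in Lemma~\ref{curve condition for geodesic}, but both follow from Lemma~\ref{qconvex} and part (1) of Lemma~\ref{property of John domain}, so no new argument is required.
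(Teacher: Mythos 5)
Your proof is correct and is precisely the argument the paper intends when it calls the corollary ``an immediate consequence of Lemma~\ref{curve condition for geodesic} and Lemma~\ref{lma:twosidedhyperbolic}'': write $T$ as the countable union provided by the latter, use the former to get $\mathcal H^1(\gamma_k\cap\partial\Omega)=0$ from the finiteness of the integral (the injectivity of $\gamma_k$ from Lemma~\ref{lem_phg} is what makes arc-length integral finiteness translate into $\mathcal H^1$-nullity), and conclude by countable subadditivity.
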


Let us record the following consequence of the proof of Lemma~\ref{lma:twosidedhyperbolic}. 

\begin{lem}\label{two side cut point}
Let $\Omega \subset \rr^2$ be a simply connected bounded John domain whose complement is quasiconvex and $\varphi \colon \mathbb D \to \Omega$ be a conformal map. 
Also let $\gamma$ be a piecewise hyperbolic geodesic joining two distinct points $\varphi(y_1),\,\varphi(y_2)\in \partial \Omega$, and let $z\in \gamma \cap \partial \Omega$ with $\varphi(y_1)\neq z \neq \varphi(y_2)$. Then $z$ is a cut-point, and every open disk centered at $z$ has non-trivial intersection both with $\Omega_1$ and with $\Omega_2$, where $\Omega_1$ and $\Omega_2$ are the two components of $\Omega\setminus(\varphi([y_1,\,0]\cup[0,\,y_2]) \cup \gamma) = \Omega\setminus(\varphi([y_1,\,0]\cup[0,\,y_2]))$. 
\end{lem}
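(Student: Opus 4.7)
The plan is to first establish the second assertion (every open disk around $z$ meets both $\Omega_1$ and $\Omega_2$) by a Jordan--Schoenflies rerouting argument essentially identical to the one in the proof of Lemma~\ref{lma:twosidedhyperbolic}, and then deduce the cut-point property from this as an easy consequence via the continuous extension of $\varphi$.

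First I would observe that $\gamma':=\varphi([y_1,0]\cup[0,y_2])\cup\gamma$ is a Jordan curve (by Lemma~\ref{lem_phg}, the piecewise hyperbolic geodesic $\gamma$ is injective and meets $\partial\Omega$ only at its two endpoints $\varphi(y_1)$, $\varphi(y_2)$, while $\varphi([y_1,0]\cup[0,y_2])$ is a simple arc inside $\Omega$ with the same endpoints). Since $\varphi$ is injective on $\mathbb{D}$, this arc splits $\mathbb{D}$ into two components whose $\varphi$-images are exactly $\Omega_1$ and $\Omega_2$; moreover, since $\gamma\subset\Omega^c$, we have $\Omega\setminus\gamma'=\Omega\setminus\varphi([y_1,0]\cup[0,y_2])=\Omega_1\cup\Omega_2$, which justifies the alternative description in the statement.

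Next, I would argue by contradiction to prove the second claim: suppose some $B(z,r)$ meets only one component, say $B(z,r)\cap\Omega_2=\emptyset$. Let $\Omega'$ be the Jordan domain bounded by $\gamma'$. By the Jordan--Schoenflies theorem fix a homeomorphism $\phi\colon\mathbb{R}^2\to\mathbb{R}^2$ with $\phi(\Omega')=\mathbb{D}$. Choose $0<\varepsilon<1$ with $\varepsilon r<\min\{|\varphi(y_1)-z|,|\varphi(y_2)-z|\}$, and choose $\delta>0$ with $B(\phi(z),\delta)\subset\phi(B(z,\varepsilon r))$. Then the subarc $\sigma$ of $\partial(\mathbb{D}\cup B(\phi(z),\delta))$ that joins $\phi(\varphi(y_1))$ and $\phi(\varphi(y_2))$ and reroutes $\phi(\gamma)$ gives, via $\Gamma:=\phi^{-1}(\sigma)$, a curve joining $\varphi(y_1)$ and $\varphi(y_2)$ whose only possible points in $\overline{\Omega}$ lie in $\gamma\cup\overline{\Omega_2}\subset\Omega^c$ (using $B(z,\varepsilon r)\cap\Omega_2=\emptyset$) and which does not pass through $z$. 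This directly contradicts the first conclusion of Lemma~\ref{curve condition for geodesic}, which forces $z\in\Gamma$. The symmetric case $B(z,r)\cap\Omega_1=\emptyset$ is handled identically. Once the conclusion holds for some small $r$, it holds for every $r$ by taking smaller disks.

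Finally, the cut-point property follows quickly. For each $n$ choose $w_n^{(i)}\in B(z,1/n)\cap\Omega_i$ and consider $\varphi^{-1}(w_n^{(i)})\in U_i$, where $U_1,U_2$ are the two connected components of $\mathbb{D}\setminus([y_1,0]\cup[0,y_2])$. By compactness of $\overline{\mathbb{D}}$, after passing to subsequences, $\varphi^{-1}(w_n^{(i)})\to z_i\in\overline{U_i}\cap\overline{\mathbb{D}}$; by continuity of $\varphi$ on $\overline{\mathbb{D}}$, $\varphi(z_i)=z\notin\Omega$, so $z_i\in\partial\mathbb{D}$. Since $\overline{U_1}\cap\overline{U_2}\cap\partial\mathbb{D}=\{y_1,y_2\}$ and $\varphi(y_j)\ne z$ for $j=1,2$, we must have $z_1\ne z_2$, so $\varphi^{-1}(z)$ is not a singleton and $z$ is a cut-point. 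The expected obstacle is the rerouting construction in the Jordan--Schoenflies step, but as noted it is a verbatim repetition of the argument already used in Lemma~\ref{lma:twosidedhyperbolic}, so no new technical difficulty arises.
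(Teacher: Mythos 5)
Your proposal is essentially correct and follows the same route that the paper intends, since the paper records Lemma~\ref{two side cut point} simply as a consequence of the proof of Lemma~\ref{lma:twosidedhyperbolic}: you rightly observe that the Jordan--Schoenflies rerouting step there never actually uses $z\notin T$, only that some $B(z,r)$ misses one of $\Omega_1,\Omega_2$, so it establishes the disk-intersection claim directly, and the cut-point property then follows from the pull-back argument you give. Two slips to fix, neither fatal: your parenthetical justification that $\gamma'$ is Jordan asserts that $\gamma$ meets $\partial\Omega$ only at its two endpoints, which contradicts the very hypothesis that $z\in\gamma^o\cap\partial\Omega$; the correct reason is that $\gamma\subset\Omega^c$ while the open arc $\varphi\bigl(([y_1,0]\cup[0,y_2])^o\bigr)$ lies in $\Omega$, so the two arcs can meet only at the common endpoints. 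Likewise the asserted inclusion $\gamma\cup\overline{\Omega_2}\subset\Omega^c$ is not literally correct (since $\Omega_2\subset\Omega$); what you want is that the rerouted curve lies in $\gamma\cup\bigl(B(z,\varepsilon r)\setminus\overline{\Omega'}\bigr)\subset\Omega^c$, with the rerouting taken on the side of $\partial\Omega'$ (exterior or interior) matching the component of $\Omega$ that the small disk misses.
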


\section{Sufficiency for simply connected domains}\label{sec:sufficiency}

We construct the desired extension via a modification to our procedure
for the Jordan case. The first obstacle is that we cannot anymore use
hyperbolic triangles in the complementary domain since there need not be
a complementary domain to work with. To overcome this, we use piecewise
hyperbolic geodesics to obtain a desired decomposition in each of the 
components of the interior of the
complement of $\Omega.$ This allows us to mimic the Jordan case, but we have
to work harder to verify the regularity of our extension. 
Observe that by \eqref{eq:curvecondition}, $\Omega^c$ is $C_1$-quasiconvex with the constant $C_1$ from \eqref{eq:curvecondition}.

\subsection{Decomposition of the complement}
Let  $\varphi\colon \overline{\mathbb D}\to \overline{\Omega}$ be a
conformal map which is continuous up to the boundary, given by (1) and (2) of   Lemma~\ref{property of John domain}. 
The decomposition of $\Omega$ is the same as in Section \ref{sec:jordandeco} 
for the Jordan case.
The Whitney-type sets in $\Omega$ are denoted by $Q_k^{(j)}$ and 
$z_k^{(j)} = \varphi(x_k^{(j)})$ stand for the images of the
endpoints on $\partial \mathbb D$ of the radial rays used in the decomposition of 
$\mathbb{D}$. Again our decomposition begins with a suitable $k_0$ according to the constant $\delta$ in the previous section such that
  $$|z_{k_0}^{(j)} -z_{k_0}^{(j-1)} |\le \delta \diam(\Omega), \quad \forall 1\le j\le 2^{k_0+2}.$$

This time the complement $\Omega^c$ will be simultaneously
triangulated inside each connected component
of $\wz \Omega=\mathbb R^2\setminus\overline{\Omega}$ using hyperbolic geodesics. For this purpose 
we consider all pairs $(z_k^{(j-1)} ,\, z_k^{(j)})$ and pick for each of
them a piecewise hyperbolic geodesic. 
It may happen that $z_k^{(j-1)}=z_k^{(j)}$ for some $j,\,k\in \mathbb N$, but recall that every piecewise hyperbolic geodesic is defined via labeling on $\partial \mathbb D$, namely $x_k^{(j-1)}$ and $x_k^{(j)}$ for this pair of points, and in this case the piecewise hyperbolic geodesic is a constant curve. 
Since we deal with a countable collection of curves
and our piecewise hyperbolic geodesics are obtained via the Arzel\`a-Ascoli 
theorem, we may assume that each of them is obtained via the same subsequence
of the conformal maps $\wz \varphi_n.$ 
In other words, all the components of $\wz \Omega$ are triangulated at the same time.

Recall from Lemma~\ref{lem_phg}
that a piecewise hyperbolic geodesic is a shortest curve, up to a 
multiplicative constant, such that its
restriction to any of the connected components of $\wz \Omega$ is a 
hyperbolic geodesic (or empty).
Moreover by Lemma~\ref{lem_phg} and \eqref{distance estimate} we again have
\begin{equation}\label{length estimate}
 \ell(\gamma_k^{(j)})\ls |z_k^{j-1} - z_k^{j}| \lesssim \diam(Q_k^{(j)}),
\end{equation}
with the constant depending only on $C_1$. 

%For consistency, we require these curves comes from the same 
%limit construction, which is legal since we have only countably many curves. 
%Additionally if in the sequel more piecewise hyperbolic geodesics are 
%required, we may pass to a further subsequence without explicit words. 

Next we study the possible cases of degenerated hyperbolic triangles; see Figure~\ref{fig:degenerated hyperbolic triangles}.

\begin{lem}\label{degenerated triangle}
For $k=1,\,2,\,3$, let $\gamma_k$ be a piecewise hyperbolic geodesics with two 
different endpoints $z_{k},\,z_{k+1}\in\partial \Omega$ respectively; suppose that $z_4=z_1$. Then there are two possibilities: 
\begin{enumerate}[1)]
\item Each of the curves is contained in the union of the other two, and the common set of these three curves is a single point. 
\item   For a specific index $1\le j_0\le N,$ the intersection of each $\gamma_k, 1\le k\le 3$ with $\wz\Omega_{j_0}$ is a hyperbolic geodesic, and these three geodesics form a hyperbolic triangle within $\wz \Omega_{j_0}$. For any other
$1\le j\le N, j\neq j_0$, if two of the curves $\gamma_k$ intersect $\wz\Omega_j$, their respective intersections must coincide.
%if any two curves are in the same connected component of $\wz \Omega$, they ove%rlap with each other. 
\end{enumerate}
\end{lem}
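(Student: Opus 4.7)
The plan is to dichotomize the configuration by asking whether $\Gamma := \gamma_1 \cup \gamma_2 \cup \gamma_3$ contains a topological cycle or is a tree. First, by Lemma~\ref{lem_phg}, within each $\wz\Omega_j$ any non-empty intersection $\gamma_k \cap \wz\Omega_j$ is a single hyperbolic geodesic, and by Lemma~\ref{parempi}, each $\gamma_k$ meets $\partial\wz\Omega_j$ in at most two points. Since $\wz\Omega_j$ is Jordan (Lemma~\ref{complement of John}), two hyperbolic geodesics there either coincide, share exactly one endpoint, cross transversally once, or are disjoint.

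Suppose $\Gamma$ contains a cycle, equivalently $\mathbb R^2 \setminus \Gamma$ has a bounded component $U$. Since $\Omega$ is connected, disjoint from $\Gamma$, and since \eqref{length estimate} ensures the $\gamma_k$'s are short compared with $\diam(\Omega)$ (after choosing $\delta$ small enough), $\Omega$ lies in the unbounded component of $\mathbb R^2 \setminus \Gamma$, forcing $U \subset \Omega^c$. I would then show $U \subset \wz\Omega_{j_0}$ for a unique $j_0$: a point of $U \cap \partial\Omega$ cannot exist, since by Lemma~\ref{two side cut point} every neighborhood of a cut-point meets $\Omega$ from two different local sides, while a one-sided boundary point has $\Omega$ accumulating from one side; both contradict $U \cap \Omega = \emptyset$ as $U$ is open. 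By connectedness, $U \subset \wz\Omega_{j_0}$ for exactly one $j_0$, and $\partial U$ is a Jordan curve consisting of three hyperbolic geodesic arcs in $\overline{\wz\Omega_{j_0}}$ meeting at three boundary points of $\wz\Omega_{j_0}$; these arcs must coincide with $\gamma_k \cap \overline{\wz\Omega_{j_0}}$ for $k=1,2,3$, yielding the triangle in Case (2).

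If instead $\Gamma$ is a tree, I would first verify that each pairwise intersection $\gamma_i \cap \gamma_j$ is connected. Otherwise, distinct points $a,b$ in two components would both lie on $\gamma_i$ and $\gamma_j$, so the sub-arcs $\gamma_i[a,b]$ and $\gamma_j[a,b]$ would be two distinct piecewise hyperbolic geodesics joining $a$ to $b$, contradicting the uniqueness of piecewise hyperbolic geodesics (Lemma~\ref{unique}, extended to close enough near-boundary points as in the remark following its proof). Thus $\Gamma$ is the Steiner subtree of $\{z_1,z_2,z_3\}$, which is either a tripod with one interior Steiner point $p$ or a path with one of the $z_k$'s in the middle playing the role of $p$. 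In either case each $\gamma_k$ decomposes at $p$ into two legs coinciding with sub-arcs of the other $\gamma_\ell$'s, giving the containment $\gamma_k \subset \gamma_{k-1} \cup \gamma_{k+1}$ (indices mod $3$) and $\gamma_1 \cap \gamma_2 \cap \gamma_3 = \{p\}$ asserted in Case (1).

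The moreover statement in Case (2) is handled by the same cycle-versus-tree principle applied inside another $\wz\Omega_j$: if $\gamma_i \cap \wz\Omega_j$ and $\gamma_k \cap \wz\Omega_j$ are non-overlapping hyperbolic geodesics (disjoint, transversally crossing, or sharing only one endpoint), then $\Gamma$ encloses an additional bounded region inside $\wz\Omega_j$, giving a second candidate for $j_0$ and contradicting its uniqueness established above; hence the two geodesics coincide on a sub-arc. The main obstacle of the proof is the cut-point analysis in Case (2), namely showing that the enclosed region $U$ cannot spread across multiple complementary components via interior cut-points, where Lemma~\ref{two side cut point} is crucial; a secondary technical point is the extension of Lemma~\ref{unique} to near-boundary interior points of piecewise hyperbolic geodesics, accessed through the remark after its proof.
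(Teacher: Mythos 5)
Your proposal takes a genuinely different, more topological route than the paper: you dichotomize on whether $\Gamma=\gamma_1\cup\gamma_2\cup\gamma_3$ contains a cycle, whereas the paper singles out the ``last'' intersection point $y_0\in\gamma_1\cap\gamma_2$, shows $y_0\in\partial\Omega$ via the uniqueness of hyperbolic geodesics in $\wz\Omega_j$, and then case-splits on whether $y_0\in\gamma_3$. The topological viewpoint is attractive, but several steps are incomplete in a way that goes beyond mere polish.

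The most serious gap is in your ``moreover'' argument. You claim that if $\gamma_i\cap\wz\Omega_j$ and $\gamma_k\cap\wz\Omega_j$ are non-overlapping geodesics then $\Gamma$ encloses a second bounded region inside $\wz\Omega_j$. This is simply not true: two disjoint geodesic arcs inside a Jordan domain $\wz\Omega_j$, together with the rest of $\Gamma$ outside $\wz\Omega_j$, need not bound any region at all (and you have not ruled out the possibility that $\mathbb R^2\setminus\Gamma$ has more than one bounded component to begin with, nor have you shown that a second bounded component would have to lie in some $\wz\Omega_j$). The paper gets the ``moreover'' claim not from any abstract uniqueness of $j_0$ but from the \emph{explicit} structure it derives: $\gamma_{1,0}=\gamma_{2,0}$ up to $y_0$, the two legs into $\wz\Omega_{j_0}$, and the coincidences $\gamma_1[w_1,z_1]\subset\gamma_3$, $\gamma_2[w_2,z_3]\subset\gamma_3$. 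Once that structure is in hand, the overlap in other components is an immediate bookkeeping observation; your topological shortcut does not supply a substitute.

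There are two subsidiary issues. First, your argument that $\gamma_i\cap\gamma_j$ is connected appeals to Lemma~\ref{unique} for the subarcs $\gamma_i[a,b]$, $\gamma_j[a,b]$, but Lemma~\ref{unique} is stated (and proved) only for endpoints on $\partial\Omega$, and nothing establishes that a subarc of a piecewise hyperbolic geodesic between two of its interior points is itself a piecewise hyperbolic geodesic between those points. The robust way to get connectedness is the paper's own combination of Lemma~\ref{parempi} (same boundary trace for the two subarcs) with Lemma~\ref{lem_phg} and the uniqueness of hyperbolic geodesics inside each Jordan $\wz\Omega_j$; invoking Lemma~\ref{unique} via ``the remark'' is exactly the part that is not justified. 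Second, in the cycle case you assert that $\partial U$ consists of three geodesic arcs that coincide with $\gamma_k\cap\overline{\wz\Omega_{j_0}}$, but this identification requires the same sort of intersection analysis the paper carries out; it is not automatic from $U\subset\wz\Omega_{j_0}$. (As a minor point, your cut-point detour to show $U\cap\partial\Omega=\emptyset$ is unnecessary: any $y\in U\cap\partial\Omega$ has a neighborhood in the open set $U$ meeting $\Omega$, contradicting $U\subset\Omega^c$ directly.)
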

\begin{proof}
First of all if $y\in \gamma_1\cap \partial \Omega$, 
then $\gamma_2$ or $\gamma_3$ must contain $y$ by 
Lemma~\ref{curve condition for geodesic}. Namely 
\begin{equation}\label{inclusion}
\gamma_1\cap \partial \Omega \subset (\gamma_2\cup \gamma_3) \cap \partial \Omega.
\end{equation}
The analogous statements also hold for $\gamma_2$ and $\gamma_3$.

%Parametrize $\gamma_1$ and $\gamma_2$ on $[0,\,1]$ with constant speed 
%beginning at $z_2$, and 
Let $y_0$ be the last point along $\gamma_1$ from $z_2$ towards $z_1$
such that $y_0\in \gamma_1\cap\gamma_2$. We claim that 
$y_0\in \partial \Omega.$

Suppose on the contrary that $y_0\notin \partial \Omega.$ Then 
$y_0\in \wz \Omega_j$
for some $1\le j \le N,$ and $\wz \Omega_j$ is Jordan on the Riemann sphere by Lemma~\ref{complement of John}.
%One shall observe that $y_0\in \partial \Omega$; otherwise by 
%Lemma~\ref{lem_phg} and the uniqueness of hyperbolic geodesic in a 
%Jordan domain, 
%$y_0$ is not the last point. 
Let $y_1$ be the first point on $\gamma_1\cap
\partial \wz \Omega_j$ when we trace (along $\gamma_1$) towards $z_2$  from $y_0.$ 
Consider the
subcurves $\gamma_{1,0}$ and $\gamma_{2,0}$ of $\gamma_1,\gamma_2,$ respectively,
between $y_0$ and $z_2.$ By Lemma~\ref{parempi} we have that $y_1\in \gamma_{2,0}.$
%and by applying Lemmma~\ref{parempi} again for the subcurves of  
%$\gamma_1,\gamma_2$ between $y_1$ and $y_0,$ we conclude that these subcurves 
%may intersect $\partial \wz \Omega_i$ only at $y_1.$ By injectivity from
It follows from Lemma~\ref{lem_phg} that the intersections of both $\gamma_1$
and $\gamma_2$ with  $\wz \Omega_j$ are hyperbolic geodesics that contain
both $y_1$ and $y_0.$ By mapping $\wz \Omega_j$ conformally to $\mathbb D$ 
 (or to the complement of the closed
unit disk if $\wz \Omega_j$ is unbounded), the 
uniqueness of hyperbolic 
geodesics yields that the intersection of $\gamma_1$ with $\wz \Omega_j$
equals to the intersection of $\gamma_2$ with $\wz \Omega_j.$ This contradicts
the definition of $y_0$, and conclude that $y_0\in \partial \Omega$.  

Consider again the subcurves  $\gamma_{1,0}$ and $\gamma_{2,0}$ from the previous
paragraph, joining $y_0$ to $z_2$. We conclude by Lemma~\ref{parempi} that 
\begin{equation}\label{equ 20}
\gamma_{1,0}\cap \partial \Omega=\gamma_{2,0}\cap \partial \Omega.
\end{equation} 
Moreover, if 
$\gamma_{1,0}$ intersects $\wz \Omega_j,$ then this intersection $\beta$  is by 
Lemma~\ref{lem_phg} a hyperbolic geodesic, say with endpoints $w_1,w_2.$ 
Then $w_1,w_2\in \partial \Omega$ and it follows that $w_1,w_2\in \gamma_{2,0}.$
Consider the corresponding subcurve $\alpha$ of $\gamma_{2,0}.$ Since $\beta \subset \wz \Omega_j,$ it follows from Lemma~\ref{parempi} that $\alpha \subset
\wz \Omega_i$ is a hyperbolic geodesic.  As in the previous paragraph, we conclude that $\alpha=\beta.$
Hence  $\gamma_{1,0}\subset \gamma_{2,0}.$
By reversing the roles of  $\gamma_{1,0}$ and $\gamma_{2,0}$ we deduce that
\begin{equation}\label{ekatsamat}
\gamma_{1,0}=\gamma_{2,0}.
\end{equation}

%I DO NOT UNDERSTAND THE NEXT PARAGRAPH.
%Since by  Lemma~\ref{lem_phg} in each connected component $\gamma_i$ is a hyper%bolic geodesic (or an empty set), then we claim that, when we trace along $\gam%ma_1$, the other curve $\gamma_2$ would coincide with it until they leave the l%ast common point $y_0$. Indeed one observe that $\gamma_1([z_2,\,y_0])$ coincid%es with the piecewise hyperbolic geodesic $\gamma$ connecting $z_2$ and $y_0$ b%y Lemma~\ref{lem_phg} and Lemma~\ref{curve condition for geodesic}: first notic%e that $\gamma\cap\partial \Omega\subset \gamma_1([z_2,\,y_0])\cap\partial \Ome%ga$ by Lemma~\ref{curve condition for geodesic}, and since $\gamma\cup \gamma_1%{[y_0,\,z_1]}$ is a curve joining $z_1$ and $z_2$, then $\gamma_1([z_2,\,y_0])\%cap\partial \Omega\subset \gamma\cap\partial \Omega$ according to Lemma~\ref{le%m_phg} and Lemma~\ref{curve condition for geodesic}  with the injectivity of $\%gamma_1$. Hence $\gamma_1([z_2,\,y_0])\cap\partial \Omega= \gamma\cap\partial \%Omega$. Next by the uniqueness of hyperbolic geodesics in a Jordan domain, one %uses  Lemma~\ref{lem_phg} again in each connected component of $\wz \Omega$ to %conclude that $\gamma_1([z_2,\,y_0])$ coincides with $\gamma$. 
%The same holds for $\gamma_2([z_2,\,y_0])$ and $\gamma$, and hence we obtain th%e claim. 

Suppose that $y_0\in \gamma_3.$ In this case, we repeat the argument from the
previous paragraph first for the subcurve $\gamma_{1,2}$ of $\gamma_1$ between
$z_1$ and $y_0$ and the subcurve $\gamma_{3,4}$ of $\gamma_3$ 
between $z_1=z_4$ and $y_0$ and after that for the subcurve $\gamma_{3,3}$ of
$\gamma_3$ between $z_3$ and $y_0$ and the subcurve $\gamma_{2,2}$
of $\gamma_2$ between
$z_3$ and $y_0.$ This gives us $\gamma_{1,2}=\gamma_{3,4}$ and 
$\gamma_{3,3}=\gamma_{2,2}$ and we conclude that $\gamma_3\subset \gamma_1\cup 
\gamma_2.$ 
 
%then continue trace along $\gamma_1$ and $\gamma_2$ respectively and compare wi%th $\gamma_3$. Similar arguments tell us that the first case holds. 

We are left with the case $y_0\notin \gamma_3.$ We
claim that
$y_0\in \overline{\wz \Omega}_{j_0}$ for some $j_0$ such that 
$\gamma_1\cap \wz \Omega_{j_0}$ and $\gamma_2\cap \wz \Omega_{j_0}$ do not intersect. 
Indeed, by the definition of $y_0$ and  \eqref{inclusion}
we have that
\begin{equation}\label{empty gamma 1 2}
    (\gamma_1 \setminus \gamma_{1,\,0}) \cap \gamma_2=\emptyset 
\end{equation}
and 
$$(\gamma_1 \setminus \gamma_{1,\,0}) \cap \partial \Omega\subset \gamma_3\cap \partial \Omega. $$
Note that $\gamma_3 \cap \partial \Omega$ is a closed set, and $y_0 \in \partial \Omega$ while $y_0 \notin \gamma_3$. This implies that there is no sequence of points in $(\gamma_1 \setminus \gamma_{1,\,0}) \cap \partial \Omega$ converging to $y_0$. 
Let $y_0'$ be the last point in $(\gamma_1 \setminus \gamma_{1,\,0}) \cap \partial \Omega$  towards $y_0$. Then the open subcurve $\gamma_1(y_0,\,y_0')$ is contained in the interior of $\Omega^c$, and by the connectedness it is contained in some component $\wz \Omega_{j_0}$ of $\wz \Omega$. 
 By applying Lemma~\ref{lem_phg} to $\gamma_1$ and the Jordan domain ${\wz \Omega}_{j_0}$, we conclude that 
$$\gamma_1(y_0,\,y_0')\subset \gamma_1 \setminus \gamma_{1,\,0}$$
is a hyperbolic geodesic in the Jordan domain ${\wz \Omega}_{j_0}$, particularly $y_0\in \overline{\wz \Omega}_{j_0}$, and  
$\gamma_1(y_0,\,y_0')$
has no intersection with  $\gamma_2$ due to \eqref{empty gamma 1 2}. This yields the claim. 

Now by  Lemma~\ref{lem_phg}, \eqref{inclusion} and the definition of $y_0$, 
the endpoint $w_1$ on $\partial \wz \Omega_{j_0}$ of the part of $\gamma_1$ in 
$\wz \Omega_{j_0}$ has to be contained in $\gamma_3.$ Arguing as above,
we conclude that the remaining part of $\gamma_1$ coincides with a subcurve
of $\gamma_3.$ Similarly, the endpoint $w_2 \in \partial \wz \Omega$  
of the part of $\gamma_2$ in $\wz \Omega_{j_0}$ must be contained in  $\gamma_3,$
and we conclude that the remaining part of $\gamma_2$ coincides with a subcurve
of $\gamma_3.$ Finally, let $\gamma_{3,1,2}$ be the subcurve of $\gamma_3$ 
joining $w_1,w_2.$ Since the union of the (closures) of the subcurves
of $\gamma_1,\gamma_2$ in  $\wz \Omega_{j_0}$ also joins $w_1$ to $w_2$ and only
intersects $\partial \Omega$ in the set $\{y_0,w_1,w_2\}$ and $y_0\notin
\gamma_3,$ Lemma~\ref{parempi} implies that $\gamma_{3,1,2}$ joins $w_1$ to $w_2$ in 
$\wz \Omega_{j_0}.$
By Lemma~\ref{lem_phg} all of $\gamma_{3,1,2}$ and the subcurves
of $\gamma_1,\gamma_2$ in  $\wz \Omega_{j_0}$ are hyperbolic geodesics and it
follows that they form a hyperbolic triangle.

%until arriving at $z_1$. 
%Likewise analogies hold for both $\gamma_2$ and $\gamma_3$, and hence except 
%in $\wz\Omega_i$, if any two curves are in the same connected component of 
%$\wz \Omega$, they overlap with each other. This gives the second part of the 
%second case. 

%Moreover from the argument above, we know that there are three different 
%points on $\partial \Omega$ in this case. Then by Lemma~\ref{lem_phg} again 
%we obtain the first part of the second case.  
\end{proof}

\begin{figure} 
 \centering
 \includegraphics[width=0.85\textwidth]{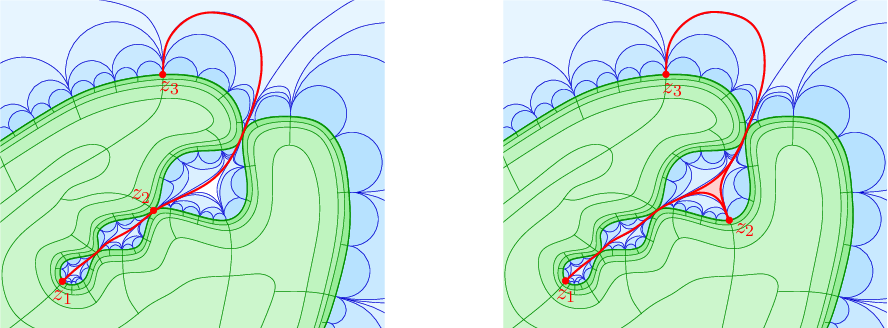}
 \caption{Here are the two possible cases of degenerated hyperbolic triangles.}
 \label{fig:degenerated hyperbolic triangles}
\end{figure}

By Lemma~\ref{degenerated triangle}, we can define the (degenerated) 
hyperbolic triangles $R_k^{(j)}$ similarly as in the Jordan case, 
namely the bounded (relatively) closed set enclosed by the union of $\gamma_k^{(j)}$, $\gamma_{k+1}^{(2j-1)}$ and 
$\gamma_{k+1}^{(2j)}$, and there is at most one connected component 
$\wz\Omega_i$ of $\wz \Omega$ such that $R_k^{(j)}\cap \wz \Omega_i$ is a (non-degenerated)
hyperbolic triangle, denoted by $R_{k,\,i}^{(j)}$.

For every $k \ge 1$, 
$1 \le j \le 2^{k_0+k+1}$ and $1\le i \le N,$  set
$$\gamma_{k,\,i}^{(j)}= \gamma_k^{(j)}\cap \wz\Omega_i.$$ Notice that 
$\gamma_{k,\,i}^{(j)}$ may well be empty for a choice of $k,j,i.$ Nevertheless
$$\bigcup_{i=1}^{N} \gamma_{k,\,i}^{(j)}=\gamma_k^{(j)}\cap \wz \Omega;$$ 
recall  also  \eqref{two sided} in Lemma~\ref{lma:twosidedhyperbolic}. We call 
$\gamma_{k,\,i}^{(j)}$ the $i$-th component of  $\gamma_k^{(j)}$, 
call $\gamma_{k,\,i}^{(j)}$ the {\it mother} of 
$\gamma_{k+1,\,i}^{(2j-1)}$ and $\gamma_{k+1,\,i}^{(2j)}$, and 
refer to $\gamma_{k+1,\,i}^{(2j-1)}$ and $\gamma_{k+1,\,i}^{(2j)}$ (possibly empty) as
{\it children} of $\gamma_{k,\,i}^{(j)}$.  By construction each mother has two 
children,  every child has a mother,  and they either overlap with each other, or they form a (non-degenerated) hyperbolic triangle by  Lemma~\ref{degenerated triangle}.
Moreover, $\gamma_{k,i}^{(j_1)}$ is called a {\it sibling} of 
$\gamma_{k,i}^{(j_2)}$ if they have the same mother. We say that $\gamma_{k,\,i}^{(j)}$ is in the {\it first generation} if $k=1$, that two curves $\gamma_{k_1,i_1}^{(j_1)},\,\gamma_{k_2,i_2}^{(j_2)}$ are   in the {\it same generation} if $k_1=k_2$, and that the generation of $\gamma_{k_1,i_1}^{(j_1)}$ is higher than that of $\gamma_{k_2,i_2}^{(j_2)}$ if $k_1<k_2$.

Notice that $\gamma_{k,\,i}^{(j)}$ may potentially  coincide with the 
$i$-th component of some other piecewise hyperbolic geodesic. 
The following lemma  describes the potential overlaps. 
\begin{lem}\label{continuous chain}
If $k_1<k_2$, then  $\gamma_{k_1,\,i}^{(j_1)}$ and $\gamma_{k_2,\,i}^{(j_2)}$ are not siblings. Furthermore, if we have that $\gamma_{k_1,\,i}^{(j_1)}$ coincides with $\gamma_{k_2,\,i}^{(j_2)}, k_1<k_2$ for some $i,\,j_1$ and $j_2$, then the mother of $\gamma_{k_2,\,i}^{(j_2)}$ also coincides with $\gamma_{k_1,\,i}^{(j_1)}$. 

Similarly if $\gamma_{k,\,i}^{(j_1)}$ and $\gamma_{k,\,i}^{(j_2)}$ coincide with each other but they are not siblings, then the mother of $\gamma_{k,\,i}^{(j_2)}$ also coincides with $\gamma_{k,\,i}^{(j_1)}$. 
\end{lem}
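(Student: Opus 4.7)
The first statement is immediate from the definition of siblings: two curves are siblings only if they share a common mother, hence belong to the same generation. Curves at different levels $k_1<k_2$ thus cannot be siblings.

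For the second and third statements, which are structurally parallel, I will treat them uniformly. Set $[p,q]:=\gamma_{k_1,i}^{(j_1)}=\gamma_{k_2,i}^{(j_2)}$, a hyperbolic geodesic in the Jordan domain $\wz\Omega_i$ by Lemma~\ref{lem_phg}; write $\beta:=\gamma_{k_2}^{(j_2)}$, let $\beta'$ be its sibling, and $\alpha:=\gamma_{k_2-1}^{(j_2')}$ the mother PHG. The triple $(\alpha,\beta,\beta')$ is the triangular configuration of Lemma~\ref{degenerated triangle}; the goal is $\alpha\cap\wz\Omega_i=[p,q]$. The plan is first to show that once $\alpha\cap\wz\Omega_i\neq\emptyset$, it must equal $[p,q]$, and then to rule out the empty case using the extra hypothesis. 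For the first half: by the second part of Lemma~\ref{parempi}, a non-empty $\alpha\cap\wz\Omega_i$ is a hyperbolic geodesic between two boundary points of $\wz\Omega_i$. By Lemma~\ref{degenerated triangle}, the triple $(\alpha,\beta,\beta')$ is either fully degenerate (case (1)) or forms a hyperbolic triangle in a unique $\wz\Omega_{j_0}$ (case (2)). If $i\neq j_0$ in case (2), pairs of intersections with $\wz\Omega_i$ must overlap, and uniqueness of boundary-to-boundary hyperbolic geodesics forces $\alpha\cap\wz\Omega_i=\beta\cap\wz\Omega_i=[p,q]$; case (1) gives the same conclusion via the ``$\beta\subset\alpha\cup\beta'$'' degeneration and the connectedness of $[p,q]$.

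To rule out (a) $\alpha\cap\wz\Omega_i=\emptyset$ and (b) $i=j_0$ in case (2), I will invoke the extra hypothesis: a non-sibling PHG $\delta$ (namely $\gamma_{k_1}^{(j_1)}$ in Part (2) or $\gamma_k^{(j_1)}$ in Part (3)) satisfies $\delta\cap\wz\Omega_i=[p,q]$. The crosscut $[p,q]$, together with one of the two arcs of $\partial\wz\Omega_i$ from $p$ to $q$, is a Jordan curve separating $\mathbb R^2$. In cases (a) and (b), Lemma~\ref{degenerated triangle} forces the children $\beta,\beta'$ to cross $[p,q]$ with a specific parity, which via Lemma~\ref{parempi} places the endpoints $a,b$ of $\alpha$ on the \emph{same} side of the separating curve; whereas $\delta$ crosses $[p,q]$, placing its endpoints on \emph{opposite} sides. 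Translating these conditions back to the preimages $x_k^{(j)}\in\partial\mathbb D$ via the continuous extension of $\varphi$ (Lemma~\ref{property of John domain}(2)) and the cut-point/two-sided structure of $\partial\Omega$ (Lemmas~\ref{lma:twosidedhyperbolic} and \ref{two side cut point}), and exploiting that the $x_k^{(j)}$ are nested dyadically on $\partial\mathbb D$, I expect a contradiction between the ``same side'' and ``opposite sides'' conclusions.

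The main obstacle will be this last topological/combinatorial interlacing step: making precise the correspondence between the two sides of the crosscut $[p,q]$ and complementary arcs on $\partial\mathbb D$, and identifying where the endpoints of $\alpha$ and $\delta$ sit relative to these arcs. Once this correspondence is established using the uniqueness of PHGs (Lemma~\ref{unique}) and the prime-end structure implicit in the earlier lemmas, the contradiction should fall out from the nesting of the dyadic decomposition alone.
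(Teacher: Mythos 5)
Your handling of the first claim is correct and identical to the paper's (trivial: different generations cannot share a mother). For the rest, however, your plan diverges from the paper's and contains genuine gaps, one of which you acknowledge.

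The paper's proof does not argue via Jordan-curve crosscuts at all. It goes back to the approximating sequences in $\wz D_n$: it picks the approximating hyperbolic geodesics $\gamma_{n,1}$, $\gamma_{n,2}$ for $\gamma_{k_1}^{(j_1)}$ and $\gamma_{k_2}^{(j_2)}$, joins corresponding endpoints $x_{n,1}$ to $x_{n,2}$ and $y_{n,1}$ to $y_{n,2}$ by new hyperbolic geodesics $\az_n$, $\bz_n$ in $\wz D_n$, uses the Gehring--Hayman estimate (Corollary~\ref{GH thm}) to show $\ell(\az_n),\ell(\bz_n)\to 0$, and then invokes the elementary fact about hyperbolic geodesics in $\mathbb R^2\setminus\overline{\mathbb D}$ (read off from the cyclic order of endpoints on $\partial\mathbb D$ after applying $\wz\varphi_n$) that $\az_n$ and $\bz_n$ must meet the geodesic $\gamma'_{n,2}$ approximating the mother. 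Passing to the limit, the mother passes through both endpoints $x,y$ of $[p,q]$, and Lemma~\ref{parempi} finishes. This is a quantitative, limit-of-approximations argument, quite different from your topological separation scheme.

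Two concrete problems with your plan. First, your ``first half'' reduction is not sound in case (1) of Lemma~\ref{degenerated triangle}: there, each curve is contained in the union of the other two with a single common point $y_0\in\partial\Omega$, so $\bz\setminus\{y_0\}$ splits into one piece in $\az$ and one in $\bz'$. Since $[p,q]\subset\wz\Omega_i$ and $y_0\in\partial\Omega$, the connected set $[p,q]$ lies entirely in one of the two pieces -- but you have no a priori reason to exclude $[p,q]\subset\bz'\setminus\az$, in which case $\az\cap\wz\Omega_i$ could be nonempty yet disjoint from $[p,q]$. So ``nonempty $\Rightarrow$ equals $[p,q]$'' fails without further work, and your proposed dichotomy (reduce to ruling out (a) and (b)) does not actually isolate all the bad cases. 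Second, the ``interlacing'' step that is supposed to rule out (a) and (b) is the load-bearing part of your proof, and you explicitly leave it as an unresolved obstacle. It is also unclear in what sense $\bz$ or $\dz$ ``crosses'' $[p,q]$: since $\bz\cap\wz\Omega_i=\dz\cap\wz\Omega_i=[p,q]$, these curves contain $[p,q]$ rather than crossing it transversally, so a parity/crossing-number count with respect to the Jordan curve $[p,q]\cup(\text{boundary arc})$ is not obviously well posed. Without a precise statement and proof of that step the argument does not close, and I would not be confident it can be repaired along these lines without essentially reconstructing the nesting structure of the approximating geodesics on $\partial\mathbb D$ -- which is exactly what the paper does directly.
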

\begin{proof}
It is clear that $\gamma_{k_1,\,i}^{(j_1)}$ and $\gamma_{k_2,\,i}^{(j_2)}$ are not in the same generation since $k_1<k_2$. Thus they are not siblings. 

Let us show the second claim where $\gamma_{k_1,\,i}^{(j_1)}$ coincides with $\gamma_{k_2,\,i}^{(j_2)}, k_1<k_2$ for some $i,\,j_1$ and $j_2$. 
Recall that piecewise hyperbolic geodesics are obtained by taking  limits of hyperbolic geodesics in Jordan domains $\wz D_n$. Suppose that the end points of $\gamma_{k_1,\,i}^{(j_1)}$ are $x$ and $y$,  which are  the end points of $\gamma_{k_2,\,i}^{(j_2)}$ as well.  Then for each $l=1,\,2$ there exists an approaching sequence of  $\gamma_{n,\,l}\subset \wz D_n$ for $\gamma_{k_l,\,i}^{(j_l)}$ such that  $x_{n,\,l},\, y_{n,\,l}\in \gamma_{n,\,l}\cap\partial \wz D_n$ approximate $x$ and $y$, respectively. Let $\az_n\subset \wz D_n$ be the hyperbolic geodesic joining $x_{n,\,1}$ and $x_{n,\,2}$. Likewise $\beta_n \subset \wz D_n$  is the hyperbolic geodesic connecting  $y_{n,\,1}$ and $y_{n,\,2}$.

Recall that our choice of $\delta_1$ in Section~\ref{sec:phg} ensures the images under $\wz \varphi_n\colon \wz D_n\to \mathbb R^2\setminus \overline{\mathbb D}$ of the hyperbolic geodesics $\az_n$ and $\beta_n$ to be contained in $B(0,\,10)$. Then according to Corollary~\ref{GH thm} \begin{equation}\label{eqn 21}
\ell(\az_n) \ls |x_{n,\,1}-x_{n,\,2}|,\, \qquad \ell(\beta_n) \ls |y_{n,\,1}- y_{n,\,2}|
\end{equation}
for sufficiently large $n$, where the constants depend only on $C_1$. 
This allows us to take the limit of $\az_n$ and $\beta_n$ by Arzel\`a-Ascoli lemma, up to relabeling the sequences. Since $x_{n,\,l}\to x$ and $y_{n,\,l} \to y$ as $n\to \infty$ for $l=1,\,2$, we conclude from \eqref{eqn 21} that $\az_n \to x$ and $\beta_n\to y$. 

On the other hand, let  $\{\gamma'_{n,\,2}\}, \gamma'_{n,\,2}\subset \wz D_n$ be the sequence approaching the mother of $\gamma_{k_2,\,i}^{(j_2)}$. Observe that, according to the construction of the hyperbolic trangles in $\wz D_n$, the end points of $\gamma'_{n,\,2}$ are contained in either  the (closed) set enclosed by $\az_n$ and $\partial \wz D_n$, or the one enclosed by $\beta_n$ and $\partial \wz D_n$. Then via conformal mappings and  the geodesics in the complement of the unit disk,  we conclude that $\az_n$ and $\beta_n$ have non-trivial intersections with $\gamma'_{n,\,2}$. Thus the mother of $\gamma_{k_2,\,i}^{(j_2)}$ also goes through $x$ and $y$, and our conclusion follows from Lemma~\ref{parempi}. 

The second part of the lemma follows from a similar argument with notational changes. 
\end{proof}

Let $[\gamma_{k,\,i}^{(j)}]$ be the collection of all the $i$-th components of 
our piecewise hyperbolic geodesics which coincide with a non-empty curve
$\gamma_{k,\,i}^{(j)}$ in $\wz\Omega_i$ (see Lemma~\ref{lem_phg}) and order the 
elements in it as follows: $\gamma_{k_1,\,i}^{(j_1)}$ is older than 
$\gamma_{k_2,\,i}^{(j_2)}$ if and only if either $k_1< k_2$ or $k_1=k_2$ but 
$j_1< j_2$. We denote the oldest one in $[\gamma_{k,\,i}^{(j)}]$ by 
$\mathcal A[\gamma_{k,\,i}^{(j)}],$ and call it 
{\it the ancestor of $\gamma_{k,\,i}^{(j)}$}. So as to define an extension 
operator conveniently later, we force 
$\gamma_{1,\,i}^{(1)}\in [\gamma_{k,\,i}^{(j)}]$ if 
$\gamma_{1,\,i}^{(j)}\in [\gamma_{k,\,i}^{(j)}]$ for some $2\le j\le 2^{k_0+k+1}$; 
it is just an artificial curve which is only used in the next subsection, and 
we suppress the abuse of notation here.  

To clarify the definition of an ancestor, consider the curves $\gamma_{k,\,i}^{(j)}$, $\gamma_{k+1,\,i}^{(2j-1)}$ and 
$\gamma_{k+1,\,i}^{(2j)}$ enclosing a 
non-degenerated hyperbolic triangle $R_{k,\,i}^{(j)}$. Then Lemma~\ref{continuous chain} shows that
\begin{equation}\label{sons}
\mathcal A[\gamma_{k+1,\,i}^{(2j-1)}]= \gamma_{k+1,\,i}^{(2j-1)} \ \ \text{ and } \ \ \mathcal A[\gamma_{k+1,\,i}^{(2j)}]= \gamma_{k+1,\,i}^{(2j)}.
\end{equation}

%$\gamma_{k,\,i}^{(j)}$ is the youngest one, and $\gamma_{l-1}$ is older than $\%gamma_{l}$ if both of them exists. The chain is constructed as follows: from th%e youngest one we go to its father since every son has only one father. When we% arrive at the highest generation, go down according to the upper indeces until% we get the ancestor, and finally write the curves from the oldest one back to %the youngest one. Thus there are only finitely many elements in $\mathcal F(\ga%mma_{k,\,i}^{(j)})$. 

\begin{lem}\label{finite overlaps}
Each generation in
$[\gamma_{k,\,i}^{(j)}]$
contains at most two distinct elements (expect the first generation which has at most three elements). Moreover if the highest (but not the first) generation of $[\gamma_{k,\,i}^{(j)}]$
contains  two distinct elements, then they are siblings. 
\end{lem}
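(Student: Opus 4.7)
I would combine Lemma~\ref{continuous chain} (propagating coincidence upward along the mother relation for non-sibling curves) with Lemma~\ref{degenerated triangle} (the structural trichotomy for a mother-children triple: degenerate, or non-degenerate triangle living in a single $\wz\Omega_{j_0}$). I first show that when the lowest generation $l_0$ appearing in $[\gamma_{k,\,i}^{(j)}]$ is at least $2$, it contains exactly one element. Indeed, any two distinct elements at $l_0$ would be siblings (otherwise Lemma~\ref{continuous chain} would force their mothers into $[\gamma_{k,\,i}^{(j)}]$ at generation $l_0-1$, contradicting minimality), and Lemma~\ref{degenerated triangle} applied to the common mother $M$ together with these two siblings gives three cases: case (2) with $j_0=i$ is impossible (the two siblings would then have distinct $i$-th components), while case (1) and case (2) with $j_0\neq i$ both force $M$'s $i$-th component to equal the common hyperbolic geodesic in $\wz\Omega_i$, which places $M$ in $[\gamma_{k,\,i}^{(j)}]$ at generation $l_0-1$, contradicting minimality.

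The number of children a given element $m$ can contribute to $[\gamma_{k,\,i}^{(j)}]$ at the next generation is analyzed using Lemma~\ref{degenerated triangle} applied to $m$ together with its two children: case (1) gives exactly one qualifying child (the sub-arc of $m$ carrying the common geodesic); case (2) with $j_0=i$ gives none (the three sides of the triangle in $\wz\Omega_i$ have distinct $i$-th components); and case (2) with $j_0\neq i$ gives both, by the overlap-equals-equality principle for hyperbolic geodesics sharing boundary endpoints in the Jordan component $\wz\Omega_i$. Starting from the singleton at $l_0$ and iterating, I would establish the bound of at most two elements per generation by ruling out simultaneous ``branching'' from both members of a two-element generation: comparing the two resulting triangle configurations from Lemma~\ref{degenerated triangle} applied to further triples derived from endpoint sharing produces two hyperbolic geodesics in $\wz\Omega_i$ joining the same pair of boundary points, which is impossible by uniqueness.

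For the second assertion, suppose the highest generation $l^*\ge 2$ contains two distinct non-sibling elements $a_1,\,a_2$. By Lemma~\ref{continuous chain} their distinct mothers $M_1,\,M_2$ lie in $[\gamma_{k,\,i}^{(j)}]$ at generation $l^*-1$. Apply Lemma~\ref{degenerated triangle} to each triple $(M_j,a_j,\tilde a_j)$ with $\tilde a_j$ the sibling of $a_j$: case (2) with $j_0=i$ contradicts $M_j$ and $a_j$ having equal $i$-th components, and case (2) with $j_0\neq i$ would add $\tilde a_j$ to $[\gamma_{k,\,i}^{(j)}]$ at the highest generation, violating the bound of two previously obtained. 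So both triples are degenerate, which forces $a_j$ to be the sub-arc of $M_j$ carrying the common geodesic; by the uniqueness of piecewise hyperbolic geodesics between fixed endpoints, the endpoints of $a_j$ coincide with the boundary points of the geodesic and $a_j$ is in fact the hyperbolic geodesic in $\wz\Omega_i$ itself. Lemma~\ref{degenerated triangle} applied once more to $a_j$ with its two children, together with the fact that the highest generation admits no further element of $[\gamma_{k,\,i}^{(j)}]$, leaves only case (2) with $j_0=i$, yielding a non-degenerate triangle of $a_j$ with its children in $\wz\Omega_i$, which contradicts $a_j$ being itself a hyperbolic geodesic in $\wz\Omega_i$. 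The first-generation bound is obtained by the same approach, with the explicit addition of $\gamma_{1,\,i}^{(1)}$ contributing at most one extra element. The main technical obstacle will be keeping track of the three cases of Lemma~\ref{degenerated triangle} across adjacent generations while consistently exploiting the uniqueness of hyperbolic geodesics in $\wz\Omega_i$.
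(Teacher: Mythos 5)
Your proposal takes a genuinely different route from the paper (a generation-by-generation induction built on Lemma~\ref{continuous chain} and the trichotomy from Lemma~\ref{degenerated triangle}), but the base case of your induction is false, and the error propagates through the rest of the argument.

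The problem is your claim that, when the highest (oldest, smallest-$k$) generation $l_0$ of $[\gamma_{k,\,i}^{(j)}]$ satisfies $l_0\ge 2$, it contains exactly one element. You rule out two distinct siblings $c_1,c_2$ at $l_0$ by asserting that ``case (1) and case (2) with $j_0\neq i$ both force $M$'s $i$-th component to equal the common hyperbolic geodesic in $\wz\Omega_i$,'' where $M$ is the common mother. That assertion fails for case (1). In case (1) the triple $(M,c_1,c_2)$ has the ``Y'' structure: there are three arcs $a,b,c$ emanating from the single common point $y_0\in\partial\Omega$, with $M=a\cup b$, $c_1=b\cup c$ and $c_2=c\cup a$ (here $c$ is the new edge from $y_0$ to the apex $z_{k+1}^{(2j-1)}$, which does not lie on $M$). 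Since $\gamma_{k,\,i}^{(j)}$-type components are connected hyperbolic geodesics in the open set $\wz\Omega_i$ and $y_0\in\partial\Omega$, the common component $\beta=c_1\cap\wz\Omega_i=c_2\cap\wz\Omega_i$ must lie in $c_1\cap c_2$ minus $y_0$, hence $\beta\subset c$. But $c\cap M=\{y_0\}$, so $\beta\not\subset M$ and $M\cap\wz\Omega_i\neq\beta$. Thus case (1) does exactly what you try to exclude: it produces two siblings in the highest generation while the mother stays out of the class. (The same soft spot appears in your treatment of case (2) with $j_0\neq i$: the ``overlap'' conclusion there applies to pairs of the three curves that actually meet $\wz\Omega_i$, and nothing forces $M$ to meet $\wz\Omega_i$ at all.) In fact, the very phenomenon you try to rule out is the content of the second assertion of Lemma~\ref{finite overlaps} -- the highest non-first generation \emph{can} contain two elements, and when it does they are siblings. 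So your base case contradicts the statement you are trying to prove. Once that base case is removed, the remainder of your inductive argument (bounding contributions per generation, ruling out simultaneous branching) no longer has a starting point.

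The paper's own proof of the first assertion is of a different nature altogether and does not attempt to track generations. It argues directly that three same-generation curves $\gamma_1,\gamma_2,\gamma_3$ cannot all have the same nonempty $i$-th component: each is closed up to a Jordan curve by attaching the two radial hyperbolic geodesics in $\Omega$ from $\varphi(0)$ to its endpoints. These three Jordan domains are shown to be pairwise disjoint by uniform approximation from the Jordan domains $\wz D_n$ (where the approximating geodesics are disjoint because the three curves are in the same generation), while the Jordan--Schoenflies theorem applied near the common arc in $\wz\Omega_i$ forces at least two of the three domains to overlap. This contradiction proves the bound of two. The second assertion is then a one-line consequence of the second part of Lemma~\ref{continuous chain}. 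Your approach would need the base case repaired (or replaced by an altogether different anchor) before the subsequent steps could be assessed, but as written it hinges on a misreading of case (1).
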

\begin{proof}
We prove the first claim by contradiction. Suppose that $[\gamma_{k,\,i}^{(j)}]$
contains three distinct elements $\gamma_l,\,l=1,\,2,\,3$ from the same generation, and note that they coincide in the  component $\wz \Omega_i$ by definition. 
By joining each of the two end points of $\gamma_l$ via a hyperbolic geodesic  in $\Omega$ starting from the image of the origin under the conformal map $\varphi\colon \mathbb D \to \Omega$, we obtain a Jordan domain $\Omega_l$ via concatenating the two hyperbolic geodesics together with $\gamma_l$, for every $l=1,\,2,\,3$ correspondingly; recall here that $\gamma_l$ is injective by Lemma~\ref{lem_phg} for each $l=1,\,2,\,3$. 

We claim that these three domains are pairwise disjoint. Indeed recall that 
$$D_n=\varphi(B(0,\,1-2^{-n}))$$
 with $\varphi\colon \mathbb D \to \Omega$ conformal, and that each $\gamma_l$ is the limit of a uniformly converging sequence, consisting of  hyperbolic geodesics $\gamma_{l,\,n} \subset \wz D_n$, where $\wz D_n$ is the exterior of $D_n$. Then the end points $x_{l,\,n},\,y_{l,\,n}$ of $\gamma_{l,\,n}$ converge to the end points $x_l,\,y_l$ of $\gamma_l$, respectively. By the uniform continuity of $\varphi$ the hyperbolic geodesics $\varphi([0,\,\varphi^{-1}(x_{l,\,n})])$ joining $\varphi(0)$ to $x_{l,\,n}$ also converge uniformly to the hyperbolic geodesics joining $\varphi(0)$ to $x_l$ (in $D_n$); an analogous statement holds for $\varphi([0,\,\varphi^{-1}(y_{l,\,n})])$. Therefore $$\az_{l,\,n}:=\varphi([0,\,\varphi^{-1}(x_{l,\,n})])\cup \varphi([0,\,\varphi^{-1}(y_{l,\,n})])\cup \gamma_{l,\,n},\,n\in \mathbb N$$ is a sequence of Jordan curves converging to a Jordan curve 
 $$\az_l=\varphi([0,\,\varphi^{-1}(x_{l})])\cup \varphi([0,\,\varphi^{-1}(y_{l})])\cup \gamma_{l}$$
uniformly as $n\to \infty$ for $l=1,\,2,\,3$.

Note that for each $n\in \mathbb N$, the Jordan domains enclosed by $\az_{l,\,n}$'s are pairwise disjoint since $\gamma_l$ are in the same generation. Then the claim follows from the uniform convergence of $\az_{l,\,n}$. Indeed if two of $\Omega_1,\,\Omega_2,\,\Omega_3$ were to intersect, then there would exist a disk $B(z,\,3r)$ contained in the intersection, say, of $\Omega_1$ and $\Omega_2$. However, for each $l=1,\,2$, when $n$ is large enough,  every point in $\az_{l,\,n}$  has at most distance $r$ to $\az_l$. Since $\az_l$ and $\az_{l,\,n}$ are Jordan, we conclude by the triangle inequality that $B(z,\,r)$ is in the intersection of the Jordan domains given by $\az_{1,\,n}$ and $\az_{2,\,n}$. This is a contradiction. 

Next we show that, since $\az_l$'s coincide in $\wz \Omega_i$ for each $l=1,\,2,\,3$, at least two of the corresponding Jordan domains have non-trivial intersection; this gives the contradiction. In fact, by the Jordan-Schoenflies theorem (see e.g.\ \cite{T1992}) there is a homeomorphism 
$\phi\colon\mathbb R^2\to\mathbb R^2$ such that the curve $\gamma_l\cap \wz \Omega_i$ is mapped to the  interval $[-1,\,1]$ on the real line; since $\gamma_l,\,l=1,\,2,\,3$ coincide, our map $\phi$ does not depend on $l$. Notice that $\Omega_l$ are also mapped to Jordan domains. Then there exists a constant $0<r<1$ such that $B(0,\,r)\cap \phi(\Omega_l)$ is a half disk for each $l=1,\,2,\,3$; indeed choose
$$r=\min_{l=1,\,2,\,3} \dist\left(0,\,\phi(\partial \Omega_l \setminus (\gamma_l\cap \wz \Omega_i))\right).$$
This implies that at least two of the half disks have a non-trivial intersection and that the same holds for $\Omega_l$'s since $\phi$ is a homeomorphism. 
All in all we have shown the first part of the lemma.

The second part of the lemma follows from the second part of  Lemma~\ref{continuous chain}.
\end{proof}

Fix a curve $\gamma_{k,\,i}^{(j)}$. Recall that $[\gamma_{k,\,i}^{(j)}]$ is  the (equivalence) class of  (subarcs of) piecewise hyperbolic geodesics coinciding with $\gamma_{k,\,i}^{(j)}$. 
For each curve $\gamma\in [\gamma_{k,\,i}^{(j)}]$, we define the {\it family chain $\mathcal F(\gamma)$ of $\gamma$} as the finite ordered set  $\mathcal F(\gamma)=\{\gamma_1,\,\gamma_2 ,\, \dots, \gamma_l\},$ consisting of certain elements in $[\gamma_{k,\,i}^{(j)}]$ and
with the property that $\gamma_{n}$ is the mother or sibling of $\gamma_{n+1}$ for  
$1\le n\le l-1,$
or for some $1\le i\le N$, $\gamma_1=\gamma_{1,\,i}^{(1)}$,  $\gamma_2=\gamma_{1,\,i}^{(j)}, 2\le j\le 2^{k_0+k+1}$, and  $\gamma_{n}$ is the mother or sibling of $\gamma_{n+1}$ for  
$2\le n\le l-1.$
$\mathcal F(\gamma)$ is obtained
via the following procedure. Define $\hat \gamma_1=  \gamma.$
Set $\hat \gamma_2$  to be the mother of $\hat \gamma_1$ 
and continue inductively; see Lemma~\ref{continuous chain}. 
This procedure stops after a finite number of steps,
when we reach $\hat \gamma_{l-1}=\gamma_{k_0,i}^{j_0}$ such that the mother of $\gamma_{k_0,i}^{j_0}$ is not in  $[\gamma_{k,\,i}^{(j)}]$, or $k_0=1$; namely $\gamma_{k_0,i}^{j_0}$ is in the highest generation of $[\gamma_{k,\,i}^{(j)}]$ (by Lemma~\ref{continuous chain}). 
Then define $\hat \gamma_{l}=\mathcal A[\gamma_{k,\,i}^{(j)}]$; note that it is possible that $\hat \gamma_{l}=\hat \gamma_{l-1}$. 
We now set $\gamma_n=
\hat \gamma_{l-n+1},$ $1\le n \le l.$ Observe that for $n\ge 3$, $\gamma_n$ is a child of $\gamma_{n-1}$ by our construction. 

We say that a family chain $\mathcal F(\gamma)$ is {\it maximal} if it is maximal (with respect to set inclusion) amongst all family chains of elements in the curve family $[\gamma]$; recall that $\mathcal F(\gamma')\subset [\gamma]$ as a set for every $\gamma'\in [\gamma]$.  
Lemma~\ref{finite overlaps} together with Lemma~\ref{degenerated triangle} and Lemma~\ref{continuous chain} tells that, from each $[\gamma_{k,\,i}^{(j)}]$,  one can extract at most two different maximal family chains. 
Now we are ready to construct our extension operator. 
See Figure~\ref{fig:family chain}.

\begin{figure} 
 \centering
 \includegraphics[width=0.8 \textwidth]{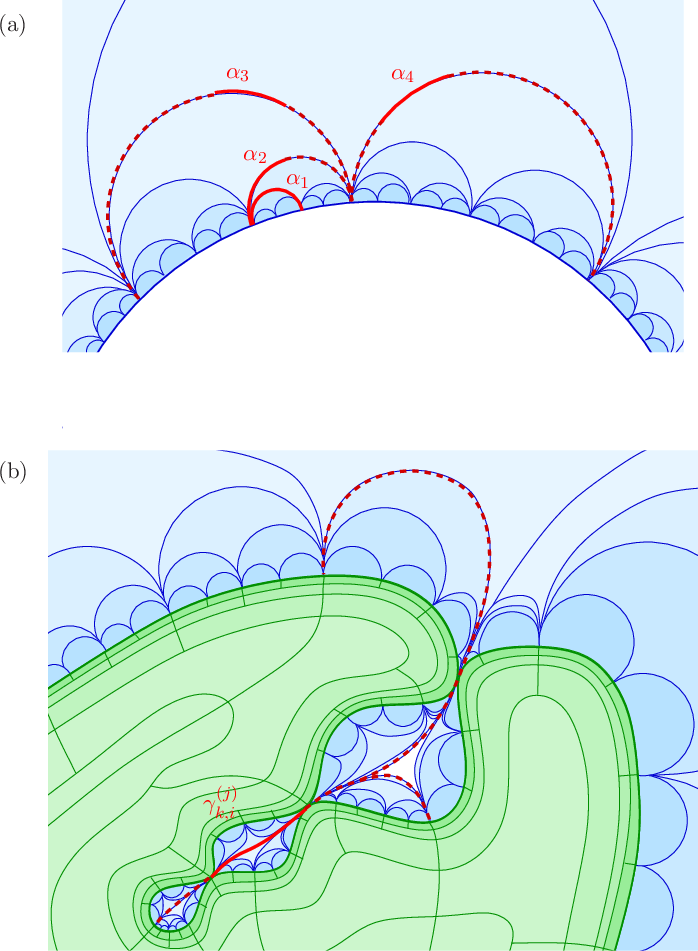}
 \caption{ A collection  of  hyperbolic geodesics (in the 
approximation to the piecewise hyperbolic geodesics in the picture (b)) related to the elements 
in $[\gamma_{k,i}^{(j)}]$ is illustrated in the figure (a). 
Here the collection $[\gamma_{k,i}^{(j)}]$ gives rise to two different maximal
family chains: One corresponds to (the limit of) $ \{\az_3,\,\az_3,\,\az_2,\,\az_1\}$ and the other corresponds to (the limit of) $\{\az_3,\,\az_4\}$.  }
 \label{fig:family chain}
\end{figure}

\subsection{Definition of the extension}

Let $u \in W^{1,\infty}(\Omega)$; then $u\in W^{1,\,1}(\Omega)$. As in the Jordan case, we define
\[
 a_k^{(j)}=\bint_{Q_k^{(j)}} u(x) \,dx.
\]
We still associate this value to $\gamma_k^{(j)}$ and write it as $a_{\gamma_k^{(j)}}$; we associate this value also to all the subarcs $\gamma_{k,\,i}^{(j)}$ of $\gamma_k^{(j)}$. However, one cannot define our extension directly via them and Lemma~\ref{axillary function} because of the degeneracy.

To overcome this, we use the terminology defined in the previous subsection. 
We define 
$Eu(x)=a_{\mathcal A[\gamma_{k,\,i}^{(j)}]}$ on $ \gamma_{k,\,i}^{(j)}$, where $a_{\mathcal A[\gamma_{k,\,i}^{(j)}]}$ is the value associated to $\mathcal A[\gamma_{k,\,i}^{(j)}]$. Then $Eu$ is 
well defined on these hyperbolic geodesics by the definition of 
$[\gamma_{k,\,i}^{(j)}]$.

Next we  associate to each non-degenerated 
$R_{k,\,i}^{(j)}$ in $\wz \Omega_i$
a function $\phi_{k}^{(j)}\in W^{1,\,1}(R_{k}^{(j)})$ so that (recall again  that there is at most one possible $i$ such that $R_{k,\,i}^{(j)}$ is non-degenerated by Lemma~\ref{degenerated triangle}. )
\[
\phi_{k}^{(j)}(x) = \begin{cases}
a_{\mathcal A[\gamma_{k,\,i}^{(j)}]}, \ & \ x\in \gamma_k^{(j)}\cap R_{k}^{(j)}\\
a_{\mathcal A[\gamma_{k+1,\,i}^{(2j-1)}]}, \ & \ x\in \gamma_{k+1}^{(2j-1)}\cap R_{k}^{(j)}\\
a_{\mathcal A[\gamma_{k+1,\,i}^{(2j)}]}, \ & \ x\in \gamma_{k+1}^{(2j)}\cap R_{k}^{(j)}
                    \end{cases}.
\]
We choose $\phi_k^{(j)}$ to be the function $\phi$ from Lemma~\ref{axillary function}  with the  boundary values given above. Then we  have the gradient estimate \eqref{norm of function} that will be employed later. 
Observe that since $R_{k,\,i}^{(j)}$  is non-degenerated, via \eqref{sons} we actually have 
$$\mathcal A[\gamma_{k+1,\,i}^{(2j-1)}]= \gamma_{k+1,\,i}^{(2j-1)} \ \ \text{ and } \ \ \mathcal A[\gamma_{k+1,\,i}^{(2j)}]= \gamma_{k+1,\,i}^{(2j)}.$$

On the rest of $\wz \Omega$ we define $Eu=a_{\gamma_{1}^{(1)}}$. This is consistent because we forced $\gamma_{1,\,i}^{(1)}\in [\gamma_{k,\,i}^{(j)}]$ if $\gamma_{1,\,i}^{(j)}\in [\gamma_{k,\,i}^{(j)}]$ for some $2\le j\le 2^{k_0+k+1}.$ 
To conclude, the operator $E$ is defined by 
\[
Eu(x) = \begin{cases}
u(x), \ & \ x\in \Omega\\
a_{\mathcal A[\gamma_{k,\,i}^{(j)}]}, \ & \ x \in \gamma_{k,\,i}^{(j)}\\
\phi_{k}^{(j)}(x), \ & \ x \text{ is in the interior of a non-degenerated } R_{k,\,i}^{j} \\
a_{\gamma_{1}^{(1)}}, \ & \ \text{in the rest of $\wz \Omega$}
                    \end{cases}.
\]

Note that currently in the simply connected case,  $Eu$ is defined as $a_{\gamma_{1}^{(1)}}$ for points far away from $\partial\Omega$, differently from the construction used in the Jordan case. This discrepancy is purely technical. Recall that for Jordan domains, we introduced additional hyperbolic triangles $R_{0}^{(j)}$ to cover a neighborhood of $\partial \Omega$ in $\wz \Omega$, and then defined $Eu$ as the average of $u$ over $\Omega$  for points away from $\partial\Omega$. However, when $\Omega$ is simply connected, controlling the limit of $R_{0}^{(j)}$  poses significant technical challenges. Consequently, we have slightly modified the argument, while the current variation remains valid for the Jordan case. We retain the original construction for Jordan domains only because the $R_{0}^{(j)}$-based definition is somewhat more straightforward; one does not need to handle overlaps there.

Observe that $Eu\in W^{1,\,1}_{loc}{(\wz \Omega)}.$  
Indeed by Lemma~\ref{axillary function} and  Lemma~\ref{degenerated triangle}, $Eu$ is locally Lipschitz continuous in $\wz \Omega$; the consistency of boundary values between (non-degenerated) hyperbolic triangles is guaranteed by the definition of $\phi_{k}^{(j)}(x)$. 

This time we estimate the Sobolev-norm of the extension over a non-degenerated hyperbolic triangle $R_{k,\,i}^{(j)} \subset \wz \Omega_i$
by inserting the intermediate values given to the corresponding curves in the family chain. Namely by Lemma~\ref{axillary function}, \eqref{length estimate} and \eqref{sons}, 
\begin{align*}
 \int_{R_{k,\,i}^{(j)}} |\nabla \phi_{k}^{(j)}(x)| \, dx & \lesssim 
 \bigg(|a_{\mathcal A[\gamma_{k,\,i}^{(j)}]}-a_{k+1}^{(2j-1)}|\ell(\gamma_{k,i}^{(j)})
 + |a_{\mathcal A[\gamma_{k,\,i}^{(j)}]}-a_{k+1}^{(2j)}|\ell(\gamma_{k,i}^{(j)})\\
 & \qquad+ |a_{k+1}^{(2j-1)}-a_{k+1}^{(2j)}|\min\{\ell(\gamma_{k+1,i}^{(2j-1)}),\ell(\gamma_{k+1,i}^{(2j)})\}\bigg)\\
  & \lesssim \left(|a_k^{(j)}-a_{k+1}^{(2j-1)}|+|a_k^{(j)}-a_{k+1}^{(2j)}| +|a_{k+1}^{(2j-1)}-a_{k+1}^{(2j)}|\right) \diam (Q_k^{(j)})\\
 & \qquad+ \sum_{\gamma_l\in \mathcal F(\gamma_{k,\,i}^{(j)})} |a_{\gamma_l} - a_{\gamma_{l+1}}|\ell(\mathcal A[\gamma_{k,i}^{(j)}])\\
 & = S_k^{(j)}(1) + S_k^{(j)}(2),
\end{align*}
where we used the fact $\ell(\mathcal A[\gamma_{k,i}^{(j)}])=\ell(\gamma_{k,i}^{(j)})$.  Here the constants depend only on $C_1$ in \eqref{eq:curvecondition}.

The first term $S_k^{(j)}(1)$ is the same as in the Jordan case, and so we know 
that, after summing over all the hyperbolic triangles, it can be controlled by 
$\|\nabla u\|_{L^{1}(\Omega)}$ up to a multiplicative constant. Notice that here each $Q_k^{(j)}$ corresponds to at most one non-degenerated $R_{k,\,i}^{(j)}$ by Lemma~\ref{degenerated triangle}. 

Observe that
\begin{equation}\label{equ 61}
 \sum_{i=1}^N \ell(\mathcal A[\gamma_{k,i}^{(j)}]) =\sum_{i=1}^N \ell(\gamma_{k,i}^{(j)}) \le C(C_1) \diam (Q_k^{(j)}), 
\end{equation}
by \eqref{length estimate}.

Fix $i$ and two distinct non-degenerated triangles $R_{k,\,i}^{(j)},\,R_{k',\,i}^{(j')}.$ Then  
$\mathcal F(\gamma_{k,\,i}^{(j)})$ is disjoint from 
$\mathcal F(\gamma_{k',\,i}^{(j')})$ unless $[\gamma_{k,\,i}^{(j)}]=[\gamma_{k',\,i}^{(j')}]$, or equivalently $R_{k,\,i}^{(j)}$ and $R_{k',\,i}^{(j')}$ share the same boundary curve. 
Recall that in each  $[\gamma_{k,\,i}^{(j)}]$ there are at most two different family chains starting from the highest generation to the lowest one, and hence the ancestor in each $[\gamma_{k,\,i}^{(j)}]$ is counted at most twice if $k\neq 1$; if $k=1$ the multiplicity of $\gamma_{1,\,i}^{(1)}$ is  at most $2^{k_0+3}$. 
Hence, as a result of changing the order of summation together with \eqref{equ 61} and \eqref{equ 30} the following estimate 
holds: 
\begin{align*}
 &\sum_{R_{k,\,i}^{(j)}} S_k^{(j)}(2) = \sum_{R_{k,\,i}^{(j)}} \sum_{\gamma_l\in \mathcal F(\gamma_{k,\,i}^{(j)})} |a_{\gamma_l} - a_{\gamma_{l+1}}|\ell(\mathcal A[\gamma_{k,i}^{(j)}])\\
 \lesssim& \sum_{k,j} (|a_k^{(j)} - a_{k+1}^{(2j-1)}|+|a_k^{(j)} - a_{k+1}^{(2j-1)}|+|a_k^{(j)} - a_{k}^{(j-1)}|+|a_k^{(j)} - a_{k}^{(j+1)}|) \sum_{i=1}^N \ell(\mathcal A[\gamma_{k,i}^{(j)}])\\
\lesssim &\sum_{k,j}(|a_k^{(j)} - a_{k+1}^{(2j-1)}|+|a_k^{(j)} - a_{k+1}^{(2j-1)}|+|a_k^{(j)} - a_{k}^{(j-1)}|+|a_k^{(j)} - a_{k}^{(j+1)}|)\diam (Q_k^{(j)})\\
\ls & \sum_{k,\,j} S^{(j)}_{k}(1),
\end{align*}
where the constants depend only on $C_1$. 
Therefore the estimate for $\sum_{k,\,j} S^{(j)}_{k}(1)$ gives
$$\|\nabla Eu\|_{L^{1}(\widetilde\Omega)} \le C(C_1) \|\nabla u\|_{L^{1}(\Omega)}. $$

Finally let $B$ be a disk of radius $4\diam(\Omega)$ such that  $\Omega$ is  contained in $\frac 1 2 B$. Similarly as in the Jordan case, by part (4) of Lemma~\ref{property of John domain}, we have that
\begin{align}\label{L1 control simply}
\| Eu\|_{L^{1}(B)}\ls & \sum_{R_{k,\,i}^{(j)}}\int_{R_{k,\,i}^{(j)}} |Eu(x)| \, dx + \int_{\Omega} |u(x)|\, dx \nonumber \\
\ls & \sum_{\substack{k\ge 1\\ 1\le j\le 2^{k_0+k+1}}}\int_{Q_k^{(j)}} |u(x)| \, dx  +\int_{\Omega} |u(x)|\, dx\ls \int_{\Omega} |u(x)|\, dx,
\end{align}
where we used \eqref{bound of function}, \eqref{equ 30} and the fact that
$$|R_k^{(j)}|\ls |Q_k^{(j)}| $$
coming from \eqref{distance estimate} and the fact that $Q_k^{(j)}$ is of $C(C_1)$-Whitney-type. Here all the constants depend only on $C_1$. 

\subsection{Continuity at one-sided points}

By the curve condition  \eqref{eq:curvecondition}  and Corollary \ref{cor:twosidedzeromeasure}, 
the set $T \subset \partial\Omega$ of cut-points  
has $\ch^1$-measure zero. Therefore if we can extend $u$ continuously to the 
one-sided points, then we have defined $Eu$ everywhere except for a set of 
$\ch^1$-measure zero. 

Let us first show that we may assume $u$ to be continuous at the one-sided points in the following sense:

\begin{lem}\label{continuous extension}
Let $\Omega$ be a bounded simply connected $J$-John domain, 
%let $\varphi\colon \overline{\mathbb D}\to \overline{\Omega}$ be continuous 
%and conformal inside $\mathbb D$, 
and suppose that
$u\in W^{1,\,\infty}(\Omega)\cap C(\Omega)$. Then we can extend $u$ (uniquely) to $
\wz u$ so that 
$\wz u\in C(\overline{\Omega}\setminus T)$, 
where $T$ is the collection of cut-points of  $\partial \Omega.$
%, which do not depend on $\varphi$ by Lemma~\ref{lma:twosidedhyperbolic}. 
\end{lem}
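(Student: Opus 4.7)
The plan is to transfer the question to the closed unit disk via $\varphi$, where the continuous extension of $u\circ\varphi$ is automatic once we recognize $u$ as Lipschitz for the correct metric on $\Omega$, and then push the extension back via $\varphi^{-1}$, which is single-valued exactly on the complement of the cut-point set.

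First, from $u \in W^{1,\infty}(\Omega) \cap C(\Omega)$ one obtains that $u$ is globally $L$-Lipschitz with respect to the inner distance $\dist_\Omega$, where $L = \|\nabla u\|_{L^\infty(\Omega)}$. Indeed, for any two points $x,y\in\Omega$ and any rectifiable curve $\gamma\subset\Omega$ joining them, $\gamma$ is compactly contained in $\Omega$, so standard mollification of $u$ in a neighborhood of $\gamma$ gives $|u(x)-u(y)|\le L\,\ell(\gamma)$; taking the infimum over $\gamma$ yields $|u(x)-u(y)|\le L\,\dist_\Omega(x,y)$.

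Next, Lemma~\ref{inner continuous} asserts that $\varphi\colon(\mathbb{D},|\cdot|)\to(\Omega,\dist_\Omega)$ is uniformly continuous. Composing with the $\dist_\Omega$-Lipschitz function $u$ shows that $u\circ\varphi\colon\mathbb{D}\to\mathbb{R}$ is uniformly continuous in the Euclidean metric, and hence admits a unique continuous extension $v$ to $\overline{\mathbb{D}}$. Moreover, by part (2) of Lemma~\ref{property of John domain}, $\varphi$ itself extends continuously to a map $\overline{\mathbb{D}}\to\overline{\Omega}$, which we still denote by $\varphi$.

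Now define $\wz u(x)=u(x)$ for $x\in\Omega$ and $\wz u(x)=v(x')$ for $x\in\partial\Omega\setminus T$, where $\{x'\}=\varphi^{-1}(x)$ (a singleton by the definition of a cut-point). To check the continuity of $\wz u$ on $\overline{\Omega}\setminus T$, let $x_n\to x$ with all $x_n,x\in\overline{\Omega}\setminus T$. Each $x_n$ has a unique preimage $y_n\in\overline{\mathbb{D}}$ under $\varphi$; the crucial step is to show $y_n\to x'$. This is a compactness–injectivity argument: by compactness of $\overline{\mathbb{D}}$, any subsequential limit $y^*$ of $\{y_n\}$ satisfies $\varphi(y^*)=x$ by continuity of $\varphi$, so $y^*=x'$ by the one-sidedness of $x$, whence the full sequence converges to $x'$. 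Continuity of $v$ then gives $\wz u(x_n)=v(y_n)\to v(x')=\wz u(x)$. Uniqueness of the extension follows from continuity together with the density of $\Omega$ in $\overline{\Omega}\setminus T$ (obtained, for example, from radial approximation in the disk and the continuity of $\varphi$ on $\overline{\mathbb D}$). The only nontrivial ingredient beyond standard facts is the step $y_n\to x'$, and for this one-sidedness of $x$ is precisely what rules out the existence of multiple subsequential limits.
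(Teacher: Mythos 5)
Your proposal is correct and takes essentially the same approach as the paper's proof: both identify $u$ as $\dist_\Omega$-Lipschitz from the $W^{1,\infty}$ bound, invoke Lemma~\ref{inner continuous} for the uniform continuity of $\varphi\colon(\mathbb D,|\cdot|)\to(\Omega,\dist_\Omega)$, and use the same compactness--injectivity argument to conclude $\varphi^{-1}(z_i)\to\varphi^{-1}(z)$ at one-sided points. The paper argues directly that $\lim_i u(z_i)$ exists, whereas you route the argument through the continuous extension $v=\overline{u\circ\varphi}$ on $\overline{\mathbb D}$; this is a cosmetic reorganization of the same ideas rather than a genuinely different method.
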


\begin{proof}
Let $z\in \partial{\Omega}\setminus T.$ 
Since 
$u\in   C(\Omega),$ it suffices to show that $\lim_{i\to \infty} u(z_i)$ exists   whenever $z_i\in \Omega$ satisfies $|z_i -z |\to 0$ as $i\to \infty.$ Let 
$\varphi\colon \overline{\mathbb D}\to \overline{\Omega}$ be continuous 
and conformal inside $\mathbb D;$ see part (2) of Lemma~\ref{property of John domain}. 
By Lemma~\ref{inner continuous}, we also have that $\varphi\colon (\mathbb D,\,|\cdot|)\to (\Omega,\,\dist_{\Omega})$ is uniformly continuous.

Since $z\notin T,$ we have that 
$\varphi^{-1}(z)$ is a singleton; say $\varphi^{-1}(z)=w.$
Our claim follows if $|\varphi^{-1}(z_i)-w|\to 0$ when $i\to \infty;$ indeed since $\varphi$ is uniformly continuous with respect to the $\dist_{\Omega}$-metric of $\Omega$, then by the assumption that $u\in W^{1,\,\infty}(\Omega)\cap C(\Omega)$ we conclude that $\lim_{i\to \infty} u(z_i)$ exists. 
This is 
necessarily the case; otherwise a subsequence converges to some $w'\neq w$
and then $\varphi(w')=z$ by the continuity of $\varphi$ at $w',$
contradicting the fact that $\varphi^{-1}(z)$ is a singleton.  
\end{proof}

%\begin{proof}
%Let $\varphi\colon \overline{\mathbb D}\to \overline{\Omega}$ be continuous 
%and conformal inside $\mathbb D$.
%By Lemma~\ref{John subdomain} we have in fact $\varphi$ is continuous with respect to both the Euclidean distance and the inner distance; 
%$\varphi$ is quasisymmetric with respect to inner metric. Then for any 
%cut-point $z\in \partial \Omega$ and $\ez>0$, we can choose $\delta>0$ such 
%that for any points $x,\,y\in B(\varphi^{-1}(z),\,\delta)$, one has 
%$\dist_{\Omega}(\varphi(x),\,\varphi(y))< \ez$ by uniform continuity as the 
%$\overline{\Omega}$ is compact (with respect to the topology induced by inner metric). Moreover, by the quasisymmetry of $\varphi$ we know that 
%$\varphi(B(\varphi^{-1}(z),\,\delta))$ including a ball centered at $z$ with 
%radius smaller than $\ez$ with respect to the inner metric. 
%
%Moreover since $u\in W^{1,\,\infty}(\Omega)$, for $x,\,y\in B(\varphi^{-1}(z),\,\delta)$ the function $u\circ\varphi$ satisfies
%$$|u\circ\varphi(x)-u\circ\varphi(y)|\lesssim \dist_{\Omega}(\varphi(x),\,\varphi(y))\lesssim \ez. $$
%Therefore one can extend $u\circ \varphi$ to $z$, and hence the lemma follows. 
%\end{proof}

Recall that by the definition of the ancestors and that of $\phi_{k}^{(j)}(x)$, $Eu$ is (locally Lipschitz) continuous in $\wz \Omega$. 
Then the continuity of $Eu$ (under $u\in W^{1,\,\infty}(\Omega)\cap C(\Omega)$) restricted to  $\mathbb R^2\setminus H$ with 
$H := T \cup Z$ follows exactly as in the Jordan case, where $Z$ is also 
defined like the previous one: the collection of the countably many 
points $z_{k}^{(j)}$ on the boundary. 
This time the fact that $y \notin \Gamma_s$ (defined in \eqref{curve family}) 
comes from Lemma \ref{lma:twosidedhyperbolic}, where 
we observed that, except for the endpoints, piecewise hyperbolic geodesics can
meet the boundary only at cut-points. 
Therefore, by further defining $Eu(x)=\wz u(x)$ for $x\in \partial \Omega$ for $u\in W^{1,\,\infty}(\Omega)\cap C(\Omega)$, 
we have that $Eu(x)$ is continuous in $\mathbb R^2\setminus H$.

\subsection{Absolute continuity}% of the extension along almost every line}

We show the absolute continuity of $Eu$ 
along almost every horizontal or vertical line under the assumption that $\wz u\in  C(\Omega) \cap W^{1,\,\infty}(\Omega)$. By Lemma~\ref{continuous extension} we may assume that 
$$u\in   W^{1,\,\infty}(\Omega)\cap C(\overline{\Omega}\setminus T).$$ 
We begin by applying the Jordan case in the following way.

Since $\ch^1(T) = 0$ by Corollary~\ref{cor:twosidedzeromeasure},
for any $\epsilon >0,$ we find 
$$U_\epsilon :=\Omega \cap  \bigcup_{z \in T}  {B(z,r_z)}$$
 with the property that the $\ch^1$-measure 
of its vertical and horizontal projections are less than $\epsilon$; the existence of such disks follows from the definition of $\ch^1$-measure and the fact that projections are $1$-Lipschitz.

%Now pull $U_{\ez}$ back to $\overline{\mathbb D}$
%via the conformal map 
%$\varphi.$ 
Since $\varphi$ is a homeomorphism in $\mathbb D$, the set 
$\varphi^{-1}(U_{\ez})$ is open. 
%We refine $\varphi^{-1}(B(z,r_z))$ to a neighbourhood $V_z$ of $\varphi^{-1}(\{z\})$ in $\overline{\mathbb D}$ by taking
For each $z \in T$ and every $w\in \varphi^{-1}(\{z\})$, by the continuity of $\varphi$ we may pick a
small enough $\delta_w \in (0,\,\frac 1 2)$ 
such that  the set
\[
 V_w := \{v \in \mathbb{D}  \colon  \langle w, v \rangle > (1-\delta_w)\}
\]
satisfies
\begin{equation}\label{equ 1001}
\dist(\varphi(V_w),\,z)\le \frac 1 2 r_z.
\end{equation}
where $\langle w, v \rangle $ means the inner product of $w$ and $v$. See Figure~\ref{fig:vz}. 

\begin{figure} 
 \centering
 \includegraphics[width=0.85\textwidth]{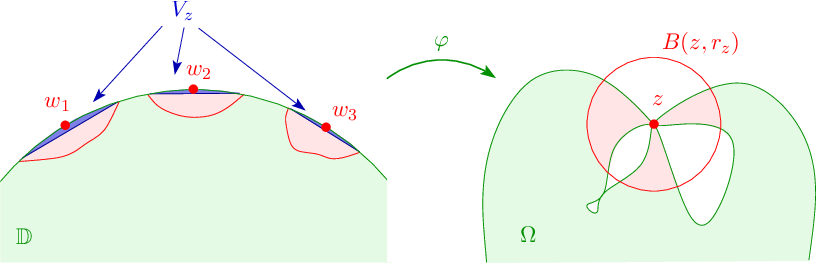}
 \caption{Here we show in red the disk $B(z,\,r_z)$  and its preimage under $\varphi$. The set $V_z$, whose boundaries are line segments inside the unit disk, is shown  with blue color. }
 \label{fig:vz}
\end{figure}

Finally we define 
\[
 \Omega_\epsilon :=\varphi\left(\mathbb D \setminus \overline{\bigcup_{w \in \varphi^{-1}(T)}V_w}\right).
\]
We claim that $\Omega\setminus \Omega_{\ez} \subset  U_{\ez}$. If this were to hold, then  the $\ch^1$-measure of the projection of $\Omega\setminus \Omega_{\ez}$ on each coordinate axis would be
less than 
$\ez$. 

Let us verify the claim. Since $\varphi$ is a homeomorphism in $\mathbb D$ and continuous up to the boundary, we have
 $$ \Omega\setminus \Omega_\epsilon =  \varphi\left(\overline{\bigcup_{w \in \varphi^{-1}(T)}V_w}\right)\cap \Omega= \overline{\bigcup_{w \in \varphi^{-1}(T)} \varphi(V_w)}\cap \Omega.$$
Fix $y\in \Omega\setminus \Omega_{\ez}$ and let $y_n\in \bigcup_{w \in \varphi^{-1}(T)} \varphi(V_w)$ satisfy $y_n\to y$. Let $\delta:=\dist(y,\,\partial \Omega)$. Then $\delta>0$. 
For $n$ large enough, we have $|y_n-y|\le \delta/4$, and hence 
$$\dist(y_n,\,\partial \Omega)\ge  \dist(y,\,\partial \Omega)-|y_n-y|\ge \frac 3 4 \delta.$$
Moreover by \eqref{equ 1001} there exists $z_n\in T$ such that $|z_n-y_n|\le  r_{z_n}/2$, and then we obtain
$$\delta\le \frac 2 3 r_{z_n}.$$
Therefore
$$|y-z_n|\le |y_n-y|+|y_n-z_n|\le  \frac 1 4 \delta + \frac 1 2 r_{z_n}\le \frac 2 3  r_{z_n},$$
and then $y\in B(z_n,\,r_{z_n})\cap \Omega\subset U_{\ez}.$ Thus the claim follows. 

%\begin{figure}
% \centering
% \def\svgwidth{500pt}
% \input{drawing-2.pdf_tex}
% \caption{The set $V_z$ is the union of the yellow and the green parts on the 
%left-hand side, and the domain $\Omega_\ez$ is shown in pink on the right-hand %side. }
% \label{fig:decomposition}
%\end{figure}

%We claim that the domain $\Omega_\epsilon$ is also John with the John constant 
%depending only on $J$.

%Indeed, first observe that  Next notice that we always require that $\delta_w \in (0,,\,\frac 1 2 )$, then $B(0,\,\frac 1 2)\subset \varphi^{-1}(\Omega_\ez)$, which combining with convexity yields that for every $x\in \varphi^{-1}(\overline{\Omega_\ez})$, locally there is a Euclidean (open) cone contained in $\varphi^{-1}(\Omega_\ez)$ with a uniform angle centered at $x$. Therefore $\varphi^{-1}(\Omega_\ez)$ is a John domain with some absolute constant, and hence by Lemma~\ref{John subdomain} we obtain the claim. 

Since $\varphi$ is a conformal map and continuous up to the boundary, then $\varphi|_{\partial  (\varphi^{-1}(\Omega_\epsilon))}$ is continuous and injective as we took out neighborhoods of all the cut-points. Then by the compactness of $\partial  (\varphi^{-1}(\Omega_\epsilon))$ we conclude that $\varphi|_{\partial ( \varphi^{-1}(\Omega_\epsilon))}$ is an embedding to $\mathbb R^2$ and hence
$\Omega_\epsilon$ is Jordan. 
We claim that $\mathbb R^2\setminus \overline{\Omega}_\ez$ is  quasiconvex.  If this claim were to hold, by the result on the $W^{1,\,1}$-extension in the Jordan case (in Section~\ref{sec:sufficiency_Jordan}) we could conclude that there exists an extension 
$u_\epsilon \in W^{1,\,1}(\rr^2)$ of $u|_{\Omega_\epsilon}$.

Indeed, the proof of the claim is similar to the one of Lemma~\ref{inherited}. If $x,\,y\in \Omega^c$, then \eqref{eq:curvecondition} gives us the desired curve. If $x\in \Omega^c$
 while $y\in \Omega\setminus \Omega_\ez$, then $|x-y|\ge \dist(y,\,\partial \Omega)$. Hence by connecting $y$ to $y_0\in \partial \Omega$ such that  the line segment $[\varphi^{-1}(y),\,\varphi^{-1}(y_0)]$ joining $\varphi^{-1}(y)$ and $\varphi^{-1}(y_0)$ lies in the line segment $[0,\,\varphi^{-1}(y_0)]$, \eqref{eq:curvecondition} and part (1) and (3) of Lemma~\ref{property of John domain}  give us the desired curve; note that $[\varphi^{-1}(y),\,\varphi^{-1}(y_0)]$ is contained in the union of $V_z$'s by definition. Thus by symmetry we may assume that $x,\,y\in \Omega\setminus \Omega_\ez$.
 
We claim that, there exists an absolute constant $C$ such that, whenever 
\begin{equation}\label{C}
|x-y|\le C \min\{\dist(x,\,\partial \Omega),\,\dist(y,\,\partial \Omega)\},
\end{equation}
we have 
\begin{equation}\label{distance to the boundary}
|\varphi^{-1}(x)-\varphi^{-1}(y)|\le \frac 1 {16} \min\{\dist(\varphi^{-1}(x),\,\partial \mathbb D),\,\dist(\varphi^{-1}(y),\,\partial \mathbb D)\}. 
\end{equation}
To show the existence of $C$, we apply Lemma~\ref{whitney preserving} to the Whitney-type disk 
\[
B_1=B(\varphi^{-1}(x),\, 1 / {16} \dist(\varphi^{-1}(x),\,\partial \mathbb D)),
\]
which implies that
$\varphi(B_1)$ is of Whitney-type. Thus there exists an absolute constant $c$ such that 
$$B(x,\,c\dist(x,\,\partial \Omega))\subset \varphi(B_1).$$
By  choosing $C=\frac 1 2 c$, we obtain the claim. 

Suppose that $x,\,y$ satisfy 
$$|x-y|\ge C \min\{\dist(x,\,\partial \Omega),\,\dist(y,\,\partial \Omega)\}$$ 
where $C$  is the one in \eqref{C}. Then we can also define $x_0\in \partial \Omega$ in a similar way, connect $x_0$ with $x$ via a hyperbolic geodesic in $\Omega$ and join $x_0$ and $y_0$ in $\Omega^c$ by a curve with length controlled by $C(C_1)|x_0-y_0|$. This gives us a desired curve by \eqref{eq:curvecondition} and Lemma~\ref{property of John domain} again. Thus we only need to consider the case where 
$$|x-y|\le C \min\{\dist(x,\,\partial \Omega),\,\dist(y,\,\partial \Omega)\}. $$

Let $B=B(\varphi^{-1}(x),\, 4 |\varphi^{-1}(x)-\varphi^{-1}(y)|)$. Then it is a disk contained in a 
$4$-Whitney-type set because of \eqref{distance to the boundary}. 
We claim that  there exists a curve $\gamma\subset B\setminus\varphi^{-1}(\Omega_\ez)$ joining $\varphi^{-1}(x),\,\varphi^{-1}(y)$ in $\mathbb D$ such that its length is controlled by $|\varphi^{-1}(x)-\varphi^{-1}(y)|$ up to a multiplicative constant. If this holds,  by Lemma~\ref{whitney preserving} the curve $\varphi(\gamma)$ joining $x,\,y$ in $\varphi(B)\cap (\Omega\setminus\Omega_\ez)$ has length smaller than $|x-y|$ up to a multiplicative constant. Consequently $\mathbb R^2\setminus\Omega_\ez$ is quasiconvex with the constant independent of $\ez$. 

Now let us show the existence of $\gamma$ under the assumption \eqref{distance to the boundary}. Observe first that by  definition  
$$\varphi^{-1}(\Omega_\epsilon)=  \mathbb D \setminus \overline{\bigcup_{z \in T}V_z} = \left(\bigcap_{z\in T} \left( \mathbb D \setminus V_z \right)\right)^o$$
is a convex domain since it is the interior of the intersection of convex sets and the connectedness follows directly from the definition. Since we always require that $\delta_w \in (0,\,\frac 1 2 )$, then $B(0,\,\frac 1 2)\subset \varphi^{-1}(\Omega_\ez)$. This combined with convexity yields that for every $z\in \varphi^{-1}(\overline{\Omega}_\ez)$, locally there is a Euclidean (open) sector contained in $\varphi^{-1}(\Omega_\ez)$ with angle at least  $\frac \pi 3$, centered at $z$, towards the origin. 
Thus if we cannot join $\varphi^{-1}(x),\,\varphi^{-1}(y)$ in $\frac 3 4 B\setminus\varphi^{-1}(\Omega_\ez)$, then $\frac 3 4 B\setminus\varphi^{-1}(\Omega_\ez)$ has at least two components. However  this is impossible since $\varphi^{-1}(x),\,\varphi^{-1}(y) \in \frac 1 4 B$ but both $\varphi^{-1}(x)$ and $\varphi^{-1}(y)$ are not in the convex set $\varphi^{-1}(\Omega_\ez)$; planar geometry tells that such a sector with angle at least  $\frac \pi 3$ cannot exist. Therefore we can join $\varphi^{-1}(x),\,\varphi^{-1}(y)$ in $\frac 3 4 B\setminus\varphi^{-1}(\Omega_\ez)$ by a curve $\gamma'$ with $\ell(\gamma')\le (8+\frac 3 2  \pi)|\varphi^{-1}(x)-\varphi^{-1}(y)|$. Thus by letting $\gamma=\varphi(\gamma')$, we conclude  from Lemma~\ref{whitney preserving} and a change of variable that $\ell(\gamma)\ls |x-y|$ with an absolute constant, as desired.

Notice that $Eu$ is absolutely continuous along almost 
every closed line segment  parallel to the coordinate axes and contained in $\wz \Omega$ since $Eu$ is locally Lipschitz in $\wz\Omega$ by our construction. Recall that by
Lemma~\ref{continuous extension} we may assume that 
$u\in C(\overline{\Omega}\setminus T)\cap W^{1,\,\infty}(\Omega)$. 
Also take a representative of $u_{1/n}\in W^{1,\,1}(\mathbb R^2)$, the extension of the restriction of $u$ to the Jordan domain $\Omega_{1/n}$ given by Theorem~\ref{Jordan suff}, so that it is absolutely continuous (and hence continuous)
along  almost every line segment parallel to the coordinate axes.

By  Fubini's theorem, 
\begin{equation}\label{equ 301}
\int_{L} |\nabla Eu|\, dx <\infty, \qquad \int_{L} |\nabla u_{1/n}|\, dx <\infty
\end{equation}
for $\ch^1$-almost every line segment $L$ parallel to the coordinate axes. It follows that for almost every horizontal (or vertical) line $S$, \eqref{equ 301} holds, each $u_{1/n}$ is absolutely continuous on every closed line segment of $S$, and so is also $Eu$ on each of these closed line segments of $S$ that is additionally contained in $\wz \Omega$.

Define 
$$Z_0=\cap_{n\ge 1} \left(\Omega\setminus \Omega_{1/ n}\right).$$
Since the vertical and horizontal projections of  $\Omega\setminus \Omega_{1/n}$ have $\ch^1$-measure no more than $1/ n$, we conclude that almost every line $S$ parallel to the coordinate axes is disjoint from $Z_0$. Equivalently, $S\cap \Omega\subset \Omega_{1/ n}$ for some $n\in \mathbb N$. Furthermore, recalling that $H=T\cup Z$ has $\ch^1$-measure zero, we have that almost every $S$ parallel to  the coordinate axes is disjoint from $H$. By Lemma~\ref{continuous extension}, we may assume that $u$ is continuous on $S\cap \overline{\Omega}$ for each such $S$. 

%Recall that $H=T\cup Z$ has $\ch^1$-measure zero. 
%Let $S$ be a horizontal line such that the assumptions above hold for $Eu$ and 
%every $u_{1/n}$ with $n$ large enough, and $S\cap H= \emptyset.$ 
%By the last paragraph of  the previous subsection, we already know 
%that $Eu$ is continuous 
%on $S$. 
%Moreover, we may apply Fubini's theorem to require that,
%\begin{equation}\label{equ 301}
%\int_{S} |\nabla Eu|\, dx <\infty, \qquad \int_{S} |\nabla u_{1/n}|\, dx <\infty
%\end{equation}
%hold for $\ch^1$-almost every horizontal $S$. 
%Additionally, by the definition of $\Omega_{\ez}$, the  measure of the horizontal projection of $\Omega\setminus \Omega_{\ez}$ goes to $0$ as $\ez\to 0$, and then we require $S$ outside the intersection of these set.
%To conclude, for almost every (with respect to $\ch^1$-measure)  horizontal line segment $S$ parallel to the coordinate axes, we have that $S$ is outside the intersection of the horizontal projections of $\Omega\setminus \Omega_{\ez}$ and that
% the restrictions of $Eu$ and $\nabla u_{1/n}$ to $S$ are continuous, together with \eqref{equ 301}. 

Fix a horizontal line segment $S$ that satisfies the conclusions of the preceding two paragraphs. Let us  verify that $Eu|_{I}$ is absolutely continuous, where $I\subset S$ is a closed line segment.  
By the definition of $S$, there exists $M\in \mathbb N$ such that $Eu|_{\overline{\Omega}\cap S}=u_{1/M}|_{\overline{\Omega}\cap S}$, and $Eu_{1/M}$ is continuous on $S$; notice that by Lemma~\ref{continuous extension} and our assumption, $Eu(x)=u_{1/M}(x)$ for $x\in \partial \Omega\cap S$. Let $z_1,\,\dots,\,z_{2n}\in I$ be points whose 
first components are strictly increasing.

If $z_1,\,z_2\in \overline{\Omega}$, then by the assumption that $u\in C(\overline{\Omega}\setminus T)\cap W^{1,\,\infty}(\Omega)$, we get
\[
|Eu(z_1)-Eu(z_2)| = |u_{1/M}(z_1)-u_{1/M}(z_2)| \le \int_{[z_1,\,z_2]} |\nabla u_{1/M}|\,dx. 
\]

%Denote by $[z_1,\,z_2]$ the line segment contained between $z_1$ and $z_2$.
If $[z_1,\,z_2] \subset \wz \Omega$, then since $Eu$ is absolutely continuous on
closed line segments in $S$, we obtain 
\[
|Eu(z_1)-Eu(z_2)|\le \int_{[z_1,\,z_2]} |\nabla Eu|\,dx. 
\]
For the remaining case, by symmetry we may assume that $z_1\in \overline{\Omega}$ while $z_2\in \wz \Omega$. 
Then let $z$ be the nearest point to $z_2$ on $\overline{\Omega}\cap I$ between $z_1$ and $z_2$, and 
observe by the same absolute continuity properties as above that
\begin{align*}
|Eu(z_1)-Eu(z_2)| & \le |Eu(z_1)-Eu(z)|+|Eu(z)-Eu(z_2)| \\
&= |u_{1/M}(z_1) - u_{1/M}(z)| + |Eu(z)-Eu(z_2)|\\
& \le \int_{[z_1,\,z]} |\nabla u_{1/M}|\,dx+\int_{[z,\,z_2]} |\nabla Eu|\, dx,
\end{align*}
where in the last inequality we applied the continuity of $Eu$ on $S$. 

Using a similar argument for the other pairs, we get
\begin{align*}
 \sum_{i=1}^{n} |Eu(z_{2i-1})-Eu(z_{2i})| & \le \sum_{i=1}^{n} 2 \int_{[z_{2i-1},\,z_{2i}]} (|\nabla u_{1/M}| + |\nabla Eu|)\, dx\\
 & = 2 \int_{\cup_i [z_{2i-1},\,z_{2i}]} (|\nabla u_{1/M}| + |\nabla Eu|)\, dx
\end{align*}
from which we get the desired absolute continuity of $Eu$ on $I$ by the absolute continuity of the integral. 
The case of vertical lines is dealt with analogously, 
%By the fact that these lines exists almost everywhere with respect to $\ch^1$-measure, we obtain 
 and we conclude the absolute continuity of $Eu$ along almost every vertical and 
horizontal line.

\subsection{Conclusion of the proof}

Combining the discussions in all the subsections above, we have shown that  there exists a linear extension operator 
$$E\colon W^{1,\,1}(\Omega)\cap C(\Omega)\cap W^{1,\,\infty}(\Omega) \to W^{1,\,1}(B)$$
such that 
$$\|\nabla Eu\|_{L^{1}(B)} \lesssim \|\nabla u\|_{L^{1}(\Omega)}$$
and
$$\| Eu\|_{L^{1}(B)}\ls \|u\|_{L^{1}(\Omega)},$$
where the constants depend only on the constant in \eqref{eq:curvecondition}; recall that the boundary of $\Omega$   has  Lebesgue measure zero by part (4) of Lemma~\ref{property of John domain}. 
Here $B$ is a disk of radius $4\diam(\Omega)$ such that  $\Omega$ is  contained in $\frac 1 2 B$. By the fact that $B$ is an $W^{1,\,1}$-extension domain, and the fact that  $C(\Omega)\cap W^{1,\,\infty}(\Omega)$ is dense in $W^{1,\,1}(\Omega)$ \cite{KZ2015}, we conclude the sufficiency of the curve condition \eqref{eq:curvecondition}. 

\section{Necessity for simply connected domains}\label{sec:necessity}

In this section we prove the necessity of the curve condition \eqref{eq:curvecondition} for  bounded simply connected
$W^{1,\,1}$-extension domains. 
We use the following result stated in \cite[Corollary 1.2]{kosmirsha10}, which was shown via results in \cite{BM1967}.

\begin{thm}[{\cite[Corollary 1.2]{kosmirsha10}}]\label{thm:quasiconvex}
 Let $\Omega \subset \rr^2$ be a bounded simply connected $W^{1,\,1}$-extension 
domain. Then
 $\Omega^c$ is $C_1$-quasiconvex with $C_1$ depending only on the norm of the extension operator (with respect to the homogeneous norm).
\end{thm}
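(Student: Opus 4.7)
I would prove this by passing through $BV$-extension, following the route of \cite{BM1967} and \cite{kosmirsha10}. The first step upgrades the $W^{1,1}$-extension operator $E$ (of norm $N$) to a $BV$-extension operator of comparable norm. Given $u\in BV(\Omega)$, approximate by mollifications $u_\varepsilon\in C^\infty(\Omega)\cap W^{1,1}(\Omega)$ with $\|\nabla u_\varepsilon\|_{L^1(\Omega)}\to |Du|(\Omega)$ and $u_\varepsilon\to u$ in $L^1(\Omega)$. The sequence $\{Eu_\varepsilon\}\subset W^{1,1}(\mathbb{R}^2)$ is then uniformly bounded in $BV(\mathbb{R}^2)$-norm. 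Weak-$*$ compactness in $BV$ yields a subsequential $L^1_{\rm loc}$-limit $v\in BV(\mathbb{R}^2)$ with $v|_\Omega=u$ a.e.\ and $|Dv|(\mathbb{R}^2)\le N\,|Du|(\Omega)$.

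Next, fix $x,y\in\Omega^c$, set $r:=|x-y|$, and apply the $BV$-extension to the Lipschitz truncation $u(p):=\max\{r-|p-x|,\,0\}$ restricted to $\Omega$, which satisfies $|Du|(\Omega)\le |\Omega\cap B(x,r)|\le \pi r^2$. Then $|Dv|(\mathbb{R}^2)\le \pi N r^2$, and the coarea formula supplies a level $t_0\in(r/4,\,3r/4)$ for which $\mathcal{H}^1(\partial^*\{v>t_0\})\le 4\pi N r$. Combining the ACL property of $v$ along generic lines with $v|_\Omega = u$, the quasicontinuous representative of $v$ takes values close to $r$ near $x$ and close to $0$ near $y$, forcing $\partial^*\{v>t_0\}$ to separate $x$ from $y$ in $\mathbb{R}^2$; moreover its portion inside $\Omega$ agrees modulo $\mathcal{H}^1$-null sets with $\Omega\cap\partial B(x,r-t_0)$, so its portion in $\Omega^c$ is a rectifiable subset of $\overline{\Omega^c}$ of length at most $4\pi N r$.

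The main obstacle is promoting this separating set of controlled $\mathcal{H}^1$-measure to an honest rectifiable curve in $\Omega^c$ joining $x$ and $y$. By Federer's structure theorem, $\partial^*\{v>t_0\}$ decomposes into countably many rectifiable Jordan curves, and the simple connectedness of $\Omega$ (equivalently, the connectedness of $\Omega^c$ on the Riemann sphere) constrains how these curves close up around $\Omega\cap\partial B(x,r-t_0)$. A topological surgery excising the $\Omega$-portions and concatenating the remaining $\Omega^c$-pieces should yield a rectifiable curve $\gamma\subset\overline{\Omega^c}$ from $x$ to $y$ with $\ell(\gamma)\le C(N)|x-y|$. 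A small perturbation using the Whitney structure of $\Omega^c$ then moves $\gamma$ off $\partial\Omega$ without inflating its length by more than a constant factor, producing the sought quasiconvex connection inside $\Omega^c$. Rigorously executing this topological surgery---ensuring connectedness, rectifiability, and the length bound simultaneously, while ruling out pathological arrangements of the countably many Jordan components---is the technical heart and hardest step of the plan.
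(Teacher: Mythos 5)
The paper does not prove this statement at all: it is imported verbatim as \cite[Corollary 1.2]{kosmirsha10}, which in turn rests on \cite{BM1967}. Your write-up is therefore a reconstruction of the cited external argument, and it does capture the broad strategy of that literature --- upgrade the $W^{1,1}$-extension to a $BV$-extension via mollification and weak-$*$ compactness, test against a ramp function, locate a good level via coarea, and try to extract a short curve in $\Omega^c$ from the resulting separating set.

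However, the write-up has genuine gaps. The central one you flag yourself: the ``topological surgery'' converting the $\Omega^c$-portion of the reduced boundary (a countably rectifiable set of controlled $\mathcal{H}^1$-measure, possibly with infinitely many connected pieces) into a single rectifiable curve from $x$ to $y$ is precisely the hard step of the whole argument, and it is left entirely unexecuted; without it there is no proof. A second gap is the separating claim: you assert that the quasicontinuous representative of $v$ is close to $r$ near $x$ and close to $0$ near $y$, but $x,y\in\Omega^c$ and the extension is constrained only on $\Omega$ --- if $\Omega$ has small or zero Lebesgue density at $x$ or $y$ (which is the generic situation for points deep inside $\Omega^c$), the values of $v$ near those points are uncontrolled, so the level set need not separate them. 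One must first reduce to $x,y$ on (or near) $\partial\Omega$ --- for instance by joining each to $\partial\Omega$ via a segment, as this paper itself does in the necessity argument --- a reduction your sketch omits. Finally, the closing perturbation ``to move $\gamma$ off $\partial\Omega$'' is a red herring: since $\Omega$ is open, $\Omega^c$ is closed and $\overline{\Omega^c}=\Omega^c$, so a rectifiable curve in $\overline{\Omega^c}$ is already exactly what quasiconvexity requires; avoiding $\partial\Omega$ matters only for the stronger curve condition \eqref{eq:curvecondition}, not for Theorem~\ref{thm:quasiconvex}.
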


Let $\varphi \colon \mathbb{D} \to \Omega$ be a conformal map. 
By Theorem \ref{thm:quasiconvex}
the complement of $\Omega$ is quasiconvex for some constant $C_1.$ 
%depending on the norm of the extension operator.
Therefore $\Omega$ is $J(C_1)$-John  by part (1) of Lemma~\ref{property of John domain}  and also resulting from part (2) of
Lemma~\ref{property of John domain} the map $\varphi$ extends as a
continuous map (still denoted  $\varphi$) to the boundary $\partial \mathbb D$.
As before, let us denote $\widetilde\Omega := \rr^2 \setminus\overline{\Omega}$
and let $\{\widetilde\Omega_i\}_{i=1}^N$ be an enumeration of the connected components of 
$\widetilde\Omega$ with $N\in \mathbb N \cup\{+\infty\}$.

We show that \eqref{eq:curvecondition}  is necessary. Fix $x,\,y\in \Omega^c$. If
$$|x-y|\le \frac 1 2 \max\{\dist(x,\,\partial \Omega),\,\dist(y,\,\partial \Omega)\},$$
then we may take $\gamma$ to be the line segment joining $x$ and $y$, and then \eqref{eq:curvecondition} holds for this $\gamma$. Thus we may assume that
$$|x-y|\ge \frac 1 2 \max\{\dist(x,\,\partial \Omega),\,\dist(y,\,\partial \Omega)\}. $$

By Theorem~\ref{thm:quasiconvex} there exists a curve $\gamma'\subset \Omega^c$ joining $x,\,y$ with length controlled by $C_1|x-y|$. If $\gamma'$ does not touch the boundary, then we are done. 
Otherwise starting from $z_0=x$, we choose $z_1$ such that 
$$\ell(\gamma'[z_0,\,z_1])=\delta \diam(\Omega) $$
for the subcurve $\gamma'[z_0,\,z_1]$ of $\gamma'$ joining $z_0$ to $z_1$, with $\delta$ in Lemma~\ref{lem_phg}; if there is no such a point, we let $z_1=y$ and define $\gamma_1$ to be the piecewise hyperbolic geodesic joining $z_0$ and $z_1$. If $\gamma'[z_0,\,z_1]$ touches the boundary at most once, then we define $\gamma_1=\gamma'[z_0,\,z_1]$. If not, choose the first and last points $x_1,\,y_1$ of $\gamma'[z_0,\,z_1]\cap \partial \Omega$ according to the parametrization of $\gamma'$, and reroute $\gamma'[z_0,\,z_1]$ via the piecewise hyperbolic geodesic joining $x_1$ and $y_1$ to obtain a new curve $\gamma_1$; the existence of the piecewise hyperbolic geodesic follows from Lemma~\ref{lem_phg}. Also according to Lemma~\ref{lem_phg}, 
$$\ell(\gamma_1)\ls \ell(\gamma'[z_0,\,z_1])$$
with the constant depending only on $C_1$. 
Continue this procedure via replacing $z_0$ by $z_1$ to find $z_2$ and to define $\gamma_2$, and iterate until we finally get the point $y$. Observe that this process stops after finitely many steps since $\gamma'$ has finite length, and it produces a finite sequence of curves $\gamma_i,\,1\le i\le N$ such that
$$\sum_{i=1}^{N} \ell(\gamma_i)\ls \sum_{i=1}^{N} \ell(\gamma'[z_{i-1},\,z_i])\ls  \ell(\gamma')\ls |x-y|.$$
We claim that $\gamma=\cup_{i=1}^{N} \gamma_i$ satisfies \eqref{eq:curvecondition} (with the constant depending only on $C_1$). 
By our construction  and the injectivity of piecewise hyperbolic geodesics from Lemma~\ref{lem_phg}, it suffices to show that for $x,\,y\in \partial \Omega$ with $|x-y|\le \delta \diam(\Omega)$ we have 
$$\ch^1(I)=0$$
where $I$ is the intersection between $\partial \Omega$ and the piecewise hyperbolic geodesic $\Gamma$ joining $x$ and $y$.

%Let $C=C(C_1)\ge 1$ be the constant in 
%Lemma \ref{lem_phg}, and  $x,y \in \Omega^c$ be two points such that for every 
%$\gamma' \subset \Omega^c$  
%connecting $x$ to $y$ and having length at most $C|x-y|$ satisfies
%$$\ch^1(\partial \Omega \cap \gamma')>0. $$
%By restricting $\gamma'$ to $\gamma'([a,b])$, where $a$ is the first time $t$ when 
%$\gamma'(t) \in \partial\Omega$,
%and $b$ is the last such time, we may assume $x,y \in \partial\Omega$. 
%Indeed by Lemma \ref{lem_phg}  we may replace $\gamma'([a,b])$ by the piecewise hyperbolic geodesic $\gamma$ such that
%\begin{equation}\label{eq:qc}
%\ell(\gamma) \le C|\gamma'(a)-\gamma'(b)|
%\end{equation}
%and by our assumption $\ch^1(I)>0$ still holds, where $I := \gamma^o \cap \partial\Omega$. 
%PIECEWISE HYPERBOLIC GEODESICS ONLY DEFINED FOR BDY POINTS.

Let $\Gamma$ be a piecewise 
hyperbolic geodesic joining $x,\,y\in \partial \Omega$ in $\Omega^c$ with
$$|x-y|\le \delta \diam(\Omega).$$
By Lemma~\ref{two side cut point}  any $z \in I\setminus\{x,y\}$ is a cut-point, and every 
neighborhood of $z$ intersects both $\Omega_0$ and $\Omega_1$, where $\Omega_1$ and $\Omega_2$ are the two components of $\Omega\setminus(\varphi([v_1,\,0]\cup[0,\,v_2]) \cup \Gamma)$ with $x=\varphi(v_1)$ and $y=\varphi(v_2)$. Write $\az= [v_1,\,0]\cup[0,\,v_2]$.  

%\begin{proof}
%% Suppose the claim is not true.
%% Let $z \in I\setminus\{x,y\}$, $i \in \{0,1\}$ and $r > 0$ be such that $B(z,r)\cap \Omega_i = \emptyset$.
%% Without loss of generality we may assume that $i = 0$.
%% Since $z \in \partial\Omega \setminus\{x,y\}$ we then have $z \in \partial\Omega_1$. 
%% Because $\Gamma$ is a Jordan curve and $B(z,r) \cap \Gamma \subset \rr^2 \setminus \overline\Omega$,
%% we have that
%% $\Gamma \cap B(z,r)$ is contained in the closure
%% of one of the connected components of $\wz \Omega$ by Jordan-Schoenflies theorem. Since inside this closure $\Gamma$ is a hyperbolic geodesic,
%% and since $z$ is an inner-point of this geodesic, we have $z \notin \partial\Omega$. This contradicts the choice of $z \in I$.
%% $z \in \partial\widetilde\Omega_j$ for
%% one of the connected components $\widetilde\Omega_j$ of $\rr^2 \setminus \overline\Omega$.
%% This contradicts the fact that $z \notin \bigcup_i\partial\widetilde\Omega_i$.
%\end{proof}

% Notice that $I\cap\bigcup_i\partial\widetilde\Omega_i$ is countable by the construction of $\gamma$. 

Suppose that $\ch^1(I)>0$. 
Let us next construct a test function that will give us a contradiction.
Pick $z_0 \in I \setminus\{x,y\}$ with 
\[
\ch^1(I\cap B(z_0,r))>0\qquad \text{for all }r>0;
\]
this is possible by the assumption that $\ch^1(I)>0$; see 
\cite[Theorem 2, Page 72]{EG1992}. Since $\varphi(\alpha)$ is a compact set, 
\[
 r_0 := \frac1 8\dist(z,\varphi(\az))>0.
\]
We define $\psi $ by setting
\[
 \psi(z) := \max\left\{0,1-\frac{1}{4 r_0}\dist(z,B(z_0,r_0))\right\}\chi_{\Omega_1}(z).
\]
By our definition of $r_0$, we have $\psi \in W^{1,\,1}(\Omega)$; notice that $\partial \Omega_1 \cap \Omega\subset \varphi(\az)$, where $\chi_{\Omega_1}$ is the characteristic function of $\Omega_1$.

Suppose that there exists an extension $E\psi\in W^{1,\,1}(\rr^2)$ of $\psi$. By   Lemma \ref{two side cut point} and the John property of $\Omega$, for every $z \in K := I \cap B(z_0,r_0)$ and every $0 < r_z <\delta <r_0$ with any $\delta>0$, for each $i =0,\,1$, there is a point
$z^{(i)} \in B(z,\,r_z)$ such that 
$$B(z^{(i)},\,cr_z)\subset B(z,\,r_z) \subset B(z_0,\, 2r_0)\cap \Omega_i \subset \Omega_i, $$
where the constant $c$ depends only on the John constant $J$.

Therefore, by the Poincar\'e inequality,
\begin{equation}\label{eq:gradestim}
 1 \ls  |E\psi_{B(z^{(0)},\,cr_z)}- E\psi_{B(z^{(1)},\,cr_z)}|
 \ls r_z\bint_{B(z,\,r_z)}|\nabla E\psi(w)|\,dw
\end{equation} for every $z \in K$, where $E\psi_{B}$ denotes the integral average of $E\psi$ on $B$.

Notice that $K\subset \bigcup_{z\in K}B(z,\,r_z)$.  By the
$5r$-covering theorem, %for any $0<\delta<r_0,$ 
there exists a collection of countably many pairwise disjoint disks
\[
\{B(z_i,r_i)\} \subset \{B(z,r) \,:\,z \in K, 0 < r < \delta) \} 
\]
such that 
\[
K \subset \bigcup_i B(z_i,5r_i). 
\]
Moreover since $\mathcal H^2(K)=0$, for any $\ez>0$, 
there exists $\delta_0>0$ such that 
\begin{equation}\label{Lebesgue measure}
 \sum_i r_i^2 < \ez
\end{equation}
whenever $0<\delta<\delta_0.$

Now by \eqref{eq:gradestim} we have
\[
 \int_{\bigcup_iB(z_i,r_i)} |\nabla E\psi(w)|\,dw \gs \sum_ir_i \gs \ch^1_\delta(K) \gtrsim \ch^1(K). 
\]
for $\delta>0$ small enough. However, since $|\nabla E\psi|\in L^{1}(\mathbb R^2)$, the absolute continuity of the integral and \eqref{Lebesgue measure}, 
give that
\[
\int_{\bigcup_iB(z_i,r_i)} |\nabla E\psi(w)|\,dw 
\]
can be made arbitrarily small by choosing a sufficiently small
$\ez$. Thus we have obtained a contradiction and hence the 
necessity of \eqref{thm:quasiconvex} for $W^{1,\,1}$-extension domains follows.

Let us end this section by giving a proof for Corollary~\ref{mainthm}.
\begin{proof}[Proof of Corollary~\ref{mainthm}]
Recall  that a Jordan domain is always bounded and simply connected.   Let us assume first that $\Omega$ is a $W^{1,\,1}$-extension domain. Then its complement $\mathbb R^2\setminus \Omega$ is quasiconvex by Theorem~\ref{thm:main}, and then Lemma~\ref{qconvex} yields that the complementary domain $\wz\Omega:=\mathbb R^2\setminus \overline{\Omega}$ is also quasiconvex. 

Now suppose that $\wz\Omega$ is quasiconvex. Then Lemma~\ref{qconvex} tells that $\mathbb R^2\setminus \Omega$ is also quasiconvex. Thus, any two points $x,\,y\in \mathbb R^2\setminus \Omega$ can be joined by a union $\gamma$ of hyperbolic geodesics via at most finitely many intermediate points, as established in the proof of Lemma~\ref{qconvex}. This curve $\gamma$ touches the boundary $\partial \Omega$ at most finitely many points, and thus \eqref{eq:curvecondition} follows. Then applying Theorem~\ref{thm:main}, we conclude that $\Omega$ is a $W^{1,\,1}$-extension domain. 
\end{proof}

\appendix

\section{Auxiliary Lemmas}

In this appendix we provide the proofs of Theorem~\ref{GM unbounded} and Lemma~\ref{ulkodist}. The proofs given here use  capacity estimates. 

\subsection{Conformal capacity}

Let $\Omega$ be a  domain. For a given pair of continua $E,\,F \subset \overline \Omega\subset \mathbb R^2$, 
define the {\it conformal capacity between $E$ and $F$ in $\Omega$} as
 $${\rm Cap}(E,\,F,\,\Omega)=\inf\{\|\nabla u\|^2_{L^{2} (\Omega)}: \ u\in\Delta(E,\,F,\,\Omega )\},$$
 where $\Delta(E,\,F,\,\Omega )$ denotes the class of all $u\in W^{1,\,2}_{\loc}(\Omega)$ that are continuous
in $\Omega\cup E\cup F$ and satisfy $u=1$ on $E$, and $u=0$ on $F$. 
By definition, we see that the conformal capacity is increasing with respect to
$\Omega.$  

In what follows, we introduce the properties of conformal capacity which are used in this paper; see e.g.\ \cite[Chapter 1]{V1971} for additional properties. We remark that, even though \cite{V1971} (with some other references below) states the results for ``modulus'',   ``modulus'' is equivalent with
conformal capacity in our setting below (see e.g. \cite[ Proposition 10.2, Page 54]{R1993}).

\begin{lem}
The conformal capacity is conformally invariant, that is, for  domains $\Omega$ and $\Omega'$ in $\mathbb R^2$, a conformal (onto) map $\varphi \colon \Omega \to \Omega'$ and continua $E$ and $F$ in $\Omega$, we have
\begin{equation}\label{cap inv}
{\rm Cap}(\varphi(E),\,\varphi(F),\,\Omega') = {\rm Cap}(E,\,F,\,\Omega).
\end{equation}
Moreover, if $\varphi$ has a homeomorphic extension, still denoted by  $\varphi$, $\varphi\colon \overline{\Omega}\to \overline{\Omega'}$, then \eqref{cap inv} also holds for continua in $\overline{\Omega}$. Especially this is the case if both $\Omega$ and $\Omega'$ are Jordan. 
\end{lem}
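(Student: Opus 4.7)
The plan is to establish the identity \eqref{cap inv} by pulling back admissible functions through $\varphi$ and exploiting the well-known conformal invariance of the Dirichlet energy in the plane. More precisely, the main step is to show that the map $u \mapsto u \circ \varphi$ sets up a bijection between $\Delta(\varphi(E),\varphi(F),\Omega')$ and $\Delta(E,F,\Omega)$ that preserves the $L^2$-norm of the gradient; taking infima then yields equality of capacities.

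First I would treat the interior case. Given $u \in \Delta(\varphi(E),\varphi(F),\Omega')$, set $v := u \circ \varphi$. Since $\varphi$ is a conformal diffeomorphism of $\Omega$ onto $\Omega'$, it is smooth with $\varphi' \neq 0$, so the standard chain rule together with the continuity of $\varphi$ gives $v \in W^{1,2}_{\mathrm{loc}}(\Omega)$ with $v$ continuous on $\Omega \cup E \cup F$, equal to $1$ on $E$ and $0$ on $F$. This places $v$ in $\Delta(E,F,\Omega)$. The key computation is the $2$-dimensional conformal invariance of the Dirichlet integral: since the differential $D\varphi(z)$ is a scalar multiple of a rotation with scaling factor $|\varphi'(z)|$, the chain rule gives $|\nabla v(z)|^2 = |\nabla u(\varphi(z))|^2\,|\varphi'(z)|^2$, while the Jacobian of $\varphi$ equals $|\varphi'(z)|^2$. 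The change-of-variables formula then yields
\[
\int_{\Omega} |\nabla v(z)|^2 \, dA(z) = \int_{\Omega'} |\nabla u(w)|^2 \, dA(w).
\]
Taking the infimum over admissible $u$ gives $\mathrm{Cap}(E,F,\Omega) \le \mathrm{Cap}(\varphi(E),\varphi(F),\Omega')$, and the reverse inequality follows by running the same argument with the conformal inverse $\varphi^{-1}$.

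For the second claim, the only potential obstacle is that continua in $\overline{\Omega}$ may contain boundary points, so the definition of $u \circ \varphi$ on $E \cup F$ needs to make sense. But if $\varphi$ extends to a homeomorphism $\overline{\Omega}\to\overline{\Omega'}$, then $\varphi(E),\varphi(F)$ are again continua in $\overline{\Omega'}$ and the composition $v = u \circ \varphi$ is continuous on $\Omega \cup E \cup F$ whenever $u$ is continuous on $\Omega' \cup \varphi(E) \cup \varphi(F)$. The Sobolev regularity statement is unaffected since it concerns only the interiors; the boundary values $v=1$ on $E$, $v=0$ on $F$ are preserved by the continuity of the extension. Thus the same bijective pullback argument applies. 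When $\Omega$ and $\Omega'$ are Jordan, the Carathéodory--Osgood theorem (already invoked in Section~3) guarantees such a homeomorphic extension, completing the proof.

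I expect no serious obstacle: the key observation is purely the planar conformal invariance $|D\varphi|^2 = J_\varphi$, and everything else reduces to verifying admissibility. The only point requiring care is the boundary extension in the second part, which is handled by directly composing with the given homeomorphic extension of $\varphi$.
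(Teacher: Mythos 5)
Your proof is correct and follows essentially the same route as the paper: pull back admissible functions by the conformal map, use the planar identity $|D\varphi|^{2}=J_{\varphi}$ to transfer Dirichlet energy via the change of variables, and pass to infima; the reverse inequality comes from symmetry. The only cosmetic difference is that the paper first replaces an admissible $u$ by the truncation $u_{\varepsilon}=\max\{\min\{(u-\varepsilon)/(1-2\varepsilon),1\},0\}$ (identically $1$ on a relative neighborhood of $E$ and $0$ on a relative neighborhood of $F$) before composing, whereas you compose directly; both versions yield admissible competitors under the stated hypotheses, so the argument stands.
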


\begin{proof}
Let $0<\ez<\frac{1}{3}$ and fix a function  $u\in W^{1,\,2}(\Omega)$ that is continuous in $\Omega\cup E\cup F$ and satisfies $u = 1$ on $E$ and $u=0$ on $F$. Let now $u_\varepsilon$ be the function
\[
u'_{\ez} = \frac {{u}-\ez}{1-2\ez}
\]
in $\Omega$. Then the sets $U_{\ez}=\left\{ u_{\ez}>1\right\}$ and $V_{\ez}=\left\{ u_{\ez}< 0\right\}$ are relatively open in $\Omega\cup E \cup F$, and contain $E$ and $F$, respectively. 
Define 
$$u_{\ez}=\max\{\min\{u'_{\ez},\,1\},\,0\}. $$
Observe that $u_{\ez}\to u$ in $W^{1,\,2}(\Omega)$ as $\ez\to 0$.  

For each $\epsilon>0$, $u_\epsilon \circ \varphi^{-1}$ is an admissible function in $\Delta(\varphi(E),\varphi(F),\,\Omega')$. Thus, by the chain rule, conformality of $\varphi^{-1}$ and a change of variables, we have the estimate
\begin{align*}
{\rm Cap}(\varphi(E),\,\varphi(F),\,\Omega')&\le    \int_{\Omega'} |\nabla u(\varphi^{-1}(x))|^2 |D\varphi^{-1}(x)|^2 \, dx\\
&\le  \int_{\Omega} |\nabla u(x)|^2 |D\varphi^{-1}(\varphi(x))|^2 J_{\varphi}(x) \, dx \le \int_\Omega |\nabla u|^2 \, dx.
\end{align*}
The claim now follows by taking an infimum over functions $u$ above and symmetry.
\end{proof}

In what follows, whenever we mention the conformal invariance of conformal capacity, we always refer to this lemma above.

In the unit disk $\mathbb D$, and the complementary domain of the unit disk $\mathbb R^2\setminus \overline{\mathbb D}$, we have the following \emph{Loewner estimate} for the conformal capacity. Let $E$ and $F$ be continua in $\overline{\mathbb D}$. Then 
\begin{equation}
\label{condition of lower bound}
{\rm Cap}(E,\,F,\,\mathbb D) \ge c \log \left(1+\frac{\min\{\diam (E),\,\diam (F)\}}{\dist(E,\,F)}\right)
\end{equation}
where $c>0$ is a universal constant, and the analogous inequality holds for  $E,\,F\subset \mathbb R^2 \setminus {\mathbb D}$; see e.g.\ \cite[Lemma 7.38]{V1988}  together with \cite[Remark 2.12]{GM1985 2}.

We call a domain $A\subset\mathbb R^2$ a {\it ring domain} if its complement has exactly two components. If the components of $A$ are $U_0$ and $U_1$, then we denote $A=R(U_0,\,U_1)$. 
It follows from topology that $\partial A$ also has two components, $V_0=U_0\cap \overline{A}$ and $V_1=U_1\cap \overline{A}$.  If $U_0$, $V_0$ and $V_1$ are compact, we have
\begin{equation}\label{same capacity}
  {\rm Cap}( V_0,\, V_1 ,\,A)= {\rm Cap}( U_0,\, V_1 ,\, A\cup U_0 ); 
\end{equation}
indeed, ``$\le$'' directly follows from the definition and ``$\ge$'' follows by extending each $u\in \Delta(V_0,\,V_1,\,A)$ as constant $1$ to $U_0\setminus V_0$, see also \cite[Theorem 11.3]{V1971} (and its proof). 
Furthermore, we have the following estimate for the capacity of boundary components in a ring domain. 

\begin{lem}
Let $A=R(U_0,\,U_1)\subset\mathbb R^2$ be a ring domain with $U_1$ unbounded. Assume that $V_0=U_0\cap \overline{A}$ and $V_1=U_1\cap \overline{A}$ are compact.  There exist two universal increasing functions $\phi_i \colon (0,\infty) \to (0,\infty),\, i=1,\,2,$ so that $\lim_{t\to 0+} \phi_i(t) = 0$ and $\lim_{t\to \infty}\phi_i(t) = \infty$, and so that  
\begin{equation}\label{capacity of balls}
\phi_1\left(\frac{ \diam (U_0) }{\dist(U_0,\, U_1 )}\right) \le {\rm Cap}( V_0,\, V_1 ,\,A)\le \phi_2\left(\frac{ \diam (U_0) \}}{\dist(U_0,\, U_1)}\right). 
\end{equation}
\end{lem}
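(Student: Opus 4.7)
The plan is to combine the formula \eqref{same capacity}, conformal invariance of capacity, the Loewner estimate \eqref{condition of lower bound}, and explicit test functions, treating the two inequalities by different techniques. Throughout I will fix $x_0\in U_0$, so that $U_0\subset\overline{B}(x_0,D)$ with $D:=\diam(U_0)$ and $V_1\subset\rr^2\setminus B(x_0,d)$ with $d:=\dist(U_0,U_1)$; write $t:=D/d$.

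For the upper bound I plan to produce explicit test functions admissible for ${\rm Cap}(V_0,V_1,A)$. When $t\le 1/3$ the standard radial logarithmic function equal to $1$ on $\overline{B}(x_0,D)$, to $\log(d/|x-x_0|)/\log(d/D)$ on the annulus $B(x_0,d)\setminus\overline{B}(x_0,D)$, and to $0$ outside is admissible (because $V_0\subset\overline{B}(x_0,D)$ and $V_1\subset\rr^2\setminus B(x_0,d)$) and has Dirichlet energy $2\pi/\log(1/t)$. For larger $t$ I switch to a covering-type test function: choose a $d/2$-net $\{z_i\}_{i=1}^N\subset V_0$ of cardinality $N\lesssim t^2$ (possible since $V_0\subset\overline{B}(x_0,D)$), and take $u=\max_i u_i$, where each $u_i$ is the Lipschitz bump equal to $1$ on $B(z_i,d/2)$ and decaying linearly to $0$ on $\partial B(z_i,d)$. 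Since $z_i\in U_0$ forces $\dist(z_i,U_1)\ge d$, the function $u$ vanishes on $V_1$ and equals $1$ on $V_0$, with total Dirichlet energy $\lesssim N\lesssim t^2$. Patching the two bounds into a single increasing function then yields $\phi_2$ with the required asymptotics.

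For the lower bound I apply \eqref{same capacity} to rewrite ${\rm Cap}(V_0,V_1,A)={\rm Cap}(U_0,V_1,A\cup U_0)$. Because $U_1$ is the only unbounded complementary component, $A\cup U_0$ is a bounded simply connected domain with $\partial(A\cup U_0)=V_1$. A Riemann map $\phi\colon\mathbb D\to A\cup U_0$ together with conformal invariance of capacity reduces the problem to estimating ${\rm Cap}(\phi^{-1}(U_0),\partial\mathbb D,\mathbb D)$, to which the Loewner estimate \eqref{condition of lower bound} directly applies and gives
\[
{\rm Cap}\gtrsim \log\left(1+\frac{\diam\phi^{-1}(U_0)}{\dist(\phi^{-1}(U_0),\partial\mathbb D)}\right).
\]
To translate this conformal ratio into $t$, I plan to use the Koebe $\frac14$-theorem, which yields the comparison $(1-|z|)|\phi'(z)|\sim\dist(\phi(z),V_1)\ge d$ for every $z\in\phi^{-1}(U_0)$; integrating $|\phi'|$ along a path inside $U_0$ connecting two diameter-realizing points of $U_0$ then produces a lower bound on the ratio of the right order in $t$, giving $\phi_1(t)$ with the required asymptotics.

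The hard part is the lower bound: turning the Euclidean datum $t=D/d$ into a usable bound on the conformal ratio inside the logarithm requires care, because $|\phi'|$ must be compared at distinct points of $\phi^{-1}(U_0)$ via Koebe distortion estimates. The connectedness of $U_0$ is decisive here, as it ensures that its diameter-realizing endpoints are joined by a path lying in the region where $\dist(\cdot,V_1)\ge d$, along which the conformal length can be meaningfully integrated.
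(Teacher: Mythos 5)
Your upper bound is essentially the same as the paper's: for small $t$ you use the radial logarithmic capacitor (giving $\lesssim 1/\log(1/t)$, exactly as the paper does via Grötzsch), and for large $t$ your $d/2$-net of bumps has total energy $\lesssim t^2$, matching the paper's single linear ramp $u(x)=\min\{1,\max\{0,1-\dist(x,U_0)/d\}\}$ with the same $t^2$ bound. The patching into a single monotone $\phi_2$ with the stated limits is routine.

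The lower bound, however, is where you depart from the paper, and where the gap lies. The paper simply invokes Väisälä's Teichmüller-type estimates (Theorems 11.7 and 11.9): picking $a\in U_0$ with $\dist(a,U_1)$ close to $d$, $c\in U_1$ nearly achieving this, and $b\in U_0$ with $|a-b|\ge D/2$, Theorem 11.7 gives ${\rm mod}(A)\le\tau(|a-c|/|a-b|)\le\tau(3/t)$ with $\tau$ the (increasing) Teichmüller modulus function, so ${\rm Cap}=1/{\rm mod}(A)\ge 1/\tau(3/t)$, which already has both required limits. Your route via \eqref{same capacity}, a Riemann map $\phi:\mathbb D\to A\cup U_0$, and the Loewner estimate is a genuinely different, self-contained strategy and it \emph{can} be made to work, but the crucial translation step is not what you describe. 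Writing $E'=\phi^{-1}(U_0)$ and $\rho_0=\dist(E',\partial\mathbb D)$, your plan is to integrate $|\phi'|$ along a preimage path $\gamma'\subset E'$ joining preimages of a diameter pair. Koebe gives, for $z\in E'$, the \emph{upper} bound $|\phi'(z)|\le 4\dist(\phi(z),V_1)/(1-|z|)\le 4(D+d)/\rho_0$ (the inequality $\dist(\phi(z),V_1)\ge d$ points the wrong way for this purpose), so
\[
D\le\int_{\gamma'}|\phi'|\,ds\le\frac{4(D+d)}{\rho_0}\,\ell(\gamma'),\qquad\text{i.e.}\qquad \ell(\gamma')\ge\frac{t}{4(t+1)}\,\rho_0\le\frac{\rho_0}{4}.
\]
This lower bound on $\ell(\gamma')$ (and hence on $\diam(E')$) does \emph{not} grow with $t$; fed into the Loewner estimate it only yields ${\rm Cap}\gs 1$, which is far from $\phi_1(t)\to\infty$. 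The issue is that the diameter-realizing endpoints of $U_0$ may both lie at comparable depth inside $A\cup U_0$, in which case the "conformal length" argument you sketch genuinely fails to see the smallness of $d$. The correct version of your idea uses the conformal quasi-invariance of the quasihyperbolic metric, together with a case split: either some $w\in U_0$ has $\dist(w,V_1)\ge\sqrt{Dd}$, in which case comparing $w$ with a point $w'\in U_0$ at distance $\approx d$ from $V_1$ gives $k_{A\cup U_0}(w,w')\ge\log(\dist(w,V_1)/\dist(w',V_1))\gs\log t$; or $\dist(\cdot,V_1)<\sqrt{Dd}$ throughout $U_0$, in which case the usual estimate $k(a,b)\ge\log(1+|a-b|/\min\dist)$ applied to a diameter pair gives $k\gs\log(1+\sqrt t)$. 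In both cases $\log(1+\diam(E')/\rho_0)\gs k\gs\log(1+t)$ follows. This is substantially more than "integrating $|\phi'|$ along a path inside $U_0$", and your appeal to the diameter pair is actively misleading: in the first case one must instead use a shallow point paired with a deep one. (Two smaller issues: $U_0$ is a continuum but need not be path-connected, so "a path inside $U_0$" needs to be interpreted, e.g.\ via paths in $A\cup U_0$; and the conformal invariance ${\rm Cap}(U_0,V_1,A\cup U_0)={\rm Cap}(E',\partial\mathbb D,\mathbb D)$ needs a word since $\phi$ need not extend homeomorphically to $\partial\mathbb D$.)
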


\begin{proof}
Define
$$\sigma=\frac{ \diam (U_0) }{\dist(U_0,\, U_1 )}.$$
The lower bound of \eqref{capacity of balls} follows from \cite[Theorem 11.7, Theorem 11.9]{V1971}. For the upper bound, if $0<\sigma<\frac 1 2$ we have $U_0 \subset B,\, 2B\subset A\cup U_0$ for a suitable disk $B$, and then by the monotonicity of capacity and \cite[Example 7.5]{V1971} one obtains the upper bound $C(\log (1/\sigma))^{-1}$ for some absolute constant $C>0$. When $\frac 1 2 \le \sigma<\infty$
 one just applies the test function
$$u(x)=\min\left\{1,\,\max\left\{0,\,1-\frac{\dist(x,\,U_0)}{\dist(U_0,\, U_1)}\right\}\right\}.$$
Then $u$ is Lipschitz,  and 
$$|\nabla u|\le \frac 1 {\dist(U_0,\, U_1)}. $$
Hence  by letting $x_0\in U_0$
\begin{align*}
\|\nabla u\|^2_{L^2(\Omega)}&\le {\dist(U_0,\, U_1)}^{-2} \left|B(x_0,\, \diam(U_0)+\dist(U_0,\,U_1))\right|\\ &\lesssim \left(\frac{\diam(U_0)+\dist(U_0,\,U_1)}{ \dist(U_0,\,  U_1)}\right)^2\sim(1+\sigma)^2.
\end{align*}
\end{proof}

We record the following estimate, which states a kind of converse to  \eqref{condition of lower bound}; see e.g.\ \cite[Lemma 2.2]{KZ2015} for a weaker version of the claim.

\begin{lem}{\label{inner capacity}}
Let $\Omega$ be a domain and  $E,\,F\subset \Omega$ be a pair of disjoint continua. 
Then if ${\rm Cap}(E,\,F,\, \Omega)\ge c_0$, we have
\begin{equation}\label{inequat 1}
{\min\{\diam_{\Omega}(E),\,\diam_{\Omega}(F)\}}\gtrsim {\dist_{\Omega}(E,\,F)},
\end{equation}
where the constant only depends on $c_0$. Especially
$${\min\{\diam_{\Omega}(E),\,\diam_{\Omega}(F)\}}\gtrsim {\dist (E,\,F)}, $$
and if $\Omega=\mathbb R^2$
\begin{equation}\label{inequat 10}
\min\{\diam(E),\,\diam(F)\}\gtrsim {\dist (E,\,F)}.  
\end{equation}
If we further assume that $\Omega$ is Jordan, then \eqref{inequat 1} also holds if $E\subset \partial\Omega$ and $F\subset\Omega$ (or $E\subset \partial\Omega$ and $F\subset\partial\Omega$) are continua with ${\rm Cap}(E,\,F,\, \Omega)\ge c_0$. 
\end{lem}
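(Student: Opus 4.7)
The plan is to argue by contradiction. Assume, without loss of generality, that $\diam_\Omega(E)\le \diam_\Omega(F)$, set $d:=\dist_\Omega(E,F)$ and $r_0:=\diam_\Omega(E)$, and suppose toward a contradiction that $r_0/d$ is very small (quantitatively, small in terms of $c_0$). I will construct an admissible test function for ${\rm Cap}(E,F,\Omega)$ whose Dirichlet energy is $\lesssim 1/\log(d/r_0)$, which can be made strictly smaller than $c_0$, contradicting the assumption ${\rm Cap}(E,F,\Omega)\ge c_0$.

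The test function is the classical logarithmic cutoff built from the inner distance. Set
\[
\phi(t)=\begin{cases} 1, & t\le r_0,\\ \dfrac{\log(d/t)}{\log(d/r_0)}, & r_0<t<d,\\ 0, & t\ge d, \end{cases}
\]
and $u(x):=\phi(\dist_\Omega(x,E))$. Since $x\mapsto \dist_\Omega(x,E)$ is $1$-Lipschitz with respect to the Euclidean metric, $u$ is locally Lipschitz in $\Omega$, continuous on $\Omega\cup E\cup F$, equals $1$ on $E$ and $0$ on $F$ (because $\dist_\Omega(x,E)\ge d$ for $x\in F$); hence $u\in\Delta(E,F,\Omega)$. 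Moreover $|\nabla u|(x)\le (\dist_\Omega(x,E)\log(d/r_0))^{-1}$ on the annular region $A=\{r_0<\dist_\Omega(\cdot,E)<d\}$, and vanishes elsewhere.

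To estimate $\|\nabla u\|_{L^2(\Omega)}^2$, decompose $A$ into the dyadic inner annuli $A_k=\{2^{k-1}r_0\le \dist_\Omega(\cdot,E)\le 2^k r_0\}$ with $1\le k\le \lceil\log_2(d/r_0)\rceil$. Fix any $z_0\in E$. Since $|\cdot-\cdot|\le \dist_\Omega(\cdot,\cdot)$ in $\Omega$, every $x\in A_k$ is within Euclidean distance $2^k r_0$ of $E$; combined with $\diam(E)\le \diam_\Omega(E)=r_0$ this gives $A_k\subset B(z_0,(2^k+1)r_0)$, so $|A_k|\lesssim 2^{2k}r_0^2$. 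Hence
\[
\int_{A_k}|\nabla u|^2\,dx\le \frac{|A_k|}{(2^{k-1}r_0)^2\log^2(d/r_0)}\lesssim \frac{1}{\log^2(d/r_0)},
\]
and summing over the $\lesssim \log(d/r_0)$ values of $k$ yields $\|\nabla u\|_{L^2(\Omega)}^2\lesssim 1/\log(d/r_0)$, as desired. The two ``especially'' statements follow immediately since $\dist_\Omega\ge \dist$ and $\diam_\Omega\ge \diam$, with equality when $\Omega=\mathbb R^2$.

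For the Jordan extension, the same test function still works: when $\Omega$ is Jordan the inner distance $\dist_\Omega(x,E)$ remains well defined for $E\subset \partial\Omega$ via curves joining $x$ to the boundary (as recalled in Section~\ref{Prerequisite}), and continuity of $u$ up to $E\cup F$ persists because $\dist_\Omega(\cdot,E)\to 0$ as one approaches $E$ along rectifiably joinable curves. The only step that requires some care is the area bound for $A_k$: for $E\subset\partial\Omega$ one must still have $\diam(E)\le \diam_\Omega(E)$, which holds by definition of the inner diameter on $\overline{\Omega}$. The main obstacle in writing out the full argument is checking this admissibility/continuity on the boundary in the Jordan case and verifying that the dyadic area estimate is robust; once those technicalities are handled, the energy computation is routine.
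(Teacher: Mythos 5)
Your argument for the interior case (both $E,F\subset\Omega$) is correct and essentially identical to the paper's Step 1: both use a logarithmic cutoff of the inner distance (the paper measures distance to a fixed point $z\in E$, you measure distance to the set $E$, which is a harmless variation) and both estimate the Dirichlet energy via a dyadic decomposition of the inner-distance annuli, using $\dist_\Omega\ge\dist$ to bound the areas. The two ``especially'' statements also follow exactly as you say.

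The Jordan case, however, has a genuine gap that is not a mere technicality. You claim the same test function $u(x)=\phi(\dist_\Omega(x,E))$ works because ``$\dist_\Omega(\cdot,E)\to 0$ as one approaches $E$ along rectifiably joinable curves.'' But admissibility in $\Delta(E,F,\Omega)$ requires $u$ to be continuous on $\Omega\cup E\cup F$, i.e.\ $u(x_n)\to 1$ for \emph{every} sequence $x_n\in\Omega$ with $x_n\to e\in E$, not only along rectifiable paths to $e$. This can fail: in a comb-type Jordan domain whose teeth accumulate along a boundary arc $E$, a sequence $x_n\to e\in E$ inside a narrowing channel between teeth satisfies $\dist_\Omega(x_n,E)\not\to 0$ (the inner geodesic must climb over the teeth), so your $u$ is not continuous at $e$ and is not an admissible competitor. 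The paper sidesteps exactly this obstruction: it transports the problem to the upper half-plane via a boundary extension of the Riemann map, builds interior approximating continua $\hat E_j=E_j+i2^{-j}$, $\hat F_j$ whose capacity is bounded below by a biLipschitz self-map argument, applies the interior case (your Step~1) to $\phi(\hat E_j),\phi(\hat F_j)$, and then passes to the limit via Arzel\`a--Ascoli and the Gehring--Hayman theorem. Without some such approximation device, the direct test-function approach does not extend to boundary continua, and the step you defer as a ``technicality'' is actually where most of the work lies.
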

\begin{proof}
\noindent{\bf Step 1:}
We begin with the case where $E,\,F\subset \Omega$. 
We may clearly assume that $\diam_{\Omega}(E)\le \diam_{\Omega}(F)$ and also that
$2\diam_{\Omega}(E) \le \dist_{\Omega}(E,\,F)$; otherwise the claim holds trivially. 
Choose $z\in E$, and write $\frac{\dist_{\Omega}(E,\,F)}{\diam_{\Omega}(E)}=\sigma$. 
We define
$$ u(x)=
 \begin{cases}
   1, & \text{if } \dist_{\Omega}(x,\,z)\le \diam_{\Omega}(E) \\
  0, & \text{if}  \dist_{\Omega}(x,\,z)\ge \dist_{\Omega}(E,\,F)\\
      \frac {\log (\dist_{\Omega}(E,\,F))-\log (\dist_{\Omega}(x,\,z))}{\log (\dist_{\Omega}(E,\,F))-\log (\diam_{\Omega}(E))}, &\text{otherwise }
 \end{cases}.$$

Then $u$ is locally Lipschitz and 
$$|\nabla u(x)|\le (\log \sigma)^{-1} \dist_{\Omega}(x,\,z)^{-1}.$$
Write
$$R=B_{\Omega}(z,\,\dist_{\Omega}(E,\,F))\setminus B_{\Omega}(z,\,\diam_{\Omega}(E)),$$
and for $i\ge 1$
$$A_i=B_{\Omega}(z,\,2^{i}\diam(E))\setminus B_{\Omega}(z,\,2^{i-1}\diam(E)),$$
where $B_{\Omega}(z,\,r)$ is the disk centered at $z$ with radius $r$ with respect to the inner distance. 
A direct calculation via a dyadic annular decomposition with respect
to the inner distance gives
\begin{align*}
c_0\le & \int_{\Omega} |\nabla u|^2\, dx  
\ls   (\log \sigma)^{-2}\int_{R} \dist_{\Omega}(x,\,z)^{-2} \, dx\\ 
\ls &   (\log \sigma)^{-2} \sum_{i=1}^{\infty}\int_{R\cap A_i } 4^{-i}\diam(E)^{-2} \, dx\\
\ls &   (\log \sigma)^{-2} \sum_{i=1}^{[\log \sigma]+1} 1\\
\ls &  (\log \sigma)^{-2} \log \sigma \lesssim  {(\log \sigma)}^{-1}, 
\end{align*}
where  $[\log \sigma]$ denotes the integer part of $[\log \sigma]$, and in the fourth inequality we used the fact that $B_{\Omega}(z,\,r)\subset B(z,\,r)$. 
Hence $\sigma\le C(c_0)$, which means that $\dist_{\Omega}(E,\,F) \lesssim \diam_{\Omega}(E)$.

\noindent{\bf Step 2:} We continue with the case $E,\,F\subset\partial\Omega$. 
To begin with, let us first show that 
$$\dist_{\Omega}(E,\,F)<\infty. $$
Let $\varphi\colon \overline{\mathbb D} \to \overline{\Omega}$ be a homeomorphism given by the Caratheodory-Osgood theorem. 
Since 
$${\rm Cap}(E,\,F,\, \mathbb R^2)\ge {\rm Cap}(E,\,F,\, \Omega)\ge c_0,$$
we conclude by \eqref{inequat 10} that neither $E$ nor $F$ is a singleton. 
Then $\varphi^{-1}(E)$ is an closed arc contained in the unit circle and of positive $1$-Hausdorff measure. Since $\varphi\in W^{1,\,2}(\mathbb D)$, then
$$\dist_{\Omega}(E,\,\varphi(0))<\infty;$$
otherwise by noticing that $\varphi$ is a smooth map on $\overline{B(0,\,1/2)}$ and applying Fubini's theorem  in the annulus $\mathbb D \setminus \overline{B(0,\,1/2)}$ with respect to the polar coordinates, we obtain a contradiction to the fact that $\varphi\in W^{1,\,2}(\mathbb D)$. 
An analogous argument also shows that 
$$\dist_{\Omega}(F,\,\varphi(0))<\infty,$$
and hence by the triangle inequality $\dist_{\Omega}(E,\,F)<\infty. $

Let $\phi\colon \mathbb H\to \Omega$ be a conformal map (homeomorphically extended  to  $\mathbb R$), where $\mathbb H$ denotes the upper half plane so that $\phi^{-1}(E)$ and $\phi^{-1}(F)$ are two continua in the real line. 
We define a sequence of continua $E_j \subset  \phi^{-1}(E)$ (for large $j$) by setting
$$ E_j =\{ z \in \mathbb R \colon   \dist(z,\,\mathbb R \setminus \phi^{-1}(E))\ge 2^{-j+1}\}. $$
The sets $ F_j\subset \phi^{-1}(F)$ are defined similarly. 
Furthermore, define
$$\hat E_j= E_j +i  2^{-j}, \ \hat F_j= F_j +i 2^{-j}. $$

Note that for every two points $z_1,\,z_2\in E$, the hyperbolic geodesic $\Gamma'$ joining them satisfies (see Lemma~\ref{GM bdd})
$$\ell(\Gamma')\lesssim \dist_{\Omega}(z_1,\,z_2), $$
where the constant is absolute. 
Given $w_1,\,w_2\in \phi(\hat E_j)$, denote the hyperbolic geodesic connecting them by $\Gamma$. Extend $\Gamma$ to a full hyperbolic geodesic $\Gamma_1$ joining points $z_1,\,z_2\in \partial \Omega$. Since $w_1,\,w_2\in \phi(\hat E_j)$, from the definition of $\hat E_j$ and planar geometry  we conclude via $\phi$ that $z_1,\,z_2\in  E$. Therefore
$$\dist_{\Omega}(w_1,\,w_2)\le \ell(\Gamma)\le \ell(\Gamma_1)\lesssim \dist_{\Omega}(z_1,\,z_2)\le \diam_{\Omega}(E). $$
By the arbitrariness of $w_1,\,w_2$, we conclude that
\begin{equation}\label{inequ 19}
\diam_{\Omega}(\phi(\hat E_j)) \lesssim  \diam_{\Omega}(E) 
\end{equation}
with an absolute constant. Similarly we get  
\begin{equation}\label{inequ 20}
\diam_{\Omega}(\phi(\hat F_j)) \lesssim  \diam_{\Omega}(F). 
\end{equation}

By the monotonicity of capacity, we have
$${\rm Cap}(\hat E_j,\,\hat F_j,\, \mathbb H)\ge {\rm Cap}(\hat E_j,\,\hat F_j,\, \mathbb H + i 2^{-j}  ). $$ 
Via the translation map $z\mapsto  z - i2^{-j}$ and the conformal invariance of capacity, we further have
$${\rm Cap}(\hat E_j,\, \hat F_j,\, \mathbb H + i 2^{-j} )={\rm Cap}(E_j,\, F_j,\,  \mathbb H). $$
 If we can show that
\begin{equation}\label{inequ 21}
{\rm Cap}(E_j,\, F_j,\,  \mathbb H)\ge \frac 1 {16} {\rm Cap}(\phi^{-1}(E) ,\, \phi^{-1}(F) ,\,  \mathbb H)
\end{equation}
for $j$ large enough, then by the conformal invariance of capacity together with the chain of inequalities above
$$ {\rm Cap}(\phi(\hat E_j),\,\phi(\hat F_j),\,\Omega )\ge \frac 1 {16} {\rm Cap}(\phi^{-1}(E) ,\, \phi^{-1}(F) ,\,  \mathbb H)= \frac 1 {16} {\rm Cap}( E ,\,  F  ,\,  \Omega)\ge \frac {c_0} {16}.$$
Thus Lemma~\ref{inner capacity} implies that
$${\min\{\diam_{\Omega}(\phi(\hat E_j)),\,\diam_{\Omega}(\phi(\hat F_j))\}}\gtrsim {\dist_{\Omega}(\phi(\hat E_j),\,\phi(\hat F_j))}, $$
where the constant only depends on $c_0$.
This together with \eqref{inequ 19} and  \eqref{inequ 20} gives that there is a rectifiable curve 
$\gamma_j$ connecting $\phi(\hat E_j)$ and $\phi(\hat F_j)$ such that
$$\ell(\gamma_j)\lesssim {\min\{\diam_{\Omega}(\phi(\hat E_j)),\,\diam_{\Omega}(\phi(\hat F_j))\}}\ls \min\{\diam_{\Omega}(E),\,\diam_{\Omega}(F)\},$$
with the constant independent of $j$. 
We may assume that these curves are hyperbolic geodesics by Lemma~\ref{GM bdd}.
Then by parameterizing $\gamma_j$ with arc length and applying the Arzel\'a-Ascoli lemma, we get a  curve $\gamma\subset \overline{\Omega}$ joining $E,\,F$ with the length bounded by $\min\{\diam_{\Omega}(E),\,\diam_{\Omega}(F)\}$ up to a multiplicative constant. Observe that, since $\varphi$ is a homeomorphism, the uniform convergence of $\gamma_j\to \gamma$ implies that of $\varphi^{-1}(\gamma_j)\to \varphi^{-1}(\gamma)$. Then the definition of hyperbolic geodesics in the unit disk gives that $\varphi^{-1}(\gamma)$ is also a hyperbolic geodesic, and so is  $\gamma$. 
The desired estimate \eqref{inequat 1} then follows. 
Thus it suffices to show \eqref{inequ 21}.

 Set 
$$a=\min\{(\diam(\phi^{-1}(E)),\,\diam(\phi^{-1}(F)),\,\dist(\phi^{-1}(E),\,\phi^{-1}(F))\}.$$ 
Let the middle point of $\phi^{-1}(E)$ be $z_1$ and that of $\phi^{-1}(F)$ be $z_2$. 
Write 
\begin{equation}\label{inequat 2}
b_E=\frac 1 2 \diam(\phi^{-1}(E)) +\frac a 8,\ b_F=\frac 1 2 \diam(\phi^{-1}(F)) +\frac a 8.
\end{equation}
Recall that
$$\diam(\phi^{-1}(E))-\diam(E_j) = 2^{-j+2},\ \diam(\phi^{-1}(F))-\diam(F_j) = 2^{-j+2}. $$
For $j\in \mathbb N,\, 2^{-j+5}\le a$, we define a map $f$ by setting
$$ f(x)=
 \begin{cases}
   z_1+\frac{\diam(\phi^{-1}(E))}{\diam(E_j)}(x-z_1), & \,  |x-z_1|\le \frac 1 2 \diam(E_j) \\
  z_1+\left(\frac 1 2 \diam(\phi^{-1}(E)) + \frac{ a \left(|x-z_1|-\frac 1 2 \diam(E_j)\right)}{ 2^{-j+4}+ a}  \right)\frac {(x-z_1)}{|x-z_1|}, &   \, \frac 1 2 \diam(E_j)\le |x-z_1|\le b_E \\
   z_2+\frac{\diam(\phi^{-1}(F))}{\diam(F_j)}(x-z_2), & \,  |x-z_2|\le \frac 1 2 \diam(F_j) \\
   z_2+\left(\frac 1 2 \diam(\phi^{-1}(F)) + \frac{ a \left(|x-z_2|-\frac 1 2 \diam(F_j)\right) }{ 2^{-j+4}+ a}\right) \frac {(x-z_2)}{|x-z_2|}, &   \, \frac 1 2 \diam(F_j)\le |x-z_2|\le b_F \\
    x , & \text{ otherwise.}
 \end{cases}$$
 Then $f$ is a well-defined homeomorphism; recall \eqref{inequat 2} and notice that
 $$b_E-\frac 1 2 \diam(E_j)= 2^{-j+1}+\frac a 8, \ \ b_F-\frac 1 2 \diam(F_j)= 2^{-j+1}+\frac a 8.$$
By our choice of $j$, $f$  maps $\mathbb H$ to $\mathbb H$, $E_j$ to $\phi^{-1}(E)$ and $F_j$ to $\phi^{-1}(F)$. Moreover since $2^{-j+5}\le a$, 
$$ \frac 1 2 \le \frac { a }{ 2^{-j+4}+a}\le 1 ,\ 1\le \frac{\diam(\phi^{-1}(E))}{\diam(E_j)}\le 2,\  1\le \frac{\diam(\phi^{-1}(F))}{\diam(F_j)} \le 2.$$
Thus $f$ is $2$-biLipschitz. For any $u\in \Delta(E_j ,\, F_j,\,\mathbb H)$ we have $u\circ f^{-1}\in \Delta( \phi^{-1}(E) ,\,  \phi^{-1}(F),\,\mathbb H)$ since $f$ is a homeomorphism. Thus by the chain rule and  a change of variables we get
\begin{align*}
{\rm Cap}(\phi^{-1}(E) ,\, \phi^{-1}(F) ,\,  \mathbb H)& \le  \int_{\mathbb H} |\nabla u(f^{-1}(x))|^2 |Df^{-1}(x)|^2 \, dx\\
&\le 4 \int_{\mathbb H} |\nabla u(x)|^2  J_{f}(x) \, dx \le 16 \int_{\mathbb H} |\nabla u|^2 \, dx,
\end{align*}
which implies \eqref{inequ 21} as desired, and then \eqref{inequat 1} follows. 

\noindent{\bf Step 3:} We are left with the case  where $E\subset \partial\Omega$ and $F\subset\Omega$. In this case we  only perform the above approximation for $E$ (and consider $j$ large enough so that $E_j\cap \phi^{-1}(F)=\emptyset$). The argument in the previous case applies with this modification and gives the remaining claim. 
\end{proof}

\subsection{A variant of the Gehring-Hayman theorem}

We employ following lemma on conformal annuli to prove Theorem~\ref{GM unbounded}.

\begin{lem}\label{lengthtransfer}
Let  $\Omega\subset \mathbb R^2$ be a Jordan domain, and let a homeomorphism 
$\wz \varphi\colon \mathbb R^2\setminus \Omega \to 
 \mathbb R^2\setminus \mathbb D$ be conformal in $\mathbb R^2 \setminus
\overline{\Omega}$. 
For $z_1%\,z_2
\in \partial \Omega$, define
\[
A(z_1,\,k):=\{x\in \mathbb R^2\setminus \overline{\mathbb D} 
\mid 2^{k-1}< |x-\wz \varphi (z_1)|\le 2^{k}\},  
\]
for %all $i=1,\,2$ and 
$k\in \mathbb Z$. 
Furthermore, let $\Gamma\subset \mathbb R^2\setminus \overline{\Omega}$ 
be a hyperbolic geodesic joining $z_1$  and $z_2\in \partial \Omega$ such that $\wz \varphi (\Gamma)\subset B(0,\,100)$. Also let
$\gamma\subset \mathbb R^2 \setminus \Omega$ 
be
%$\gamma\subset \overline{\Omega}$ 
a curve connecting $z_1$ and $z_2$. 
%, and by $\Gamma\subset \overline{\Omega}$ the hyperbolic geodesic 
%connecting $z_1$ and $z_2$. 
Set
\[
%\gamma_{,k}:=\wz \varphi(A(z_1,\,k))\cap \gamma\qquad \text{and} \qquad 
\Gamma_{k}:=\wz \varphi^{-1}(A(z_1,\,k))\cap \Gamma 
\]
and let $\gamma_{k}$ 
be any subcurve of $\gamma$ in $\wz \varphi^{-1}(A(z_1,\,k))$
joining the inner and outer boundaries of
$\wz \varphi^{-1}(A(z_1,\,k)),$ if such a subcurve exists. 
(Here the inner and outer boundaries of $\wz \varphi^{-1}(A(z_1,\,k))$ are the preimages under $\wz \varphi$ of the inner and outer boundaries of $A(z_1,\,k)$. )
Then 
\begin{equation}\label{equ 1002}
\ell(\Gamma_{k})\sim \dist(\Gamma_{k},\,\partial 
\Omega)
\end{equation}
 and 
$$\ell(\gamma_{k})\gtrsim 
\ell(\Gamma_{k})\sim \diam(\Gamma_{k}).$$
%and $i=1,\,2$. 
Here all the constants are independent of $\Omega$ and the choice
of $\wz \varphi,z_1,\gamma,z_2,k.$ 
\end{lem}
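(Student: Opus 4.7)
The plan is to transfer both estimates to the model setting $\mathbb R^2\setminus\overline{\mathbb D}$ via the conformality of $\wz\varphi$. Since $\wz\varphi$ preserves hyperbolic geodesics, $\wz\varphi(\Gamma)$ is an arc of a Euclidean circle meeting $\partial\mathbb D$ orthogonally at its two endpoints $\wz\varphi(z_1),\wz\varphi(z_2)\in\partial\mathbb D$. Using the hypothesis $\wz\varphi(\Gamma)\subset B(0,100)$ to bound the relevant $k$, an explicit computation near $\wz\varphi(z_1)$ gives
\[
\ell(\wz\varphi(\Gamma_k))\sim 2^k\sim\dist(\wz\varphi(\Gamma_k),\partial\mathbb D),
\]
so that $\wz\varphi(\Gamma_k)$ lies inside a $\lambda$-Whitney-type disk $W_k^*\subset\mathbb R^2\setminus\overline{\mathbb D}$ of size $\sim 2^k$ with $\lambda$ absolute.

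For \eqref{equ 1002}, I would apply Lemma~\ref{whitney preserving} to $W_k^*$: the pullback $W_k:=\wz\varphi^{-1}(W_k^*)$ is $\lambda'$-Whitney-type in $\mathbb R^2\setminus\overline{\Omega}$, and $\wz\varphi^{-1}|_{W_k^*}$ is biLipschitz up to a dilation $D_k:=|(\wz\varphi^{-1})'(q_0)|$ for any fixed reference point $q_0\in\wz\varphi(\Gamma_k)$. Each of $\ell(\Gamma_k)$, $\diam(\Gamma_k)$, $\dist(\Gamma_k,\partial\Omega)$ is then comparable to $D_k\cdot 2^k$, yielding both \eqref{equ 1002} and $\ell(\Gamma_k)\sim\diam(\Gamma_k)$.

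For the lower bound $\ell(\gamma_k)\gtrsim\ell(\Gamma_k)$, the plan is to combine conformal invariance of capacity with Koebe distortion at the apexes of $I_k$ and $O_k$. The capacity ${\rm Cap}(I_k,O_k,\mathbb R^2\setminus\overline{\mathbb D})$ is bounded above (via the radial harmonic test function $u(w)=\log(|w-\wz\varphi(z_1)|/2^{k-1})/\log 2$ truncated to $[0,1]$) and below (via the Loewner estimate \eqref{condition of lower bound}) by absolute constants. By conformal invariance the same holds for ${\rm Cap}(E_k,F_k,\mathbb R^2\setminus\overline{\Omega})$, where $E_k:=\wz\varphi^{-1}(I_k)$ and $F_k:=\wz\varphi^{-1}(O_k)$. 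Koebe distortion at the apex of $I_k$, which lies inside $W_k^*$, shows that $E_k$ contains a sub-continuum of Euclidean diameter $\gtrsim D_k\cdot 2^{k-1}\sim\ell(\Gamma_k)$, and similarly for $F_k$. Combining this with the Loewner lower bound applied in $\mathbb R^2$ to the continua $E_k,F_k$ and an upper bound on ${\rm Cap}(E_k,F_k,\mathbb R^2)$ (obtained by extending the extremal function across $\Omega$ using continuity of $\wz\varphi$ up to the boundary), one obtains $\dist(E_k,F_k)\gtrsim\ell(\Gamma_k)$. The trivial inequality $\ell(\gamma_k)\ge\dist(E_k,F_k)$ then completes the estimate.

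The hardest step is the transfer of the capacity upper bound from $\mathbb R^2\setminus\overline{\Omega}$ to all of $\mathbb R^2$: this requires extending the extremal test function across $\Omega$ with controlled Dirichlet energy. This is feasible because the endpoints of $E_k$ and $F_k$ on $\partial\Omega$ are preimages under $\wz\varphi$ of points on $\partial\mathbb D$ at arc-distance $\sim 2^{k-1}$, and $\wz\varphi$ is continuous up to the boundary by the Caratheodory-Osgood theorem. Additional care is needed for $k$ near the extremes of the range allowed by $\wz\varphi(\Gamma)\subset B(0,100)$, where the half-annulus $A(z_1,k)\subset\mathbb R^2\setminus\overline{\mathbb D}$ is no longer a ``small'' perturbation of a Euclidean annulus; those cases can be handled by a direct geometric argument using the explicit form of the hyperbolic geodesics in the exterior of the unit disk.
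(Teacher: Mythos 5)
Your treatment of \eqref{equ 1002} --- pulling $\wz\varphi(\Gamma_k)$ back through a Whitney-type disk via Lemma~\ref{whitney preserving} --- matches the paper's argument and is correct.

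The second half has a genuine gap. You replace the given curves by the pulled-back inner and outer boundary arcs $E_k=\wz\varphi^{-1}(I_k)$, $F_k=\wz\varphi^{-1}(O_k)$ of the conformal annulus and aim at $\ell(\gamma_k)\ge\dist(E_k,F_k)\gtrsim\ell(\Gamma_k)$. The estimate $\dist(E_k,F_k)\gtrsim\ell(\Gamma_k)$ can fail: $E_k$ and $F_k$ are continua whose endpoints lie on $\partial\Omega$, and the subarcs of $\partial\Omega$ joining an endpoint of $E_k$ to the corresponding endpoint of $F_k$ (the $\wz\varphi^{-1}$-images of two arcs of $\partial\mathbb D$ of length $\sim 2^{k-1}$) can be far shorter than $\ell(\Gamma_k)$ when the boundary derivative of $\wz\varphi^{-1}$ is much smaller there than at the Whitney scale of $\Gamma_k$; nothing in the hypothesis ``$\Omega$ Jordan'' rules this out. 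The step you flag as the hardest --- an absolute upper bound on ${\rm Cap}(E_k,F_k,\mathbb R^2)$ by extending the exterior extremal function across $\Omega$ --- is exactly the step that breaks: a Jordan domain is not in general a $W^{1,2}$-extension domain, and whenever $\dist(E_k,F_k)\ll\ell(\Gamma_k)$ the Loewner estimate in $\mathbb R^2$ forces ${\rm Cap}(E_k,F_k,\mathbb R^2)$ to be large, so no energy-controlled extension exists. The two difficulties are one and the same.

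The paper avoids this by never introducing $E_k,F_k$: it compares $\gamma_k$ and $\Gamma_k$ directly through a three-case analysis on the position of $\wz\varphi(\gamma_k)$ relative to $\wz\varphi(\Gamma_k)$ and to $\partial\mathbb D$. When $\gamma_k$ comes within a comparable scale of $\Gamma_k$ or of $\partial\mathbb D$, the Whitney bi-Lipschitz estimate of Lemma~\ref{whitney preserving} already gives $\ell(\gamma_k)\gtrsim\diam(\Gamma_k)$; otherwise the Loewner estimate in the model yields a lower bound on ${\rm Cap}(\overline\gamma_k,\overline\Gamma_k,\mathbb R^2\setminus\overline\Omega)$ and Lemma~\ref{inner capacity} (which controls diameters in terms of the \emph{inner} distance, not the Euclidean distance to the boundary arcs) finishes the estimate. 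No capacity upper bound through $\Omega$ is needed anywhere.
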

\begin{proof}
We may assume that $\Gamma_{k}$ is non-empty; otherwise the claim of the lemma is trivial.

The fact that 
$\ell(\Gamma_k)\sim \dist(\Gamma_k,\,\partial \Omega)\sim \diam(\Gamma_k)$ 
follows immediately from  Lemma~\ref{whitney preserving} and a change of variable, since by definition 
$\wz \varphi(\Gamma_k)$ is contained in a Whitney-type set 
in $\mathbb R^2\setminus \overline{\mathbb D}$. 

Hence we only need to prove that $\ell(\gamma_k)\gtrsim \ell(\Gamma_k)$. 
Observe that, since $\gamma_k$ by definition joins the inner and outer 
boundaries of
$\wz \varphi^{-1}(A(z_1,\,k)),$ then $\wz \varphi(\gamma_k)$ joins the inner and outer 
boundaries of
$A(z_1,\,k),$ and hence
\begin{equation}\label{eqn1}
\ell(\wz \varphi (\gamma_k))\gtrsim \diam(\wz \varphi (\Gamma_k))\sim \dist(\wz \varphi (\Gamma_k),\,\partial \mathbb D).
\end{equation}
We next argue by case study. 

\noindent{\it Case 1: $\dist(\wz \varphi (\gamma_k),\,\wz \varphi (\Gamma_k))\ge \frac 1 3 \dist(\wz \varphi (\Gamma_k),\,\partial \mathbb D)$.}  By Lemma~\ref{whitney preserving}, the assumption and the fact that $\wz \varphi (\Gamma_k)$ is contained in a Whitney-type set, we know that for any curve $\gamma'$ joining $\gamma_k$ and $\Gamma_k$, its length satisfies
$$\ell(\gamma')\gtrsim \diam(\Gamma_k),$$
and hence
\begin{equation}\label{eqn00}
\dist_{\Omega}( \gamma_k ,\, \Gamma_k )\gtrsim \diam(\Gamma_k). 
\end{equation}

Moreover by \eqref{condition of lower bound} for the exterior of the unit disk, \eqref{eqn1} and the monotonicity of the capacity we obtain
$$1\ls {\rm Cap}(\wz \varphi (\overline{\gamma}_k),\,\wz \varphi (\overline{\Gamma}_k),\,\mathbb R^2\setminus \overline{\mathbb D})={\rm Cap}( \overline{\gamma}_k ,\, \overline{\Gamma}_k ,\,\mathbb R^2 \setminus \overline{\Omega})\le {\rm Cap}( \overline{\gamma}_k ,\, \overline{\Gamma}_k ,\,\mathbb R^2). $$
Hence by \eqref{eqn00} and Lemma~\ref{inner capacity} we know that
$$\ell(\gamma_k)\ge \diam(\gamma_k)\gtrsim \diam(\Gamma_k)\sim \ell(\Gamma_k).$$

\noindent{\it Case 2: $\dist(\wz \varphi (\gamma_k),\,\partial \mathbb D)\ge \frac 1 3 \dist(\wz \varphi (\Gamma_k),\,\partial \mathbb D)$.} This assumption implies that  $\wz \varphi (\gamma_k)\cup \wz \varphi (\Gamma_k)$ is contained in a Whitney-type set. Then $\gamma_k\cup \Gamma_k$ is also contained in a Whitney-type set by Lemma~\ref{whitney preserving}, and then the desired estimate follows directly from Lemma~\ref{whitney preserving} and \eqref{eqn1} via a change of variable.

\noindent{\it Case 3: $$\dist(\wz \varphi (\gamma_k),\,\wz \varphi (\Gamma_k))< \frac 1 3 \dist(\wz \varphi (\Gamma_k),\,\partial \mathbb D)$$ and $$\dist(\wz \varphi (\gamma_k),\,\partial \mathbb D)<\frac 1 3 \dist(\wz \varphi (\Gamma_k),\,\partial \mathbb D).$$}
In this case,  there is a subcurve $\wz \gamma_k\subset \gamma_k$ such that $\ell(\wz \varphi (\wz \gamma_k))\gtrsim \ell(\wz \varphi (\Gamma_k))$ and $\dist(\wz \varphi (\wz \gamma_k),\,\partial \mathbb D)\gtrsim \dist(\wz \varphi (\Gamma_k),\,\partial \mathbb D)$, as $\gamma_k$ is a (connected) curve. Then we are reduced to a case similar to the second one, and it follows that
$$\ell(\gamma_k)\gtrsim \ell(\wz \gamma_k)\ge \diam(\wz \gamma_k) \gtrsim \diam(\Gamma_k)\sim \ell(\Gamma_k). $$
Consequently we obtain the desired estimate. 
\end{proof}

We call the set $\wz \varphi^{-1}(A(z,\,k))$  defined above a {\it conformal annulus} centered at $z$. Now let us prove Theorem~\ref{GM unbounded}. 

\begin{proof}[Proof of Theorem~\ref{GM unbounded}]
We first deal with the case where $\wz \varphi(x),\,\wz \varphi(y)$ are not on the same radial ray. 
Extend the hyperbolic geodesic joining $x$ and $y$ to the boundary of $\wz \Omega$ at $z_1$ and $z_2$, and construct the conformal annuli as in Lemma~\ref{lengthtransfer} 

We first consider the case where $x,\,y$ are in the same conformal annulus (centered at $z_1$ or $z_2$). Then  since $\wz \varphi(x),\,\wz \varphi(y)$ are contained in some Whitney-type set of the exterior of the unit disk by the planar geometry, via  from Lemma~\ref{whitney preserving} we know that $x,\,y$ are in some Whitney-type set of $\wz \Omega$, and the claim follows directly.

Then we show the claim in the case where $x$ and $y$ are in different annuli. We may assume that they lie on the boundary of different conformal annuli, since the general case follows from the triangle inequality with the conclusion of the first case. 

We employ the notation in Lemma~\ref{lengthtransfer}. 
If $|\wz \varphi (x)-\wz \varphi (y)|\le 1$, then by applying the Jordan curve theorem to the union of the subarc of $\partial \mathbb D$ inside
$B(\wz \varphi(z),\,2^{k-1})$
 with the inner boundary of the conformal annulus $A(z,\,k)$, with the help of the conformal map we conclude that, any curve joining $x$ and $y$ intersects the inner boundary of $\varphi(A(z,\,k))$ whenever $\Gamma_k$ is non-empty. Similarly via the Jordan curve theorem we obtain that any curve joining $x$ and $y$  intersects the outer boundary of $\varphi(A(z,\,k))$ whenever $\Gamma_k$ is non-empty.  
In other words,  any curve joining $x$ and $y$ should go across the conformal annulus $\varphi(A(z,\,k))$ whenever $\Gamma_k$ is non-empty. 
Then by Lemma~\ref{lengthtransfer} we have
$$\ell(\Gamma)\le \sum_{k} \ell(\Gamma_k) \ls \sum_k \ell(\gamma_k) \ls \ell(\gamma) $$
for some absolute constants. 
Therefore we obtain the desired conclusion.

Suppose $|\wz \varphi (x)-\wz \varphi (y)|> 1$. If $\Gamma_k\neq \emptyset$   but $\gamma_k=\emptyset$, then the Jordan curve theorem (with the argument above) implies that $k\ge 1$. By $|\wz \varphi (x)-\wz \varphi (y)|> 1$ and  the fact that $\wz  \varphi (\Gamma)\subset B(0,\,100)$, according to \eqref{condition of lower bound} we have
$${\rm Cap}(\Gamma_k,\,\gamma,\,\wz \Omega)={\rm Cap}(\wz \varphi (\Gamma_k),\,\wz \varphi(\gamma),\,\mathbb R^2\setminus \overline{\mathbb D})\ge C.$$
Thus by Lemma~\ref{inner capacity} and Lemma~\ref{lengthtransfer} we have
$$\diam(\Gamma_k) \le C \diam(\gamma)\le C \ell(\gamma)$$
for some absolute constants. 
Therefore by Lemma~\ref{lengthtransfer} again we conclude that
$$\ell(\Gamma)\le \sum_{k<0} \ell(\Gamma_k) +\sum_{0\le k\le 7}  \ell(\Gamma_k) \ls \sum_{k<0} \ell(\gamma_k) + \sum_{0\le k\le 7}  \ell(\Gamma_k)  \ls \ell(\gamma). $$
Hence we conclude the theorem in the case  where $\wz \varphi(x),\,\wz \varphi(y)$ are not on the same radial ray. 

If  $\wz \varphi(x),\,\wz \varphi(y)$ are on the same radial ray, we extend the hyperbolic geodesic joining $x$ and $y$ to the boundary of $\wz \Omega$ at $z_1$, and only consider the conformal annuli centered at $z_1$. Then the above argument also applies similarly to this case, and we conclude the theorem. 
\end{proof}

%\begin{rem}\label{hyperbolic ray}
%Similar to \cite[Lemma 4.2]{KRZ2015}, we can also show via Lemma~\ref{lengthtransfer} that for any hyperbolic  geodesic  $\Gamma$ towards $\infty$ and contained in $\wz \Omega$ or its closure, where $\wz \Omega$ is the complementary of a Jordan domain, and either $\wz \Omega$ or its closure is $C_1$-quasiconvex, the length of the subarc joining $x,\,y\in \Gamma$ is no more than $C(C_1) |x-y|$. 
%\end{rem}

\subsection{A lemma on quasiconvex sets}\label{appendix quasiconvex}

\begin{proof}[Proof of Lemma~\ref{ulkodist}]
We may assume that $0\notin \wz \Omega$. 
Let $S=\partial B(0,\, 10 \diam(\partial \wz  \Omega))$, and $Q$ be a $4\sqrt 2$-Whitney square of $\wz\Omega$ intersecting $S$. Then $S\subset \wz\Omega$ and $\diam(Q)/\dist(Q,\,\partial \wz \Omega)$ is bounded from below by an absolute constant. We claim that
\begin{equation}\label{equ 2}
{\rm Cap}(Q,\, \partial \wz \Omega,\,\wz \Omega)\gs 1.
\end{equation}
To begin with, by \eqref{same capacity} we have
$${\rm Cap}(Q,\, \partial {\Omega} ,\, \wz \Omega) ={\rm Cap}(\partial Q,\, \partial {\Omega} ,\, \wz \Omega \setminus Q). $$
Denote  the center of $Q$ by $x_0$. 
Since the M\"obius transformation $\phi\colon z\mapsto \frac {\diam(Q)} {(z-x_0)}$ is   uniformly biLipschitz in
$\{z\in \mathbb R^2\colon 0< \dist(z,\,Q)\le 2\dist(Q,\,\partial \Omega)\}$ 
as $Q$ is a Whitney square, we have
$$\frac{\diam(\partial Q)}{\dist(Q,\,\partial \Omega)}\sim \frac{\phi(\diam(\partial Q))}{\dist(\phi(Q),\,\phi(\partial \Omega))}$$
and thus by \eqref{capacity of balls} together with our assumption 
$${\rm Cap}(\phi(\partial Q),\, \phi(\partial {\Omega}),\, \phi(\wz \Omega\setminus Q))\ge C(\sigma).$$
Then \eqref{equ 2} follows from the conformal invariance of capacity as $\phi$ is conformal in the ring domain $G\setminus Q$. 

By the conformal invariance of capacity, we conclude from \eqref{equ 2} that
\begin{equation*}
{\rm Cap}(\wz \varphi(Q),\, \partial \mathbb D,\, \mathbb R^2\setminus \overline{\mathbb  D})\gs 1.
\end{equation*}
Notice that by Lemma~\ref{whitney preserving}, $\wz \varphi (Q)$ is of $\lambda$-Whitney-type for some absolute constant $\lambda$. 
Hence  
$$\diam(\wz \varphi (Q))\sim_{\lambda} \dist( \partial \mathbb D,\,\wz \varphi (Q)) \ls \diam(\mathbb D)=2$$ 
according to Lemma~\ref{inner capacity}. By the triangle inequality and the arbitrariness of $Q$ we have 
\begin{equation}\label{equ 3}
 \dist(\wz \varphi (S),\,\partial \mathbb D)\le  C_2^{-1} \ \text{  and  } \ 1\le \diam(\wz \varphi (S))\le C_2^{-1}. 
\end{equation}
where $C_2<1$ is an absolute constant. 

Since $\wz \Omega$ is $C_1$-quasiconvex, then for the points $z_1,\,z_2$ in the statement of the lemma, 
 there exists a curve $\gamma\subset \wz \Omega$ joining $z_1$ and $z_2$ such that
$$\ell(\gamma)\le \delta_1 C_1   \diam(\partial \wz  \Omega). $$
By letting $\delta_1<C_1^{-1}\le 1$, with the definition of $z_1,\,z_2$, we have $\gamma\subset B(0,\,4 \diam(\partial \wz  \Omega))$ by the triangle inequality. Then by \eqref{capacity of balls} again we obtain
\begin{equation}\label{equ 1}
{\rm Cap}(\gamma,\,S,\,\wz \Omega)\le \phi_2(\delta_1 C_1/6).
\end{equation}
By conformal invariance we have
\begin{equation}\label{equ 4}
{\rm Cap}(\wz \varphi (\gamma) ,\,\wz \varphi (S),\, \mathbb R^2\setminus \overline{\mathbb  D})\le \phi_2(\delta_1 C_1/6). 
\end{equation}
Notice that  since $\gamma\subset B(0,\, 4 \diam(\partial \wz  \Omega))$, then 
$\wz \varphi (\gamma)$ is contained in the Jordan domain enclosed by $\wz \varphi (S)$. 
Hence if 
$$|\wz \varphi (z_1)-\wz \varphi (z_2)|\ge \frac 1 4, $$
by \eqref{equ 3}, \eqref{capacity of balls} and \eqref{equ 4} we further have
\begin{equation}\label{equ 5}
 \phi_1(C_3/4)\le {\rm Cap}(\wz \varphi (\gamma) ,\,\wz \varphi (S),\, \mathbb R^2\setminus \overline{\mathbb  D}) \le \phi_2(\delta_1 C_1/6). 
\end{equation}
Thus by choosing $\delta_1$ small enough such that $\delta_1< C_1^{-1}$ but \eqref{equ 5} fails, we obtain the first part of the lemma. 

Towards the second part, by symmetry we only need to prove the inequality for $z_1$. If $z_1\in \partial \wz \Omega$, then by the 
Caratheodory-Osgood 
theorem we know that $\wz \varphi(z_1)\in \partial \mathbb D$, and the desired claim follows. If $z_1$ is in the interior of $\wz \Omega$, let $\Gamma$ be the hyperbolic geodesic joining $z_1$ to $\infty$. Then since $\dist(z_i,\,\partial \wz  \Omega)\le \delta_1 \diam(\partial \wz  \Omega)$,   by \eqref{capacity of balls}  we have
\begin{equation}\label{equ 200}
{\rm Cap}( \partial \wz \Omega,\,\Gamma,\,\wz \Omega)\ge \phi_1(\delta_1^{-1})
\end{equation}
for some absolute constant. 
By the conformal invariance of capacity, we conclude from \eqref{equ 200} that
\begin{equation*}
{\rm Cap}( \partial \mathbb D,\,\wz \varphi(\Gamma),\, \mathbb R^2\setminus \overline{\mathbb  D})\ge \phi_1(\delta_1^{-1}).
\end{equation*}
Then the desired estimate follows from Lemma~\ref{inner capacity} by choosing $\delta_1$ suitably.
\end{proof}

\section*{Acknowledgement}
We thank the anonymous referee for the comments and suggestions to
the manuscript.

\end{document}